\numberwithin{equation}{section}
\numberwithin{figure}{section}
\theoremstyle{plain}
\newtheorem{thm}{\protect\theoremname}[section]
\theoremstyle{plain}
\newtheorem{prop}[thm]{\protect\propositionname}
\theoremstyle{plain}
\newtheorem{cor}[thm]{\protect\corollaryname}
\theoremstyle{plain}
\newtheorem{lem}[thm]{\protect\lemmaname}
\theoremstyle{definition}
\newtheorem{defn}[thm]{\protect\definitionname}
\theoremstyle{remark}
\newtheorem{rem}[thm]{\protect\remarkname}
\def\vep{\varepsilon}
\def\de{\delta}
\def\be{\begin{equation}}
\def\ee{\end{equation}}
\def\rife#1{\eqref{#1}}
\def\intr{\int_{\R}}
\def\intt{\int_{\T}}
\def\cP{{\mathcal P}}
\def\cE{{\mathcal E}}
\def\cB{{\mathcal B}}
\DeclareMathOperator*{\argmin}{arg\,min}
\providecommand{\corollaryname}{Corollary}
\providecommand{\definitionname}{Definition}
\providecommand{\lemmaname}{Lemma}
\providecommand{\propositionname}{Proposition}
\providecommand{\remarkname}{Remark}
\providecommand{\theoremname}{Theorem}
\title{Free boundary regularity and support propagation in mean field games and optimal transport} 
\author{Pierre Cardaliaguet \thanks{Universit\'e Paris-Dauphine, Place du Mar\'echal de Lattre de Tassigny, 75016 Paris, France. \texttt{cardaliaguet@ceremade.dauphine.fr}} \and Sebastian Munoz \thanks{Department of Mathematics, University of Chicago, 60637 Chicago, USA. \texttt{sbstn@math.uchicago.edu}} \and Alessio Porretta \thanks{Dipartimento di Matematica, Universit\'a di Roma Tor Vergata. Via della Ricerca Scientifica 1, 00133 Roma, Italy. \texttt{porretta@mat.uniroma2.it}}}
\begin{document}
\global\long\def\T{R\mathbb{T}}%

\global\long\def\ue{u^{\vep}}%

\global\long\def\me{m^{\vep}}%

\global\long\def\R{\mathbb{R}}%

\global\long\def\ox{\overline{x}}%

\global\long\def\mg{v}%

\global\long\def\vg{\overline{v}}%

\global\long\def\oal{\overline{\alpha}}%
 
\maketitle

\begin{abstract}
    We study the behavior of solutions to the first-order mean field games system with a local coupling, when the initial density is a compactly supported function on the real line. Our results show that the solution is smooth in regions where the density is strictly positive, and that the density itself is globally continuous. Additionally, the speed of propagation is determined by the behavior of the cost function near small values of the density. When the coupling is entropic, we demonstrate that the support of the density propagates with infinite speed. On the other hand, for a power-type coupling, we establish finite speed of propagation, leading to the formation of a free boundary. We prove that under a natural non-degeneracy assumption, the free boundary is strictly convex and enjoys $C^{1,1}$ regularity. We also establish sharp estimates on the speed of support propagation and the rate of long time decay for the density. Moreover, the density and the gradient of the value function are both shown to be H\"older continuous up to the free boundary. Our methods are based on the analysis of a new elliptic equation satisfied by the flow of optimal trajectories. The results also apply to mean field planning problems, characterizing the structure of minimizers of a class of optimal transport problems with congestion.
\end{abstract}

 \noindent \textbf{Keywords:} self-similar solutions; degenerate elliptic equations; intrinsic scaling; displacement convexity; Lagrangian coordinates; free boundary; mean field games; Hamilton-Jacobi equations; continuity equation.\\
 \noindent \textbf{MSC: } 35R35, 35Q89, 35B65, 35J70.

\tableofcontents{}

\section{Introduction}

In this paper, we analyze the following first-order system of partial differential equations in the unknown scalar functions $(u,m)$:
\begin{equation}
\begin{cases}
-u_{t}+\frac{1}{2}u_{x}^{2}=f(m) & (x,t)\in\R\times(0,T),\\
m_{t}-(mu_{x})_{x}=0 & (x,t)\in\R\times(0,T),
\end{cases}\label{mfgsys}
\end{equation}
where $f$ is an increasing function.  Systems such as \rife{mfgsys} appear in the theory of mean field games (MFG for short), which describes Nash equilibria in differential games with strategic interactions between a large number of agents (\cite{LL1, LL2}). In that context, $m$ represents the distribution density of a population of infinitely many indistinguishable agents, while $u$ is the value function of  any generic agent, whose running costs depend on the density distribution $m$. 

The system shall also be complemented with initial and terminal conditions, which can take the form of a given initial distribution $m_0$ and a  final pay-off for the agents, which may itself be dependent on  the density of the population at the final time. In that case, the complete form of the system is as follows:
\begin{equation}
\tag{{MFG}}\begin{cases}
-u_{t}+\frac{1}{2}u_{x}^{2}=f(m) & (x,t)\in\mathbb{R}\times(0,T)\\
m_{t}-(mu_{x})_{x}=0 & (x,t)\in\mathbb{R}\times(0,T)\\
m(x,0)=m_{0}(x),\,\,u(x,T)=g(m(x,T)), & x\in\mathbb{R}.
\end{cases}\label{eq:MFGR}
\end{equation}
In \rife{eq:MFGR},  any such agent,  typically represented by a dynamical state $x(t)$, optimizes some cost given by the kinetic energy plus a congestion cost depending on the distribution of mass, that is, 
\[
u(x,t)=\inf_{\gamma:[t,T] \rightarrow \mathbb R, \,\gamma(t)=x} \quad\int_t^T \frac{1}{2} |\Dot \gamma(s)|^2+f(m(\gamma(s),s)) ds + g(m(\gamma(T),T)).
\]  
In turn, the density evolves according to the optimal feedback control of the agents, ($-u_x$).  At equilibrium, the value function of any single agent, and the density distribution of the population, satisfy the system \rife{eq:MFGR} coupling the Hamilton--Jacobi (HJ) equation for $u$ with the continuity equation for $m$.  

Alternatively to prescribing a final cost,  one may instead prescribe both the initial and final distributions of the agents. This is the so-called mean field optimal planning problem, which takes the complete form: 
\begin{equation}
\tag{{MFGP}}\begin{cases}
-u_{t}+\frac{1}{2}u_{x}^{2}=f(m) & (x,t)\in\mathbb{R}\times(0,T)\\
m_{t}-(mu_{x})_{x}=0 & (x,t)\in\mathbb{R}\times(0,T)\\
m(x,0)=m_{0}(x),\,\,m(x,T)=m_{T}(x), & x\in\mathbb{R},
\end{cases}\label{eq:MFGPR}
\end{equation}
for some prescribed mass distributions $m_0, m_T$.

In this latter case, the system \rife{eq:MFGPR} can also be viewed as the first-order optimality conditions of an optimal transport problem on the Wasserstein space of measures. In this context, one seeks to minimize the functional
\begin{equation}\label{OTF}
{\cB(m,v)}:= \int_0^T\!\! \!\int_{\R^d} \frac 12\, |v|^2 dmdt+  \int_0^T\!\! \!\int_{\R^d} F(m) dxdt
\quad \text{subject to } \begin{cases} m_t - (vm)_x=0 & \\ m(0)=m_0\,, m(T)=m_T & \end{cases}
\end{equation}
where $F(s)=\int_0^s f(r)dr$. This problem appears as a generalization of the dynamic formulation of the  mass transport problem (cf. \cite{BB}). Indeed, the classical  dynamic version of the Monge-Kantorovich problem  corresponds to the case $f=0$, whereas the resulting optimizer $m(t)$ is the geodesic, in the Wasserstein space, that connects $m_0$ and $m_T$. The function $u$ appears from the dual formulation, and  $u(0),   u(T)$ are the so-called Kantorovich potentials.  Classical monographs on optimal transport theory are \cite{AGS, Sa, Vi}. The additional cost term $F(m)$, which is convex whenever $f$ is increasing, can be interpreted as an incentive to avoid congested areas. Similar optimal transport problems with congestion have been widely investigated in the literature, including their connection with MFG theory, since the solution of \rife{eq:MFGPR} provides the minimizer $(m,u_x)$ of $\cB$; see, for example,   \cite{Gomes2, CMS, GMST, LaSa, OPS}.  
 
Since the courses given by P.-L. Lions at Coll\`ege de France, devoted to  MFG theory, it was observed that the increasing character of $f$ would produce a regularizing effect  in the solutions $(u,m)$ of system  \rife{mfgsys}; indeed, the first order system can be reformulated as a single degenerate elliptic equation for $u$ in the space-time variables (\cite{L-college}).
The approach suggested by Lions was recently  developed and extended in \cite{MimikosMunoz, Munoz, Munoz2, Porretta}. So far, in arbitrary dimensions, it is well understood that solutions are smooth under the blow-up assumption \begin{equation} \label{eq:blowup} \lim_{m\rightarrow 0^+}f(m)=-\infty, \end{equation} 
provided that the marginals $m_0, m_T$ are (strictly) positive, say for positive measures on a compact domain (e.g. on the flat torus) or for Gaussian-like measures on the whole space. It was also established in \cite{MimikosMunoz} that, for the one-dimensional case, assumption \eqref{eq:blowup} may be removed, thus requiring only the positivity of the marginals. 

Much less is known about what happens in case of compactly supported marginals. Regularizing effects of the type $L^1 \to L^\infty$ have been proven to hold (see \cite{LaSa, Porretta}), but the propagation of the support of the solution, and even basic matters of regularity such as the continuity of the density, have largely remained open issues. 

The purpose of this article is to investigate this question in the one-dimensional case of the space variable: we address both the case of finite and infinite speed of propagation of the support of initial measures.  Roughly speaking, those two cases correspond to two model choices for the coupling function, namely $f(m)= m^\theta$ for some $\theta>0$, or $f(m)= \log (m)$. In the latter case, there is infinite speed of propagation, and the solution starting with compact support becomes instantaneously positive and smooth. By contrast, when $f(m)= m^\theta$, we observe finite speed of propagation, and the solution evolves with compact support.  This leads to new interesting questions concerning the study of the free boundary $\partial \{m(t)>0\}$, which is the main focus of our paper.

By way of analogy, which is also natural from the optimal transport viewpoint,  for $f=\log(m)$  the evolution of $m$ is reminiscent of a nondegenerate diffusion, such as the heat equation. On the other hand, the case of a power nonlinearity $f$ resembles the behavior of degenerate slow diffusions such as the flow through a porous medium (see \cite{vazquez2007porous}).  This analogy becomes more compelling as in fact, when $f(m)= m^\theta$,  we exhibit a family of self-similar solutions which evolves from a Dirac mass into a compactly supported measure. These solutions are given by the formula
\begin{equation} \label{eq:self similar m}
m(x,t)= t^{-\overline{\alpha}}\left( R- \frac{\overline{\alpha}(1-\overline{\alpha})}{2} \left(\frac{x}{t^{\overline{\alpha}}}\right)^2\right)_+^{1/\theta}\,,\quad \overline{\alpha}= \frac 2{2+\theta},
\end{equation}
which is strongly reminiscent of the famous Barenblatt solution for the porous medium equation \cite{Bar}.
The behavior exhibited by this class of compactly supported solutions serves as a prototype for our  analysis of problems \rife{eq:MFGR} and \rife{eq:MFGPR}.  In order to describe our main results on the propagation of the support and the characterization of the free boundary, we assume henceforth that
\be\label{power}
f(m)= m^\theta\,, \quad \theta>0\,,
\ee
and that the initial measure $m_0$ is a continuous, compactly supported, probability density, with a bump-like shape:
\be\label{bump0}
\{m_{0}>0\}=(a_{0},b_{0})\quad \text{ and }\quad 
 \frac{1}{C_{0}}\text{dist}(x,\{a_{0},b_{0}\})^{\alpha_0}\leq m_{0}(x)\leq C_{0}\text{dist}(x,\{a_{0},b_{0}\})^{\alpha_0},
\ee
for some $\alpha_0, C_0>0$. 
In order to keep our main statement in a simpler form,   we   will assume here a consistent condition on the terminal density, in case of problem \rife{eq:MFGPR},
\begin{equation}\label{bumpT}
\{m_{T}>0\}=(a_{1},b_{1}),  \quad \text{ and }\quad \frac{1}{C_{1}}\text{dist}(x,\{a_{1},b_{1}\})^{\alpha_0}\leq m_{T}(x)\leq C_{1}\text{dist}(x,\{a_{1},b_{1}\})^{\alpha_0}.
\end{equation}
However, more general situations  will be considered later, allowing for the behavior of $m_T$ at the boundary of its support to differ from the behavior of $m_0$. Similarly, for  problem   \rife{eq:MFGR}, we will require here, for simplicity, consistency between $f$ and the terminal cost coupling $g$, namely 
\be\label{g=f}
g(s)= c_T\, s^\theta\,,\qquad \hbox{for some $c_T\geq 0$.}
\ee
We may now state our main result, which proves that  the unique  solution of \rife{eq:MFGR} or \rife{eq:MFGPR} has a  compactly supported density and the free boundary $\partial \{m(t)>0\}$ consists of  two Lipschitz curves, which are $C^{1,1}$ under a suitable non-degeneracy assumption at the initial time. Those curves can be characterized in terms of the flow of optimal trajectories for the agents' optimization problem. 

In fact, we will show  that $u$ is smooth inside the support of $m$, and the characteristic flow
$$
\begin{cases}
\dot \gamma ( x,\cdot)=-u_{x}(\gamma( x,\cdot),\cdot)\\
\gamma( x,0)=x
\end{cases}
$$
is well defined starting from $x$ in the support of $m_0$. Finally,  we also show that the left and right free boundary curves are, respectively,  convex and concave, and in problem \rife{eq:MFGR} the support spreads outward in time.

\begin{thm}\label{thm.intro1} Let $f$ be given by \rife{power}, and let $0<\alpha<1$. Assume that $m_0:\mathbb{R} \rightarrow [0,\infty)$ satisfies \rife{bump0}, $m_0^\theta \in C^{1,\alpha}(a_0,b_0),$  and $m_0^{\theta}$ is semi--convex.
In case of problem \rife{eq:MFGPR}, assume also that $m_T^\theta \in C^{1,\alpha}(a_0,b_0)$ satisfies \rife{bumpT}.
Let $(u,m)$ be the solution to \rife{eq:MFGPR}, or to \rife{eq:MFGR} with $g$ satisfying \eqref{g=f}. Then $(u, m)\in C_{\emph{loc}}^{2,\alpha}((\mathbb{\mathbb{R}\times}[0,T])\cap\{m>0\})\times C_{\emph{loc}}^{1,\alpha}((\mathbb{\mathbb{R}\times}[0,T])\cap\{m>0\})$, and the following holds:

\begin{enumerate}

\item There exist two functions $\gamma_{L}<\gamma_{R}\in W^{1,\infty}(0,T)$,
such that 
\begin{equation}
\{m>0\}=\{(x,t)\in\mathbb{R}\times[0,T]:\gamma_{L}(t)<x<\gamma_{R}(t)\}.
\end{equation}
Moreover, the flow $\gamma$ of optimal trajectories is well defined
on $(a_{0},b_{0})\times[0,T]$, we have
$$
\gamma\in W^{1,\infty}((a_{0},b_{0})\times(0,T))\cap C_{\emph{\text{loc}}}^{2,\alpha}((a_{0},b_{0})\times[0,T]),\,\,\,\gamma_{x}>0,\,\,\,\gamma_{L}(t)=\gamma(a_{0},t),\,\gamma_{R}(t)=\gamma(b_{0},t),
$$
 and $\gamma$ is a classical solution in $(a_{0},b_{0})\times(0,T)$
to the elliptic equation
\begin{equation}
\gamma_{tt}+\frac{\theta m_{0}^\theta}{(\gamma_{x})^{2+\theta}}\gamma_{xx}=\frac{(m_{0}^\theta)_{x}}{(\gamma_{x})^{1+\theta}}.\label{eq:flow0}
\end{equation}

\item If we assume further the concavity condition
\be\label{concavefm0}
(m_{0}^\theta)_{xx}\leq0\text{ in }\{x\in (a_0,b_0) : \emph{\text{dist}}(x,\{a_{0},b_{0}\})<\delta\}\text{ for some }\text{\ensuremath{\delta>0}}, 
\ee
then we have $\gamma_{L},\gamma_{R}\in W^{2,\infty}(0,T)$, and there  exists $K>0$ such that, for a.e. $t\in [0,T]$,
$$
\frac{1}{K}\leq \Ddot \gamma_{L}(t)\leq K,\,\text{ and }\,-K\leq\ \Ddot \gamma_{R}(t)\leq-\frac{1}{K},
$$ 
where  $K$ depends on $T, C_{0},  \theta, \de^{-1}, \|((m^\theta_0)_{xx})^-\|_\infty$, $|(m_0^{\theta})_x(a_0^{+})|$,$|(m_0^{\theta})_x(b_0^{-})|$  (and additionally on $c_T$ for problem \rife{eq:MFGR}, and on $C_1$ for problem \rife{eq:MFGPR}).

Moreover, when $(u,m)$ solves \eqref{eq:MFGR}, we have, for $t\in [0,T],$
\begin{equation*} -K(c_T+(T-t))\leq \dot \gamma_L(t)\leq -\frac{1}{K}(c_T+(T-t)),\,\text{ and }\,\frac{1}{K}(c_T+(T-t))\leq \dot \gamma_R(t)\leq K(c_T+(T-t)).
\end{equation*}
\end{enumerate}
\end{thm}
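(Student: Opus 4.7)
The plan is to reformulate the problem in Lagrangian coordinates so that the Eulerian free boundary becomes the fixed lateral boundary $\{a_0,b_0\}\times[0,T]$. Along an optimal trajectory $\gamma(x,\cdot)$ one has $\dot\gamma=-u_x(\gamma,\cdot)$; differentiating in $t$ and using the $x$-derivative of the Hamilton--Jacobi equation gives $\ddot\gamma=f'(m)m_x$ evaluated on $\gamma$. Combining this with the Lagrangian mass-conservation identity $m(\gamma(x,t),t)\gamma_x(x,t)=m_0(x)$ and with $f(m)=m^\theta$ produces exactly \eqref{eq:flow0}. This is the workhorse of the proof: a quasilinear, possibly degenerate, elliptic equation for $\gamma$ on the \emph{fixed} strip $(a_0,b_0)\times(0,T)$, with Dirichlet-type data ($\gamma(\cdot,0)=\mathrm{id}$ together with either $\gamma(\cdot,T)$ prescribed, in the planning case, or a terminal slope coming from $u(\cdot,T)=g(m(\cdot,T))$, in the MFG case).

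I would then establish interior regularity. On $\{m>0\}$ the results of \cite{MimikosMunoz, Munoz, Munoz2, Porretta} give local smoothness of $(u,m)$; pushed to Lagrangian coordinates this yields $\gamma\in C^{2,\alpha}_{\mathrm{loc}}((a_0,b_0)\times[0,T])$ with $\gamma_x>0$. A uniform Lipschitz control on $\gamma$ up to $\{a_0,b_0\}\times[0,T]$ can then be obtained from the available $L^\infty$ bounds on $m^\theta$ together with an energy estimate based on \eqref{eq:flow0}; this makes $\gamma_L(t):=\gamma(a_0,t)$ and $\gamma_R(t):=\gamma(b_0,t)$ Lipschitz in time, and, by $\gamma_x>0$ and mass conservation, $\{m(t)>0\}=(\gamma_L(t),\gamma_R(t))$, proving item (1).

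For item (2) I would perform a barrier comparison against the explicit Barenblatt-type profile \eqref{eq:self similar m}. Under \eqref{concavefm0}, after rescaling, $m_0^\theta$ is pinched between two such profiles near each endpoint, and comparison for \eqref{eq:flow0} pinches $\gamma(a_0+\cdot,t)$ and $\gamma(b_0-\cdot,t)$ between the corresponding self-similar flows. Translating this pinching back into estimates on the boundary traces yields both $\gamma_L,\gamma_R\in W^{2,\infty}(0,T)$ and the two-sided bounds on $\ddot\gamma_L,\ddot\gamma_R$, with the correct signs (convexity of $\gamma_L$, concavity of $\gamma_R$). For the problem \eqref{eq:MFGR} the $\dot\gamma$ estimates follow from the terminal condition: $g(s)=c_T s^\theta$ gives $u_x(\cdot,T)=\theta c_T m(\cdot,T)^{\theta-1} m_x(\cdot,T)$, which, combined with $m(\gamma,T)\gamma_x(\cdot,T)=m_0$, controls $\dot\gamma(x,T)$ at $x=a_0,b_0$ by $c_T$ times the boundary slope of $m_0^\theta$; integrating the $\ddot\gamma$-bounds backwards from $t=T$ closes the estimate.

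The main obstacle is matching upper and lower bounds near the free boundary, where \eqref{eq:flow0} degenerates because its ellipticity coefficient $\theta m_0^\theta/\gamma_x^{2+\theta}$ vanishes as $x\to a_0^+, b_0^-$. A generic argument only delivers one-sided information; the assumption \eqref{concavefm0} is precisely what allows a barrier with the correct intrinsic scaling $x\sim t^{\overline\alpha}$, $\overline\alpha=2/(2+\theta)$, dictated by \eqref{eq:self similar m}. Constructing these barriers rigorously, verifying a comparison principle for the degenerate quasilinear elliptic equation \eqref{eq:flow0}, and transferring the sharp scaling back to the Eulerian picture is the technical heart of the argument.
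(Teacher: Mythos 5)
Your Lagrangian setup and derivation of \eqref{eq:flow0} are exactly the paper's, and you correctly identify it as the central tool. However, the two key estimates are not obtained the way you describe, and the substitutes you propose do not close.

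For item (1), you assert that the Lipschitz bound on $\gamma$ up to $\{a_0,b_0\}\times[0,T]$ follows "from the available $L^\infty$ bounds on $m^\theta$ together with an energy estimate based on \eqref{eq:flow0}." That is not true: the $L^\infty$ bound on $m$ and the $H^1$ energy estimate on $f(m)$ control $\gamma_t=-u_x(\gamma,\cdot)$ and give a modulus of continuity, but they do not produce the pointwise upper bound $\gamma_x\leq C$ (equivalently a lower bound on $m\circ\gamma$ relative to $m_0$). The paper proves $\|\gamma_x\|_\infty\leq C$ by differentiating \eqref{eq:flow0} in $x$, obtaining a quasilinear equation for $v=\gamma_x$ whose zeroth-order source is $f(m_0)_{xx}/v^{1+\theta}$, and then running a maximum principle comparison against $w=v-Kt^{\overline\alpha}$; the semiconvexity of $m_0^\theta$ is used precisely to bound this source from below, and a separate boundary analysis at $t=T$ (using the decay exponents $\alpha_0\geq\alpha_1$ for \eqref{eq:MFGPR}, or a Robin-type computation from $u(\cdot,T)=g(m(\cdot,T))$ for \eqref{eq:MFGR}) closes the estimate. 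This mechanism is absent from your proposal.

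For item (2), your barrier comparison against self-similar Barenblatt profiles is a genuinely different route from the paper's, but it is not carried out and the missing pieces are exactly the hard ones. The paper instead (a) obtains a lower bound $\gamma_x\geq 1/C$ by a maximum principle for $v=f(m(\gamma(\cdot),\cdot))$ against $Cf(m_0)+\varepsilon t$, and (b) derives the two-sided bound on $\ddot\gamma_L$ by integrating \eqref{eq:flow0} in $x$, using the identity
\begin{equation*}
\int_{a_0}^{x_0}\gamma_{tt}(x,\tau)\,dx=\frac{\theta}{\theta+1}\,\frac{f(m_0)}{\gamma_x^{1+\theta}}(x_0)+\frac{1}{\theta+1}\int_{a_0}^{x_0}\frac{f(m_0)_x}{\gamma_x^{1+\theta}}\,dx,
\end{equation*}
taking second differences in $t$, and letting $x_0\to a_0^+$. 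Your proposal would require a comparison principle for the degenerate equation \eqref{eq:flow0} that holds up to the degenerate boundary $x\in\{a_0,b_0\}$, explicit Lagrangian sub/super-solutions built from the self-similar profile that actually sandwich $\gamma$ for a general $m_0$ satisfying \eqref{concavefm0} (note the self-similar flow emanates from a Dirac mass, not from $m_0$), and, crucially, a mechanism to upgrade sup/inf pinching of $\gamma$ to pointwise bounds on $\ddot\gamma_L$; the last does not follow from pinching alone. You label these the "technical heart," which is accurate, but it means the argument as written is an outline, not a proof.
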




In relation to the main text, Theorem \ref{thm.intro1} is a combination of Theorem \ref{thm:well-posedness theorem} (for the existence of the solution $(u,m)$ and its regularity in $\{m>0\}$), Theorem \ref{thm:characteriz free boundary} (for the description of the free boundary) and Theorem \ref{thm:free boundary regularity} (for the  regularity and convexity of the free boundary).

\begin{rem} We now discuss the nondegeneracy conditions required on $m_0$ at the boundary of its support. First, we note that the  $C^{1,\alpha}(a_0,b_0)$ (and therefore $W^{1,\infty}(\mathbb{R})$) condition on $m_0^\theta$, together with \rife{bump0}, implies that $\alpha_0\geq \frac1\theta$ in \rife{bump0}. In turn, the concavity assumption \rife{concavefm0} further restricts the behavior of $m_0^\theta$, forcing $\alpha_0= \frac1\theta$ in \rife{bump0}. However, this condition of a linear,  nondegenerate behavior of $m_0^\theta$ is  natural (a case in point being the self-similar solution itself), and should be compared with standard nondegeneracy conditions on the initial data in other free boundary problems (e.g. in the study of the moving free boundary  for the porous medium equation, see \cite{vazquez2007porous}).

 We also wish to highlight that, when dealing with problem \rife{eq:MFGPR}, some asymmetry can be observed when requiring some conditions (e.g. concavity-type assumptions) on $m_0$ but not on $m_T$. This kind of asymmetry arises because we are referring to the forward flow $\gamma(x,t)$ in our statement. Of course, similar results will hold when reversing the time flow and exchanging the roles of $m_0, m_T$.
\end{rem}

\begin{rem} We stress that the first part of Theorem \ref{thm.intro1} remains true under more general conditions than \rife{bumpT} (respectively, \rife{g=f}). We refer the reader to Theorem \ref{thm:characteriz free boundary},  which allows for the behavior of  $m_T$ at the boundary of its support to be different from the behavior of $m_0$ (respectively, in case of problem \rife{eq:MFGR}, for the function $g(s)$ to be a different power than $f$).
\end{rem}

The result in the second part of Theorem \ref{thm.intro1} corresponds exactly to the picture described by the self-similar solution \eqref{eq:self similar m}. Indeed, our next result shows that the free boundary  propagates with strictly convex (resp. concave) behavior at the left (resp. right) free boundary curve. In fact, if we strengthen the concavity assumption on $m_0^\theta$, we show that the free boundary evolves with the optimal speed given by the self-similar solution. Moreover, the long time decay of the density occurs with the same rate, as exhibited by  \eqref{eq:self similar m}.

\begin{thm}\label{thm.intro2} Under the assumptions  of Theorem \ref{thm.intro1},  let $(u,m)$ be the unique  solution to \eqref{eq:MFGR} or \eqref{eq:MFGPR}, and let $\gamma$ be the associated flow of optimal trajectories. Assume in addition  that $-K \leq (m_0^\theta)_{xx}\leq -\frac{1}{K}$ in $(a_0,b_0)$ for some $K>0$, and, in case of problem \rife{eq:MFGR}, assume that $c_T= \kappa_1 T$ in \rife{g=f}.   If we define
$$
\oal= \frac{2}{2+\theta}\,\,; \qquad\qquad \mathscr{d}(t) = 
\begin{cases} 
t & \text{if } u \text{ solves \eqref{eq:MFGR}}, \\
\emph{\text{dist}}(t, \{0, T\}) & \text{if } u \text{ solves \eqref{eq:MFGPR},}
\end{cases}
$$
then there exists a constant $C>0$ such that for every $(x,t)\in [a_0,b_0] \times [0,T]$,
\begin{equation}  \frac{1}{C} (1+\mathscr{d}(t)^{\oal})\leq |\emph{supp}(m(\cdot,t))|\leq C(1+\mathscr{d}(t)^{\oal}),\quad|\gamma(x,t)|\leq C(1 +\mathscr{d}(t)^{\oal}),    
\end{equation}
\begin{equation}   \frac{1}{C} \frac{m_0(x)}{(1+\mathscr{d}(t)^{\oal})}\leq m(\gamma(x,t),t)\leq C\frac{m_0(x)}{(1+\mathscr{d}(t)^{\oal})},   
\end{equation}
where $
C=C\left(C_{0}, \kappa_1, \kappa_1^{-1},|a_0|,|b_0|,   K\right)$ in case of problem  \eqref{eq:MFGR}, and $
C=C\left(C_{0},  C_1,|a_0|,|b_0|,|a_1|, |b_1|, K\right) $ in case of \eqref{eq:MFGPR}. 
\end{thm}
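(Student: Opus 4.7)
The plan is to reduce all three asserted inequalities to a single two-sided pointwise bound on the Jacobian $\gamma_{x}$, namely
\[
\frac{1}{C}\bigl(1+\mathscr{d}(t)^{\oal}\bigr)\le\gamma_{x}(x,t)\le C\bigl(1+\mathscr{d}(t)^{\oal}\bigr),\qquad (x,t)\in(a_{0},b_{0})\times[0,T],
\]
and then to establish this bound by a barrier comparison with the self-similar solution \eqref{eq:self similar m}. Indeed, Lagrangian mass conservation along the flow, $m(\gamma(x,t),t)\,\gamma_{x}(x,t)=m_{0}(x)$, converts the Jacobian estimate directly into the density estimate. Integrating in $x$ gives $|\text{supp}(m(\cdot,t))|=\gamma_{R}(t)-\gamma_{L}(t)\asymp 1+\mathscr{d}(t)^{\oal}$, and the pointwise control on $|\gamma(x,t)|$ then follows from the sandwich $\gamma_{L}(t)\le\gamma(x,t)\le\gamma_{R}(t)$ (since $\gamma_{x}>0$), combined with the outward-spreading of the support in case \eqref{eq:MFGR} (first part of Theorem \ref{thm.intro1}) and the symmetric two-sided control in case \eqref{eq:MFGPR}; the dependence on $|a_{0}|,|b_{0}|,|a_{1}|,|b_{1}|$ is absorbed at this step.

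For $\mathscr{d}(t)$ of order one (short time, or $t$ close to $T$ in the planning case) the Jacobian bound is immediate from the regularity of Theorem \ref{thm.intro1}: $\gamma\in C^{2,\alpha}_{\text{loc}}$ with $\gamma_{x}(\cdot,0)\equiv 1$, together with $\gamma_{L},\gamma_{R}\in W^{2,\infty}$. The substance of the proof lies in the long-time regime $\mathscr{d}(t)\gg 1$, and here I would compare with \eqref{eq:self similar m}. A direct computation shows that the self-similar flow satisfies $\bar\gamma_{x}(x,t)=(t/t_{0})^{\oal}$ and that its pressure has the constant negative second derivative $(\bar m^{\theta})_{xx}(x,t)=-\oal(1-\oal)\,t^{-2}$ on its support. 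The hypothesis $-K\le(m_{0}^{\theta})_{xx}\le -1/K$ is precisely the nondegeneracy that makes $m_{0}^{\theta}$ ``Barenblatt-like,'' so that rescaled self-similar flows are legitimate comparison functions. Concretely, I would differentiate \eqref{eq:flow0} once in $x$ to obtain a quasilinear degenerate elliptic equation for $w=\gamma_{x}$, construct explicit barriers $w_{\pm}(x,t)=C_{\pm}\bigl(1+\mathscr{d}(t)^{\oal}\bigr)$ built from rescaled self-similar flows, and verify by direct substitution that they are a sub- and a super-solution on $\{m_{0}>0\}\times[0,T]$; the strict concavity of $m_{0}^{\theta}$ is what controls the sign and size of the forcing term $(m_{0}^{\theta})_{x}/\gamma_{x}^{1+\theta}$. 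The degenerate maximum principle then squeezes $\gamma_{x}$ between $w_{-}$ and $w_{+}$.

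For \eqref{eq:MFGR}, the choice $c_{T}=\kappa_{1}T$ is precisely what keeps both the system and the terminal condition $u(\cdot,T)=c_{T}m(\cdot,T)^{\theta}$ invariant under the self-similar rescaling $(t,x,m,u)\mapsto(\lambda t,\,\lambda^{\oal}x,\,\lambda^{-\oal}m,\,\lambda^{2\oal}u)$, which is exactly why the barrier constants can be taken independent of $T$. For \eqref{eq:MFGPR}, I would run the same comparison forward from $t=0$ on $[0,T/2]$ and, by time-reversal symmetry, backward from $t=T$ on $[T/2,T]$, and patch the two estimates in the interior via the Schauder estimates in Theorem \ref{thm.intro1}. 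The principal obstacle I anticipate is the degeneracy of \eqref{eq:flow0} at the lateral free boundary $x\in\{a_{0},b_{0}\}$, where $m_{0}$ vanishes and the elliptic coefficient $\theta m_{0}^{\theta}/\gamma_{x}^{2+\theta}$ becomes singular: the maximum principle must be applied on strict subdomains $\{m_{0}\ge\eta\}$, and the remaining boundary layer handled by a separate matching argument that uses the $W^{2,\infty}$ control on $\gamma_{L},\gamma_{R}$ from the second part of Theorem \ref{thm.intro1} to propagate the barriers up to the free boundary. Tracking the precise dependence of the constants on $C_{0},C_{1},\kappa_{1}$ and $K$ is then a matter of careful bookkeeping in the barrier construction.
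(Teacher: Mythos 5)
Your overall architecture is right and matches the paper's: everything reduces to a $T$-uniform two-sided bound $\frac{1}{C}(1+\mathscr d(t)^{\oal})\le\gamma_x\le C(1+\mathscr d(t)^{\oal})$, the density estimate then follows from $m(\gamma(x,t),t)=m_0(x)/\gamma_x(x,t)$, the support bound by integrating $\gamma_x$ in $x$, and the pointwise bound on $|\gamma|$ by pinning $\gamma$ at one point using the outward-spreading (resp.\ convexity) of the support from Theorem \ref{thm:free boundary regularity}. For the \emph{upper} bound your barrier is exactly the paper's (Corollary \ref{cor: gamma_x upper bound} via the argument in Theorem \ref{thm:characteriz free boundary}): one differentiates \eqref{eq:flow0} in $x$, tests against $Kt^{\oal}$, and the exponent identity $\oal(1+\theta)=2-\oal$ closes the interior-maximum contradiction, while the lateral boundary $x\in\{a_0,b_0\}$ is handled automatically in the approximation because $m_0^\vep(a_0)=\min m^\vep$ forces $\gamma^\vep_x(a_0,t)\le1$. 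Your self-similar scaling heuristics are also correct.

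The gap is in the \emph{lower} bound, and it is not cosmetic. Your barrier $w_-(t)=c(1+\mathscr d(t)^{\oal})$ is independent of $x$, so you must verify $\gamma_x\ge w_-$ on the lateral pieces $\{a_0,b_0\}\times[0,T]$ of the boundary, and there is no mechanism to do so: the second-order coefficient $\theta m_0^\theta/\gamma_x^{2+\theta}$ degenerates there, the first-order coefficient becomes unbounded (it contains $(m_0^\theta)_x/\gamma_x^{2+\theta}$ with $(m_0^\theta)_x(a_0^+)\ne0$), and in the approximation the very inequality $\gamma^\vep_x(a_0,t)\le1$ that rescues the upper bound now works \emph{against} you for $t$ large. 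Your proposed fix — work on $\{m_0\ge\eta\}$ and splice via the $W^{2,\infty}$ control on $\gamma_L,\gamma_R$ — does not close the loop: (i) the values of $\gamma_x$ on $\{m_0=\eta\}$ are precisely what you are trying to estimate, and (ii) the $W^{2,\infty}$ constant in Theorem \ref{thm.intro1} is allowed to depend on $T$, whereas Theorem \ref{thm.intro2} is a $T$-uniform statement and the required growth $\mathscr d(t)^{\oal}$, $\oal<1$, is strictly weaker than what a $T$-dependent $W^{2,\infty}$ bound would furnish. The paper avoids all of this by working not with $\gamma_x$ but with the equivalent statement $v\le\zeta(t)\,m_0^\theta(x)$ for $v(x,t)=f(m(\gamma(x,t),t))$, with a \emph{product} barrier $\zeta(t)m_0^\theta(x)$, $\zeta(t)=C(t^{-\oal\theta}+(T-t)^{-\oal\theta})$ (Proposition \ref{prop:lower bound gamma x}(ii)). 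Since both $v$ and the barrier vanish identically at $x\in\{a_0,b_0\}$ and $\zeta$ blows up at $t\in\{0,T\}$, the superlevel set $\{v>\zeta m_0^\theta\}$ is automatically interior, and the contradiction at an interior maximum is obtained from the equation \eqref{eq:mg equation} for $v$, using $(m_0^\theta)_{xx}\le -1/K$ together with $\gamma_x=(m_0^\theta/v)^{1/\theta}$. If you want a route based on the $\gamma_x$-equation, you would need a barrier that decays appropriately as $x\to a_0,b_0$, or an independent, $T$-uniform lower bound on $\gamma_x$ along the free-boundary curves, neither of which your proposal supplies.
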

\vskip1em
Theorem \ref{thm.intro2} is nothing but Theorem \ref{thm:long time} below. Let us stress that a crucial role in the proof of the above results is played by the equation satisfied by   the flow of optimal curves $\gamma$, namely \rife{eq:flow0}. 
 In particular, the Lipschitz regularity of $\gamma$ is obtained by a maximum principle argument applied to $\gamma_x$, which is derived from \rife{eq:flow0}. We obtain further insight by studying the equation of $m$ in Lagrangian coordinates. Indeed, the function $v= f(m(\gamma(x,t), t))$ satisfies the (degenerate) elliptic equation 
\begin{equation}\label{eq:v eq intro} 
  - \left(\gamma_x(\theta v)^{-1}v_t\right)_t-\left(\gamma_x^{-1}v_x\right)_x= 0,
\end{equation}
where  one can prove, assuming    \rife{concavefm0}, that the  positive quantity $\gamma_x$ is bounded below and above.  This elucidates the key distinction between the present problem and slow diffusions of porous medium type; equation \eqref{eq:v eq intro} is diffusive (rather than parabolic) in the time variable. 
Relying on this equation, we establish the regularity of $m$ up to the free boundary. Namely, we prove that $m$ is H\"older continuous,  through an application of the intrinsic scaling regularity method (see \cite{DiB, DiB2, Urbano}). In turn, we show that $Du$ is  H\"older continuous as well. We can summarize these regularity results, contained in Theorem \ref{thm:f(m) holder} and \ref{thm: Du Holder} respectively, as follows:

\begin{thm}  Under the assumptions of Theorem \ref{thm.intro2}, we have $ f(m) \in  C^{\beta}_{\text{\emph{loc}}}(\R\times(0,T))$ and $u\in C^{1,\frac\beta 2}_\text{\emph{loc}}(\R\times(0,T))$ for some $\beta\in (0,1)$. 
\end{thm}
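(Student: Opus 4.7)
The plan is to prove the Hölder continuity of $v:=f(m(\gamma(\cdot,\cdot),\cdot))$ first in Lagrangian coordinates, exploiting the elliptic equation \eqref{eq:v eq intro}, and then transport the estimate back to Eulerian coordinates to conclude for $f(m)$ and, via the Hamilton--Jacobi equation, for $Du$.

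First, I would set the stage in Lagrangian coordinates. From Theorem \ref{thm.intro1}(2) and the extra nondegeneracy $-K\le (m_0^\theta)_{xx}\le -1/K$ of Theorem \ref{thm.intro2}, the flow $\gamma$ satisfies $0<1/\Lambda\le \gamma_x\le \Lambda$ on $(a_0,b_0)\times [0,T]$ and $\gamma\in C^{2,\alpha}_{\mathrm{loc}}$ inside the support. Thus equation \eqref{eq:v eq intro}, rewritten as
\begin{equation*}
-\bigl(a(x,t)\,v^{-1}v_t\bigr)_t-\bigl(b(x,t)v_x\bigr)_x=0,\qquad a=\gamma_x/\theta,\ b=\gamma_x^{-1},
\end{equation*}
has bounded, strictly positive, Hölder coefficients $a,b$, and is nondegenerate in the spatial direction but degenerate in the time direction where $v$ vanishes (the free boundary). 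This is the prototype situation for DiBenedetto's intrinsic-scaling regularity theory.

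The core step is to establish $v\in C^{\beta}_{\mathrm{loc}}$ for some $\beta\in(0,1)$. I would adapt the intrinsic-scaling oscillation-decay scheme (see \cite{DiB,DiB2,Urbano}): on a cylinder $Q_\rho(x_0,t_0)=(x_0-\rho,x_0+\rho)\times(t_0-\rho^2\omega^{-1},t_0+\rho^2\omega^{-1})$ whose \emph{temporal} length is rescaled by $\omega^{-1}$, where $\omega\sim\mathrm{osc}_{Q_\rho} v$, the equation behaves like a uniformly elliptic one, since after this intrinsic rescaling the factor $v^{-1}$ is comparable to $\omega^{-1}$ and cancels the rescaling of $\partial_t^2$. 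Standard De~Giorgi/Moser alternatives (an energy and a logarithmic estimate, followed by the usual expansion-of-positivity argument) then yield a geometric decay of the oscillation across a dyadic family of intrinsic cylinders, which iteratively produces the Hölder estimate. Since every point of $\{v=0\}$ satisfies $\mathrm{osc}\, v\le 2\sup v$ on suitably small cylinders, the argument extends across the free boundary.

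Once $v\in C^{\beta}_{\mathrm{loc}}$ is known in Lagrangian coordinates, I would transfer it back: since $\gamma$ is bi-Lipschitz on $(a_0,b_0)\times[0,T]$ with Lipschitz inverse in space, and since outside $\{m>0\}$ the function $f(m)$ is identically $0$, the composition identity $f(m)(y,t)=v(\gamma^{-1}(y,t),t)$ yields $f(m)\in C^{\beta}_{\mathrm{loc}}(\R\times(0,T))$. For $u$, inside the support characteristics give $u_x(\gamma(x,t),t)=-\dot\gamma(x,t)$, which is $C^{0,\alpha}$ from Theorem \ref{thm.intro1}; outside the support $u$ solves the eikonal equation $u_t=\tfrac12 u_x^2$ and $u_x$ extends continuously to the $C^{1,1}$ free boundary with boundary values $\dot\gamma_L$, $\dot\gamma_R$. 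So $u_x$ is continuous across the free boundary and Hölder-in-$x$ with exponent $\beta$; the Hamilton--Jacobi equation $u_t=\tfrac12 u_x^2-f(m)$ then gives $u_t\in C^{\beta}$ in $x$ as well. To upgrade to joint $C^{1,\beta/2}$, I would use the standard HJ identity: for $s<t$ with the characteristic $\gamma$ issued from a point $x_0$,
\begin{equation*}
u_x(\gamma(x_0,t),t)-u_x(\gamma(x_0,s),s)=-\int_s^t (f(m))_x(\gamma,\tau)\,d\tau,
\end{equation*}
together with the $C^\beta$ bound on $f(m)$ and the Lipschitz regularity of $\gamma$, yielding modulus $|t-s|^{\beta/2}$ after interpolating the $L^\infty$ and $C^\beta$ spatial control of $f(m)$ against the displacement $|\gamma(x_0,t)-\gamma(x_0,s)|\lesssim|t-s|$; the $\beta/2$ exponent is the price paid by this interpolation.

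The main obstacle is the intrinsic-scaling argument itself: the equation is \emph{elliptic}, not parabolic, so the classical DiBenedetto machinery must be reworked with doubly symmetric time-direction cylinders and the logarithmic/energy estimates adapted to the weight $v^{-1}$, while ensuring the nondegeneracy of $\gamma_x$ is genuinely used to absorb the $x$-derivative terms. Verifying that the expansion-of-positivity lemma still delivers a clean oscillation decay near $\{v=0\}$ — i.e.\ exactly at the free boundary, where the regularity statement must hold — is the delicate point on which the whole theorem rests.
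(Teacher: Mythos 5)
Your proposal correctly identifies intrinsic scaling as the mechanism for $f(m)\in C^\beta_{\mathrm{loc}}$, but you miss the shortcut the paper uses to avoid exactly the obstacle you flag as ``the delicate point.'' The paper does not need an expansion-of-positivity argument at all: the two-sided bounds on $\gamma_x$ together with the linear nondegeneracy of $m_0^\theta$ give a Harnack-type comparability \emph{for free} --- on any intrinsic rectangle well inside the support, $\sup v \leq C\inf v$, and near the boundary $\sup v$ is controlled by the radius (Proposition \ref{prop:Harnack}); this is a consequence of the flow estimates, not of the PDE. Because of this, the intrinsic rectangles can be scaled by $(\theta v(x_0,t_0))^{-1/2}$ (the pointwise value of $v$ at the center) rather than by the oscillation as in your sketch, and after that rescaling a single Caccioppoli/De Giorgi pair (Lemma \ref{lem:Caccioppoli}, Lemma \ref{lem:De Giorgi}) already gives the oscillation decay (Corollary \ref{cor:reduction of osci}). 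Your ``energy plus logarithmic estimate plus expansion-of-positivity'' route is the generic intrinsic-scaling machinery, and it is unclear that the expansion-of-positivity step closes in this elliptic (rather than parabolic) setting; the paper's Harnack inequality is precisely what makes the argument robust at the free boundary.

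The part of your proposal concerning $u\in C^{1,\beta/2}_{\mathrm{loc}}$ has a genuine gap. You propose to control $u_x$ inside $\{m>0\}$ via $\dot\gamma$, outside via the eikonal equation, and match across the free boundary, but nothing in that outline establishes a \emph{Hölder modulus} across the interface: continuity of $u_x$ up to the free boundary follows from the $W^{1,\infty}$ bound, but the quantitative $C^{0,\beta/2}$ matching is exactly what needs proof, and the characteristics identity $u_x(\gamma(x_0,t),t)-u_x(\gamma(x_0,s),s)=-\int_s^t(f(m))_x\,d\tau$ requires differentiability of $f(m)$ that you do not have near the boundary. The paper instead works globally from the optimal-control representation: using $C^\beta$ regularity of $f(m)$ in the running cost, a comparison with slightly reparametrized optimal trajectories shows $u$ is locally semiconcave with modulus $\omega(r)=Cr^{\beta/2}$ (Cannarsa--Soner), and since $-u(x,T-t)$ also satisfies the same type of HJ problem, $u$ is simultaneously semiconvex with the same modulus; semiconcavity plus semiconvexity with modulus $\omega$ then yields $Du\in C^{0,\beta/2}_{\mathrm{loc}}$. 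This bypasses the free-boundary matching entirely and is the cleaner route.
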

Finally, our last result shows that the solutions of \rife{mfgsys} exhibit a different behavior when
\be\label{flog}
f(m)= \log(m)\,.
\ee
In contrast with the case of a power nonlinearity, the  unbounded payoff as $m\downarrow 0$ given by\rife{flog} implies that the support of the density propagates with infinite speed. This behavior is observable in both problem \rife{eq:MFGR} (with $g(s)= c_T\log(s)$, $c_T\geq 0$) and in the planning problem \rife{eq:MFGPR}, which corresponds to  the optimal transport functional \rife{OTF} with the entropy term $F(m)= m\log(m)$. 
More specifically, under the assumption of $m_0$ being continuous with compact support (and similarly for $m_T$ in the case of \eqref{eq:MFGPR}), we establish the existence of classical solutions $(u,m)$ with $m>0$ in $(0,T)$. 

Compared to  Theorem \ref{thm.intro1}, the positivity of solutions on the whole space now makes it much more delicate to use the flow of optimal curves $\gamma(x,t)$, which are no longer confined in a bounded set. This difficulty leads us to  require an extra symmetry  and monotonicity assumption, namely that $m_0$  is even and nonincreasing in $(0,\infty)$ (and the same for the terminal density $m_T$). We are able to take advantage of this assumption by showing that the solution $m(\cdot,t)$ preserves this property for all $t\in (0,T)$, which is in itself a non-trivial feature of the MFG system (see Lemma \ref{lem.convexity}). However,  we no longer require
any special behavior of $m_0$ when vanishing at the boundary of its support, avoiding conditions such as \rife{bump0}, \rife{bumpT}. In fact, the support now propagates instantly, regardless of the flatness of $m_0$.

\begin{thm}\label{thm.intro-log} Let $f$ be given by \rife{flog},  let $\alpha \in (0,1)$, and assume that $m_0$ is a continuous, compactly supported, density on $\R$, which is  $C^{1,\alpha}_{\emph{loc}}$ in the set $\{m_0>0\}$, even and nonincreasing on $[0,\infty)$.

\begin{enumerate}

\item If   $g(s)= c_T \log (s)$, for some $c_T\geq 0$, then there exists a unique  classical solution $(u,m)\in C^2(\R\times (0,T])\times C^1(\R\times (0,T])$ of \rife{eq:MFGR}  
  such that  $m$ is continuous   and bounded on  $\R\times [0,T]$, positive on $\R\times (0,T)$ with $m(0)=m_0$, and $|x|^2m(t) \in L^1(\R)\,, \frac {u(t)}{(1+ |x|^2)}\in L^\infty(\R)$, for every $t\in (0,T)$.

\item If  $ m_T\in C_c(\R)$   is  even and nonincreasing on $[0,\infty)$,  then there exists a unique (up to addition of a constant to $u$) classical solution $(u,m)\in C^2(\R\times (0,T))\times C^1(\R\times (0,T))$ of \rife{eq:MFGPR}   such that  $m$ is continuous   and bounded on  $\R\times [0,T]$, positive on $\R\times (0,T)$ with $m(0)=m_0, m(T)=m_T$, and $ |x|^2m(t) \in L^1(\R)\,,\frac {u(t)}{(1+ |x|^2)}\in L^\infty(\R)$, for every $t\in (0,T)$. 

\end{enumerate}
\end{thm}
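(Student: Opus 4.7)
The plan is to prove existence via an approximation argument and uniqueness via the Lasry--Lions monotonicity principle. The data is regularized by strictly positive, smooth, even, nonincreasing densities, to which the already-established theory for strictly positive marginals applies; estimates uniform in the approximation parameter then allow passage to the limit, and the logarithmic singularity of $f$ at zero drives the instantaneous positivity of the limit.

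For the approximation, set $m_0^\vep=\bigl((1-\vep)m_0+\vep G\bigr)*\rho_\vep$, where $G$ is a fixed smooth, strictly positive, even density that is strictly decreasing on $(0,\infty)$ (e.g.\ a Gaussian), $\rho_\vep$ is an even mollifier, and, in the planning case, define $m_T^\vep$ analogously. Each $m_0^\vep$ is $C^\infty$, strictly positive, even, nonincreasing on $[0,\infty)$, and converges uniformly to $m_0$. The well-posedness theory for the entropic coupling with strictly positive marginals produces a unique smooth solution $(\ue,\me)$ with $\me>0$. Symmetry of the data and uniqueness force $\me(\cdot,t)$ and $\ue(\cdot,t)$ to be even, and applying Lemma \ref{lem.convexity} further yields that $\me(\cdot,t)$ is nonincreasing on $[0,\infty)$ uniformly in $\vep$.

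Next I derive the uniform estimates needed for compactness. Mass conservation $\int\me\,dx=1$ together with monotonicity gives the pointwise bound $\me(x,t)\le 1/(2|x|)$, so $\log\me$ is uniformly bounded above off the origin; combined with the $L^1\!\to\! L^\infty$ regularization available for the entropic coupling, this yields $\|\me(\cdot,t)\|_\infty\le C$ on every compact subinterval of $(0,T)$. Inserting the upper bound on $\log\me$ into the value-function representation of $\ue$ and testing against affine paths produces the quadratic growth $|\ue(x,t)|\le C(1+|x|^2)$, and differentiation of the second moment $\int x^2\me\,dx$ along the continuity equation then provides a uniform bound on it. By compactness, along a subsequence $(\ue,\me)\to(u,m)$ locally uniformly. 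The essential remaining point is that $m>0$ on $\R\times(0,T)$: arguing by contradiction, if $m(x_0,t_0)=0$ for some $t_0>0$, the evenness and monotonicity of $m(\cdot,t_0)$ force $m(\cdot,t_0)$ to vanish on $\{|x|\ge|x_0|\}$, and then optimal trajectories reaching this region must accumulate an infinite running cost from $\log m$, contradicting the uniform growth bounds on $\ue$. Equivalently, the Lagrangian identity $\me(\gamma^\vep,t)\,\gamma^\vep_x=m_0^\vep$, together with uniform bounds on $\gamma^\vep_x$ coming from a flow equation analogous to \rife{eq:flow0} with $\log$ in place of the power coupling, produces an $\vep$-independent positive lower bound for $\me$ on compact sets. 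Once positivity holds, the HJ/continuity system is uniformly nondegenerate and standard interior bootstrap delivers the $C^2\times C^1$ regularity.

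Uniqueness follows from the Lasry--Lions monotonicity identity. Given two admissible solutions $(u_i,m_i)$, the quadratic growth of $u_i$ together with $|x|^2 m_i\in L^1$ justifies integration by parts in the pairing of the HJ equations with $m_1-m_2$ and of the continuity equations with $u_1-u_2$. The resulting identity
\begin{equation*}
\int_0^T\!\!\int_\R(m_1-m_2)(\log m_1-\log m_2)\,dx\,dt+c_T\!\int_\R(m_1-m_2)(\log m_1-\log m_2)\Big|_{t=T}\,dx=0
\end{equation*}
forces $m_1=m_2$ by strict monotonicity of $\log$, and then $u_1=u_2$ in \rife{eq:MFGR}, or up to an additive constant in \rife{eq:MFGPR}. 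The main obstacle is the uniform positive lower bound on $\me$: the perturbation $\vep G$ vanishes in the limit, so positivity must arise from the dynamics themselves, namely from the singular behavior of $\log m$ at zero that makes zero-density regions infinitely costly and thus inaccessible to the optimal flow; quantifying this on the unbounded domain, where no direct maximum principle is available, is the delicate step, and it is here that the symmetry--monotonicity framework afforded by Lemma \ref{lem.convexity} is indispensable.
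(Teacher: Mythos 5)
Your overall scaffolding matches the paper's (regularize with positive, even, nonincreasing data; invoke Lemma \ref{lem.convexity} to propagate the monotone/symmetric structure; pass to the limit; uniqueness via Lasry--Lions), and the a priori bounds you list -- $\me(x,t)\le 1/(2|x|)$ from monotonicity, $\|\me\|_\infty$ controlled, second moment bound, quadratic growth of $\ue$ -- are all the right ingredients. But the central technical step is left unproved, and both routes you suggest for it break down.

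The delicate step is precisely the one you flag in your last sentence: a positive lower bound on $\me$, uniform in $\vep$, on compact subsets of $\R\times(0,T)$. Your first route (arguing by contradiction at the \emph{limit}: if $m(x_0,t_0)=0$, then $m(\cdot,t_0)\equiv 0$ on $\{|x|\ge|x_0|\}$, hence optimal trajectories entering that region accrue infinite cost) is circular as stated. Before you have established positivity, you do not know that the limiting $u$ admits a control representation with running cost $\log m$ along well-defined optimal trajectories; you have only a uniform limit of viscosity solutions and a continuous $m$ that could vanish. The contradiction has to be extracted from the approximations $(\ue,\me)$, where $\me>0$, and has to be quantitative. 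Your second route (bound $\gamma^\vep_x$ from the flow equation \rife{eq:flow0} ``with $\log$ in place of the power coupling'') does not apply here: in the power case, the maximum principle for $\gamma_x$ hinges on semi-convexity of $f(m_0)$, i.e.\ $(\log m_0)_{xx}$ bounded below; but for a compactly supported $m_0$ one has $\log m_0\to-\infty$ at $\partial\{m_0>0\}$, so $(\log m_0)_{xx}$ is \emph{not} bounded below and the source term $(m_0)_x/m_0$ in \rife{eq.ell.gammaBIS} is unbounded. Consequently $\gamma^\vep_x$ is not uniformly bounded above, and the Lagrangian identity alone gives no uniform floor on $\me$. The paper's actual argument (Lemma \ref{lem.esticaselogRsym}) is different and works in two stages: first, mass conservation combined with the uniform second-moment bound and monotonicity gives a direct lower bound $\me(\gamma^\vep(x,t),t)\ge (8C_0)^{-1/2}\bigl(\int_{|x|}^{R/2} m_0\bigr)^{3/2}$, i.e.\ a floor \emph{along the flow}; second, one needs to show that the flow sweeps out every compact interval $[-a,a]$ at interior times, and this is extracted from the HJ equation and the quadratic growth bound on $\ue$, by integrating $\log\me(a,\cdot)\ge -\ue_t(a,\cdot)$ over a small time window and using the concavity of $\gamma^\vep(x,\cdot)$. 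It is this two-stage interplay -- a pointwise floor on $\me$ in Lagrangian coordinates plus a quantitative spreading of $\gamma^\vep$ -- that delivers the uniform positivity; your proposal does not supply a substitute for it.

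A secondary issue: in the uniqueness step, you invoke integration by parts over the whole time interval, but the bound on $u$ degenerates as $1/\mathscr{d}(t)$ near the endpoints, and $u(\cdot,0^+)$ or $u(\cdot,T^-)$ need not be integrable a priori against the marginals. The paper handles this by first proving time-monotonicity of $u$ and $u(0^+)\in L^1(dm_0)$ (resp.\ $u(T^-)\in L^1(dm_T)$), and then uses a truncation $u_k=\min(k,\max(u,-k))$ to justify the Lasry--Lions identity across $[0,T]$. Your assertion that ``quadratic growth of $u_i$ together with $|x|^2m_i\in L^1$ justifies integration by parts'' does not by itself control the temporal boundary terms.
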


Theorem \ref{thm.intro-log} is Theorem \ref{thm.caselogRsym} below. As mentioned before, the symmetry and monotonicity assumption on $m_0$, which is required in Theorem \ref{thm.intro-log}, allows us to overcome certain difficulties in the
obtention of classical positive solutions in the whole space. These difficulties disappear in compact domains, as is seen in Theorem \ref{ex-log-torus}, where we prove the existence of classical periodic solutions, with $m>0$ in $(0,T)$, under the only condition that $m_0$ (and $m_T$) are continuous and compactly supported.

\vskip1em
 \noindent {\bf Outline of the paper.} The paper is organized as follows. In Section \ref{sec:self similar} we exhibit the class of self-similar solutions which will serve as a prototype for our main results. Section \ref{sec: periodic} presents the key features of smooth, periodic solutions with a positive density:  structural properties, displacement convexity, Lipschitz estimates, and a modulus of continuity for the density. Section \ref{sec:finitespeed}, dealing with compactly supported solutions, is the heart of the paper: starting with the existence of solutions (Subsection \ref{subsec:wellposed}), it culminates with the regularity,  geometric properties, and long time behavior of the free boundary (Subsection \ref{free-analysis}), and the H\"{o}lder regularity of $m$ and $Du$ (Subsection \ref{subsec:reguH}). Section \ref{subsec:infinitespeed} is devoted to the entropic coupling ($f=\log$) and the infinite speed of propagation. Appendix \ref{sec:appendix} contains the computations for the self-similar solutions.



\section{Self-similar solutions} \label{sec:self similar}

We now exhibit a family of compactly supported, self-similar solutions of the system
\begin{equation}
\begin{cases}
-u_{t}+\frac{1}{2}u_{x}^{2}=m^\theta & (x,t)\in\R\times(0, \infty),\\
m_{t}-(mu_{x})_{x}=0 & (x,t)\in\R\times(0, \infty),
\\
\int_{\R}  m(t)dx =1 & \,\, t\in (0, \infty),
\end{cases}\label{powersys}
\end{equation}
where $\theta>0$. By a solution of \eqref{powersys} we mean here that $u$ is Lipschitz continuous and $m$ continuous and nonnegative, the first equation being understood in the sense of viscosity solutions, while the second equation is satisfied in the sense of distributions. The solution is described in the following result.

\begin{prop}\label{self-building} For $\theta>0$, let us set
$$
\overline{\alpha}= \frac{2}{2+\theta}\,\,  
$$
and let $R$ be the unique positive number such that  $\int_{\R}   \left( R-\frac12(\overline{\alpha}-\overline{\alpha}^2)y^2\right)_+^{1/\theta}dy=1$. A solution of \rife{powersys} is given by $(u,m)$, with
$$
m(x,t)=  t^{-\overline{\alpha}} \phi(x/t^{\overline{\alpha}}) \,,\qquad \hbox{where} \quad \phi(y)= \left( R-\frac12(\overline{\alpha}(1-\overline{\alpha}) )y^2\right)_+^{1/\theta}\, 
$$
%
%
and $u$ defined as follows:

(i) either $\theta=2$ and 
\be\label{u2}
u(x,t)=\begin{cases}
-\frac{1}{4t}x^{2}-R\log t & \text{if \ensuremath{\Delta\leq0}}\\
-\frac{2R|x|}{|x|-\sqrt{\Delta}}-2R\log(\frac{|x|-\sqrt{\Delta}}{\sqrt{8R}}) & \text{if \ensuremath{\Delta>0}}
\end{cases}
\ee
where $\Delta=x^{2}-8Rt$,

(ii) or $\theta\neq 2$ and
\be\label{uneq2}
u(x,t)=\begin{cases}
-\overline{\alpha}\frac{x^{2}}{2t}-R\frac{1}{2\overline{\alpha}-1}t^{2\overline{\alpha}-1} & \text{if \ensuremath{\Delta\leq0}}\\
\frac{-R\overline{\alpha}}{(1-\overline{\alpha})(2\overline{\alpha}-1)}S^{2\overline{\alpha}-1}-\frac{\overline{\alpha} R}{1-\overline{\alpha}}S^{2\overline{\alpha}-2}(t-S) & \text{if \ensuremath{\Delta>0},}
\end{cases}
\ee
where $\Delta=|x|-\sqrt{\frac{2R}{\overline{\alpha}(1-\overline{\alpha})}}t^{\overline{\alpha}}$ and the function $S=S(x,t)$ is defined implicitly by the equation:
\begin{equation}
S\sqrt{\frac{2R}{\overline{\alpha}(1-\overline{\alpha})}}-|x|S^{1-\overline{\alpha}}+\sqrt{\frac{2R\overline{\alpha}}{1-\overline{\alpha}}}(t-S)=0.\label{eq:1}
\end{equation}
%
%
%
\end{prop}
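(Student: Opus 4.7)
The plan is to construct the solution in two stages: first in the positivity region of $m$ by a self-similar ansatz, then extend $u$ to the free region by the method of characteristics for the eikonal equation $-u_{t}+\tfrac12 u_x^{2}=0$, checking at the end that the two pieces glue into a $C^1$ function and that the global $u$ is a viscosity solution.

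In the inner region $\{m>0\}$ I would postulate
$$
u(x,t)=-\frac{\overline{\alpha}\,x^{2}}{2t}+A(t),\qquad m(x,t)=t^{-\overline{\alpha}}\,\phi(x/t^{\overline{\alpha}}).
$$
The induced velocity $-u_x=\overline{\alpha}\,x/t$ sends a particle at $x_0$ at time $t_0$ to $x_0(t/t_0)^{\overline{\alpha}}$, preserving the self-similar scaling of $m$, so a direct substitution into the continuity equation yields zero identically for any profile $\phi$. Plugging the ansatz into the Hamilton--Jacobi equation produces
$$
-\frac{\overline{\alpha}(1-\overline{\alpha})}{2}\,\frac{x^{2}}{t^{2}}-A'(t)=t^{-\overline{\alpha}\theta}\phi(x/t^{\overline{\alpha}})^{\theta};
$$
the key algebraic identity $2\overline{\alpha}-2=-\overline{\alpha}\theta$, equivalent to the choice $\overline{\alpha}=2/(2+\theta)$, forces the $x^{2}$-terms to match only if $\phi^{\theta}(y)=\bigl(R-\tfrac12\overline{\alpha}(1-\overline{\alpha})y^{2}\bigr)_{+}$, and the mass constraint determines $R$ uniquely via the change of variables $y=x/t^{\overline{\alpha}}$. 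The residual ODE $A'(t)=-R\,t^{2\overline{\alpha}-2}$ integrates to the power expression in \rife{uneq2} when $\theta\neq 2$, and to the logarithm in \rife{u2} when $\theta=2$ (precisely the case $2\overline{\alpha}-1=0$).

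Outside $\{m>0\}$ the source vanishes, so $u$ need only solve $-u_t+\tfrac12 u_x^{2}=0$. Since $u_x$ is conserved along characteristics of this eikonal equation, I would launch characteristics from the free boundary $s\mapsto(\gamma(s),s)$ with $\gamma(s)=\sqrt{2R/(\overline{\alpha}(1-\overline{\alpha}))}\,s^{\overline{\alpha}}$, imposing continuity of $u_x$ to fix the outgoing velocity
$$
v(S):=-u_x(\gamma(S),S)=\frac{\overline{\alpha}\,\gamma(S)}{S}=\sqrt{\tfrac{2R\,\overline{\alpha}}{1-\overline{\alpha}}}\,S^{\overline{\alpha}-1}.
$$
The characteristic through $(\gamma(S),S)$ evaluated at time $t$ reads $|x|=\gamma(S)+v(S)(t-S)$, which after multiplication by $S^{1-\overline{\alpha}}$ is exactly \rife{eq:1}. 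Strict concavity of $s\mapsto s^{\overline{\alpha}}$ places this line above the graph of $\gamma$ for $t>S$, so the characteristics foliate $\{|x|>\gamma(t)\}$ and the implicit relation defines a unique $S=S(x,t)\in(0,t)$. Integrating $u$ along such a characteristic via $\tfrac{d}{dt}u(x(t),t)=u_t-v^{2}=-\tfrac12 v^{2}$ starting from the already known inner boundary value $u(\gamma(S),S)$ reproduces \rife{uneq2}; the case $\theta=2$ follows by a parallel computation, or alternatively by passing to the limit $2\overline{\alpha}-1\to 0$.

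To close, continuity of $u$ and of $u_x$ across $\{|x|=\gamma(t)\}$ is automatic by construction, hence $u$ is globally $C^1$ and a classical solution of the HJ equation on each open region, so a viscosity solution everywhere; the mass constraint $\int m(\cdot,t)=1$ is immediate from the scaling. The main technical obstacle I expect is the analysis of the implicit equation \rife{eq:1}: verifying monotonicity in $S$ to obtain solvability and smooth dependence on $(x,t)$, and then, for a fully rigorous check, differentiating the resulting formula to confirm the Hamilton--Jacobi identity without leaning solely on the characteristic derivation. These are routine but lengthy calculations that belong naturally in the appendix.
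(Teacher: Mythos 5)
Your proposal follows the same route as the paper's appendix: verify the self-similar ansatz in $\{m>0\}$, launch characteristics from the free boundary $|x|=\gamma(t)=\sqrt{2R/(\overline{\alpha}(1-\overline{\alpha}))}\,t^{\overline{\alpha}}$ with matching slope to get the implicit equation for $S$, integrate along characteristics to recover \eqref{uneq2} and \eqref{u2}, and glue via $C^1$ continuity. The technical points you flag at the end — solvability and smoothness of $S$ from \eqref{eq:1}, the limit $S\to t$ at the interface, and the distributional verification of the continuity equation across $\{m=0\}$ — are exactly the steps the paper carries out in its appendix lemmas.
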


The explicit construction of $u$ can be understood by distinguishing two regions. First, one shows that, on the support of $m$, $u$ must be given by
\be\label{usuppo}
u(x,t)= -\overline{\alpha}\frac{x^2}{2t}+c(t), \quad \hbox{with}\quad   c'(t)= -R t^{-2\theta/(2+\theta)}\,.
\ee
Outside the support of $m$, the values of $u$ are extended along the optimal curves, which are straight lines in the set $\{m=0\}$. This leads to formula \rife{u2} if $\theta=2$, or to formula \rife{uneq2} if $\theta\neq 2$. Note that $\Delta\leq 0$ corresponds with the support of $m$.
Finally, we point out that $u$ is defined up to an additive constant. The proof of the statements made in Proposition \ref{self-building} will be presented in Appendix \ref{sec:appendix}.

We note the following relevant facts about this solution, which will serve as a model for our later assumptions and results:

\begin{itemize}

\item  At time $t=0$, the measure corresponds to a Dirac mass at $x=0$. For positive times $t>0$, in general, the density $m$ is merely H\"older continuous.
\item For each $t>0$, the function $f(m)$ is always Lipschitz (away from $t=0$). Moreover, $f(m(\cdot,t))$ is strictly concave within the support, and, in particular, $f(m(\cdot,t))_x$ is non-zero at the endpoints. A weaker, local version of these conditions will serve as our non-degeneracy assumption on the initial distribution $f(m_0)$ (see  \eqref{eq:concavity assumption} and \eqref{eq:nondegeneracy}), in order to prove our main regularity result for the free boundary (Theorem \ref{thm:free boundary regularity}), and the H\"older continuity of $m$ (Theorem \ref{thm:f(m) holder}).  Moreover, the full strict concavity assumption on $f(m_0)$ will yield our result on the optimal speed of support propagation and long time decay of $m$ (Theorem \ref{thm:long time}).

\item The value function $u$ is smooth on the support of $m$ but $u_{xx}$ blows-up at the interface (see Remark \ref{rem:uxx unbounded}), at least when approaching from outside the support. In fact, it is shown in Proposition \ref{prop:self similar regu} that $u\in C^{1,s}$ for a certain $0<s<1$. Accordingly, our general results will show that the $C^{1,s}$ regularity exhibited by this explicit solution in fact holds for arbitrary solutions of the MFG system, at least under the aforementioned non-degeneracy assumption (Theorem \ref{thm: Du Holder}).


\end{itemize}

\section{Structure and a priori estimates in the periodic setting}\label{sec: periodic}
\subsection{Structural properties of the MFG system}
The results of the paper are systematically obtained by establishing a priori estimates on a regularized system which has a smooth solution. In this subsection, we will explain the structural properties of the system \rife{mfgsys}, deriving the fundamental identities used throughout the paper. Here we assume that  $f'>0$ on $(0,\infty)$, and $(u,m)$ is a classical solution to \rife{mfgsys}, with $m$ being positive, and $u_x$ has at most a linear growth. Note that we only assume $m(\cdot,0)=m_0$ to be smooth and positive (no condition on the mass), so that the results of this part are valid for equations with periodic boundary conditions as well as in the whole space. 

We begin with the elliptic equation satisfied by $u$, first derived by Lions in \cite{L-college} (see also  \cite{Munoz, Porretta}).  It is obtained by simply eliminating $m=f^{-1}(-u_t+u_x^2/2)$ from the system, thanks to the fact that $f'>0.$
\begin{lem}  Let $(u,m)$ be a classical solution to \rife{mfgsys}. The map $u$ satisfies the quasilinear elliptic equation 
\be\label{eq.elluBIS}
-u_{tt}+2u_xu_{xt} -(u_x^2+mf'(m))u_{xx}=0 \qquad \text{in $\R\times (0,T)$}
\ee
with $m=f^{-1}(-u_t+u_x^2/2)$. 
\end{lem}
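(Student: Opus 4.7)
The proof is essentially an algebraic elimination of $m$ from the system using the strict monotonicity of $f$. The plan is as follows.

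Since $f' > 0$ on $(0,\infty)$ and $m$ is positive, the Hamilton--Jacobi equation gives $f(m) = -u_t + \tfrac{1}{2} u_x^2$, which can be inverted to $m = f^{-1}(-u_t + u_x^2/2)$ pointwise. Setting $w := -u_t + \tfrac12 u_x^2$, so that $m = f^{-1}(w)$ and $f'(m) m_\cdot = w_\cdot$, I would compute
\begin{align*}
m_t &= \frac{1}{f'(m)}\bigl(-u_{tt} + u_x u_{xt}\bigr), \\
m_x &= \frac{1}{f'(m)}\bigl(-u_{xt} + u_x u_{xx}\bigr).
\end{align*}

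Next I would expand the continuity equation as $m_t - u_x m_x - m u_{xx} = 0$, substitute the expressions above, and multiply through by $f'(m) > 0$. This yields
\[
-u_{tt} + u_x u_{xt} - u_x\bigl(-u_{xt} + u_x u_{xx}\bigr) - m f'(m)\, u_{xx} = 0,
\]
which after collecting terms gives exactly \rife{eq.elluBIS}. No real obstacle arises: the identity is purely algebraic, relying only on the classical regularity of $(u,m)$ (so that all partial derivatives exist and the chain rule applies) and on $f' > 0$ (so that the inversion and the division by $f'(m)$ are licit). The main point to emphasize is that the resulting scalar equation is elliptic in the space-time variables when $m f'(m) > 0$, since the symbol of the second-order part is $-\tau^2 + 2 u_x \tau \xi - (u_x^2 + m f'(m))\xi^2$, whose discriminant $4 u_x^2 - 4(u_x^2 + m f'(m)) = -4 m f'(m)$ is strictly negative; this is the structural fact underpinning the elliptic-regularity approach used throughout the paper.
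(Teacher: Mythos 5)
Your proof is correct and is exactly the elimination the paper has in mind: the paper states the result without a separate proof, remarking only that it is "obtained by simply eliminating $m=f^{-1}(-u_t+u_x^2/2)$ from the system, thanks to the fact that $f'>0$." Your computation—differentiating the Hamilton--Jacobi equation to express $m_t$ and $m_x$ in terms of $u$, substituting into the expanded continuity equation $m_t - u_x m_x - m u_{xx}=0$, and clearing the factor $f'(m)>0$—is the standard and intended argument, and the ellipticity remark at the end is a correct and worthwhile observation.
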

We observe that \eqref{eq.elluBIS} is a degenerate elliptic equation for $u$; the uniform ellipticity being lost when $m$  vanishes. The study of \rife{eq.elluBIS}  is the starting point of the regularity theory developed in \cite{L-college, Munoz, Porretta} under conditions ensuring a positive control from below on $m$. In that case, equation \rife{eq.elluBIS} turns out to be equivalent to the system (\ref{mfgsys}), at least for classical solutions. As it is customary for quasilinear problems, a key role is played by gradient estimates, which are obtained through the maximum principle. We will recall this approach in Subsection \ref{subsection:stime}. For this purpose, it is convenient to introduce the linear second order operator  
\begin{equation}
Q(v):=-v_{tt}+2u_{x}v_{xt}-(u_{x}^{2}+mf'(m))v_{xx},
\label{eq:Q definition}
\end{equation}
and accordingly, the linearized operator generated from \rife{eq.elluBIS}:
\begin{equation}
L(v)=Q(v)+2(u_{xt}-u_{x}u_{xx})v_{x}-\left(\frac{mf''(m)}{f'(m)}+1\right)u_{xx}(-v_{t}+u_{x}v_{x}).\label{eq:linearization}
\end{equation}
For the rest of this section, the solutions will be tacitly assumed to be sufficiently smooth to justify the computations below.
\begin{lem}  Let $u$ be a classical solution to \rife{eq.elluBIS}, and $Q,L$ be defined by \rife{eq:Q definition} and \rife{eq:linearization}. Then we have
$$
L(u_t )=0 \,,\quad L(u_x)=0
$$
and the function $w:=f(m)$ satisfies
\begin{equation}
Q(w)-w_{x}^{2}+m(mf''(m)+2f'(m))u_{xx}^{2}=0.\label{eq:f(m) equation}
\end{equation}
\end{lem}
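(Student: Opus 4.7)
The plan is to view \eqref{eq.elluBIS} as a quasilinear equation $F(u_t, u_x, u_{tt}, u_{xt}, u_{xx}) = 0$, in which $F$ has no explicit dependence on $x$, $t$, or $u$ itself (the solution enters only through its derivatives and through $m = f^{-1}(-u_t + u_x^2/2)$), and to recognize the operator $L$ in \eqref{eq:linearization} as the Fr\'echet linearization of $F$ at $u$. Granting this, the identities $L(u_t) = 0$ and $L(u_x) = 0$ follow immediately by differentiating the equation in $t$ and in $x$ respectively. To verify that the linearization really produces the form displayed in \eqref{eq:linearization}, I would use $\partial_{u_t} m = -1/f'(m)$ and $\partial_{u_x} m = u_x/f'(m)$, so that the variable coefficient $u_x^2 + m f'(m)$ contributes exactly the factor $(1 + m f''(m)/f'(m))$ multiplying the first-order term, with the sign conventions accounting for the structure $-v_t + u_x v_x$.

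For the equation on $w = f(m) = -u_t + u_x^2/2$, the plan is to use linearity of $Q$ to split $Q(w) = -Q(u_t) + \tfrac{1}{2} Q(u_x^2)$, and to expand the second term by a Leibniz-type identity for $Q$ acting on a square: $Q(u_x^2) = 2 u_x Q(u_x) - 2 u_{xt}^2 + 4 u_x u_{xt} u_{xx} - 2(u_x^2 + m f'(m)) u_{xx}^2$. I would then substitute the explicit expressions for $Q(u_t)$ and $Q(u_x)$ furnished by $L(u_t) = 0$ and $L(u_x) = 0$. Using the elementary relations $w_x = u_x u_{xx} - u_{xt}$ and $w_t = u_x u_{xt} - u_{tt}$, the lower-order contributions from the two linearization identities combine into terms proportional to $u_{xx}(u_x w_x - w_t)$, while the remaining purely quadratic combination $-u_{xt}^2 + 2 u_x u_{xt} u_{xx} - u_x^2 u_{xx}^2 = -w_x^2$ supplies (after accounting for a compensating $+2 w_x^2$ from the substitution) the single $w_x^2$ in the claim.

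The key simplification is the identity $u_x w_x - w_t = -m f'(m) u_{xx}$, which follows at once from the continuity equation: writing $m_t = u_x m_x + m u_{xx}$ and using $m_t = w_t/f'(m)$, $m_x = w_x/f'(m)$ yields exactly this relation (equivalently, it can be read off from $Q(u) = 0$ once $u_{tt}$ is rewritten in terms of $w_t$). Inserting it into the factor $(1 + m f''(m)/f'(m)) u_{xx}(u_x w_x - w_t)$ produced above gives $-m(f'(m) + m f''(m)) u_{xx}^2$, which combines with the residual $-m f'(m) u_{xx}^2$ from the direct expansion to yield the claimed coefficient $-m(2 f'(m) + m f''(m)) u_{xx}^2$. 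The main obstacle is purely bookkeeping: several cancellations among mixed $u_{xt} u_{xx}$ terms must line up correctly, and the sign conventions in $w_t, w_x$ must be tracked carefully. No conceptual difficulty arises beyond the three ingredients above — the linearization viewpoint, the Leibniz-type expansion of $Q$, and the continuity-equation identity connecting $u_x w_x - w_t$ with $u_{xx}$.
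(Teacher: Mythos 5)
Your proposal is correct and is, at bottom, the same computation the paper carries out: both hinge on $L(u_t)=L(u_x)=0$ by differentiating \eqref{eq.elluBIS}, a product/chain-rule identity for the second-order operator applied to $u_x^2$, and the continuity equation in the form $u_x w_x - w_t = -mf'(m)u_{xx}$. The only difference is organizational: the paper evaluates $L(f(m))$ two ways (once via the chain rule from $L(u_x)=0$, once by unfolding the definition of $L$ into $Q(f(m))$ plus first-order terms) and compares, whereas you decompose $Q(w)=-Q(u_t)+\tfrac12 Q(u_x^2)$ and substitute for $Q(u_t),Q(u_x)$ directly — the same algebra in a slightly different order.
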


\begin{proof}  The equations satisfied by $u_t, u_x$ are obtained by differentiation of (\ref{eq.elluBIS}). From $L(u_x)=0$ we also obtain, by the chain rule,
\begin{equation}
L\left(\frac{1}{2}u_{x}^{2}\right)=-(u_{xt}-u_{x}u_{xx})^{2}-mf'(m)u_{xx}^{2}. 
\label{eq:-3}
\end{equation}
Hence,
\be\label{eq:---4}
L(f(m))=L(-u_{t}+\frac{1}{2}u_{x}^{2})=-(u_{xt}-u_{x}u_{xx})^{2}-mf'(m)u_{xx}^{2}=-f(m)_{x}^{2}-mf'(m)u_{xx}^{2}.
\ee
Now, using (\ref{eq:linearization}), we also have
\be\label{eq:--5} \begin{split}
L(f(m))= & Q(f(m))-2f(m)_{x}^{2}-\left(\frac{mf''(m)}{f'(m)}+1\right)u_{xx}(-f(m)_{t}+u_{x}f(m)_{x})\\
= & Q(f(m))-2f(m)_{x}^{2}+\left(\frac{mf''(m)}{f'(m)}+1\right)u_{xx}(mf'(m)u_{xx}),
\end{split}
\ee
where we used the equation of $m$ in the last step. Putting together \rife{eq:---4} and \rife{eq:--5} yields (\ref{eq:f(m) equation}).
\end{proof}

As our goal is to understand what happens when $m$ vanishes or gets close to $0$, we need to introduce more geometric quantities related to the first order system \rife{mfgsys}. The first of these is the family of optimal trajectories associated to the HJ equation satisfied by $u$: we define  $\gamma:\R\times [0,T]\to \R$ as the solution to 
\be\label{def.gammaBIS}
\gamma_t(x,t)=-u_x(\gamma(x,t),t) \quad \text{in $\R\times [0,T]$}, \qquad \gamma(x,0)=x \; \qquad \text{in $\R$}.
\ee
By the standard theory of Hamilton-Jacobi equations, it is known that $\gamma(x, \cdot)$ is the minimizer of the problem 
$$
\inf_{\alpha\in H^1, \; \alpha(t)=x} \int_t^T \frac12 |\dot \alpha|^2+f(m(\alpha,s))\ ds + u(\alpha(T),T).
$$
The fundamental properties of  $\gamma$ are given next. 
\begin{lem}\label{lem.gamma}  Let $(u,m)$ be a classical solution to \rife{mfgsys}. One has
\be\label{eq.EulerBIS}
\gamma_{tt}(x,t)= f'(m(\gamma(x,t),t))m_x(\gamma(x,t),t), \qquad \forall (x,t)\in \R\times (0,T)\qquad \text{(Euler equation),}
\ee
and
\be\label{masspreservationBIS}
\gamma_x(x,t)= \frac{m_0(x)}{m(\gamma(x,t),t)}\qquad \forall (x,t)\in \R\times (0,T)\qquad \text{(conservation of mass)},
\ee
or, equivalently, 
\be\label{masspreservationTER}
\int_{\gamma(x_1,t)}^{\gamma(x_2,t)} m(x,t)dx = \int_{x_1}^{x_2}m_0(x)dx \qquad \forall x_1,x_2\in \R, \; t\in [0,T].
\ee
Moreover $\gamma$ solves the quasilinear elliptic equation
\be\label{eq.ell.gammaBIS}
-\frac{m_0 f'(m_0/\gamma_x)}{(\gamma_x)^3}\gamma_{xx}- \gamma_{tt}= -\frac{(m_0)_x f'(m_0/\gamma_x)}{(\gamma_x)^2}
\qquad \text{in $\R\times (0,T)$.}
\ee
\end{lem}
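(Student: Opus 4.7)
The three identities are essentially consequences of differentiating the defining ODE $\gamma_t=-u_x(\gamma,\cdot)$ once more and combining with the two PDEs of \rife{mfgsys}. I would prove them in the order (i)--(iii) as stated, since the elliptic equation \rife{eq.ell.gammaBIS} is obtained by substituting (ii) into (i).

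For the Euler equation \rife{eq.EulerBIS}, I differentiate the defining relation $\gamma_t(x,t)=-u_x(\gamma(x,t),t)$ in $t$, obtaining
\[
\gamma_{tt}(x,t)=-u_{xt}(\gamma,t)-u_{xx}(\gamma,t)\gamma_t=-u_{xt}(\gamma,t)+u_x(\gamma,t)u_{xx}(\gamma,t).
\]
On the other hand, differentiating the Hamilton--Jacobi equation $-u_t+\tfrac12u_x^2=f(m)$ in $x$ gives $-u_{xt}+u_xu_{xx}=f'(m)m_x$. Evaluating at $(\gamma(x,t),t)$ yields exactly $\gamma_{tt}=f'(m(\gamma,t))m_x(\gamma,t)$.

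For the conservation of mass \rife{masspreservationBIS}, I consider the quantity $\Phi(x,t):=m(\gamma(x,t),t)\gamma_x(x,t)$. Differentiating $\gamma_t=-u_x(\gamma,\cdot)$ in $x$ gives $(\gamma_x)_t=-u_{xx}(\gamma,t)\gamma_x$, so
\[
\Phi_t=\bigl[m_t(\gamma,t)+m_x(\gamma,t)\gamma_t\bigr]\gamma_x+m(\gamma,t)(\gamma_x)_t=\bigl[m_t-(mu_x)_x\bigr](\gamma,t)\,\gamma_x=0,
\]
by the continuity equation. Since $\Phi(x,0)=m_0(x)$, this yields \rife{masspreservationBIS}. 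The integral form \rife{masspreservationTER} follows by the change of variables $y=\gamma(x,t)$, together with the fact that $\gamma(\cdot,t)$ is a diffeomorphism (which is guaranteed by $\gamma_x>0$, itself a consequence of $\gamma_x=m_0/m(\gamma,\cdot)>0$).

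Finally, for the elliptic equation \rife{eq.ell.gammaBIS}, I rewrite the right-hand side of the Euler equation in Lagrangian coordinates using \rife{masspreservationBIS}, which gives $m(\gamma(x,t),t)=m_0(x)/\gamma_x(x,t)$. Differentiating this identity in $x$,
\[
m_x(\gamma,t)\,\gamma_x=\left(\frac{m_0}{\gamma_x}\right)_x=\frac{(m_0)_x}{\gamma_x}-\frac{m_0\,\gamma_{xx}}{\gamma_x^2},
\]
so that $m_x(\gamma,t)=(m_0)_x/\gamma_x^2-m_0\gamma_{xx}/\gamma_x^3$. Plugging this, together with $f'(m(\gamma,t))=f'(m_0/\gamma_x)$, into \rife{eq.EulerBIS} and rearranging gives \rife{eq.ell.gammaBIS}. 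There is no real obstacle here beyond keeping track of which derivatives are in Eulerian versus Lagrangian variables; the regularity hypotheses in force ($m>0$ smooth, $u_x$ with linear growth) are more than enough to justify the chain-rule manipulations and the change of variables.
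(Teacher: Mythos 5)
Your proof is correct. For \eqref{eq.EulerBIS} and \eqref{eq.ell.gammaBIS} you take exactly the route the paper takes: differentiate $\gamma_t=-u_x(\gamma,\cdot)$ in $t$ and use the $x$-derivative of the HJ equation for the Euler equation, then differentiate \eqref{masspreservationBIS} in $x$ and substitute back into \eqref{eq.EulerBIS} (eliminating $m_x(\gamma,t)$) to get the elliptic equation.

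For \eqref{masspreservationBIS} your argument is slightly different from, though equivalent to, the paper's. You prove directly that the Jacobian-weighted density $\Phi(x,t)=m(\gamma(x,t),t)\gamma_x(x,t)$ is constant in time, using $(\gamma_x)_t=-u_{xx}(\gamma,\cdot)\gamma_x$ and the continuity equation; this is the classical Liouville-type computation and is self-contained. The paper instead leans on the auxiliary function $M$ from Lemma \ref{lem.eqMMBIS} (the solution of the transport equation with $M_x=-m$): since $\gamma$ is its characteristic flow, $M(\gamma(x,t),t)=M_0(x)$, and differentiating in $x$ gives $m(\gamma,t)\gamma_x=m_0$ immediately. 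Both arguments use essentially the same input (the continuity equation / the transport equation for $M$, which are equivalent after one space differentiation); the paper's is marginally shorter because it reuses machinery already in place, whereas yours is more self-contained and avoids the detour through $M$. Your derivation of the integral form \eqref{masspreservationTER} by changing variables $y=\gamma(x,t)$ is the same as the paper's ``integrating \eqref{masspreservationBIS} in space,'' and your remark that $\gamma_x>0$ (needed for the change of variables) follows from \eqref{masspreservationBIS} together with the standing positivity of $m$ is an appropriate justification.
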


Before proving the lemma, it will be convenient to associate  to $(u,m)$ the solution $M$ to the transport equation 
\be\label{eq.transport}
M_t -u_x M_x = 0 \qquad \text{in $\R\times [0,T]$},\qquad M(x,0)=-\int_0^x m_0(y)dy \qquad \text{in $\R$}.
\ee
 Notice that \rife{def.gammaBIS} defines $\gamma$ as the  curve of characteristics associated to this transport equation.

\begin{lem}\label{lem.eqMMBIS}  Let $(u,m)$ be a classical solution to \rife{mfgsys}. One has $M_x=-m<0$ and $M$ satisfies the quasilinear elliptic equation 
\begin{equation} \label{eq.MMBIS}
- \frac{M_t^2}{M_x^2}  M_{xx}+2 \frac{M_t}{M_x}M_{xt}-M_{tt} +M_xf'(-M_x))M_{xx}=0 \qquad \text{in} \; \R\times (0,T).
\end{equation}
\end{lem}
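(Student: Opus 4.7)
The plan is to prove the two claims separately: first, identify $-M_x$ with $m$ using uniqueness for the continuity equation; second, eliminate $u_x$ from the Hamilton--Jacobi equation via the transport relation defining $M$, thereby converting \eqref{mfgsys} into a single equation for $M$ alone.

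For the identity $M_x = -m$, the natural step is to differentiate the transport equation \eqref{eq.transport} in $x$, which yields
\begin{equation*}
(M_x)_t - u_x (M_x)_x - u_{xx} M_x = 0 \qquad \text{in } \R\times (0,T).
\end{equation*}
Setting $N = -M_x$, this rewrites as $N_t - (u_x N)_x = 0$, with $N(\cdot,0) = m_0$. But $m$ satisfies the very same continuity equation with the same initial datum and the same (smooth) drift $u_x$. Since $m_0$ is smooth and positive and $u_x$ is smooth, the standard uniqueness for the linear continuity equation (via the method of characteristics) forces $N = m$, giving both $M_x = -m$ and in particular $M_x < 0$.

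For the elliptic equation on $M$, I would use the transport equation in the pointwise form $u_x = M_t/M_x$, which is licit because $M_x$ does not vanish. Differentiating this relation gives
\begin{equation*}
u_{xt} = \frac{M_{tt}}{M_x} - \frac{M_t M_{xt}}{M_x^2}, \qquad u_{xx} = \frac{M_{xt}}{M_x} - \frac{M_t M_{xx}}{M_x^2}.
\end{equation*}
On the other hand, differentiating the Hamilton--Jacobi equation in $x$ yields $-u_{xt} + u_x u_{xx} = f'(m) m_x$, and plugging in $m = -M_x$, $m_x = -M_{xx}$ converts the right-hand side into $-f'(-M_x) M_{xx}$. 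Substituting $u_x = M_t/M_x$ together with the expressions above, then multiplying through by $M_x$, the two cross terms $M_t M_{xt}/M_x^2$ coming from $u_{xt}$ and from $(M_t/M_x)\cdot M_{xt}/M_x$ combine into a single coefficient $2 M_t/M_x$ in front of $M_{xt}$, and the $M_{tt}$ and $M_{xx}$ terms line up exactly to produce \eqref{eq.MMBIS}.

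There is no real obstacle beyond careful bookkeeping of signs, the most delicate point being the identification $M_x = -m$: it rests on recognizing that differentiating the transport equation for $M$ gives precisely the continuity equation for $-M_x$, so that uniqueness can be invoked. Everything afterwards is an algebraic manipulation, and the argument remains valid as long as $M_x \neq 0$, which is exactly the positivity assumption on $m$.
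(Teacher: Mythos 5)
Your proof is correct and matches the paper's argument: differentiate the transport equation for $M$ in $x$ and invoke uniqueness for the linear continuity equation to get $M_x=-m$, then differentiate the HJ equation in $x$, substitute $u_x=M_t/M_x$ and $m=-M_x$, and multiply through by $M_x$ to land on \eqref{eq.MMBIS}. (The paper's text says to differentiate \eqref{eq.elluBIS} in $x$, but that reference appears to be a slip — it is the $x$-derivative of the HJ equation, exactly as you use, that produces \eqref{eq.MMBIS} cleanly without any extra cancellation or division by $u_x$.)
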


\begin{proof} Differentiating \eqref{eq.transport} in space, we see that $\mu:=-M_x$ satisfies $\mu_t -(u_x\mu)_x=0$, with initial condition $\mu(0,x)=m_0$: this is exactly the equation satisfied by $m$, so that $M_x=-\mu=-m<0$. On the other hand, by definition $u_x= M_t/M_x$. Taking the derivative w.r.t. $x$ of the equation for $u$ \rife{eq.elluBIS}, we obtain \eqref{eq.MMBIS}. 
\end{proof}

\begin{proof}[Proof of Lemma \ref{lem.gamma}] As $\gamma$ solves \eqref{def.gammaBIS}, we have 
$$
\gamma_{tt}= -u_{xx}(\gamma,t)(-u_x(\gamma,t))-u_{xt}(\gamma,t) = f'(m(\gamma,t)) m_x(\gamma,t), 
$$
where the second equality comes from the derivation in space of the HJ  equation. This is \eqref{eq.EulerBIS}. As $M_x=-m$, \eqref{masspreservationBIS} comes from  the derivative in space of the transport equality $M(\gamma(x,t),t)=M_0(x)$. Then \eqref{masspreservationTER} follows by the integration in space of \eqref{masspreservationBIS}. 

By \eqref{eq.EulerBIS} and then \eqref{masspreservationBIS}, $m_x(\gamma,t)= \gamma_{tt}/ f'(m(\gamma,t))= \gamma_{tt} / f'(m_0/\gamma_x)$. On the other hand, taking the derivative in space of \eqref{masspreservationBIS} gives (using again \eqref{masspreservationBIS}  and the expression above for $m_x(\gamma,t)$):
$$
\gamma_{xx}=  \frac{(m_0)_x}{m(\gamma,t)}- \frac{m_0m_x(\gamma,t)\gamma_x}{(m(\gamma,t))^2}=\frac{(m_0)_x\gamma_x}{m_0} - \frac{\gamma_x^3}{f'(m_0/\gamma_x) m_0}\gamma_{tt}.
$$
This is \eqref{eq.ell.gammaBIS}.
\end{proof}

We finally compute the equation satisfied by $v(x,t)= f(m(\gamma(x,t),t))$. The map $v$ is the r.h.s. of the HJ equation viewed from the lens of the optimal trajectories. 

\begin{lem}\label{eqfmgam}  Let $(u,m)$ be a classical solution to \rife{mfgsys}. The map $v(x,t)= f(m(\gamma(x,t),t))$ satisfies the quasilinear elliptic equation in divergence form:
\be\label{eq.v=fmgamma}
-\left(\frac{v_x}{\gamma_x}\right)_x  - \left(\frac{\gamma_x^2}{m_0f'(m_0/\gamma_x)}v_t\right)_t= 0 .
\ee
If  $f(m)=m^\theta$,  this equation simplifies into  
\be\label{eq.v=fmgammamtheta}
(i)\; -\left(\frac{v_x}{\gamma_x}\right)_x  - \left(\frac{\gamma_x}{\theta v}v_t\right)_t= 0,\qquad\text{or}\qquad  (ii)\; -v_{tt}-\frac{\theta v}{\gamma_x^2}v_{xx}+v_x\frac{\theta v}{\gamma_x^3}\gamma_{xx} + \frac{\theta+1}{\theta} v^{-1}v_t^2=0 , 
\ee
while if $f(m)=\log(m)$, it becomes 
\be\label{eq.v=fmgammalog}
-\left(\frac{v_x}{\gamma_x}\right)_x  - \left(\gamma_x v_t\right)_t= 0.
\ee
\end{lem}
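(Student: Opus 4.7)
The plan is to push the chain rule through both sides and observe that each flux in \rife{eq.v=fmgamma} collapses to a second derivative of $\gamma$, after which the identity reduces to the Schwarz identity $\gamma_{ttx}=\gamma_{xtt}$.

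For the spatial flux I would compute $v_x = f'(m(\gamma,t))\, m_x(\gamma,t)\,\gamma_x$ by the chain rule, and then invoke the Euler equation \rife{eq.EulerBIS} to recognize the product $f'(m(\gamma,t))\,m_x(\gamma,t)$ as $\gamma_{tt}$; this gives $v_x/\gamma_x = \gamma_{tt}$. For the time flux I differentiate $v$ in $t$, replace $m_t(\gamma,t)$ by $(mu_x)_x(\gamma,t)$ via the continuity equation, and observe a cancellation: the two contributions involving $m_x(\gamma,t)$ drop out thanks to $\gamma_t = -u_x(\gamma,t)$, leaving $v_t = f'(m(\gamma,t))\,m(\gamma,t)\, u_{xx}(\gamma,t)$. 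Differentiating $\gamma_t = -u_x(\gamma,t)$ in $x$ produces $u_{xx}(\gamma,t) = -\gamma_{xt}/\gamma_x$, and combining this with the conservation of mass \rife{masspreservationBIS} yields
\[
\frac{\gamma_x^2}{m_0\,f'(m_0/\gamma_x)}\,v_t = -\gamma_{xt}.
\]
Putting the two pieces together, \rife{eq.v=fmgamma} follows from equality of mixed partials.

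The two specializations are then algebraic. For $f(m)=m^\theta$, the mass identity $m(\gamma,t)=m_0/\gamma_x$ gives $m_0\,f'(m_0/\gamma_x) = \theta\,\gamma_x\,v$, so the time coefficient in \rife{eq.v=fmgamma} becomes $\gamma_x/(\theta v)$, yielding (i); for $f(m)=\log m$ the same computation gives coefficient $\gamma_x$, i.e.\ \rife{eq.v=fmgammalog}. To pass from the divergence form (i) to the non-divergence form (ii), I would expand the two derivatives and reuse the already-derived relation $\gamma_{xt} = -\gamma_x v_t/(\theta v)$ to eliminate the single $\gamma_{xt}$-term produced by $(\gamma_x/(\theta v)\,v_t)_t$; the $v_t^2$ coefficients then combine as $\frac{1}{\theta v}+\frac{1}{v} = \frac{\theta+1}{\theta v}$, reproducing the announced coefficient.

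I do not foresee a substantive obstacle here — the lemma is really a bookkeeping identity recasting \rife{mfgsys} in Lagrangian coordinates. The delicate steps are the cancellation between transport and flux contributions in the computation of $v_t$ (which hinges precisely on $\gamma_t=-u_x(\gamma,t)$), and the coefficient arithmetic producing the factor $(\theta+1)/\theta$ in (ii); any algebraic slip would surface most naturally in one of these two places.
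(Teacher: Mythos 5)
Your proof is correct and follows essentially the paper's route: identify $v_x/\gamma_x = \gamma_{tt}$ via the Euler equation, express the time flux as $-\gamma_{xt}$, and conclude by equality of mixed partials, then specialize algebraically. The only (cosmetic) difference is your computation of $v_t$, which differentiates $f(m(\gamma,t))$ in Eulerian form and then invokes the continuity equation, $\gamma_t=-u_x(\gamma,t)$, and mass conservation to reach $\frac{\gamma_x^2}{m_0 f'(m_0/\gamma_x)}\,v_t = -\gamma_{xt}$, whereas the paper differentiates the Lagrangian expression $v = f(m_0/\gamma_x)$ directly and gets the same identity in one step.
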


\begin{proof} 

 Observe that \eqref{eq.EulerBIS} may be written as
\be\label{gatt}
 \gamma_{tt}=\frac{v_x}{\gamma_x}.
\ee
We now compute the time derivative of $v(x,t)= f(m_0(x))/\gamma_x(x,t))$ to find 
\be\label{hkajzejrsbdfn}
v_t= f'(m_0/\gamma_x)\left(-\frac{m_0 \gamma_{xt}}{\gamma_x^2}\right).
\ee
Thus,  putting together \rife{gatt} and \rife{hkajzejrsbdfn} we get
$$
 \left(\frac{v_x}{\gamma_x}\right)_x = \gamma_{xtt} = - \left(\frac{v_t\gamma_x^2}{m_0f'(m_0/\gamma_x)}\right)_t,
$$

which gives \eqref{eq.v=fmgamma}, and thus \eqref{eq.v=fmgammamtheta}-(i)  and \eqref{eq.v=fmgammalog}. Finally, if $f(m)=m^\theta$,  developing \eqref{eq.v=fmgammamtheta}-(i) and using \eqref{hkajzejrsbdfn}---which implies that $\gamma_{xt}= -(\gamma_xv_t)/(\theta v)$---leads to  \eqref{eq.v=fmgammamtheta}-(ii). 
\end{proof}

\subsection{Displacement convexity estimates on $m$, Lipschitz estimates on $u$ and existence result}\label{subsection:stime}
Before studying the problem with compactly supported marginals, we
will begin obtaining some estimates for the simpler
periodic setting with strictly positive marginals. We are mostly interested
in a priori estimates that are independent of $\min m_{0}$ and $\min m_{T}$.
By approximation with positive densities $m_{0}^{\vep},m_{T}^{\vep}$,
these estimates will hold for the case in which $m_{0}$ and $m_{T}$
are compactly supported. 

Throughout the remainder of this section, $R\geq1$ will denote a fixed constant, and we will 
analyze the MFG system on the one-dimensional torus of length $R$, denoted by $\T$.
Functions defined on $\T$ are meant to be $R$-periodic functions on $\R$. 
We will consider the problem
\begin{equation}
\begin{cases}
-u_{t}+\frac{1}{2}u_{x}^{2}=f(m) & (x,t)\in\T\times(0,T),\\
m_{t}-(mu_{x})_{x}=0 & (x,t)\in\T\times(0,T),
\end{cases}\label{eq:Per MFG}
\ee
complemented either with  the  initial-terminal conditions
\begin{equation}\label{couplingT}
m(x,0)=m_{0}(x),\,\,\quad u(x,T)=g(m(x,T)) \,,\quad  x\in\T,
\ee
or with the prescribed marginal conditions of the planning problem:
\be\label{per-MFGP}
m(x,0)=m_{0}(x),\quad \,\,m(x,T)=m_{T}(x)\,,\quad  x\in\T.
\end{equation}
The functions $f,g:(0,\infty)\rightarrow\mathbb{R}$ are assumed to
satisfy $f',g'>0$, with $f,g \in C^{2}(0,\infty)$ 
and
\begin{equation}
\limsup_{m\rightarrow0^{+}}mf'(m)<\infty,\,\,\limsup_{m\rightarrow0^{+}}\frac{m|f''(m)|}{f'(m)}<\infty.
\label{eq:polynomial growth}
\end{equation}
The initial (and terminal) data 
$m_{0},m_{T}:\T\rightarrow(0,\infty)$ are understood to be $C^{1}$
functions satisfying
\begin{equation} \label{perMassAssumption}
\int_{\T}m_{0}=\int_{\T}m_{T},\quad  m_0,m_T >0.
\end{equation}
 Moreover, we also assume that
\begin{equation}
f(m_{0}),f(m_{T})\in C^{1,\alpha}(\T).\label{eq:marginals C1,alpha}
\end{equation}
The fact that the system \rife{eq:Per MFG} has smooth solutions when the marginals
are strictly positive and the data are sufficiently smooth is already
known (see \cite[Theorem 1.1]{Munoz}).   In fact, we will see later
(Theorem \ref{thm:existence}) that, under the present assumptions,
(\ref{eq:Per MFG})--\rife{couplingT} and (\ref{eq:Per MFG})--\rife{per-MFGP} both admit  a classical solution
$(u,m)\in C^{2,\alpha}(\T\times(0,T))\times C^{1,\alpha}(\T\times(0,T))$. 

 We begin by recalling the so-called displacement convexity formula (see \cite{GomesSeneci,MimikosMunoz}), as well as an identity which will later be useful to obtain energy estimates on the density.
\begin{prop}  Assume that $f\in C^{1}(0,\infty)$, and
let $(u,m)$ be a classical solution to \rife{eq:Per MFG}.
Then we have
\begin{equation}
mf'(m)u_{xx}^{2}+f(m)_{x}^{2}=(f(m)_{t}u_{x})_{x}-(f(m)_{x}u_{x})_{t}.\label{eq:displacement 1}
\end{equation}
Moreover, if  $h:(0,\infty)\rightarrow\mathbb{R}$ is twice differentiable,
then
\begin{equation}
\frac{d^{2}}{dt^{2}}\int_{\T}h(m)=\int_{\T}mh''(m)(mu_{xx}^{2}+f'(m)m_{x}^{2}).\label{eq:displacement integral}
\end{equation}
\end{prop}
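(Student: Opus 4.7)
The plan is to derive both identities by direct calculation, using only the two PDEs of the system, integration by parts, and periodicity.

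For identity \eqref{eq:displacement 1}, I will start by differentiating the Hamilton--Jacobi equation $-u_t + \tfrac{1}{2}u_x^2 = f(m)$ with respect to $x$ and $t$, obtaining $f(m)_x = -u_{xt} + u_x u_{xx}$ and $f(m)_t = -u_{tt} + u_x u_{xt}$. Expanding the right-hand side of \eqref{eq:displacement 1} and invoking the equality of mixed partials, all purely $u$-second-order terms cancel and I am left with $f(m)_t u_{xx} - f(m)_x u_{xt}$. I will then use the chain rule together with the continuity equation to derive the alternative expression $f(m)_t = f'(m) m_t = f'(m)(m_x u_x + m u_{xx}) = f(m)_x u_x + m f'(m) u_{xx}$. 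Substituting this gives $m f'(m) u_{xx}^2 + f(m)_x (u_x u_{xx} - u_{xt})$, and the parenthetical factor is exactly $f(m)_x$ by the first formula above, producing \eqref{eq:displacement 1}.

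For identity \eqref{eq:displacement integral}, I will differentiate $\int_\T h(m)$ once in time, use the continuity equation $m_t = (mu_x)_x$, and integrate by parts (periodicity killing all boundary contributions) to obtain $\tfrac{d}{dt}\int_\T h(m) = -\int_\T m h''(m)\, m_x u_x$. Differentiating again in $t$ produces three terms; the cross term involving $m_{xt}$ is handled by a further spatial integration by parts, namely $\int_\T m h''(m)\, u_x m_{xt} = -\int_\T [(m h''(m))' m_x u_x + m h''(m) u_{xx}] m_t$. The $(m h''(m))'\, m_x u_x m_t$ contributions then cancel, leaving $\int_\T m h''(m)\bigl[u_{xx} m_t - m_x u_{xt}\bigr]$. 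I will then substitute $m_t = m_x u_x + m u_{xx}$ from the continuity equation and $u_{xt} = u_x u_{xx} - f'(m) m_x$ from the $x$-derivative of the Hamilton--Jacobi equation; the two $m_x u_x u_{xx}$ cross terms cancel and the surviving terms give precisely the right-hand side of \eqref{eq:displacement integral}.

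The main obstacle is just careful bookkeeping of the integrations by parts and the sign-matched cancellations; there is no genuine analytic difficulty, since $(u,m)$ is assumed classical and $m$ is positive. Structurally, both identities encode the duality between the Hamilton--Jacobi and continuity equations; in particular, \eqref{eq:displacement integral} is the displacement convexity formula in Wasserstein geometry with velocity field $-u_x$, a point of view that will drive the a priori estimates in the remainder of the section.
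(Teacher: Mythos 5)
Your proof of \eqref{eq:displacement 1} is essentially the paper's proof read in the opposite direction: both rest on the continuity equation, the $x$-derivative of the Hamilton--Jacobi equation, and the observation that $(f(m)_tu_x)_x-(f(m)_xu_x)_t=f(m)_tu_{xx}-f(m)_xu_{xt}$; you collapse the right side to the left, the paper builds the left side up, but the algebra is identical.

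Your proof of \eqref{eq:displacement integral} is correct but follows a genuinely different route. The paper derives a \emph{pointwise} conservation-law identity
\[
mh''(m)(mu_{xx}^{2}+f'(m)m_{x}^{2})=(H(m)_{t}u_{x})_{x}-(H(m)_{x}u_{x})_{t},\qquad H(m)=mh'(m)-h(m),
\]
and then rewrites the right-hand side, using the continuity equation to express $h(m)_t$, as $h(m)_{tt}-(h'(m)(mu_{x})_{t})_{x}$; integrating over $\T$ kills the full $x$-derivative and gives the result. You instead differentiate $\int_{\T}h(m)$ twice in $t$ directly and integrate by parts in $x$, relying on the cancellation of the $(mh''(m))'\,m_tm_xu_x$ contributions. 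Your computation is shorter and avoids introducing $H$, but the paper's version buys two things worth noting. First, the pointwise identity is strictly stronger than the integrated one and can be tested against cutoffs or used on other domains. Second, your intermediate steps formally invoke $h'''$ through $(mh''(m))'=h''(m)+mh'''(m)$; those terms cancel, so the final identity needs only $h''$, but the paper's bookkeeping through $H$ (for which $H'(m)=mh''(m)$) never leaves the realm of $h''$ and so sits more comfortably under the stated hypothesis that $h$ is merely twice differentiable. If you want to keep the direct route without the phantom $h'''$, replace your first-derivative expression $-\int_{\T}mh''(m)m_xu_x$ by the equivalent $\int_{\T}H(m)u_{xx}$ (one more integration by parts, using $H(m)_x=mh''(m)m_x$) and differentiate that form in time; you land on $\int_{\T}mh''(m)(m_tu_{xx}-m_xu_{xt})$ without ever touching $h'''$, and the final substitutions are exactly yours.
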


\begin{proof}
We start by multiplying the continuity equation by $u_{xx}$, which
yields

\[
mu_{xx}^{2}+m_{x}u_{x}u_{xx}-m_{t}u_{xx}=0.
\]
As a result, differentiating the HJ equation for the term $u_xu_{xx}$ we obtain

\begin{equation}
mu_{xx}^{2}+m_{x}(u_{xt}+f'(m)m_{x})-m_{t}u_{xx}=0,\label{eq:}
\end{equation}
which, after multiplying by $f'(m)$, yields

\[
mf'(m)u_{xx}^{2}+f'(m)^{2}m_{x}^{2}=(f(m)_{t}u_{x})_{x}-(f(m)_{x}u_{x})_{t}.
\]
This proves (\ref{eq:displacement 1}). We note that (\ref{eq:displacement integral})
is merely special case of \cite[Proposition 3.1]{MimikosMunoz}, but we give a proof for the reader's convenience.  Multiplying both sides
of (\ref{eq:}) by $mh''(m)$, we obtain

\[
mh''(m)(mu_{xx}^{2}+f'(m)m_{x}^{2})=h''(m)mm_{t}u_{xx}-h''(m)m_{x}u_{xt}=H(m)_{t}u_{xx}-H(m)_{x}u_{xt},
\]
where $H(m)=mh'(m)-h(m)$. This may be rewritten as 

\begin{equation}
mh''(m)(mu_{xx}^{2}+f'(m)m_{x}^{2})=(H(m)_{t}u_{x})_{x}-(H(m)_{x}u_{x})_{t}\label{eq:1-Dis}
\end{equation}
Now, from the continuity equation,
\[
h(m)_{t}=h(m)_{x}u_{x}+mh'(m)u_{xx}=(h(m)-mh'(m))_{x}u_{x}+(mh'(m)u_{x})_{x}=-H(m)_{x}u_{x}+(mh'(m)u_{x})_{x}.
\]
Therefore,
\[
(H(m)_{x}u_{x})_{t}=-h(m)_{tt}+(mh'(m)u_{x})_{tx}.
\]
Substituting in (\ref{eq:1-Dis}), we obtain
\begin{multline*}
mh''(m)(mu_{xx}^{2}+f'(m)m_{x}^{2})  =h(m)_{tt}+(H(m)u_{x})_{x}-(mh'(m)u_{x})_{tx}
 \\=h(m)_{tt}-(h(m)_{t}u_{x})_{x}-(mh'(m)u_{xt})_{x}
 =h(m)_{tt}-(h'(m)(mu_{x})_{t})_{x},
\end{multline*}
and (\ref{eq:displacement integral}) then
follows by integrating both sides of this equation in space.
\end{proof}
We now note that the density attains its extremum values at the extremal times.
\begin{cor}
\label{cor: m bound} Assume that $f\in C^{1}(0,\infty)$, and
let $(u,m)$ be a classical solution to (\ref{eq:Per MFG}). Then
\begin{align*}
 \|m\|_{\infty}  & \leq\max(\|m(\cdot, 0) \|_{\infty},\|m(\cdot,T)\|_{\infty}),
\\
\left\Vert m^{-1}\right\Vert _{\infty}  &  \leq\max\left(\left\Vert m(\cdot, 0)^{-1}\right\Vert _{\infty},\left\Vert m(\cdot,T)^{-1}\right\Vert _{\infty}\right).
\end{align*}
\end{cor}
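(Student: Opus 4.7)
The plan is to deduce both bounds from the displacement convexity identity \eqref{eq:displacement integral} by choosing suitable convex functions $h$ and passing to the limit in the exponent.

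First, fix any $p>1$ and take $h(m)=m^{p}$. Then $h''(m)=p(p-1)m^{p-2}\geq 0$, and since $f'(m)>0$ and $m>0$ throughout, the integrand on the right-hand side of \eqref{eq:displacement integral} is nonnegative. Therefore the scalar function
\[
\Phi_{p}(t):=\int_{\T}m(x,t)^{p}\,dx
\]
is convex on $[0,T]$, and the elementary fact that a convex function on a closed interval attains its maximum at an endpoint gives $\Phi_{p}(t)\leq \max(\Phi_{p}(0),\Phi_{p}(T))$ for all $t\in[0,T]$. Taking $p$-th roots yields
\[
\|m(\cdot,t)\|_{L^{p}(\T)}\leq \max(\|m(\cdot,0)\|_{L^{p}(\T)},\|m(\cdot,T)\|_{L^{p}(\T)}).
\]
Since $\T$ has finite measure, $\|g\|_{L^{p}(\T)}\to \|g\|_{L^{\infty}(\T)}$ as $p\to\infty$ for any continuous $g$, so letting $p\to\infty$ produces the first inequality of the corollary.

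For the lower bound on $m$, the analogous choice is $h(m)=m^{-p}$ with $p>0$. Then $h''(m)=p(p+1)m^{-p-2}\geq 0$, and again the right-hand side of \eqref{eq:displacement integral} is nonnegative. Hence $t\mapsto \int_{\T}m(x,t)^{-p}\,dx=\|m(\cdot,t)^{-1}\|_{L^{p}(\T)}^{p}$ is convex on $[0,T]$, so it is bounded by the maximum of its endpoint values; passing $p\to\infty$ gives the second inequality.

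The argument is entirely soft: the only substantive input is the displacement convexity identity \eqref{eq:displacement integral}, and the routine points to check are that $h(m)=m^{p}$ (for $p>1$) and $h(m)=m^{-p}$ (for $p>0$) are indeed admissible test functions, and that the $L^{p}\to L^{\infty}$ limit is valid; both are immediate given the smoothness and strict positivity of $m$ assumed for the classical solutions considered here.
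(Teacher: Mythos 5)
Your proof is correct and follows exactly the paper's route: both apply the displacement convexity identity \eqref{eq:displacement integral} to $h(m)=m^p$ to obtain convexity in time of $\int_{\T} m^p$, then pass $p\to\pm\infty$ (your choice of $h(m)=m^{-p}$, $p\to+\infty$, being the same as the paper's $p\to-\infty$). The only difference is that you spell out the $L^p\to L^\infty$ limit step, which the paper leaves implicit.
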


\begin{proof}
One first observes that, for any convex function $h:(0,\infty)\rightarrow \R$, it follows from  (\ref{eq:displacement integral}) that $\int_{\T}h(m)$ is convex in time, which yields
$$
\int_{\T}h(m) \leq \max\left(\int_{\T}h(m(\cdot,0)), \int_{\T}h(m(\cdot,T))\right) \,.
$$
The upper bounds on $m$ and $m^{-1}$, then follow by taking $h(m)=m^{p}$  
and letting $p\rightarrow\pm\infty$, respectively.
\end{proof}
\begin{rem}
    These a priori estimates were proved in \cite{GomesSeneci}. They could have also been derived without the displacement convexity formula, as an immediate consequence of Lemma \ref{lem:minimum principle} below.
\end{rem}
We will now review the Lipschitz estimates which can be established on $u$ following the approach suggested by P.-L. Lions in \cite{L-college}, and developed later in more generality in \cite{Munoz,Porretta}. The following $L^{\infty}$ bounds on $u$ and $m(\cdot,T)$ are well-known
consequences of the maximum principle and the Hopf-Lax formula (see
\cite[Propositions 4.1 and 4.2]{MimikosMunoz}).
\begin{prop}
\label{prop: oscillation bound}  Assume that $f,g \in C^1(0,\infty)$, $f'>0, g'\geq0,$ and $m_0,m_T\in C(\mathbb{T})$. If $(u,m)$ is a classical solution to \eqref{eq:Per MFG}--\rife{couplingT},
then we have, for $(x,t)\in\T\times[0,T]$,
\[
\min m_{0}\leq m(x,T)\leq\max m_{0},
\]
\[
f(\min m_{0})(T-t)+g(\min m_{0})\leq u(x,t)\leq f(\max m_{0})(T-t)+g(\max m_{0}).
\]
Moreover, there exists a constant $C=C(\|f(m_{0})\|_{\infty},\|f(m_{T})\|_{\infty})$
such that, if $(u,m)$ solves \rife{eq:Per MFG}--\rife{per-MFGP}, then
\[
\text{\emph{osc}}\,(u)\leq C(T+R^{2}T^{-1}).
\]
\end{prop}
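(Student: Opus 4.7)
The plan is to combine a maximum principle on the density $m$ with the Hopf-Lax representation of $u$. For problem \eqref{eq:Per MFG}--\rife{couplingT}, the heart of the matter is establishing the sharp bound $\min m_0 \leq m(\cdot, T) \leq \max m_0$. By the maximum principle of Lemma \ref{lem:minimum principle}, any extremum of $m$ on $\T \times [0, T]$ is attained at $t \in \{0, T\}$. Suppose for contradiction that $(x_1, T)$ is a global maximum with $m(x_1, T) > \max m_0$. Then $(m(T))_x(x_1) = 0$ and $m_{xx}(x_1, T) \leq 0$; differentiating the coupling $u(x, T) = g(m(x, T))$ yields $u_x(x_1, T) = 0$ and $u_{xx}(x_1, T) = g'(m(T))m_{xx}(x_1, T) \leq 0$. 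On the other hand, the continuity equation at $(x_1, T)$ reduces to $m_t = m\, u_{xx}$, and $m_t(x_1, T) \geq 0$ (as $t = T$ is a terminal maximum) forces $u_{xx}(x_1, T) \geq 0$. Hence $u_{xx}(x_1, T) = 0$ and $m_t(x_1, T) = 0$; a strong maximum principle argument then forces $m$ to be constant, contradicting the strict inequality. The symmetric argument gives the lower bound.

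Once $\min m_0 \leq m \leq \max m_0$ globally (by Corollary \ref{cor: m bound}), the bounds on $u$ are immediate from the value function representation
\[
u(x, t) = \inf_{\gamma(t) = x}\left\{\int_t^T \tfrac{1}{2}|\dot\gamma|^2 + f(m(\gamma, s))\, ds + g(m(\gamma(T), T))\right\}.
\]
The upper bound comes from taking the constant trajectory $\gamma \equiv x$, while the lower bound follows from the pointwise estimates $f(m) \geq f(\min m_0)$ and $g(m(T)) \geq g(\min m_0)$, which hold along any competitor thanks to the monotonicity of $f, g$ and the global lower bound on $m$.

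For the planning problem \eqref{eq:Per MFG}--\rife{per-MFGP}, Corollary \ref{cor: m bound} together with \eqref{perMassAssumption} gives $\|f(m)\|_\infty \leq C(\|f(m_0)\|_\infty, \|f(m_T)\|_\infty)$. To bound $\text{osc}(u)$, I would differentiate $s \mapsto u(\gamma(s), s)$ along an arbitrary $C^1$ curve on $\T$ and use the HJ equation together with the elementary inequality $\tfrac{1}{2}u_x^2 + u_x\dot\gamma \geq -\tfrac{1}{2}\dot\gamma^2$ to obtain
\[
u(\gamma(t_0), t_0) - u(\gamma(t_1), t_1) \leq \int_{t_0}^{t_1}\left(\tfrac{1}{2}|\dot\gamma|^2 + f(m(\gamma, s))\right) ds, \qquad t_0 \leq t_1.
\]
Applied to straight-line trajectories on $\T$ of length at most $R/2$ over time intervals of order $T$, this yields bounds of the form $R^2/T + CT$; combining with a dual argument in the reverse direction (via an intermediate stop at $t \in \{0, T\}$) gives $\text{osc}(u) \leq C(T + R^2/T)$. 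The main technical obstacle in the whole proof is the sharp bound $m(\cdot, T) \leq \max m_0$ in the coupled case: the minimum principle alone only gives $\|m\|_\infty \leq \max(\|m_0\|_\infty, \|m(\cdot, T)\|_\infty)$, which is tautological without additional input, so the coupling $u(T) = g(m(T))$ must be exploited carefully through boundary-point analysis and a strong maximum principle to close the loop.
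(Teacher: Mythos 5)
The paper itself does not prove this proposition—it cites Propositions 4.1 and 4.2 of \cite{MimikosMunoz}—so I assess your argument on its own terms. Your terminal-time analysis is essentially the standard maximum-principle argument, though two small points deserve tightening: (i) you differentiate $u(\cdot,T)=g(m(\cdot,T))$ twice using $m_{xx}(x_1,T)\le 0$, which the hypotheses ($g\in C^1$, $m\in C^{1,\alpha}$) do not supply; this is fixable by noting instead that $g'\ge 0$ makes $u(\cdot,T)$ inherit a local maximum at $x_1$ from $m(\cdot,T)$, so that $u\in C^{2,\alpha}$ already yields $u_{xx}(x_1,T)\le 0$ without touching $m_{xx}$; (ii) the ``strong maximum principle'' step is really Hopf's boundary-point lemma for $v=f(m)$, whose applicability is precisely what Lemma \ref{lem:minimum principle} is designed to supply—non-constancy of $v$ would force $v_t(x_1,T)>0$, contradicting your computation $m_t(x_1,T)\le 0$.

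The substantive gap is in the oscillation bound for the planning problem. Your trajectory inequality
\[
u(\gamma(t_0),t_0)-u(\gamma(t_1),t_1)\le\int_{t_0}^{t_1}\Bigl(\tfrac12|\dot\gamma|^2+f(m(\gamma,s))\Bigr)\,ds, \qquad t_0<t_1,
\]
controls $u(x,t_0)-u(y,t_1)$ only when $t_0\le t_1$. The ``dual argument in the reverse direction'' does not, on its own, supply the missing side: setting $\hat u(x,s)=-u(x,T-s)$ gives a solution of the \emph{same} backward HJ equation, and applying the same trajectory inequality to $\hat u$ reproduces the identical one-sided estimate after relabeling. So every decomposition of $u(x_0,t_0)-u(x_1,t_1)$ through a stop at $t\in\{0,T\}$ leaves one piece oriented against the arrow of time and uncontrolled. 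What actually closes the loop is that, for the planning problem \eqref{eq:Per MFG}--\eqref{per-MFGP} specifically, $(-u(\cdot,T-\cdot),m(\cdot,T-\cdot))$ is again a \emph{planning solution}, so $u$ also enjoys the forward representation $u(x,t)=\sup_{\gamma(t)=x}\bigl[u(\gamma(0),0)-\int_0^t\bigl(\tfrac12|\dot\gamma|^2+f(m(\gamma,s))\bigr)ds\bigr]$; this yields $u(x,t)\le\max_y u(y,0)+\|f(m)\|_\infty\,t$, which, together with the backward bound $u(x,t)\ge\min_y u(y,T)-\|f(m)\|_\infty(T-t)$ and the trajectory estimate $\max u(\cdot,0)-\min u(\cdot,T)\le R^2/(8T)+\|f(m)\|_\infty T$, gives $\mathrm{osc}(u)\le C(T+R^2T^{-1})$. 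Your sketch gestures at this but does not establish the forward representation, so as written the oscillation bound is not proved.
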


We may now obtain a gradient estimate which, crucially, is independent of $\min m$. Hereafter, we denote by $Du$ the vector formed by the space and time first derivatives, that is,
$$
Du:= (u_x, u_t)\,.
$$
Moreover, we   denote $\kappa_{0}>0$ to
be a constant such that
\begin{equation}
 \,\, mf'(m),\frac{m|f''(m)|}{f'(m)}\leq \kappa_{0} \qquad \forall \,\,\, m\in(0,2\max(\|m_{0}\|_{\infty},\|m_{T}\|_{\infty})]\,.
\label{eq:polynomial growth f restatement}
\end{equation}
Such a constant exists by virtue of (\ref{eq:polynomial growth}). It is understood that the term with $\|m_{T}\|_{\infty}$ is treated as zero,  in case of conditions \rife{couplingT}.

\begin{prop}
\label{prop:gradient bound} Assume that \eqref{eq:polynomial growth}, \eqref{perMassAssumption}, and \rife{eq:marginals C1,alpha} hold true, and let   $(u,m)$  be  a classical solution to 
\eqref{eq:Per MFG}--\rife{couplingT} or \eqref{eq:Per MFG}--\rife{per-MFGP}.
There exists a constant $C$ 
such that
\begin{align*}
\|Du\|_{\infty} & \leq C.
\end{align*}
where
\[
C= \begin{cases} C(\kappa_{0},R,T,T^{-1},\|f(m_{0})\|_{W^{1,\infty}}, \|g(m_0)\|_\infty) & \text{if } (u,m) \text{ solves \eqref{eq:Per MFG}--\rife{couplingT} },\\
C(\kappa_{0}, R,T,T^{-1},\|f(m_{0})\|_{W^{1,\infty}},\|f(m_{T})\|_{W^{1,\infty}}) & \text{if } (u,m) \text{ solves \eqref{eq:Per MFG}--\rife{per-MFGP}},
\end{cases} \]
\end{prop}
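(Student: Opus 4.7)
The plan is to apply the Bernstein-type maximum principle technique of P.-L. Lions, as refined in \cite{Munoz, Porretta}, relying on the structural identities $L(u_x)=L(u_t)=0$ from Lemma \ref{eqfmgam} together with the growth condition \eqref{eq:polynomial growth} (encoded in $\kappa_0$).

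First I would derive an $L^\infty$ bound on $f(m)$: Proposition \ref{prop: oscillation bound} and Corollary \ref{cor: m bound} give $\|m\|_\infty, \|m^{-1}\|_\infty\le C$ with $C$ depending on the quantities listed in the statement, whence $\|f(m)\|_\infty\le C$. Via the Hamilton--Jacobi equation $u_t=\tfrac12 u_x^2-f(m)$, bounding $\|u_t\|_\infty$ reduces to bounding $\|u_x\|_\infty$.

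Second, I would bound $\|u_x\|_\infty$ by a Bernstein argument on the linearized operator $L$, whose principal part (after sign reversal) is uniformly elliptic with determinant $m f'(m)>0$. Using $L(u_x)=0$ and the chain rule yields the clean identity
\begin{equation*}
L\!\left(\tfrac{1}{2}u_x^2\right)\;=\;-(u_{xt}-u_x u_{xx})^2-m f'(m)u_{xx}^2\;=\;-\bigl(f(m)\bigr)_x^{\,2}-m f'(m)u_{xx}^2\;\le\;0,
\end{equation*}
which by itself only controls $u_x^2$ from \emph{below} on the elliptic boundary. To flip the sign I would consider an auxiliary function
\begin{equation*}
W\;=\;\tfrac{1}{2}u_x^2+A\,u+B\,t,
\end{equation*}
with $A,B>0$ to be fixed. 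From $Q(u)=0$ one computes
\begin{equation*}
L(u)\;=\;2u_x(u_{xt}-u_x u_{xx})-\Bigl(1+\tfrac{m f''(m)}{f'(m)}\Bigr)u_{xx}\Bigl(f(m)+\tfrac{1}{2}u_x^2\Bigr),
\end{equation*}
and a Young-type absorption---made possible by the bounds $m f'(m), m|f''(m)|/f'(m)\le \kappa_0$, and crucially \emph{independent of $\min m$}---shows that for $A,B$ large enough in terms of $\kappa_0,\|f(m)\|_\infty,R$, no sufficiently large interior maximum of $W$ is compatible with the maximum principle for $-L$. Hence $W$ is controlled by its trace on $\mathbb T\times\{0,T\}$.

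Third, I would bound $W$ on $\mathbb T\times\{0,T\}$. The pieces $Au$ and $Bt$ are controlled by Proposition \ref{prop: oscillation bound}. For $\tfrac12 u_x^2$ on these slices, one exploits the identity $(f(m))_x=u_x u_{xx}-u_{xt}$ together with the $C^{1,\alpha}$ regularity of $f(m_0),f(m_T)$ (or, in the MFG case, of $g(m_0)$ combined with the $L^\infty$ control of $m(\cdot,T)$), plus a localized HJ/barrier argument in a thin slab adjacent to $t=0,T$; this provides the terms of the form $R^2/T$ and $T^{-1}$ in the final constant. Once $\|u_x\|_\infty$ is under control, the HJ equation yields $\|u_t\|_\infty\le \tfrac12\|u_x\|_\infty^2+\|f(m)\|_\infty$, completing the estimate.

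The main obstacle is the Bernstein step: because $L(\tfrac12 u_x^2)\le 0$ has the ``wrong'' sign for a direct upper-bound maximum principle, the correction $Au+Bt$ must be engineered to reverse it, and the cross-term $u_x u_{xx}(u_{xt}-u_x u_{xx})(3+m f''(m)/f'(m))$ has to be absorbed against the good quadratics $-(u_{xt}-u_x u_{xx})^2$ and $-m f'(m) u_{xx}^2$ without introducing a dependence on $\min m$. This is precisely where the growth hypothesis \eqref{eq:polynomial growth} plays a decisive role, and is the reason the bound on $\|Du\|_\infty$ can be written purely in terms of $\kappa_0$ and the data at $t=0,T$.
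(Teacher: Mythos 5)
Your first step (reducing $\|u_t\|_\infty$ to $\|u_x\|_\infty$ via the HJ equation) and the overall Bernstein framework are right, but the choice of auxiliary function $W=\tfrac12 u_x^2 + Au + Bt$ cannot close the estimate. The difficulty is that the bad first-order terms contributed by $L(u)$ are \emph{quadratic} in $u_x$, while a linear correction in $u$ and $t$ only produces further quadratic terms with uncontrolled sign. Concretely, at an interior maximum of $W$ one has $W_x=0$, hence (for $u_x\neq 0$) $u_{xx}=-A$, and a short computation yields
\begin{equation*}
L(W) \;=\; -u_{xt}^2 + A^2 u_x^2 - mf'(m)A^2 + A^2\Bigl(1+\tfrac{mf''(m)}{f'(m)}\Bigr)\Bigl(f(m)+\tfrac12 u_x^2\Bigr) - AB\Bigl(1+\tfrac{mf''(m)}{f'(m)}\Bigr).
\end{equation*}
The leading behaviour for large $u_x$ is $A^2 u_x^2\bigl(\tfrac32+\tfrac12\,mf''(m)/f'(m)\bigr)$, which under \eqref{eq:polynomial growth} is generically \emph{positive}, so $L(W)\geq 0$ at the maximum is satisfied without constraint — no contradiction, no bound. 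There is no term of order $u_x^4$ to absorb the quadratic growth, and no choice of $A,B$ repairs this.

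The paper's proof fixes exactly this by using a \emph{quadratic} correction $v=\tfrac12 u_x^2 + \tfrac{1}{2T}\tilde{u}^2$, where $\tilde{u}$ is a carefully translated and time-sheared copy of $u$ with $\tilde u(\cdot,0)\leq -T$, $\tilde u(\cdot,T)\geq T$, $\|\tilde u\|_\infty\leq T+\mathrm{osc}(u)$. Because $Q$ is a pure second-order operator and $Q(\tilde u)=0$, one has
\begin{equation*}
Q\!\left(\tfrac12 \tilde u^2\right) \;=\; -(-\tilde u_t + u_x\tilde u_x)^2 - mf'(m)\tilde u_x^2 \;\leq\; -\Bigl(f(m)+\tfrac12 u_x^2 - \tfrac1T(\mathrm{osc}(u)+2T)\Bigr)^2,
\end{equation*}
and the squared term is $\sim \tfrac14 u_x^4$ when $u_x$ is large. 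This quartic good term is what absorbs all the cross terms (which, after using $v_x=0$ to bound $|u_{xx}|$, are at most quadratic in $u_x$). The quadratic-in-$\tilde u$ structure is therefore not a convenience but the essential engine of the estimate. In addition, the specific normalization of $\tilde u$ gives its sign on the time slices $t=0,T$, so the boundary case is closed by a one-line algebraic argument (reading off $-v_t+u_xv_x\geq 0$ and using $\tilde u\leq -T$ or $\geq T$), rather than the thin-slab barrier argument you sketch. You should replace the linear correction $Au+Bt$ by the quadratic $\tfrac{1}{2T}\tilde u^2$ and redo both the interior and the boundary cases accordingly.
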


\begin{proof}
By rescaling, it is enough to consider the case $R=1$. By approximation\footnote{See the proof of Theorem \ref{thm:existence} for the details of such an approximation argument.},
we may also assume that $u\in C^{3}(\mathbb{T}\times[0,T])$ . We begin
by noting that, using either Corollary \ref{cor: m bound} or Proposition \ref{prop: oscillation bound}, we have that $\|m\|_\infty$ is controlled by $\|m_0\|_\infty$,  and by  $\|m_T\|_\infty$ in case of \rife{per-MFGP}. Hence we will use \rife{eq:polynomial growth f restatement} for $m(x,t)$ below.
We let $v(x,t)=\frac{1}{2}u_{x}^{2}+\frac{1}{2T}\tilde{u}^{2}$,
where $\tilde{u}$ is defined as
\[
\tilde{u}=u-\min u+T-\frac{(\text{osc}\,(u)+2T)}{T}(T-t),
\]
so that $\tilde{u}(x,0)=u-\max u-T\leq-T$, $\tilde{u}(x,T)=u-\min u+T\geq T$,
and $\|\tilde{u}\|_{\infty}\leq(T+\text{osc}\,(u))$. Let $(x_{0},t_{0})$
be a point in which $v$ achieves its maximum value. We note that,
by the HJ equation, it is enough to bound $v$. If $t_{0}=0$, then
we have
\[
-v_{t}+u_{x}v_{x}=u_{x}f(m_{0})_{x}+\frac{1}{T}\tilde{u}(f(m_{0})+ \frac{1}{2}u_{x}^{2}-\frac{1}{T}(\text{osc}\,(u)+2T)).
\]
Now, we have $v_{x}=0$ and $v_{t}\leq0$, and recall that $\tilde u(x,0)\leq -T$; so either $f(m_{0}) + \frac{1}{2}u_{x}^{2}-\frac{1}{T}(\text{osc}\,(u)+2T)\leq0$,
in which case there is nothing to prove, or we deduce
$$
0\leq u_{x}f(m_{0})_{x}- (f(m_{0})+ \frac{1}{2}u_{x}^{2}-\frac{1}{T}(\text{osc}\,(u)+2T))\,.
$$
This means 
\[
\frac{1}{2}u_{x}^{2} \leq    u_{x}f(m_{0})_{x}- f(m_{0})+  \frac{1}{T}(\text{osc}\,(u) +2T),
\]
and yields the required estimate. The case in which $t_{0}=T$
is similar, so we assume now that $0<t_{0}<T$. We begin by noting
that, since $v_{x}=0,$
\[
u_{x}u_{xx}=-\frac{1}{T}u_{x}\tilde{u},
\]
and, thus, 
\begin{equation}
|u_{xx}|\leq \frac{C}{T}(T+\text{osc}(u)).\label{eq:uxx bd}
\end{equation}
Now, recall the definitions of the linear operators $Q,L$ in \rife{eq:Q definition}, \rife{eq:linearization}. Using \rife{eq:-3} we have
\begin{equation}
L\left(\frac{1}{2}u_{x}^{2}\right) \leq-(u_{xt}-u_{x}u_{xx})^{2}\,.\label{eq:-3new}
\end{equation}
On the other hand, since $Q(\tilde u)=0$,   we have
\[
Q\left(\frac{1}{2}\tilde{u}^{2}\right)=-( -\tilde{u}_{t}+u_{x}^{2})^{2}-mf'(m)\tilde{u}_{x}^{2}\leq-( -\tilde{u}_{t} +u_{x}^{2})^{2}=-\left(f(m)+\frac{1}{2}u_{x}^{2}-\frac{1}{T}(\text{osc}\,(u)+2T)\right)^{2}\,.
\]
%
%
Observe that, by Corollary \ref{cor: m bound} and Proposition \ref{prop: oscillation bound}, $|f(m)|$ is bounded. Therefore, if $f(m)+\frac{1}{4}u_{x}^{2}\leq\frac{1}{T}(\text{osc}\,(u)+2T)$,
there is nothing to prove. Else,
\[
Q\left(\frac{1}{2}\tilde{u}^{2}\right)\leq-\frac{1}{32}u_{x}^{4}-\frac{1}{2}( -\tilde{u}_{t} +u_{x}^{2})^{2},
\]
and, thus, by definition of $L$, we obtain, using  (\ref{eq:polynomial growth f restatement}) and (\ref{eq:uxx bd}),
\begin{align*}
L\left(\frac{1}{2T}\tilde{u}^{2}\right) & \leq-\frac{1}{32T}u_{x}^{4}-\frac{1}{2T}(-\tilde{u}_{t}+u_{x}\tilde{u}_{x})^{2}+\frac{2}{T}(u_{xt}-u_{x}u_{xx})u_{x}\tilde{u}-\frac{1}{T}\left(\frac{mf''(m)}{f'(m)}+1\right)u_{xx}(-\tilde{u}_{t}+u_{x}\tilde{u}_{x})\tilde{u}\\
 & \leq-\frac{1}{32T}u_{x}^{4}+\frac{1}{2T}\left(\kappa_0+1\right)^{2}(u_{xx})^{2}\tilde{u}^2 +\frac{4}{T^{2}}u_{x}^{2}\tilde{u}^{2}+(u_{xt}-u_{x}u_{xx})^{2} \\
 & \leq-\frac{1}{64T}u_{x}^{4}+\frac{C}{T^{3}}(T+\text{osc}\,(u))^{4}+ (u_{xt}-u_{x}u_{xx})^{2} \,.
\end{align*}
Putting toghether the above inequality with \rife{eq:-3new} we get
\[
L(v)\leq-\frac{1}{64T}u_{x}^{4}+\frac{C}{T^{3}}(T+\text{osc\,}(u))^{4}.
\]
Now, since $(x_{0},t_{0})$ is a maximum point for $v$, we have $L(v)\geq0$,
which yields
\[
u_{x}^{4}\leq\frac{C}{T^{2}}(T+\text{osc}\,(u))^{4}.
\]
Recalling that $\text{osc\,}(u)$ is estimated from Proposition \ref{prop: oscillation bound}, we conclude the estimate.
\end{proof}

We   now show that, under the present assumptions, (\ref{eq:Per MFG})--\rife{couplingT} 
and (\ref{eq:Per MFG})--\rife{per-MFGP} may be solved classically.
\begin{thm}
\label{thm:existence}  Assume that conditions \rife{eq:polynomial growth},  \eqref{perMassAssumption}, and \rife{eq:marginals C1,alpha} hold true. 
Then the systems \eqref{eq:Per MFG}--\rife{couplingT}  and \eqref{eq:Per MFG}--\rife{per-MFGP}
have a classical solution $(u,m)\in C^{2,\alpha}(\T\times[0,T])\times C^{1,\alpha}(\T\times[0,T])$,
with $m$ being unique. In the case of (\ref{couplingT}), $u$ is
unique, and in the case of (\ref{per-MFGP}), $u$ is unique up
to a constant. 
\end{thm}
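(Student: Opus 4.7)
The plan is to reduce the problem to the existing existence theory (which requires the blow-up condition \eqref{eq:blowup}) via a cut-and-extend modification of $f$, then use the a priori bounds already established to conclude. The key observation is that, since $m_{0}, m_{T}\in C^{1}(\T)$ are strictly positive on the compact torus, we have $c_{0}:=\min(\min_{\T} m_{0}, \min_{\T}m_{T})>0$, so by Corollary \ref{cor: m bound} any classical solution has $m$ bounded away from $0$ by $c_{0}$. Therefore only the values of $f$ on $[c_{0},+\infty)$ are relevant, and we may modify $f$ off this set without changing the problem.

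\emph{Step 1 (regularization).} Choose $\tilde f\in C^{2}(0,\infty)$ with $\tilde f'>0$ on $(0,\infty)$, $\tilde f\equiv f$ on $[c_{0}/2,\infty)$, $\lim_{m\to 0^{+}}\tilde f(m)=-\infty$, and such that $m\tilde f'(m)$ and $m|\tilde f''(m)|/\tilde f'(m)$ satisfy \eqref{eq:polynomial growth f restatement} with the same constant $\kappa_{0}$ (this is a one-dimensional extension problem). Do the same for $g$ in the setting of \eqref{couplingT}. The existence result \cite[Theorem 1.1]{Munoz} (applicable under the blow-up assumption) yields a smooth solution $(\tilde u,\tilde m)$ of the system with $(\tilde f,\tilde g)$ and data $m_{0}$ (and $m_{T}$). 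By Corollary \ref{cor: m bound}, $\tilde m\geq c_{0}$ pointwise, so $\tilde f(\tilde m)=f(\tilde m)$ and $(\tilde u,\tilde m)$ is in fact a classical solution of the original system.

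\emph{Step 2 (regularity up to $C^{2,\alpha}\times C^{1,\alpha}$).} Proposition \ref{prop:gradient bound} gives a uniform bound on $Du$, and Corollary \ref{cor: m bound} gives $c_{0}\leq m\leq \max(\|m_{0}\|_{\infty},\|m_{T}\|_{\infty})$. Rewriting the system as the quasilinear equation \eqref{eq.elluBIS} for $u$ alone, the coefficient $u_{x}^{2}+mf'(m)$ is bounded below by $c_{0}f'(c_{0})>0$, so \eqref{eq.elluBIS} is uniformly elliptic with bounded coefficients. Apply interior De Giorgi–Nash–Moser to $Du$ to obtain $Du\in C^{\beta}_{\mathrm{loc}}$, and then Schauder theory to promote this to $u\in C^{2,\alpha}_{\mathrm{loc}}$; the boundary regularity at $t=0,T$ follows from the boundary Schauder theory for \eqref{eq.elluBIS}, using the assumed $C^{1,\alpha}$ regularity of $f(m_{0})$ and either $g(m_{0})$ or $f(m_{T})$ (which translates into a Dirichlet datum for $u$, or a pseudo-differential identity $-u_{t}+\tfrac{1}{2}u_{x}^{2}=f(m_{0})$ at $t=0$ and analogously at $t=T$). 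Then $m=f^{-1}(-u_{t}+\tfrac12 u_{x}^{2})$ inherits $C^{1,\alpha}$ regularity from $u$ and from $f^{-1}\in C^{2}$ on $[c_{0},\infty)$.

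\emph{Step 3 (uniqueness).} Apply the standard Lasry–Lions monotonicity argument: given two classical solutions $(u_{i},m_{i})$, multiply the difference of the HJ equations by $m_{1}-m_{2}$ and the difference of the continuity equations by $u_{1}-u_{2}$, integrate over $\T\times(0,T)$ and combine, obtaining
\begin{equation*}
\int_{0}^{T}\!\!\int_{\T}\bigl(f(m_{1})-f(m_{2))}\bigr)(m_{1}-m_{2})+\tfrac12(m_{1}+m_{2})(\partial_{x}u_{1}-\partial_{x}u_{2})^{2}\,dxdt \;=\;\mathrm{BT},
\end{equation*}
where the boundary term $\mathrm{BT}$ vanishes in the planning case (\ref{per-MFGP}) and equals $-\int_{\T}(g(m_{1}(T))-g(m_{2}(T)))(m_{1}(T)-m_{2}(T))\leq 0$ in the terminal-cost case \eqref{couplingT}. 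Both terms on the left are nonnegative and, by the strict monotonicity of $f$, force $m_{1}=m_{2}$; the continuity equation and the positivity of $m$ then force $\partial_{x}u_{1}=\partial_{x}u_{2}$, giving $u_{1}=u_{2}+C(t)$, and the HJ equation fixes $C'\equiv 0$. In case \eqref{couplingT}, $g(m_{1}(T))=g(m_{2}(T))$ together with the HJ equation at $t=T$ fixes the constant, yielding full uniqueness.

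The main technical obstacle is Step 2, and more specifically the boundary regularity at $t=0,T$ for the quasilinear degenerate-in-principle equation \eqref{eq.elluBIS}; once one exploits the uniform positivity of $m$ coming from Corollary \ref{cor: m bound}, however, this reduces to an application of standard Schauder theory for uniformly elliptic quasilinear equations, and the rest is routine.
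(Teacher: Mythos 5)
Your core idea in Step~1 --- truncating $f$ below $c_0/2$ to manufacture the blow-up condition \eqref{eq:blowup}, using the a priori lower bound $m\geq c_0$ from Corollary~\ref{cor: m bound} and Proposition~\ref{prop: oscillation bound} to show the solution of the modified system coincides with one of the original system --- is a clean and legitimate alternative to invoking the blow-up-free result of \cite{MimikosMunoz}, and I want to flag that it is a genuinely nice trick. But as written, Step~1 has a gap: you apply \cite[Thm.~1.1]{Munoz} directly to the original marginals $m_0,m_T$, which under the hypotheses of Theorem~\ref{thm:existence} are only known to satisfy $f(m_0),f(m_T)\in C^{1,\alpha}(\T)$, and with $f$ only in $C^2$. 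The paper explicitly states that the preexisting existence theory applies when ``the data are sufficiently smooth,'' and the whole point of Theorem~\ref{thm:existence} is to extend this to the present weaker $C^{1,\alpha}$ setting. That is precisely why the paper's proof does not apply the existing theorem directly: it first regularizes both $f$ to $f^\epsilon\in C^4$ and $m_0,m_T$ to $m_0^\epsilon,m_T^\epsilon\in C^4(\T)$ (keeping $f^\epsilon(m_0^\epsilon),f^\epsilon(m_T^\epsilon)$ uniformly bounded in $C^{1,\alpha}$), applies \cite[Thm.~1.1]{MimikosMunoz} to the regularized data, and then passes to the limit $\epsilon\to 0$. This passage to the limit is where the real work lies, and it is entirely absent from your proposal.

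The second, related gap is in your Step~2. The boundary conditions at $t=0,T$ are the nonlinear oblique conditions $-u_t(\cdot,0)+\tfrac12 u_x(\cdot,0)^2=f(m_0)$ (and similarly at $t=T$), which are of Neumann type and quadratic in $Du$ --- they are not Dirichlet conditions. Standard Dirichlet Schauder theory does not apply at $t=0,T$; one needs Lieberman's $C^{1,s}$ estimate for oblique quasilinear problems \cite{Lieberman} and the Schauder estimates for linear oblique problems \cite{Lieberman solvability}, combined with a Fiorenza-type convergence argument to pass from the uniformly $C^1$-bounded approximations to a $C^{2,\alpha}$ limit. The paper spells out exactly this: from the uniform bounds of Propositions~\ref{prop: oscillation bound}--\ref{prop:gradient bound}, Lieberman's estimate gives a uniform $C^{1,s}$ bound; then the differences $u^{\epsilon}-u^{\epsilon'}$ satisfy a linear oblique problem, and the oblique Schauder estimates together with the interpolation inequality $\|\cdot\|_{C^2}\leq\delta\|\cdot\|_{C^{2,\alpha}}+C_\delta\|\cdot\|_{C^0}$ yield a uniform $C^{2,\alpha'}$ bound and convergence. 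Also minor: the interior argument for $u$ should use Krylov--Safonov / Ladyzhenskaya--Uraltseva for non-divergence quasilinear equations rather than De Giorgi--Nash--Moser, which addresses divergence form.

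If you insert the missing approximation of $m_0,m_T$ (and take $\tilde f\in C^4$, e.g.\ by a further mollification, since the $C^2$ extension you describe may not be enough for the cited existence theorem), and then carry out the Lieberman--Fiorenza limit argument, the proof would close --- and it would differ from the paper's only in using Munoz plus your truncation trick rather than Mimikos--Munoz. Your Step~3 (Lasry--Lions monotonicity) is correct; note only that in the terminal-cost case the constant is pinned down by the equality $u_1(\cdot,T)=g(m_1(\cdot,T))=g(m_2(\cdot,T))=u_2(\cdot,T)$ of terminal data, not by the HJ equation itself.
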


\begin{proof}
The uniqueness is a standard result, proved through duality. We will
do the proof of existence for (\ref{eq:Per MFG})--\rife{per-MFGP};  
the alternative case of (\ref{eq:Per MFG})--\rife{couplingT} requires only minor modifications.
We may approximate  $f$ and the marginals with $f^{\epsilon}\in C^{4}(0,\infty)$, $m_{0}^{\vep},m_{T}^{\vep}\in C^{4}(\T)$, such that $f^{\epsilon}(m_{0}^{\vep}),f^{\epsilon}(m_{T}^{\vep})$
is uniformly bounded in $C^{1,\alpha}(\T)$. Indeed, we may simply
take
\begin{equation}
 f^{\epsilon}=f\ast\eta_{\epsilon,} \quad m_{0}^{\vep}=(f^{\epsilon})^{-1}(f^{\epsilon}(m_{0})\ast\eta_{\vep}+c_{0,\vep}),\quad m_{T}^{\vep}=(f^{\epsilon})^{-1}(f^{\epsilon}(m_{T})\ast\eta_{\vep}+c_{T,\vep}),\label{eq:convolution approx}
\end{equation}
where $\eta^{\vep}$ is
the standard mollifier, and the non-negative constants $c_{0,\vep},c_{T,\vep}$
are adequately chosen such that $c_{0,\vep}\cdot c_{T,\vep}=0$,
$\int_{\T}\me_{0}=\int_{\T}\me_{T}$ and $\lim_{\vep\rightarrow0}c_{0,\vep}=\lim_{\vep\rightarrow0}c_{T,\vep}=0$.
With these regularized data, (\ref{eq:Per MFG})--\rife{per-MFGP} has a unique classical
solution $(\ue,\me)\in C^{3}(\T\times[0,T])\times C^{2}(\T\times[0,T])$
satisfying $\int_{\T}\int_{0}^{T}\ue=0$ (see \cite[Thm 1.1]{MimikosMunoz}).
Moreover, in view of Propositions \ref{prop: oscillation bound} and
\ref{prop:gradient bound}, the solution is bounded in $C^{1}\times C^{0}$,
uniformly in $\vep$. The result will then follow by letting $\vep\rightarrow0$
and applying a version of R. Fiorenza's convergence theorem for elliptic
oblique problems (see, for instance, \cite[Lem. 17.29]{GilbargTrudinger}, \cite[Lem. 2, Cor. 1]{Lieberman solvability}, \cite{Fiorenza, Fiorenza-1}).
For completeness, we sketch the details for this argument, which amounts
to a proof of Fiorenza's result. From Lemma \ref{eq.elluBIS}, the functions $\ue$ solve the oblique
quasilinear elliptic problem

\[
\begin{cases}
-\text{Tr}(A(\ue_{x},\ue_{t})D^{2}\ue)=0 & (x,t)\in\T\times[0,T],\\
-\ue_{t}(x,0)+\frac{1}{2}(\ue_{x})^{2}(x,0)=f(m_{0}^{\vep}(x)) & x\in\T,\\
-\ue_{t}(x,T)+\frac{1}{2}(\ue_{x})^{2}(x,T)=f(m_{T}^{\vep}(x)) & x\in\T,
\end{cases}
\]
where 

\[
A(\ue_{x},\ue_{t})=\begin{pmatrix}(\ue_{x})^{2}+\me f'(\me) & -\ue_{x}\\
-\ue_{x} & 1
\end{pmatrix}=\begin{pmatrix}(\ue_{x})^{2}+f^{-1}(-\ue_{t}+\frac{1}{2}(\ue_{x})^{2})f'(f^{-1}(-\ue_{t}+\frac{1}{2}(\ue_{x})^{2})) & -\ue_{x}\\
-\ue_{x} & 1
\end{pmatrix}.
\]
Since $\|\ue\|_{C^{1}}$, $\|\me\|_{\infty}$ $\|(\me)^{-1}\|_{\infty}$
are uniformly bounded, this equation is uniformly elliptic, uniformly
in $\vep$. Thus, as a result of Lieberman's $C^{1,s}$ estimate
for oblique problems (see \cite[Lem. 2.3]{Lieberman}),
there exists $0<s<1$ and a constant $C>0$ such that 
\[
\|u^{\vep}\|_{C^{1,s}}\leq C.
\]
Therefore if $0<\vep'<1$, the difference $v=u^{\vep}-u^{\vep'}$
solves
\[
\begin{cases}
-\text{Tr}(A^{\vep}D^{2}v)=\text{Tr}((A^{\vep}-A^{\vep'})D^{2}u^{\vep'}) & (x,t)\in\T\times[0,T],\\
-v_{t}(x,0)+\frac{1}{2}(\ue_{x}(x,0)+u_{x}^{\vep'}(x,0))v_{x}(x,0)=f(m_{0}^{\vep}(x))-f(m_{0}^{\vep'}(x)) & x\in\T,\\
-v_{t}(x,T)+\frac{1}{2}(\ue_{x}(x,T)+u_{x}^{\vep'}(x,T))v_{x}(x,0)=f(m_{T}^{\vep}(x))-f(m_{T}^{\vep'}(x)) & x\in\T.
\end{cases}
\]
Let $\alpha'=\min(\alpha,s)$. The standard Schauder estimates for
linear oblique problems (see, for instance, \cite[Lem. 1]{Lieberman solvability})
imply that 
\begin{multline*}
\|v\|_{C^{2,\alpha'}}\leq C(\|v\|_{\infty}+\|u^{\vep'}\|_{C^{2,\alpha'}}\|v\|_{C^{1}}+\|u^{\vep'}\|_{C^{2}}\|v^{\vep}\|_{C^{1,\alpha'}}+\|f(m_{0}^{\vep})-f(m_{0}^{\vep'})\|_{C^{1,\alpha'}}\\
+\|f(m_{T}^{\vep})-f(m_{T}^{\vep'})\|_{C^{1,\alpha'}}+\|v\|_{C^{1}}\|u^{\vep}+u^{\vep'}\|_{C^{2,\alpha'}}).
\end{multline*}
Recalling the interpolation inequality for H\"older spaces, $\| \cdot \|_{C^{2}} \leq \delta \| \cdot  \|_{C^{2,\alpha}} + C_{\delta} \| \cdot \|_{C^{0}}$, we deduce that
\[
\|u^{\vep}-u^{\vep'}\|_{C^{2,\alpha'}}\leq C(o(1)(1+\|u^{\vep}\|_{C^{2,\alpha'}}+\|u^{\vep'}\|_{C^{2,\alpha'}})+\|u^{\vep'}\|_{C^{2}}+1)\leq\frac{1}{4}(\|u^{\vep}\|_{C^{2,\alpha'}}+\|u^{\vep'}\|_{C^{2,\alpha'}}))+C
\]
as $\vep,\vep'\rightarrow0$. Fixing a small $\vep'$,
and letting $\vep\rightarrow0$, we see that $\|u^{\vep}\|_{C^{2,\alpha'}}$
must be bounded. Now repeating the same argument but taking $\alpha'=\alpha$,
we see that $\|u^{\vep}\|_{C^{2,\alpha}}$ must be bounded as
well, which means $u^{\vep}$ converges to a solution $u\in C^{2,\alpha}(\T \times [0,T])$. In turn, the $C^{1,\alpha}$ regularity of $m$ follows from the HJ equation and the $C^2$ regularity of $f$.
\end{proof}

\subsection{Continuity of the density}

In this section, we will prove that the function $f(m)$ satisfies a uniform modulus
of continuity, independent of $\min m$. This estimate is a crucial step in treating the setting of compactly supported solutions, because it will allow us to prove that the density is globally continuous, despite the lack of a positive lower bound. 

\begin{lem}
\label{lem:minimum principle} Under the assumptions of Theorem \ref{thm:existence}, let $(u,m)$ be a classical solution to \eqref{eq:Per MFG}.
Then the function $v=f(m)$ satisfies the maximum
principle and the minimum principle on each compact subset of $\T\times[0,T]$.
\end{lem}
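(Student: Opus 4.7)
The plan is to deduce both principles from the standard weak max/min principle for a linear uniformly elliptic divergence-form PDE with no zero-order term, applied in Lagrangian coordinates via the flow $\gamma$. There is a direct Eulerian shortcut for the maximum principle alone: from identity \eqref{eq:---4}, $L(f(m)) = -(u_{xt}-u_x u_{xx})^2 - m f'(m)u_{xx}^2 \le 0$, and since $L$ is uniformly elliptic on compacts (the determinant of its principal matrix is $mf'(m)>0$ by the positivity of $m$) and has no zero-order term in $v$, the classical weak maximum principle gives $\max_K f(m) = \max_{\partial K} f(m)$ on every compact $K \subset \T\times[0,T]$. This sign is however not sufficient to yield the minimum principle in Eulerian, so I would turn to Lagrangian coordinates to handle both principles simultaneously.

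Under the present hypotheses, Theorem \ref{thm:existence} together with Corollary \ref{cor: m bound} provides a smooth classical solution $(u,m)$ with $m$ bounded above and below by positive constants on $\T \times [0,T]$. The mass-conservation identity \eqref{masspreservationBIS} then forces $\gamma_x = m_0/m$ to be strictly positive and uniformly bounded, so the map $\Gamma\colon (x,t) \mapsto (\gamma(x,t),t)$ is a $C^2$ diffeomorphism of $\T \times [0,T]$. Setting $\tilde v(x,t) := f(m(\gamma(x,t),t))$, Lemma \ref{eqfmgam} gives the divergence-form equation
\[
-\Bigl(\frac{\tilde v_x}{\gamma_x}\Bigr)_x - \Bigl(\frac{\gamma_x^2}{m_0 f'(m_0/\gamma_x)}\tilde v_t\Bigr)_t = 0,
\]
whose diagonal principal coefficients $1/\gamma_x$ and $\gamma_x^2/(m_0 f'(m_0/\gamma_x))$ are continuous, strictly positive, and uniformly bounded. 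This is a linear uniformly elliptic equation in divergence form with no zero-order term, so the classical weak maximum and minimum principles both apply to $\tilde v$ on any compact subset $\tilde K \subset \T\times[0,T]$.

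Finally, for a compact $K \subset \T \times [0,T]$ in Eulerian I would set $\tilde K := \Gamma^{-1}(K)$, which is compact with $\partial \tilde K = \Gamma^{-1}(\partial K)$ because $\Gamma$ is a diffeomorphism. Since $v = \tilde v \circ \Gamma^{-1}$, the extrema of $v$ on $K$ coincide with those of $\tilde v$ on $\tilde K$, which by the previous step are attained on $\partial \tilde K$, and hence on $\partial K$ via $\Gamma$. Both the maximum and the minimum principle for $f(m)$ on $K$ then follow. The chief technical point I anticipate is the uniform control of $\gamma_x$ from above and below, ensuring uniform ellipticity of the Lagrangian equation; this reduces to Corollary \ref{cor: m bound} and, crucially, does not require a positive lower bound on $m_0$, which will be essential for the approximation arguments towards compactly supported marginals later in the paper.
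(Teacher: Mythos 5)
Your proof is correct, but it takes a genuinely different route. The paper stays in Eulerian variables: substituting the continuity-equation identity $mf'(m)u_{xx}=v_t-v_x u_x$ into \eqref{eq:f(m) equation} gives the equality \eqref{eq:--4}; absorbing one factor of each quadratic term there as a frozen first-order coefficient exhibits $v$ as a solution of a linear, uniformly elliptic, non-divergence equation with first-order but no zero-order terms, so the maximum and minimum principles both follow at once. You correctly observe that the one-sided inequality $L(f(m))\le0$ yields only the maximum principle; the paper's device is to work with the $Q$-equality rather than with the linearized operator $L$. Your Lagrangian route is also valid and uses the equally available equation \eqref{eq.v=fmgamma} for $\tilde v(x,t)=f(m(\gamma(x,t),t))$: the map $\Gamma(x,t)=(\gamma(x,t),t)$ is a time-slice-preserving diffeomorphism of $\T\times[0,T]$ because $\gamma_x=m_0/m$ is bounded above and below (Corollary \ref{cor: m bound} together with the strict positivity of the marginals) and $\gamma(x+R,t)=\gamma(x,t)+R$ by periodicity, the coefficients of \eqref{eq.v=fmgamma} are positive and bounded by the same bounds on $m$, and there is no zero-order term, so both principles hold for $\tilde v$ and pass to $v$ through $\Gamma$. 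The Eulerian route is lighter, needing only the structure identities of Subsection 3.1, while your Lagrangian route exposes the divergence form later exploited in the intrinsic-scaling arguments of Section 4; both tacitly assume $(u,m)$ smooth enough to compute the second derivatives involved, which is justified by the approximation remark opening the paper's proof.
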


\begin{proof}
By approximation, we may assume that $u$ and $m$ are smooth. Hence $v=f(m)$ satisfies \rife{eq:f(m) equation}.
%
%
Now, from the continuity equation,
we have
\[
mf'(m)u_{xx}=f(m)_{t}-f(m)_{x}u_{x}= v_t-v_x u_x,
\]
and, therefore, substituting in (\ref{eq:f(m) equation}) yields
\begin{equation}
Q(v)-v_{x}^{2}+m(mf''(m)+2f'(m))(mf'(m))^{-2}(v_{t}-v_{x}u_{x})^{2}=0.
\label{eq:--4}
\end{equation}
We notice that $Q$ is a purely second order linear elliptic operator,
and the remaining terms of (\ref{eq:--4}) can  be written as
 first--order terms in $v$. Thus, $v$ satisfies an elliptic
equation with no zero--order terms. This implies that $v$ satisfies
the maximum and the minimum principle on every compact subset of $\T\times[0,T]$,
as wanted.
\end{proof}
We now compute an energy estimate for the function $v$.  Recall that $\kappa_0$ is given by \rife{eq:polynomial growth f restatement}.
\begin{prop}
\label{prop:energy estimate}  Under the assumptions of Theorem \ref{thm:existence}, let $(u,m)$ be a classical solution to \eqref{eq:Per MFG}--\rife{couplingT} or 
 \eqref{eq:Per MFG}--\rife{per-MFGP}, and let $v=f(m)$. Then there exists  $C$
such that
\[
\int_{0}^{T}\int_{\T}|Dv|^{2}\le C
\]
where
\[
C= \begin{cases} C(\kappa_{0},R,T,T^{-1},\|f(m_{0})\|_{W^{1,\infty}}, \|g(m_0)\|_\infty) & \text{if } (u,m) \text{ solves \eqref{eq:Per MFG}--\rife{couplingT} },\\
C(\kappa_{0}, R,T,T^{-1},\|f(m_{0})\|_{W^{1,\infty}},\|f(m_{T})\|_{W^{1,\infty}}) & \text{if } (u,m) \text{ solves \eqref{eq:Per MFG}--\rife{per-MFGP}},
\end{cases} \]
\end{prop}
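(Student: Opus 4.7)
The plan is to exploit the displacement convexity identity \eqref{eq:displacement 1}, which rearranged reads
$$
mf'(m)u_{xx}^{2}+v_{x}^{2}=(v_{t}u_{x})_{x}-(v_{x}u_{x})_{t},
$$
and integrate it over $\T\times(0,T)$. The term $(v_t u_x)_x$ integrates to zero thanks to spatial periodicity, while $(v_x u_x)_t$ produces only boundary contributions at $t=0$ and $t=T$. Thus
$$
\int_{0}^{T}\!\!\int_{\T}\bigl(mf'(m)u_{xx}^{2}+v_{x}^{2}\bigr)\,dx\,dt=\int_{\T}(v_{x}u_{x})(x,0)\,dx-\int_{\T}(v_{x}u_{x})(x,T)\,dx.
$$

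The next step is to bound the two boundary integrals. In the planning case \eqref{per-MFGP}, $v(\cdot,0)=f(m_0)$ and $v(\cdot,T)=f(m_T)$ are given, so both are controlled by $\|f(m_0)\|_{W^{1,\infty}}$, $\|f(m_T)\|_{W^{1,\infty}}$ and $\|Du\|_\infty$, the latter being bounded through Proposition \ref{prop:gradient bound}. In the MFG case \eqref{couplingT}, only the boundary data at $t=0$ is of this form; at $t=T$, differentiating the terminal condition $u(x,T)=g(m(x,T))$ gives $u_x(x,T)=g'(m)m_x$ while $v_x(x,T)=f'(m)m_x$, hence
$$
(v_{x}u_{x})(x,T)=g'(m)f'(m)\,m_{x}^{2}\geq 0,
$$
since $f',g'>0$. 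This nonnegative term can be dropped (or absorbed) on the left-hand side, leaving only the boundary integral at $t=0$ on the right, which is estimated by $R\,\|f(m_0)\|_{W^{1,\infty}}\|Du\|_\infty$. In both settings we obtain
\begin{equation}\label{tmpbound}
\int_{0}^{T}\!\!\int_{\T}mf'(m)u_{xx}^{2}\,dx\,dt+\int_{0}^{T}\!\!\int_{\T}v_{x}^{2}\,dx\,dt\leq C,
\end{equation}
for a constant $C$ of the type claimed.

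It remains to control $\int\!\!\int v_t^2$. For this, I would differentiate $v=f(m)$ in time and use the continuity equation to write
$$
v_{t}=f'(m)m_{t}=f'(m)(m u_{x})_{x}=v_{x}u_{x}+mf'(m)u_{xx}.
$$
Squaring and using the elementary inequality $(a+b)^2\leq 2a^2+2b^2$ yields
$$
v_{t}^{2}\leq 2\|u_{x}\|_{\infty}^{2}\,v_{x}^{2}+2(mf'(m))^{2}u_{xx}^{2}\leq 2\|u_{x}\|_{\infty}^{2}\,v_{x}^{2}+2\kappa_{0}\,mf'(m)u_{xx}^{2},
$$
where in the last step we invoked \eqref{eq:polynomial growth f restatement} in the form $(mf'(m))^2\leq \kappa_0\, mf'(m)$. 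Integrating this pointwise inequality and plugging in \eqref{tmpbound} (combined with the Lipschitz estimate on $u$) gives the desired bound on $\int_0^T\!\!\int_\T v_t^2$, completing the proof.

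The only delicate point in this plan is the treatment of the boundary term at $t=T$ in the MFG case; the argument crucially uses the monotonicity hypothesis $g'>0$ to ensure the correct sign, so that the boundary contribution can be discarded rather than estimated. Everything else is routine integration by parts together with Proposition \ref{prop:gradient bound} and the growth condition \eqref{eq:polynomial growth f restatement}.
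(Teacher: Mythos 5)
Your proof is correct and takes essentially the same route as the paper: integrate the displacement convexity identity \eqref{eq:displacement 1}, estimate the boundary terms (using the sign of $v_x u_x$ at $t=T$ in the case of \eqref{couplingT}, which the paper writes as $-f'(m)g'(m)^{-1}u_x(T)^2 \le 0$, equivalent to your $-g'(m)f'(m)m_x^2 \le 0$), and then recover the $v_t$ bound from the continuity equation together with the growth condition \eqref{eq:polynomial growth f restatement}. No gaps.
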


\begin{proof}
Integrating (\ref{eq:displacement 1}) in space-time yields

\[
\int_{0}^{T}\int_{\T}mf'(m)u_{xx}^{2}+f(m)_{x}^{2}=\int_{\T}f(m)_{x}u_{x}(0)-f(m)_{x}u_{x}(T)\leq C-\int_{\T}f(m)_{x}u_{x}(T),
\]
where we used the gradient bound from Proposition \ref{prop:gradient bound}. If $(u,m)$ solves \rife{per-MFGP}, then we use the bound on $u_x$ and $f(m_T)_x$. If  $(u,m)$ solves (\ref{eq:Per MFG}), we have $-f(m)_{x}u_{x}(T)=-f'(m)g'(m)^{-1}u_{x}(T)^{2}<0$; 
so, in any case, we obtain
\be\label{fmx}
\int_{0}^{T}\int_{\T}mf'(m)u_{xx}^{2}+f(m)_{x}^{2}\leq C\,.
\ee
The bound on $f(m)_{t}$ simply follows from the continuity equation:
\[
f(m)_{t}^{2}=(f(m)_{x}u_{x}+mf'(m)u_{xx})^{2}\leq2(f(m)_{x}^{2}u_{x}^{2}+(mf'(m))^{2}u_{xx}^{2})\leq C(f(m)_{x}^{2}+mf'(m)u_{xx}^{2}),
\]
where the bound on $u_x$ and (\ref{eq:polynomial growth f restatement}) were used in the
last inequality. Integrating and using \rife{fmx}, we get the $L^2$ bound for $f(m)_t$. Finally, we have proved that
\[
\int_{0}^{T}\int_{\T}f(m)_{t}^{2}+f(m)_{x}^{2}\leq C,
\]
where $C$ depends on the same quantities as the bound of $u_x$ in Proposition \ref{prop:gradient bound}.
\end{proof}

The interior continuity now follows from a classical computation, originally attributed
to H. Lebesgue \cite{Lebesgue}, which implies that a sufficient condition for
a $W^{1,2}$ function in two variables to be continuous, is for it to satisfy the maximum and minimum principle.
\begin{prop}[Interior modulus of continuity]
\label{prop:interior modulus}
 Under the assumptions of Theorem \ref{thm:existence}, let
$(u,m)$ be a classical solution to (\ref{eq:Per MFG}). Then 
the function $v=f(m)$ has the following logarithmic modulus of continuity,
valid for all concentric balls $B_{r_{1}}$ and $B_{r_{2}}$, $r_{1}\leq r_{2}$
contained in $Q_{T}$. 
\[
(\emph{\text{osc}}_{B_{r_{1}}}\,(v))^{2}\log\left(\frac{r_{2}}{r_{1}}\right)\leq\pi\iint_{B_{r_{2}}}|Dv|^{2}.
\]
\end{prop}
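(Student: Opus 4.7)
The strategy is the classical Lebesgue--Morrey argument in two dimensions, which exploits precisely two ingredients: the max/min principle for $v=f(m)$ (Lemma~\ref{lem:minimum principle}) and the fact that, in dimension two, an $H^1$ function restricted to a circle inherits a one-dimensional Sobolev estimate. I will work in polar coordinates $(r,\theta)$ centered at the common center $x_0$ of the concentric balls.

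First, since Lemma~\ref{lem:minimum principle} asserts that $v$ attains both its maximum and its minimum on the boundary of every compact subdomain of $\mathbb{T}\times(0,T)$, for every $r\in[r_1,r_2]$ I have
\[
\operatorname{osc}_{B_{r_1}}(v)\;\leq\;\operatorname{osc}_{B_r}(v)\;=\;\operatorname{osc}_{\partial B_r}(v).
\]
Next, parametrizing the circle $\partial B_r$ by $\theta\in[0,2\pi]$, I fix points $\theta_m,\theta_M$ where $v(r,\cdot)$ attains its minimum and maximum. Writing the difference as an integral of the angular derivative along the shorter arc (of length at most $\pi$) and applying Cauchy--Schwarz, I obtain
\[
\bigl(\operatorname{osc}_{\partial B_r}(v)\bigr)^{2}
\;\leq\;\pi\int_{0}^{2\pi}|v_{\theta}(r,\theta)|^{2}\,d\theta
\;=\;\pi\, r\!\int_{\partial B_r}|\partial_{\tau}v|^{2}\,d\ell
\;\leq\;\pi\, r\!\int_{\partial B_r}|Dv|^{2}\,d\ell,
\]
where I used $v_\theta=r\,\partial_\tau v$ and $|\partial_\tau v|\leq|Dv|$. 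Dividing by $r$ yields
\[
\frac{\bigl(\operatorname{osc}_{B_{r_1}}(v)\bigr)^{2}}{r}\;\leq\;\pi\!\int_{\partial B_r}|Dv|^{2}\,d\ell.
\]

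Finally, integrating this inequality in $r$ from $r_1$ to $r_2$ and using the coarea/polar formula on the right-hand side,
\[
\bigl(\operatorname{osc}_{B_{r_1}}(v)\bigr)^{2}\log\!\Bigl(\frac{r_2}{r_1}\Bigr)\;\leq\;\pi\!\int_{r_1}^{r_2}\!\!\int_{\partial B_r}|Dv|^{2}\,d\ell\,dr\;=\;\pi\iint_{B_{r_2}\setminus B_{r_1}}|Dv|^{2}\;\leq\;\pi\iint_{B_{r_2}}|Dv|^{2},
\]
which is the claim. By approximation one may assume $v$ is smooth, so all of the above integrations are justified; the bound then passes to the limit, and in fact can be applied to almost every radius $r$ by Fubini.

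I do not anticipate any real obstacle: the only subtle points are (i) invoking Lemma~\ref{lem:minimum principle} to replace the oscillation over a solid disk by the oscillation on a circle, which requires the concentric ball $B_{r_2}$ to be contained in a compact subdomain where the maximum principle applies (this is precisely the hypothesis $B_{r_2}\subset Q_T$), and (ii) keeping track of the constant $\pi$, which comes from the Cauchy--Schwarz step on an arc of length $\pi$ rather than $2\pi$.
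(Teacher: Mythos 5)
Your proof is correct and is essentially the same argument as the paper's: polar coordinates around the common center, the max/min principle from Lemma~\ref{lem:minimum principle} to replace $\operatorname{osc}_{B_{r_1}}$ by $\operatorname{osc}_{\partial B_r}$, Cauchy--Schwarz on an arc of angular length at most $\pi$, and integration in $r$ over $[r_1,r_2]$. The only cosmetic difference is that you route the circle integral through the arclength element $d\ell=r\,d\theta$ and the tangential derivative, while the paper stays directly in $(r,\theta)$ via $|Dv|^2=v_r^2+r^{-2}v_\theta^2$; these are identical computations.
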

\begin{proof}
Let $r\in[r_{1},r_{2}]$, and let $\theta_{1},\theta_{2}\in[0,2\pi]$.
Then, using polar coordinates with origin at the center of the balls $B_{r_{i}}$,
\[
v(r,\theta_{2})-v(r,\theta_{1})=\int_{\theta_{1}}^{\theta_{2}}\frac{\partial v}{\partial\theta}d\theta.
\]
Thus, integrating over a half circle and using the Cauchy-Schwarz
inequality,
\[
\text{osc}_{\partial B_{r}}(v)\leq\sqrt{\pi}\sqrt{\int_{0}^{2\pi}\left(\frac{\partial v}{\partial\theta}\right)^{2}d\theta}.
\]
Now, in view of Lemma \ref{lem:minimum principle}, $v$ satisfies
the maximum and minimum principle, so
\[
\text{osc}_{B_{r_{1}}}(v)\leq\text{osc}_{\partial B_{r}}(v).
\]
and, thus,
\[
(\text{osc}_{B_{r_{1}}}(v))^{2}\leq\pi\int_{0}^{2\pi}\left(\frac{\partial v}{\partial\theta}\right)^{2}d\theta.
\]
On the other hand, we have
\[
|Dv|^{2}=\left(\frac{\partial v}{\partial r}\right)^{2}+\frac{1}{r^{2}}\left(\frac{\partial v}{\partial\theta}\right)^{2}\geq\frac{1}{r^{2}}\left(\frac{\partial v}{\partial\theta}\right)^{2},
\]
which then implies that
\[
\frac{1}{r}(\text{osc}_{B_{r_{1}}}(v))^{2}\leq\pi\int_{0}^{2\pi}r|Dv|^{2}d\theta.
\]
Integrating in $r$ from $r_{1}$ to $r_{2}$ yields the result.
\end{proof}
A slight variant of this argument, by integrating over semi--disks
instead, yields the continuity estimate up to the boundary.
\begin{prop}[Boundary modulus of continuity]
\label{prop:boundary modulus}
 Under the assumptions of Theorem \ref{thm:existence}, let  $x_{0}\in\mathbb{T}$ and $D_{r_{1}}$
and $D_{r_{2}}$ be upper semi--disks of radii $r_{1}<r_{2}$, centered
at $(x_{0},0)$. Then we have
\[
(\emph{\text{osc}}_{D_{r_{1}}}(v))^{2}\log\left(\frac{r_{2}}{r_{1}}\right)\leq 2\pi\int\int_{D_{r_{2}}}|Dv|^{2}+4(r_{2}^2-r_{1}^2)\|v_{x}(0)\|_{\infty}^{2}.
\]
Similarly, if $D_{r_{1}}$ and $D_{r_{2}}$ are lower semi--disks
centered at $(x_{0},T)$, one has 
\[
(\emph{\text{osc}}_{D_{r_{1}}}(v))^2\log\left(\frac{r_{2}}{r_{1}}\right)\leq 2\pi\int\int_{D_{r_{2}}}|Dv|^{2}+4(r_{2}^2-r_{1}^2)\|v_{x}(T)\|_{\infty}^{2}.
\]
\end{prop}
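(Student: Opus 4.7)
I adapt the argument of Proposition \ref{prop:interior modulus} to semi-disks abutting the parabolic boundary. I focus on the upper case centered at $(x_0,0)$; the lower case at $(x_0,T)$ is symmetric. By Lemma \ref{lem:minimum principle}, $v=f(m)$ satisfies both the maximum and minimum principles on every compact subset of $\T\times[0,T]$, so in particular on each closed semi-disk $\overline{D_r}$ with $r\in[r_1,r_2]$. Since $D_{r_1}\subset\overline{D_r}$ for every such $r$, this yields $\text{osc}_{D_{r_1}}(v)\le \text{osc}_{\partial D_r}(v)$.

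Next I estimate $\text{osc}_{\partial D_r}(v)$ by splitting the boundary into the semicircular arc $\{(x_0+r\cos\theta,\,r\sin\theta):\theta\in[0,\pi]\}$ and the flat diameter $[x_0-r,x_0+r]\times\{0\}$. For any two points $P_1,P_2\in\partial D_r$ I connect them by a path along $\partial D_r$ using at most the full half-arc and at most the full diameter, and split the line integral of $|Dv|$ into the two pieces. In polar coordinates $(r,\theta)$ centered at $(x_0,0)$, and using that the diameter lies in $\{t=0\}$, this gives
\[
\text{osc}_{\partial D_r}(v)\ \le\ \int_0^\pi|v_\theta(r,\theta)|\,d\theta\ +\ 2r\,\|v_x(\cdot,0)\|_\infty.
\]
Applying Cauchy-Schwarz to the first term and the elementary inequality $(a+b)^2\le 2a^2+2b^2$ yields
\[
(\text{osc}_{\partial D_r}(v))^2\ \le\ 2\pi\int_0^\pi v_\theta^2(r,\theta)\,d\theta\ +\ 8r^2\|v_x(\cdot,0)\|_\infty^2.
\]

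Combining with the previous inequality, dividing by $r$, and integrating in $r$ from $r_1$ to $r_2$ produces the factor $\log(r_2/r_1)$ on the left and $4(r_2^2-r_1^2)\|v_x(\cdot,0)\|_\infty^2$ from the boundary-flat term. The remaining double integral is bounded by $\iint_{D_{r_2}}|Dv|^2$ thanks to the pointwise inequality $|Dv|^2\ge r^{-2}v_\theta^2$ and the polar volume element $r\,dr\,d\theta$. This produces exactly the stated estimate, and the lower semi-disk version follows verbatim with $t=0$ replaced by $t=T$.

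\textbf{Main obstacle.} Each individual step is straightforward; the genuine content is the decomposition of $\partial D_r$ so that the flat diameter contributes only a term involving $\|v_x(\cdot,0)\|_\infty$, rather than corrupting the Dirichlet-type integral. This uses that $v\in C^{1,\alpha}(\T\times[0,T])$ (so that $v_x(\cdot,0)$ is defined and bounded), which is guaranteed by Theorem \ref{thm:existence}, together with the fact that the maximum/minimum principle from Lemma \ref{lem:minimum principle} holds on compact sets touching the parabolic boundary---this is crucial because semi-disks are not interior subsets.
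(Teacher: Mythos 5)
Your proof is correct and is exactly the "slight variant" the paper alludes to without spelling out: you keep the max/min principle and polar-coordinate Dirichlet estimate from Proposition \ref{prop:interior modulus}, and handle the flat side of $\partial D_r$ by a separate $\|v_x(\cdot,0)\|_\infty$ bound. The constants check out — the factor $2\pi$ (versus $\pi$ in the interior case) comes from $(a+b)^2\le 2a^2+2b^2$, and integrating $8r\|v_x(0)\|_\infty^2$ in $r$ yields exactly $4(r_2^2-r_1^2)\|v_x(0)\|_\infty^2$. Your remark about why Lemma \ref{lem:minimum principle} applies to semi-disks touching $\{t=0\}$ is the right point to flag: since $\mathrm{int}(\overline{D_r})=D_r\subset\T\times(0,T)$, the elliptic equation with no zero-order term holds in the interior and the max/min are attained on $\partial D_r$, which here includes the diameter.
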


\section{Finite speed of propagation and compactly supported solutions}
\label{sec:finitespeed}

This section, which is the main core of the paper, is dedicated to the study of  solutions to problems \rife{eq:MFGR} and \rife{eq:MFGPR} with \emph{compactly supported density}. 

We will first  obtain a  preliminary existence (and uniqueness) result, under fairly general conditions on the coupling functions $f,g$, assuming the initial density $m_0$ (and possibly the terminal density $m_T$) to be compactly supported.  By a suitable choice of approximation of $m_0, m_T$, we will show the property of  finite speed of propagation of the support, and the existence of  a unique   continuous solution with compactly supported density $m$. This will allow us to build a rigorous framework for the study of the free boundary $\partial \{(x,t)\,:\, m(x,t)>0\}$, carried out in Subsection \ref{free-analysis}, and will be tightly connected to the analysis of the flow of optimal trajectories associated to the optimization problem.  
The study of the regularity and geometric properties of the free boundary, as well as of its spreading  speed,   will be analyzed for the model case of $f(m)= m^\theta$, $\theta>0$. We conclude by  establishing space-time H\"older regularity of the pair $(m,Du)$ up to the free boundary.

\subsection{Well-posedness results} \label{subsec:wellposed}

Throughout this subsection,  we will assume that 
\be\label{fg-ass}
\hbox{$f,g\in C^{2}(0,\infty)\cap C[0,\infty)$, $f'>0$, $g'\geq0$, and (\ref{eq:polynomial growth})
holds.}
\ee
For simplicity, and with no loss of generality, we will also
normalize $f$ so that 
\begin{equation}
f(0)=0.\label{eq:f(0)=00003D0}
\end{equation}
As for the initial density (and possibly terminal density, in case of \rife{eq:MFGPR}), we assume that 
$m_{0}$, $m_{T}\in C_{c}(\mathbb{R})$
are compactly supported, non-negative functions, such that
\begin{equation}
\int_{\mathbb{R}}m_{0}=\int_{\mathbb{R}}m_{T} =1,\label{eq:equal mass assumption}
\end{equation}
and
\begin{equation}
m_{0},m_{T}\text{ vanish, respectively, outside the intervals }[a_{0},b_{0}]\text{,}[a_{1},b_1].\label{eq:compact support assumption}
\end{equation}
Rather than (\ref{eq:marginals C1,alpha}), we will instead assume that, for some $\alpha\in (0,1)$,
\begin{equation}
f(m_{0})\in C^{1,\alpha}(\{m_0>0\}),f(m_{T})\in C^{1,\alpha}(\{m_T>0\}).\label{eq:f(marginals) interior holder}
\end{equation}
We remark that, in particular, this assumption implies that $f(m_0),f(m_T)\in W^{1,\infty}(\R)$, but it allows for the possibility that $f(m_0)_x$ or $f(m_T)_x$ might be discontinuous at the boundary of the support, when considered as functions in $\R$.

Our first goal will be to show the   well-posedness of  systems \rife{eq:MFGR} and \rife{eq:MFGPR}. 
%
Since we know, from the model case of Section \ref{sec:self similar} that, in general, the solution is not classical, we must work with a preliminary notion of generalized solution.  We recall that $C_b(\mathbb{R} \times [0,T])$ denotes the space of bounded continuous functions.

\begin{defn}\label{gensol}
We say that $(u,m)\in W^{1,\infty}(\mathbb{R}\times(0,T))\times C_{b}(\mathbb{R}\times[0,T])$
is a  solution to (\ref{eq:MFGR}) (respectively, (\ref{eq:MFGPR})),
if
\end{defn}

\begin{itemize}
\item[(i)] $u$ is a viscosity solution to the HJ equation
\[
-u_{t}+\frac{1}{2}u_{x}^{2}= f(m)\,\,\,\,\,\,(x,t)\in\mathbb{R}\times(0,T),
\]
\item[(ii)] $m$ satisfies the continuity equation
\[
m_{t}-(mu_{x})_{x}=0\,\,\,\,\,\, (x,t)\in\mathbb{R}\times(0,T),
\]
in the distributional sense, with $m(\cdot,0)=m_{0}$.
\item[(iii)] We have $u(\cdot,T)= g(m(\cdot, T))$  (respectively, $m(\cdot,T)=m_{T}$).

\end{itemize}

\begin{rem} Different notions of weak solutions could have been used, alternatively to  Definition \ref{gensol}.  In particular, distributional subsolutions of the HJ equation have been frequently used for MFG problems with a local coupling, both in case of final pay-off and in case of planning conditions (see e.g. \cite{Carda1}, \cite{CMS}, \cite{GMST}, \cite{OPS}, and the survey \cite{CP-CIME}). That approach is tightly related to the concept of relaxed minima of variational problems, and avoids, for instance, any requirement of continuity and boundedness of $m$ and $u_x$. Of course a similar approach would also apply to the present problems. However, since our primary goal in this article is the analysis of the free boundary for compactly supported solutions, it seems more natural to work from the beginning with the stronger notion of continuous solutions. It is also natural,  in that context, to make use of  the standard framework of viscosity solutions for HJ equations.   
\end{rem}

Our goal in this subsection will be to prove the following well-posedness result.  In what follows, $C_c(\R)$ denotes the space of continuous, compactly supported functions on $\R$. Notice that, in particular, the result shows that the unique solution is such that $m$ has compact support. 
 
\begin{thm}
\label{thm:well-posedness theorem}Assume that $f,g$ satisfy \rife{fg-ass}, $m_0,m_T\in C_c(\R)$ satisfy \rife{eq:equal mass assumption}--\rife{eq:f(marginals) interior holder}, and $\kappa_0$ is as in \eqref{eq:polynomial growth f restatement}. 
Then the following holds.
\begin{enumerate}
\item There exists a unique   solution $(u,m)$ to \eqref{eq:MFGR}. Moreover, $(u,f(m))\in C_{\emph{loc}}^{2,\alpha}((\mathbb{\mathbb{R}\times}[0,T))\cap\{m>0\})\times C_{\emph{loc}}^{1,\alpha}((\mathbb{\mathbb{R}\times}[0,T))\cap\{m>0\})$. There exists a constant 
\[
C=C(T,T^{-1}, \kappa_0,\|f(m_{0})\|_{W^{1,\infty}},|\emph{\text{supp}}(m_{0})|,\|g(m_{0})\|_{\infty})
\]
such that
\[
\|u\|_{\infty}\leq\|f(m_{0})\|_{\infty} T+\|g(m_{0})\|_{\infty};\,\,\,\, \qquad \|m\|_\infty\leq\|m_{0}\|_{\infty};\,\,\,\,\qquad  \text{\emph{supp}}(m)  \subset [-C,C] \times [0,T],
\]
\begin{equation}
\|Du\|_{L^{\infty}}\leq C,\label{eq:Du ineq}
\end{equation}
\begin{equation}
\int_{0}^{T}\int_{\mathbb{R}}|D(f(m))|^{2}\leq C,\label{eq:D(f(m)) energy bd}
\end{equation}
and, for $(x,t), (\overline{x},\overline{t})\in \R \times [0,T],$
\begin{equation}
|f(m(x,t))-f(m(\overline{x},\overline{t}))|  \leq\frac{C}{\sqrt{\log(|x-\overline{x}|^{2}+|t-\overline{t}|^{2})_{-}}}. \label{eq:f holder bd, supt bd}
\end{equation}
Finally, if $g'>0$ or $g\equiv0$, then $(u,f(m))\in C_{\emph{\text{loc}}}^{2,\alpha}((\mathbb{\mathbb{R}\times}[0,T])\cap\{m>0\})\times C_{\emph{\text{loc}}}^{1,\alpha}((\mathbb{\mathbb{R}\times}[0,T])\cap\{m>0\})$.
\item There exists a  solution $(u,m)$ to \eqref{eq:MFGPR}. The function
$m$ is unique, $u$ is unique up to a constant on each connected
component of $\{m>0\}$, $(u,f(m))\in C_{\emph{loc}}^{2,\alpha}((\mathbb{\mathbb{R}\times}[0,T])\cap\{m>0\})\times C_{\emph{loc}}^{1,\alpha}((\mathbb{\mathbb{R}\times}[0,T])\cap\{m>0\})$. Moreover,
there exist constants $K,C>0$, with
\[ 
K=K(T,T^{-1}, \kappa_{0},\|f(m_{0})\|_{\infty},\|f(m_{T})\|_{\infty},|\text{\emph{supp}}(m_{0})|,|\text{\emph{supp}}(m_{T})|,\text{\emph{dist}}(\text{\emph{supp}}(m_{0}),\text{\emph{supp}}(m_{T}))), \]
\[C=C(K,\|f(m_{0})\|_{W^{1,\infty}},\|f(m_{T})\|_{W^{1,\infty}}),\]
such that
\[\emph{\text{osc}}(u) \leq K;\,\,\,\, \qquad\|m\|_\infty \leq\max(\|m_{0}\|_{\infty},\|m_{T}\|_{\infty});\,\,\,\, \qquad  \text{\emph{supp}}(m)  \subset [-C,C]\times [0,T],
\]
and
\eqref{eq:Du ineq}, \eqref{eq:D(f(m)) energy bd}, \eqref{eq:f holder bd, supt bd} hold.
\end{enumerate}
\end{thm}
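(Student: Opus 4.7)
The plan is to construct solutions by approximation, building on the periodic framework of Section 3, and then to pass to the limit using the compactness provided by the $W^{1,\infty}$ bound on $u$ and the uniform modulus of continuity on $f(m)$ established there. The main new difficulty, not addressed in Section 3, will be establishing that the support of the approximating densities remains in a fixed compact set (finite speed of propagation), so that the limit solution genuinely has compact support.

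\textbf{Step 1 (approximation).} First I would regularize by setting $m_0^\varepsilon = (m_0 + \varepsilon) \chi_{[-R,R]} / c_\varepsilon$ (renormalized so the masses match), or more precisely work on a torus $\mathbb{T}_R$ of large length $R > 0$ (to be chosen below) with strictly positive smooth marginals $m_0^\varepsilon, m_T^\varepsilon$ obtained by a procedure analogous to \eqref{eq:convolution approx}, with $m_0^\varepsilon \to m_0$, $m_T^\varepsilon \to m_T$ in $C^0$, and $f(m_0^\varepsilon), f(m_T^\varepsilon)$ uniformly bounded in $C^{1,\alpha}$ on $\{m_0 > 0\}$, $\{m_T > 0\}$ respectively. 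Similarly, regularize $g$ to $g^\varepsilon$ with $g^\varepsilon \geq g(0)$ and $(g^\varepsilon)' > 0$. By Theorem \ref{thm:existence}, there exists a classical periodic solution $(u^\varepsilon, m^\varepsilon)$ to the regularized version of \eqref{eq:MFGR} or \eqref{eq:MFGPR}, which by Propositions \ref{prop: oscillation bound}, \ref{prop:gradient bound}, \ref{prop:energy estimate} satisfies uniform bounds on $\|u^\varepsilon\|_\infty$, $\|Du^\varepsilon\|_\infty$, $\|m^\varepsilon\|_\infty$, and $\int |D f(m^\varepsilon)|^2$, with constants depending only on the quantities in the statement of the theorem (crucially, not on $\varepsilon$ or on $\min m_0^\varepsilon$).

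\textbf{Step 2 (finite speed of propagation).} This is the main obstacle. I need to show that $\mathrm{supp}(m^\varepsilon) \subset [-C, C] \times [0,T]$ for some $\varepsilon$-independent $C$. The idea is to use the optimal trajectory/flow $\gamma^\varepsilon$ satisfying \eqref{def.gammaBIS}: since $\|u_x^\varepsilon\|_\infty \leq C$ by Proposition \ref{prop:gradient bound}, the flow travels at finite speed, so $\gamma^\varepsilon(x, t) \in [x - CT, x + CT]$. By the conservation of mass identity \eqref{masspreservationTER}, the mass of $m_0^\varepsilon$ concentrated near $[a_0, b_0]$ stays, at time $t$, within the image of $[a_0, b_0]$ under $\gamma^\varepsilon(\cdot, t)$, hence within a fixed compact set; the excess mass $\varepsilon \cdot |\mathrm{supp}|$ tends to $0$ and needs to be controlled by tracking the characteristics of the residual piece (which also stays in a fixed set over $[0,T]$). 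This forces the limit $m$ to have support in $[-C,C] \times [0,T]$. Choosing $R$ larger than $C + CT + 1$ ensures that the periodicity plays no role.

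\textbf{Step 3 (passing to the limit and verification).} By the uniform $W^{1,\infty}$ bound on $u^\varepsilon$, Arzel\`a--Ascoli gives $u^\varepsilon \to u$ locally uniformly, and stability of viscosity solutions yields that $u$ solves the HJ equation. The uniform modulus of continuity for $v^\varepsilon = f(m^\varepsilon)$ obtained from Propositions \ref{prop:interior modulus}--\ref{prop:boundary modulus} combined with the energy estimate \eqref{eq:D(f(m)) energy bd} gives the logarithmic estimate \eqref{eq:f holder bd, supt bd} at the limit, and in particular compactness of $v^\varepsilon$ in $C^0$, whence $m^\varepsilon \to m$ locally uniformly. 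This lets us pass to the limit in the continuity equation (distributional sense) and in the boundary/terminal conditions. The estimates \eqref{eq:Du ineq}--\eqref{eq:f holder bd, supt bd} are inherited from Section 3 and lower semicontinuity.

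\textbf{Step 4 (interior regularity).} On any compact set $K \Subset \{m > 0\} \cap (\R \times [0,T))$ (or the closed analogue under $g' > 0$ or $g \equiv 0$, where the terminal boundary is nondegenerate), uniform convergence of $m^\varepsilon$ to $m$ and the uniform positive lower bound on $m$ near $K$ make the elliptic equation \eqref{eq.elluBIS} uniformly elliptic for $u^\varepsilon$ there. Then Lieberman's $C^{1,s}$ interior estimate followed by the bootstrap Schauder argument carried out in the proof of Theorem \ref{thm:existence} yields $u^\varepsilon$ bounded in $C^{2,\alpha}_{\mathrm{loc}}$, and $f(m^\varepsilon)$ bounded in $C^{1,\alpha}_{\mathrm{loc}}$; passing to the limit gives the claimed interior regularity. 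Uniqueness of $m$ (and of $u$ up to additive constants on connected components of $\{m > 0\}$) follows from the standard duality / convexity argument for MFG with monotone coupling: test $u_1 - u_2$ against $m_1 - m_2$ and use strict monotonicity of $f$ on $\{m>0\}$; this argument needs only $u \in W^{1,\infty}$, $m \in C_b$, which is what we have.
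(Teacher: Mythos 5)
Your overall strategy (regularize, use the periodic a priori estimates, establish compactly supported limits, pass to the limit, then bootstrap interior regularity) is the same as the paper's, and Steps~1, 3 and 4 are substantially correct. There is, however, a genuine gap in Step~2, which is the crux of the whole argument.

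You claim finite propagation of the support from the bound $\|u_x^\varepsilon\|_\infty\le C$, asserting $\gamma^\varepsilon(x,t)\in[x-CT,x+CT]$. But the constant in Proposition~\ref{prop:gradient bound} (and likewise in Proposition~\ref{prop:Neumann theorem}) depends on $R$: $C=C(\kappa_0,R,T,\dots)$. So as you take $R$ large enough to contain the support you anticipate, the admissible propagation distance $CT$ may also grow, and the argument does not close. What you need is an $R$-independent confinement of the trajectories emanating from $\mathrm{supp}(m_0)$, and the paper obtains it quite differently (Proposition~\ref{prop:bounded support}). For the planning problem, it exploits the fact that the approximation is constructed so that $m_0^\varepsilon\equiv m_T^\varepsilon\equiv\varepsilon$ outside a fixed $[-r,r]$ (and with matched masses); by conservation of mass \eqref{masspreservationTER}, this forces $\gamma^\varepsilon(\pm r,T)=\pm r$ exactly, independently of $R$ and $\varepsilon$, and then a comparison of the action of $\gamma^\varepsilon(x,\cdot)$ with the straight-line path to $\gamma^\varepsilon(x,T)$ gives $\int_0^T|\dot\gamma^\varepsilon|^2\le C$ with $C$ independent of $R$. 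For the final-cost problem one cannot pin down $\gamma^\varepsilon(\cdot,T)$, so the paper instead uses the \emph{$R$-independent} sup-norm bound $\|u^\varepsilon\|_\infty\le\|f(m_0^\varepsilon)\|_\infty T+\|g(m_0^\varepsilon)\|_\infty$ (not the gradient bound) together with the variational identity $\int_0^T\frac12|\dot\gamma^\varepsilon|^2 = u^\varepsilon(x,0)-u^\varepsilon(\gamma^\varepsilon(x,T),T)-\int_0^T f(m^\varepsilon(\gamma^\varepsilon,\cdot))$. Related to this, the paper first passes from periodic to Neumann boundary conditions on $[-R,R]$ (Proposition~\ref{prop:Neumann theorem}), precisely because optimal trajectories in the periodic domain can wrap around, which would spoil the argument that $\gamma_x^\varepsilon>0$ and the mass-conservation bookkeeping; your sketch does not address this, and working directly on the torus would require extra care. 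Your Step~2 should be replaced by these two ingredients to make the propagation estimate rigorous and $R$-independent.
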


\begin{rem}We note that, due to lack of uniqueness for $u$ in $\{m=0\}$ for the case of \eqref{eq:MFGPR}, the explicit a priori
estimate (\ref{eq:Du ineq}) for the solutions is limited to the support of $m$. However,
the proof of Theorem \ref{thm:well-posedness theorem} will show that
there exists a solution to \eqref{eq:MFGPR} satisfying the global estimate
\[
\|u\|_{W^{1,\infty}(\mathbb{R})}\leq C,
\]
where $C$ depends on the data as described above.
\end{rem}

\subsubsection{From periodic to Neumann boundary conditions} \label{subsubsec:Neumann}
As anticipated in Section \ref{sec: periodic}, we intend to use the estimates for the periodic setting in order to build a solution to \eqref{eq:MFGR} and \eqref{eq:MFGPR}. However, it will be convenient to switch from periodic to Neumann boundary conditions. Indeed, we will see later that this simplifies the analysis of the optimal trajectories, since they do not ``wrap around'' the domain as they do in the periodic case. The following result shows that we may switch to this point of view while preserving all of the estimates for the periodic setting.
\begin{prop}
\label{prop:Neumann theorem} Assume that $f,g$ satisfy \eqref{fg-ass}, and let $0<\alpha<1$. Assume that the positive
functions $m_{0}^{\vep}$, $m_{T}^{\vep}\in C(\mathbb{R})$
are such that $\me_{0},\me_{T}\equiv\vep$ outside of the interval
$[-r,r]$, $\int_{-r}^{r}m_{0}^{\vep}=\int_{-r}^{r}m_{T}^{\vep}$,
and $f(m_{0}^{\vep}),f(m_{T}^{\vep})\in C^{1,\alpha}(\mathbb{R})$.
Assume that $g'>0$, and that (\ref{eq:polynomial growth}) holds,
and let $R>r$. Consider the system
\begin{equation}
\begin{cases}
-\ue_{t}+\frac{1}{2}(\ue_{x})^{2}=f(\me) & (x,t)\in[-R,R]\times(0,T)\\
\me_{t}-(\me\ue_{x})_{x}=0 & (x,t)\in[-R,R]\times(0,T)\\
\me(x,0)=\me_{0}(x), & x\in[-R,R]\\
u_{x}^{\vep}(-R,t)=u_{x}^{\vep}(R,t)=0 & t\in[0,T],
\end{cases}\label{eq:MFG-Neu}
\end{equation}
where either $u^{\vep}(\cdot,T)=g(\me(\cdot,T))$ or $\me(\cdot,T)=\me_{T}$. 
\end{prop}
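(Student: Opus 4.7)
The plan is to reduce the Neumann problem \eqref{eq:MFG-Neu} to the periodic setting covered by Theorem \ref{thm:existence}, via a doubling-by-reflection trick, and then to transfer all the a priori estimates from Section \ref{sec: periodic}. The crucial observation is that, because $\me_0\equiv\vep\equiv\me_T$ on $[-R,R]\setminus[-r,r]$, the extension by reflection produces data that is not only continuous across the reflection points $\pm R$ but in fact constant in a neighborhood of them, hence automatically of class $C^{1,\alpha}$ after reflection.

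More concretely, I would first extend $\me_0$ and $\me_T$ from $[-R,R]$ to the torus $\T$ of length $4R$ (identified with $[-R,3R]$) by setting
\[
\hat m_0^\vep(x):=\me_0(x)\;\text{on }[-R,R],\qquad \hat m_0^\vep(x):=\me_0(2R-x)\;\text{on }[R,3R],
\]
and analogously for $\hat m_T^\vep$. Because $\me_0\equiv\vep$ on a neighborhood of $\pm R$, $\hat m_0^\vep$ is even about both $x=R$ and $x=-R$ (the latter by $4R$-periodicity), and $f(\hat m_0^\vep)\in C^{1,\alpha}(\T)$; the mass condition $\int_\T \hat m_0^\vep=\int_\T \hat m_T^\vep$ also holds. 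We may then apply Theorem \ref{thm:existence} on this torus to obtain a unique classical solution $(\hat u^\vep,\hat m^\vep)\in C^{2,\alpha}\times C^{1,\alpha}$ with $\hat m^\vep>0$, either with the final cost condition $\hat u^\vep(\cdot,T)=g(\hat m^\vep(\cdot,T))$ or with the planning condition $\hat m^\vep(\cdot,T)=\hat m_T^\vep$.

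Next, I would invoke the symmetry of the data together with the uniqueness from Theorem \ref{thm:existence}. The pair $\bigl(\hat u^\vep(2R-x,t),\,\hat m^\vep(2R-x,t)\bigr)$ solves the same periodic system with the same initial/terminal data (using that $g'>0$, when in the final cost case, to ensure uniqueness of $u$; in the planning case one appeals to uniqueness of $m$ and the fact that any two $u$'s differ only by a constant on connected components of $\{m>0\}$, which is immaterial for $u_x$). By uniqueness, the solution is itself invariant under $x\mapsto 2R-x$, hence $\hat u_x^\vep(R,t)=0$; invariance under $x\mapsto -2R-x$ gives $\hat u_x^\vep(-R,t)=0$. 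Restricting $(\hat u^\vep,\hat m^\vep)$ to $[-R,R]\times[0,T]$ then yields a classical solution $(u^\vep,m^\vep)$ of \eqref{eq:MFG-Neu}, with the regularity of Theorem \ref{thm:existence}.

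Finally, the a priori bounds proved in Section \ref{sec: periodic}—the oscillation estimate of Proposition \ref{prop: oscillation bound}, the two-sided $m$ bound of Corollary \ref{cor: m bound}, the gradient estimate of Proposition \ref{prop:gradient bound}, the energy estimate of Proposition \ref{prop:energy estimate}, and the moduli of continuity of Propositions \ref{prop:interior modulus} and \ref{prop:boundary modulus}—all hold for $(\hat u^\vep,\hat m^\vep)$ on $\T_{4R}$, and restricting to $[-R,R]$ preserves them with the same constants (observing that $\|f(\hat m_0^\vep)\|_{W^{1,\infty}(\T)}=\|f(\me_0)\|_{W^{1,\infty}([-R,R])}$, and similarly for $m_T$). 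The main subtlety I expect is the verification that the reflection argument applies cleanly across $\pm R$: this is where the assumption that $\me_0,\me_T\equiv\vep$ outside $[-r,r]$ is essential, since otherwise the reflected data would suffer a Lipschitz-type kink and Theorem \ref{thm:existence} could not be applied directly. Once this regularity is in place, the rest of the argument is a straightforward symmetrization plus uniqueness.
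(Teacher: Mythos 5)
Your proposal is correct and follows essentially the same reflection-and-symmetry argument the paper gives: extend the data evenly to a torus of length $4R$ (exploiting that $\me_0,\me_T\equiv\vep$ near $\pm R$ so the reflected data stays $C^{1,\alpha}$), apply Theorem \ref{thm:existence}, deduce $\hat u^\vep_x(\pm R,t)=0$ from symmetry plus uniqueness, and restrict to $[-R,R]$ to inherit all the periodic a priori bounds. The only cosmetic difference is that the paper first translates $[-R,R]$ to $[0,2R]$ before reflecting, whereas you reflect in place; the content is identical.
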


\begin{itemize}
\item[(i)] There exists a unique solution $ (\ue,m^{\vep})\in C^{2,\alpha}([-R,R]\times[0,T])\times  C^{1,\alpha}([-R,R]\times[0,T])$
to (\ref{eq:MFG-Neu}) satisfying $u^{\vep}(\cdot,T)=g(\me(\cdot,T))$.
Moreover, there exists a constant $C=C(R,T,T^{-1}, \kappa_{0},\|f(m_{0}^{\vep})\|_{W^{1,\infty}},\|g(\me_{0})\|_{\infty})$,
such that
\begin{equation}
\|\ue\|_{\infty}\leq\|f(\me_{0})\|_{\infty}T+\|g(\me_{0})\|_{\infty};\qquad  \,\,\,\min\me_{0}\leq\me\leq\max\me_{0},\label{eq:m epsilon upper bound}
\end{equation}

\begin{equation}
\|Du^{\vep}\|_{\infty}\leq C;\qquad \,\,\,\,\int_{0}^{T}\int_{-R}^{R}|D(f(\me))|^{2}\leq C,\label{eq:-14}
\end{equation}
and, for each $(x,t),(\overline{x},\overline{t})\in[-R,R]\times[0,T]$,
\begin{equation}
|f(\me(x,t))-f(\me(\overline{x},\overline{t}))|\leq C \left(\frac{\int_{0}^{T}\int_{-R}^{R}|D(f(\me))|^{2}}{\log(|x-\overline{x}|^{2}+|t-\overline{t}|^{2})_{-}}\right)^{\frac12}.\label{eq:-15}
\end{equation}
\item[(ii)] Up to an additive constant for $u^\vep$, there exists a unique solution  $(\ue,m^{\vep})\in C^{2,\alpha}([-R,R]\times[0,T])\times C^{1,\alpha}([-R,R]\times[0,T])$ to \eqref{eq:MFG-Neu} 
satisfying $\me(\cdot,T)=\me_{T}$. Moreover, there exist constants $K,C>0$, with
\[ K=K(R,T,T^{-1}, \kappa_{0},\|f(m_{0}^{\vep})\|_{\infty},\|f(\me_{T})\|_{\infty})\,,\, C=C(K,\|f(m_0^{\vep})\|_{W^{1,\infty}},\|f(m_T^{\vep})\|_{W^{1,\infty}})\]
such that 
\begin{equation}
\text{osc(\ensuremath{\ue})}\leq K, \,\,\,\,\min(\min\me_{0},\min\me_{T})\leq\me\leq\max(\max\me_{0},\max\me_{T}),\label{eq:-5}
\end{equation}

and (\ref{eq:-14}), (\ref{eq:-15}) hold. 
\end{itemize}
\begin{proof}
For concreteness, we focus on the planning case, $\me(\cdot,T)=\me_{T}$.
For each $x\in[0,2R]$, define $\tilde{m}_{0}(x)=m_{0}^{\vep}(x-R)$,
and $\tilde{m}_{T}(x)=m_{T}^{\vep}(x-R)$. Then these functions
can be naturally extended to even, periodic functions $\tilde{m_{0}},\tilde{m_{T}}\in4\T$.
With these marginals, the solution $(\tilde{u},\tilde{m})$ to \eqref{eq:Per MFG}--\eqref{per-MFGP}, given by Theorem \ref{thm:existence},
is even, as well as symmetric with respect to $x=\pm2R$. In particular,
we have
\[
\tilde{u}_{x}(0,t)=\tilde{u}_{x}(\pm2R,t)\equiv0.
\]
As a result, the functions $(\ue,\me)$ defined by
\[
\ue(x,t)=\tilde{u}(x+R,t),\,\me(x,t)=\tilde{m}(x+R,t)
\]
are classical solutions to (\ref{eq:MFG-Neu}). The estimates on $(\ue,\me)$
readily follow by applying Corollary \ref{cor: m bound}
and Propositions \ref{prop: oscillation bound}, \ref{prop:gradient bound},
\ref{prop:energy estimate}, \ref{prop:interior modulus}, and \ref{prop:boundary modulus}
to the function $(\tilde{u},\tilde{m})$. A similar discussion yields
the result for the final cost problem, $\ue(\cdot,T)=g(\me(\cdot,T)).$
\end{proof}

\subsubsection{An estimate on the flow of optimal trajectories}

Given a solution $(\ue,\me)$ to (\ref{eq:MFG-Neu}), we may define
the  flow of optimal trajectories \[\gamma^{\vep}:[-R,R] \times [0,T] \rightarrow \mathbb{R}\]   according to \eqref{def.gammaBIS}. We remark that, when $x=\pm R$, the solution is the constant curve
$\gamma^{\vep}(x,\cdot)\equiv x$. Additionally, since $\ue\in C^{2,\alpha}([-R,R]\times[0,T])$, we have
$\gamma^{\vep}\in C^{2}([-R,R]\times[0,T])$, and  $\gamma_{x}^{\vep}>0$. 
We begin by showing that the trajectories starting in the support of $m_0$ remain in a bounded set, independently of $R$.
\begin{prop}[Finite propagation of the support]
\label{prop:bounded support}Under the assumptions of Proposition \ref{prop:Neumann theorem},
let $R\geq1$, $0<\vep<1$, and let $(\ue,\me)$ be a solution
to (\ref{eq:MFG-Neu}), and let $\gamma^{\vep}$ be the flow of
trajectories associated to $(\ue,\me)$. Then there exists a constant
$\overline{r}=\overline{r}(r,T,T^{-1}, \kappa_{0},\|f(m_{0}^{\vep})\|_{W^{1,\infty}},\|f(m_{T}^{\vep})\|_{W^{1,\infty}},\|g(\me_{0})\|_{\infty})$,
such that

\[
\|\gamma^{\vep}\|_{[-r,r]\times[0,T]}\leq\overline{r}.
\]
\end{prop}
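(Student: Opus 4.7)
My plan is to exploit the variational (value-function) characterization of $\gamma^\vep$ as the optimal characteristic flow for the HJ equation satisfied by $u^\vep$. Since $u^\vep$ is classical and $\gamma^\vep(x_0,\cdot)$ is the minimizing trajectory starting at $x_0$, I will use the identity
\[
u^\vep(x_0,0)-u^\vep(\gamma^\vep(x_0,T),T)=\int_0^T\!\left[\tfrac12|\dot\gamma^\vep(x_0,s)|^2+f(m^\vep(\gamma^\vep(x_0,s),s))\right]ds.
\]
Since $f(0)=0$ and $f'>0$ force $f(m^\vep)\ge 0$, it follows that
\[
\tfrac12\int_0^T|\dot\gamma^\vep(x_0,s)|^2\,ds\le u^\vep(x_0,0)-u^\vep(\gamma^\vep(x_0,T),T),
\]
so that Cauchy--Schwarz gives $|\gamma^\vep(x_0,t)-x_0|^2 \le 2T[u^\vep(x_0,0)-u^\vep(\gamma^\vep(x_0,T),T)]$ for every $t\in[0,T]$. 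The task thus reduces to bounding the right-hand side by a constant independent of $R$ and depending only on the quantities listed in the statement.

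In the final pay-off case, Proposition \ref{prop:Neumann theorem}(i) already provides the $R$-independent estimate $\|u^\vep\|_\infty \le \|f(m_0^\vep)\|_\infty T+\|g(m_0^\vep)\|_\infty$, whence one concludes directly with $\bar r=r+2\sqrt{T(\|f(m_0^\vep)\|_\infty T+\|g(m_0^\vep)\|_\infty)}$. The planning case will require more care, because the oscillation bound in Proposition \ref{prop:Neumann theorem}(ii) a priori depends on $R$. The key observation is that mass conservation \eqref{masspreservationTER}, combined with the hypothesis $\int_{-r}^r m_0^\vep=\int_{-r}^r m_T^\vep$ and the fact that $m_0^\vep\equiv m_T^\vep\equiv\vep$ outside $[-r,r]$, forces the monotone map $\gamma^\vep(\cdot,T)$ to send $[-r,r]$ into $[-r,r]$. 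Next I will apply the standard Hopf--Lax comparison
\[
u^\vep(x,0)-u^\vep(y,T)\le \frac{(y-x)^2}{2T}+\|f(m^\vep)\|_\infty T,
\]
in which $\|f(m^\vep)\|_\infty$ is controlled uniformly in $R$ by Proposition \ref{prop:Neumann theorem}(ii). Using this with $y=\gamma^\vep(x_2,T)\in[-r,r]$ together with the fact that the optimal trajectory gives equality in $u^\vep(x_2,0)\ge u^\vep(\gamma^\vep(x_2,T),T)$, one obtains the $R$-independent oscillation bound $\mathrm{osc}_{[-r,r]}\,u^\vep(\cdot,0)\le \tfrac{2r^2}{T}+\|f(m^\vep)\|_\infty T$, and analogously for $u^\vep(\cdot,T)$. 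Combining this with the constant-trajectory comparison $u^\vep(x_0,0)-u^\vep(x_0,T)\le \|f(m^\vep)\|_\infty T$ then gives an $R$-independent bound for $u^\vep(x_0,0)-u^\vep(\gamma^\vep(x_0,T),T)$, which finishes the argument exactly as in the final pay-off case.

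The main obstacle is the planning case: there is no direct $L^\infty$ control on $u^\vep$, only an oscillation estimate that degenerates as $R\to\infty$. Overcoming this forces me to combine the mass-transport identity (which localizes $\gamma^\vep(x_0,T)$ in $[-r,r]$) with Hopf--Lax comparisons applied only between points of $[-r,r]$, so that the relevant quadratic cost $\tfrac{(y-x)^2}{2T}$ stays bounded uniformly in $R$.
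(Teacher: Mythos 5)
Your proposal is correct and follows essentially the same route as the paper: reduce to a uniform bound on $\int_0^T|\dot\gamma^\vep|^2\,ds$, use the mass-conservation identity to show $\gamma^\vep(\cdot,T)$ maps $[-r,r]$ into itself in the planning case (and the $R$-independent $\|u^\vep\|_\infty$ bound from Proposition \ref{prop:Neumann theorem}(i) in the final-pay-off case), and then compare the optimal trajectory against a curve confined to $[-r,r]$. The only stylistic difference is in the planning case, where you take a slightly longer route through separate oscillation bounds for $u^\vep(\cdot,0)$ and $u^\vep(\cdot,T)$ plus the constant-trajectory comparison; the paper obtains the same energy bound in one step by testing the minimization against the straight segment joining $x_0$ to $\gamma^\vep(x_0,T)$ (which is your Hopf--Lax comparison applied directly to those two points). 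Both arguments produce the same $R$-independent constant.
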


\begin{proof}
For simplicity, we write $\gamma^{\vep}=\gamma$. We will treat
separately the planning problem and the final cost problem. First, assume that $\me(\cdot,T)=\me_T$. In view of Lemma
\ref{lem.gamma}, and the facts that $\gamma(-R,\cdot)\equiv-R$,
and $m_0\equiv 0$ outside of $[-r,r]$, we have
\[
\int_{-R}^{y}m_{T}^{\vep}=\int_{-R}^{-r}m_{0}^{\vep}=(R-r)\vep,
\]
where $y=\gamma(-r,T)$. Now, the left hand side is (strictly) increasing
in $y$, and 

\[
\int_{-R}^{-r}m_{T}^{\vep}=(R-r)\vep,
\]
thus $\gamma(-r,T)=y=-r$. Similarly $\gamma(r,T)=r$. Since $\gamma_{x}>0$,
this implies that $\gamma([-r,r],T)\subset[-r,r]$. Now, given $x\in[-r,r]$,
we recall that
\[
\gamma(x,\cdot)=\argmin\limits_{\alpha\in H^1(0,T),\alpha(0)=x}\int_{0}^{T}\frac{1}{2}|\Dot \alpha|^{2}+f(\me(\alpha(t),t))dt+\ue(\alpha(T),T).
\]
Therefore, defining $\alpha:[0,T]\rightarrow[-r,r]$ to be the straight
line segment connecting $x$ and $\gamma(x,T)$, we have
\begin{multline}
\int_{0}^{T}\left(\frac{1}{2}|\gamma_{t}(x,t)|^{2}+f(\me(\gamma(x,t),t))\right)dt+\ue(\gamma(x,T),T)\\  \leq \int_{0}^{T}\left(\frac{1}{2}|\Dot\alpha|^{2}+f(\me(\alpha(x,t),t))\right)dt+\ue(\gamma(x,T),T),\label{eq:-16}
\end{multline}
that is,
\[
\int_{0}^{T}\frac{1}{2}|\gamma_{t}(x,t)|^{2}dt\leq T\left(2\frac{r^{2}}{T^{2}}+f(\max m^{\vep})-f(\min m^{\vep})\right)\leq C.
\]
In particular, given $t\in[0,T]$, we have
\[
|\gamma(x,t)|\leq r+|\gamma(x,t)-\gamma(x,0)|\leq r+\sqrt{2t}\sqrt{\int_{0}^{T}\frac{1}{2}|\gamma_{t}|^{2}}\leq r+\sqrt{C}\leq C.
\]
This proves the result for the planning case. Next, we assume that
$\ue(\cdot,T)=g(\me(\cdot,T))$. Observe that, in this case, while
we do not know that $\gamma([-r,r],T)\subset[-r,r]$,  
we instead observe that $\ue$ is bounded independently of $R$, due to  Proposition \ref{prop:Neumann theorem}.
Therefore,
we have
\[
\int_{0}^{T}\frac{1}{2}|\gamma_{t}(x,t)|^{2}dt=\ue(x,0)-\ue(\gamma(x,T),T)-\int_{0}^{T}f(\me(\gamma(x,t),t)dt\leq C.
\]
\end{proof}
\subsubsection{Compatible approximations and existence of  solutions}
In this section, we will apply Proposition \ref{prop:Neumann theorem} to prove the well-posedness result, Theorem \ref{thm:well-posedness theorem}. For that purpose, we will now build suitable $C^{1,\alpha}(\mathbb{R})$ approximations $\me_0>0$ through a mild regularization procedure. For \eqref{eq:MFGPR}, we must also build $\me_T$, while requiring that the two are suitably compatible, in the sense that they have the same mass. Furthermore, since these approximations will be needed later to prove finer results about the free boundary, we will ensure that they are also compatible in the sense that $[a_0,b_0]$ is mapped bijectively onto $[a_1,b_1]$ by the flow $\gamma^{\vep}(\cdot,T)$.
\begin{defn}
Given $r>0$, we say that $(m_{0}^{\vep},m_{T}^{\vep})$ is
a $r$--compatible approximation of $(m_{0},m_{T})$ if $(m_{0}^{\vep},m_{T}^{\vep})$
satisfies the assumptions of Proposition \ref{prop:Neumann theorem}, $m_{0}^{\vep}\to m_0$, $m_{T}^{\vep}\to m_T$ uniformly,  and,
for every $R>r$,

\begin{equation}
\int_{-R}^{a_{0}}m_{0}^{\vep}=\int_{-R}^{a_{1}}m_{T}^{\vep}\,\text{ and }\,\int_{b_{0}}^{R}m_{0}^{\vep}=\int_{b_{0}}^{R}m_{T}^{\vep}.\label{eq:compatibility condition}
\end{equation}
\end{defn}

\begin{lem}[Construction of $r$--compatible approximations]
\label{lem:lemma compat approx}  Under the assumption of Proposition \ref{prop:Neumann theorem}, let $r>0$ be such that
\begin{equation}
[a_{0}-2,b_{0}+2],[a_{1}-2,b_1+2]\subset[-r,r].\label{eq:r condition}
\end{equation}
There exists a pair of vector-valued functions $\eta_{0},\eta_{T}\in C_{c}^{\infty}(\mathbb{R},[0,\infty)^{3})$
such that, for every $\vep\in(0,1)$, there exist constant vectors
$c_{0,\vep},c_{T,\vep}\in[0,\infty)^{3}$ such that the pair
$(m_{0}^{\vep},m_{T}^{\vep})$ defined by

\begin{equation}
m_{0}^{\vep}=f^{-1}(\sqrt{f(m_{0})^{2}+f(\vep)^{2}}+c_{0,\vep}\cdot\eta_{0}),\,\,m_{T}^{\vep}=f^{-1}(\sqrt{f(m_{T})^{2}+f(\vep)^{2}}+c_{T,\vep}\cdot\eta_{T})\label{eq:compatible approximations}
\end{equation}
is a $r$-compatible approximation of $(m_{0},m_{T})$. Moreover,
we have
\begin{equation}
\lim_{\vep\rightarrow0^{+}}c_{0,\vep}=\lim_{\vep\rightarrow0^{+}}c_{T,\vep}=0,\label{eq:-8}
\end{equation}
\begin{multline}
\eta_{0}\equiv0\:\text{\,in\, }\left[a_{0}-1,\frac{1}{3}(2a_{0}+b_{0})\right]\cup\left[\frac{1}{3}(a_{0}+2b_{0}),b_{0}+1\right], \\
\eta_{T}\equiv0\,\,\text{in }\left[a_1-1,\frac{1}{3}(2a_1+b_1)\right]\cup\left[\frac{1}{3}(a_1+2b_1),b_1+1\right],\label{eq:-9}
\end{multline}
and there exists a constant $C=C(|\emph{\text{supp}}(m_{0})|,|\emph{\text{supp}}(m_{T})|,\emph{\text{dist}}(\emph{\text{supp}}(m_{0}),\emph{\text{supp}}(m_{T})))$
such that
\begin{equation}
\|f(m_{j}^{\vep})\|_{W^{1,\infty}(\mathbb{R})}\leq C\|f(m_{j})\|_{W^{1,\infty}(\mathbb{R})}+f(\vep)+|c_{0,\vep}|\cdot\|\eta_{j}\|_{C^{1}(\mathbb{R},\mathbb{R}^{3})},\,\,j\in\{0,T\},\label{eq:uniform lip}
\end{equation}
\begin{equation}
[f(m_{j}^{\vep})_{x}]_{C^{\alpha}(\mathbb{R})}\leq\frac{C}{f(\vep)}(1+[f(m_{j})_{x}]_{C^{\alpha}(\{m_{j}>0\})})+|c_{0,\vep}|[(\eta_{j})_{x}]_{C^{\alpha}(\mathbb{R},\mathbb{R}^{3})},\,j\in\{0,T\}\label{eq:uniform holder}
\end{equation}
\begin{equation}
[f(m_{j}^{\vep})_{x}]_{C^{\alpha}(\{m_{j}\geq\delta\})}\leq\frac{C}{f(\delta)}(1+[f(m_{j})_{x}]_{C^{\alpha}(\{m_{j}>0\})})+\frac{C}{f(\delta)}|c_{0,\vep}|[(\eta_{j})_{x}]_{C^{\alpha}(\mathbb{R},\mathbb{R}^{3})},\,j\in\{0,T\}\label{eq:interior holder}
\end{equation}
Finally if $j\in\{0,T\}$ and $f(m_{j})$ is semi--convex, then

\begin{equation}
\|f(m_{j}^{\vep})_{xx}^{-}\|_{L^{\infty}(\mathbb{R})}\leq\|f(m_{j})_{xx}^{-}\|_{L^{\infty}(\mathbb{R})}+|c_{0,\vep}|\cdot\|(\eta_{j})_{xx}^{-}\|_{L^{\infty}(\mathbb{R})}.\label{eq:uniform semiconvexity}
\end{equation}
\end{lem}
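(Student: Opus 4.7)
The plan is to first fix the shape of the bumps $\eta_0,\eta_T$ independently of $\vep$, and then tune only the scalar coefficients $c_{0,\vep}, c_{T,\vep}$ to enforce the mass identities hidden inside the $r$-compatibility condition. For $j\in\{0,T\}$, I would take $\eta_j = ((\eta_j)_1,(\eta_j)_2,(\eta_j)_3)$ as a triple of nonnegative $C_c^\infty$ bumps of unit integral, with $(\eta_j)_1$ supported in a small subinterval of $(a_j-2,a_j-1)$, $(\eta_j)_2$ supported in a small subinterval of $\bigl(\tfrac{1}{3}(2a_j+b_j),\tfrac{1}{3}(a_j+2b_j)\bigr)$, and $(\eta_j)_3$ supported in a small subinterval of $(b_j+1,b_j+2)$. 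By \eqref{eq:r condition} all three lie in $(-r,r)$, and the vanishing pattern \eqref{eq:-9} is built in by construction. Setting
\[
v_j^\vep := \sqrt{f(m_j)^2 + f(\vep)^2} + c_{j,\vep}\cdot\eta_j,\qquad m_j^\vep := f^{-1}(v_j^\vep),
\]
one verifies directly that $m_j^\vep\geq\vep$; that $m_j^\vep\equiv\vep$ off $[-r,r]$; and that $m_j^\vep\to m_j$ uniformly as $\vep\to 0$ and $c_{j,\vep}\to 0$. The regularity $f(m_j^\vep)\in C^{1,\alpha}(\R)$ follows from the fact that $\phi(s):=\sqrt{s^2+f(\vep)^2}$ is smooth with $\phi'(0)=0$: inside $\{m_j>0\}$ the composition $\phi(f(m_j))$ is $C^{1,\alpha}$, while at the boundary of the support the derivative $\phi'(f(m_j))f(m_j)_x$ vanishes from both sides, giving a continuous global derivative.

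The key structural observation is that the three supports of $(\eta_j)_1,(\eta_j)_2,(\eta_j)_3$ lie respectively in the zones $(-r,a_j)$, $(a_j,b_j)$, $(b_j,r)$. Combined with the fact that $m_0^\vep$ and $m_T^\vep$ both equal the constant $\vep$ on $[-R,-r]\cup[r,R]$, the $r$-compatibility condition \eqref{eq:compatibility condition} (together with the total mass equality it implies) decouples into three independent scalar equations
\[
\int_{-r}^{a_0}m_0^\vep = \int_{-r}^{a_1}m_T^\vep,\quad \int_{a_0}^{b_0}m_0^\vep = \int_{a_1}^{b_1}m_T^\vep,\quad \int_{b_0}^{r}m_0^\vep = \int_{b_1}^{r}m_T^\vep,
\]
and moreover the $i$-th equation involves only $(c_{0,\vep})_i$ and $(c_{T,\vep})_i$. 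For each $i$, I would let $D_i^\vep$ denote the default discrepancy (with $c_{0,\vep}=c_{T,\vep}=0$); this is finite and tends to $0$ as $\vep\to 0$ by uniform convergence of the unperturbed $\tilde m_j^\vep$ to $m_j$. One then performs a one-sided adjustment: if $D_i^\vep\geq 0$ set $(c_{0,\vep})_i=0$ and pick $(c_{T,\vep})_i\geq 0$ solving the $i$-th identity; otherwise swap roles. Existence and uniqueness follow from the intermediate value theorem, since $c\mapsto \int f^{-1}(\sqrt{f(m_j)^2+f(\vep)^2}+c(\eta_j)_i)$ is continuous, strictly increasing in $c$, and diverges to $+\infty$. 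Continuous dependence of this solution on $D_i^\vep$, together with $D_i^\vep\to 0$ and the normalization $f(0)=0$, yields $c_{j,\vep}\to 0$.

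It remains to verify the quantitative bounds. Estimate \eqref{eq:uniform lip} is immediate from $|\phi'(s)|\leq 1$, and \eqref{eq:uniform semiconvexity} follows because $\phi''\geq 0$ and $\phi'(f(m_j))\in[0,1]$ imply
\[
[\phi(f(m_j))]_{xx} = \phi''(f(m_j))(f(m_j)_x)^2 + \phi'(f(m_j))f(m_j)_{xx} \geq -\|f(m_j)_{xx}^-\|_\infty,
\]
with the $c_{j,\vep}\cdot\eta_j$ correction treated directly. The Hölder bounds \eqref{eq:uniform holder}--\eqref{eq:interior holder} on the derivative are the subtlest step: splitting $(f(m_j^\vep))_x = \phi'(f(m_j))f(m_j)_x + c_{j,\vep}\cdot(\eta_j)_x$, I expect the main obstacle to be Hölder-controlling the first term globally on $\R$, because $f(m_j)_x$ need not have any global Hölder regularity (it is only $C^\alpha$ on $\{m_j>0\}$ and may jump across $\partial\{m_j>0\}$). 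The saving feature is that $\phi'(f(m_j))$ vanishes precisely where $f(m_j)$ does, and transitions smoothly on the scale $f(\vep)$ via $|\phi''|\leq 1/f(\vep)$, which keeps the product continuous across the boundary and produces the $O(1/f(\vep))$ blow-up displayed in \eqref{eq:uniform holder}. On the subset $\{m_j\geq\delta\}$ one instead uses $|\phi''(f(m_j))|\leq f(\vep)^2/f(\delta)^3\leq 1/f(\delta)$, yielding the sharper interior bound \eqref{eq:interior holder}.
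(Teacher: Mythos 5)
Your proof is correct and follows essentially the same approach as the paper's: the same choice of three bumps per marginal, the same observation that the compatibility conditions decouple into three scalar mass-matching equations, the same one-sided adjustment with the intermediate value theorem (enforcing $c_{0,\vep}^{i}c_{T,\vep}^{i}=0$), and the same derivative computations. The only cosmetic difference is that you package the derivative bounds through the function $\phi(s)=\sqrt{s^2+f(\vep)^2}$ and its properties $\phi'\in[0,1]$, $\phi''\geq 0$, $|\phi''|\leq 1/f(\vep)$, whereas the paper carries out the analogous expansion of $f(m_j^\vep)_x$ term by term; your formulation is a bit more transparent, particularly for \eqref{eq:uniform semiconvexity}, but the content is identical.
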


\begin{proof}
We let $\eta_{0,1},\eta_{0,2},\eta_{0,3}\in C_{c}^{\infty}(\mathbb{R})$
be non--zero, non--negative bump functions supported, respectively,
on 
\[
[a_{0}-2,a_{0}-1],\left[\frac{1}{3}(2a_{0}+b_{0}),\frac{1}{3}(a_{0}+2b_{0})\right],\text{ and }[b_{0}+1,b_{0}+2].
\]
Similarly, we take $\eta_{T,1},\eta_{T,2},\eta_{T,3}\in C_{c}^{\infty}(\mathbb{R})$
to be supported, respectively, on
\[
[a_1-2,a_1-1],\left[\frac{1}{3}(2a_1+b_1),\frac{1}{3}(a_1+2b_1)\right],\text{ and }[b_1+1,b_1+2].
\]
With this, we can define
\[
\eta_{0}=(\eta_{0,1},\eta_{0,2},\eta_{0,3}),\,\,\eta_{T}=(\eta_{T,1},\eta_{T,2},\eta_{T,3}).
\]
We observe first that (\ref{eq:-9}) holds by construction. Next, note that (\ref{eq:r condition}) guarantees that $m_{0}^{\vep},m_{T}^{\vep}\equiv\vep$
outside of $[-r,r]$. For this reason, (\ref{eq:compatibility condition})
will hold for all $R>r$ as long as it holds for $R=r$. Now, we have
\[
\int_{-r}^{a_{0}}m_{0}^{\vep}=\int_{-r}^{a_{0}}f^{-1}(f(\vep)+c_{0,\vep}^{1}\eta_{0,1}),\text{ and }\int_{-r}^{a_1}m_{T}^{\vep}=\int_{-r}^{a_1}f^{-1}(f(\vep)+c_{T,\vep}^{1}\eta_{T,1}).
\]
If, say, $a_0 \leq a_1$,
then, taking $c_{T,\vep}^{1}=0$, there exists a unique choice
of $c_{0,\vep}^{1}\geq0$ that ensures
\[
\int_{-r}^{a_{0}}m_{0}^{\vep}=\int_{-r}^{a_1}m_{T}^{\vep}.
\]
Similarly, if $a_1< a_{0}$, we may take $c_{0,\vep}^{1}=0$
and choose an adequate $c_{T,\vep}^{1}>0$. By the same reasoning,
there exists a choice of $c_{0,\vep}^{3},c_{T,\vep}^{3}$ such
that
\[
\int_{b_{0}}^{r}m_{0}^{\vep}=\int_{b_{0}}^{r}m_{T}^{\vep}.
\]
We must also ensure that $m_{0}^{\vep},m_{T}^{\vep}$ have
equal mass. For this purpose, we observe that
\[
\int_{a_{0}}^{b_{0}}m_{0}^{\vep}=\int_{a_{0}}^{b_{0}}f^{-1}(\sqrt{f(m_{0})^{2}+f(\vep)^{2}}+c_{0,\vep}^{2}\eta_{0,2}),\,\int_{a_1}^{b_1}m_{T}^{\vep}=\int_{a_1}^{b_1}f^{-1}(\sqrt{f(m_{T})^{2}+f(\vep)^{2}}+c_{T,\vep}^{2}\eta_{T,2}).
\]
And, as before, depending on which of the quantities $\int_{a_{0}}^{b_{0}}f^{-1}(\sqrt{f(m_{0})^{2}+f(\vep)^{2}})$,
$\int_{a_1}^{b_1}f^{-1}(\sqrt{f(m_{T})^{2}+f(\vep)^{2}})$
is larger, we may choose one of the numbers $c_{0,\vep}^{2},c_{T,\vep}^{2}$
to be zero, which leaves a unique way to choose the remaining one in such
a way that
\[
\int_{a_{0}}^{b_{0}}m_{0}^{\vep}=\int_{a_1}^{b_1}m_{T}^{\vep}.
\]
This, together with (\ref{eq:compatibility condition}), guarantees
that the two functions have the same integral over $[-R,R]$. Now,
(\ref{eq:-8}) is a straightforward consequence of (\ref{eq:equal mass assumption}),
(\ref{eq:compact support assumption}), and the fact that the vectors
$c_{0,\vep},c_{T,\vep}$ were chosen so that
\[
c_{0,\vep}^{i}c_{T,\vep}^{i}=0\text{ for }i\in\{1,2,3\}.
\]
It remains to show the Lipschitz, $C^{1,\alpha}$, and semi-convexity
estimates on $f(m_{0}^{\vep}),f(m_{T}^{\vep})$. For concreteness, we will only show these for $f(m_{0}^{\vep})$, since the arguments for $f(m_{T}^{\vep})$ are identical. We have
\[
f(m_{0}^{\vep})=\sqrt{f(m_{0})^{2}+f(\vep)^{2}}+c_{0,\vep}\cdot\eta_{0}\,; \qquad \,\,\,f(m_{0}^{\vep})_{x}=\frac{f(m_{0})f(m_{0})_{x}}{\sqrt{f(m_{0})^{2}+f(\vep)^{2}}}+c_{0,\vep}\cdot(\eta_{0})_{x},
\]
which readily yields (\ref{eq:uniform lip}). Now, given $x,y\in[-r,r]$, we
may write
\begin{multline}
f(m_{0}^{\vep})_{x}(x)-f(m_{0}^{\vep})_{x}(y)=c_{0,\vep}\cdot((\eta_{0})_{x}(x)-(\eta_{0})_{x}(y))+\frac{(f(m_{0})(x)-f(m_{0})(y))f(m_{0})_{x}(x)}{\sqrt{f(m_{0})^{2}(x)+f(\vep)^{2}}}\\
+f(m_{0})(y)\frac{f(m_{0})_{x}(x)-f(m_{0})_{x}(y)}{\sqrt{f(m_{0})(x)^{2}+f(\vep)^{2}}}+f(m_{0})(y)f(m_{0})_{x}(y)\left(\frac{1}{\sqrt{f(m_{0})(x)^{2}+f(\vep)^{2}}}-\frac{1}{\sqrt{f(m_{0})(y)^{2}+f(\vep)^{2}}}\right)\\
=c_{0,\vep}\cdot((\eta_{0})_{x}(x)-(\eta_{0})_{x}(y))+\frac{(f(m_{0})(x)-f(m_{0})(y))f(m_{0})_{x}(x)}{\sqrt{f(m_{0})^{2}(x)+f(\vep)^{2}}}+f(m_{0})(y)\frac{f(m_{0})_{x}(x)-f(m_{0})_{x}(y)}{\sqrt{f(m_{0})(x)^{2}+f(\vep)^{2}}}\\
+f(m_{0})(y)f(m_{0})_{x}(y)\left(\frac{(f(m_{0})(y)-f(m_{0})(x))(f(m_{0})(y)+f(m_{0})(x))}{\sqrt{f(m_{0})(x)^{2}+f(\vep)^{2}}\sqrt{f(m_{0})(y)^{2}+f(\vep)^{2}}(\sqrt{f(m_{0})(y)^{2}+f(\vep)^{2}}+\sqrt{f(m_{0})(x)^{2}+f(\vep)^{2}})}\right).\label{eq:approx holder computation}
\end{multline}
Thus, we obtain
\begin{multline*}
|f(m_{0}^{\vep})_{x}(x)-f(m_{0}^{\vep})_{x}(y)|\leq|c_{0,\vep}|[\eta_{0}]_{\alpha}|x-y|^{\alpha}+\frac{C}{f(\vep)}\|f(m_{0})_{x}\|_{\infty}^{2}|x-y|^{\alpha}+\frac{1}{f(\vep)}\|f(m_{0})\|_{\infty}[f(m_{0})_{x}]_{C^{\alpha}(\{m_{0}>0\})}|x-y|^{\alpha}\\
+\frac{C}{f(\vep)}\|f(m_{0})_{x}\|_{\infty}^{2}|x-y|^{\alpha},
\end{multline*}
which yields (\ref{eq:uniform holder}). Similarly, if $x,y\in\{m_{0}\geq\delta\}$, 
we obtain  (\ref{eq:interior holder}). Finally, if
$m_{0}$ is smooth, then
\begin{multline*}
f(m_{0}^{\vep})_{xx}=\frac{f(m_{0})f(m_{0})_{xx}}{\sqrt{f(m_{0})^{2}+f(\vep)^{2}}}+\frac{f(m_{0})_{x}^{2}}{\sqrt{f(m_{0})^{2}+f(\vep)^{2}}}-\frac{f(m_{0})^{2}f(m_{0})_{x}^{2}}{(f(m_{0})^{2}+f(\vep)^{2})^{\frac{3}{2}}}+c_{0,\vep}\cdot(\eta_{0})_{xx}\\
\geq-\|f(m_{0})_{xx}^{-}\|_{\infty}+\frac{f(m_{0})_{x}^{2}f(\vep)^{2}}{(f(m_{0})^{2}+f(\vep)^{2})^{\frac{3}{2}}}-|c_{0,\vep}\||(\eta_{0})_{xx}\|_{\infty}\\
\geq-\|f(m_{0})_{xx}^{-}\|_{\infty}-|c_{0,\vep}\||(\eta_{0})_{xx}\|_{\infty},
\end{multline*}
which shows (\ref{eq:uniform semiconvexity}). For a non-smooth $m_{0}$,
the result then follows by standard approximation.
\end{proof}
Having constructed the necessary approximations, we are now ready to prove the well-posedness theorem.
\begin{proof}[Proof of Theorem \ref{thm:well-posedness theorem}]
We begin with the case of (\ref{eq:MFGPR}). We first prove existence
of a solution satisfying the estimates. For this purpose, we choose the $r$-compatible approximations
$(m_{0}^{\vep},m_{T}^{\vep})$ as defined by (\ref{eq:compatible approximations}), where $r>0$ is fixed and chosen large enough to satisfy (\ref{eq:r condition}), and take
$R>2\overline{r}$, where $\overline{r}$ is the constant of Proposition
\ref{prop:bounded support}. In view of (\ref{eq:uniform lip}) from
Lemma \ref{lem:lemma compat approx}, the constant $C$ of Proposition
\ref{prop:Neumann theorem} may be chosen independently of $\vep$.
In particular, from (\ref{eq:-5}), we may choose the solutions $(u^{\vep},m^{\vep})$
of Proposition \ref{prop:Neumann theorem} in such a way that
$\|\ue\|_{\infty}\leq C.$
Therefore, due to the bounds \rife{eq:-14} and \rife{eq:-15}, 
 the family $\{(u^{\vep},m^{\vep})\}_{\vep\in(0,1)}$ is bounded and equicontinuous, and we may extract a subsequence and
conclude that $(u^{\vep},m^{\vep})\rightarrow(u,m)\in W^{1,\infty}([-R,R]\times[0,T])\times C([-R,R]\times[0,T])$,
with $(u,m)$ satisfying the required estimates.

Moreover, if $(x,t)\in\{m>0\}$,
by the equicontinuity of $\{m^{\vep}\}_{\vep\in(0,1)}$, there
exists an open set $V$ containing $(x,t)$ such that $f'(m^\vep)m^\vep>0$, so that  $u^{\vep}$
satisfies a uniformly elliptic equation in $V$ (recall \eqref{eq.elluBIS}). As a result, if $0<t<T$,
the standard interior $C^{1,\alpha}$ estimates followed by the Schauder
estimates (see \cite[Thm. 13.6]{GilbargTrudinger} and \cite[Thm. 6.2]{GilbargTrudinger}, respectively)  imply that $(u,m)\in C^{2,\alpha}\times C^{1,\alpha}$ in a neighborhood of $(x,t)$.
On the other hand, if $t\in\{0,T\}$, by proceeding as in the proof of Theorem \ref{thm:existence}, through the $C^{1,\alpha}$ estimates
for oblique problems, followed by Fiorenza's convergence argument (as detailed in the proof of Theorem \ref{thm:existence}), we see that $(u,m)\in C^{2,\alpha} \times C^{1,\alpha}$ in a neighborhood of
$(x,t)$. In particular, this shows that $u$ solves the HJ equation
in the pointwise sense at $(x,t)$.

Now,  we claim that $m\in C_{c}(\mathbb{R}\times[0,T])$, with $\text{supp}(m)\subset [-\overline{r},\overline{r}] \times [0,T]$.
 Indeed, this follows from the fact that, for each $t\in [0,T],$ $m(\cdot,t)$ has mass $1$ and, by Lemma \ref{lem.gamma} and Proposition \ref{prop:bounded support},
\[\int_{-\overline{r}}^{\overline{r}} m(\cdot,t)= \lim_{\epsilon \downarrow 0}\int_{-\overline{r}}^{\overline{r}} \me(\cdot,t)\geq \lim_{\epsilon \downarrow 0} \int_{a_0}^{b_0} \me_0 = \int_{a_0}^{b_0} m_0=1.\]
On the other hand, we can extend $u$ to $W^{1,\infty}(\mathbb{R}\times(0,T))$
in the following way:
\[
u(x,t)=\begin{cases}
u(-R,t) & \text{if }x<-R\\
u(R,t) & \text{if }x>R.
\end{cases}
\]
Notice that, since $\ue_{x}(\pm R,\cdot)\equiv0$,
\[
-u_{t}^{\vep}(\pm R,t)=f(\me(\pm R,t))\rightarrow f(0)\text{ uniformly in \ensuremath{t}},
\]
and, thus,
\[
u^{\vep}(\pm R,t)\rightarrow f(0)(T-t)+u(\pm R,T)\text{ in }C^{1,1}([0,T]).
\]
In particular, we see that, for $|x|>R$,
\[-u_{t}+\frac{1}{2}u_{x}^{2}=f(0)=f(m).
\]
It is then straightforward to verify, by using the basic stability property of viscosity solutions under uniform convergence (see \cite{UsersGuide}), that this extension $(u,m)$
is a solution to (\ref{eq:MFGPR}), in the sense of Definition \ref{gensol},  which satisfies all the necessary
estimates. 

Now, to prove uniqueness, we assume that $(\tilde{u},\tilde{m})$
is another solution. 
Since the function $\tilde{u}$ is a viscosity solution to the HJ equation, and it is almost everywhere differentiable, it must also satisfy the equation pointwise almost everywhere. Therefore, it also solves the equation in the distributional sense.  Noting that the pairs $(u,m), (\tilde{u},\tilde{m})\in W^{1,\infty}(\R \times (0,T)) \times C_b(\R \times [0,T])$ are sufficiently regular to serve as test functions, we may apply the standard Lasry-Lions computation to the pair  $(u,m)$ and $(\tilde{u},\tilde{m})$,
obtaining
 \begin{equation*}
\int_{0}^{T}\int_{\R}\frac{1}{2}(m+\tilde{m})|u_{x}-\tilde{u}_{x}|^{2}+(m-\tilde{m})(f(m)-f(\tilde{m}))=0.
\end{equation*}Therefore, since the left hand side is
non-negative, we conclude that $m=\tilde{m}$, and $u_{x}=\tilde{u}_{x}$
in $\{m>0\}$. Finally, since $m$ is continuous, $\{m>0\}$ is an
open set, and thus $u$ and $\tilde{u}$ at most differ by a constant
on each connected component of $\{m>0\}$. For the case of (\ref{eq:MFGR}),
the proof is completely analogous, noting that in applying Proposition
\ref{prop:Neumann theorem}, we take $g^{\vep}(m)=g(m)+\vep m$,
to satisfy the strict monotonicity assumption. We note that, unless
$g'>0$, the Lieberman and oblique Schauder estimates may not be applied
at $t=T$, hence the weaker conclusion $(u,f(m))\in C_{\text{loc}}^{2,\alpha}((\mathbb{\mathbb{R}\times}[0,T))\cap\{m>0\})\times C_{\text{loc}}^{1,\alpha}((\mathbb{\mathbb{R}\times}[0,T))\cap\{m>0\})$.
However, in the special case $g\equiv0$, $u$ satisfies a smooth
Dirichlet boundary condition at $t=T$, and thus we may apply the Ladyzhenskaya-Uralt'seva
and Schauder estimates for the Dirichlet problem (see \cite[Thm 13.7, Thm 6.19]{GilbargTrudinger}) to the limiting function
$u$ to still obtain the $C_{\text{loc}}^{2,\alpha}((\mathbb{\mathbb{R}\times}[0,T])\cap\{m>0\})$
regularity. Note that global uniqueness 
follows because, since $m$ is unique, the terminal value $u(\cdot,T)=g(m(\cdot,T))$
is uniquely determined.
\end{proof}

\subsection{Analysis of the free boundary}\label{free-analysis}

Having established the existence and uniqueness of solutions, with compactly supported density, we now study the set $\partial(\{m>0\})$,
where $(u,m)$ is the solution to (\ref{eq:MFGR}) or (\ref{eq:MFGPR}).  Henceforth, we restrict our analysis to the case that, for some given constant $\theta>0$, we have
\be\label{eq: assumption f=00003Dpower}
f(m)= m^\theta\,,\qquad \theta>0\,.
\ee
We will focus on the setting in which the initial distribution $m_0$ (and possibly $m_T$, in case of problem \rife{eq:MFGPR}) are each supported
exactly on an interval. That is, we will assume that
\begin{equation}
\{m_{0}>0\}=(a_{0},b_{0})\text{ and }\{m_{T}>0\}=(a_1,b_1),\label{eq:support assumption}
\end{equation}
for some finite $a_0, b_0, a_1, b_1$. 
Moreover, we will assume that $m_{0}$ and $m_{T}$ decay like powers
near the endpoints of these intervals. In other words, there exist
numbers $\alpha_0,\alpha_1>0$ such that
\begin{equation}
\frac{1}{C_{0}}\text{dist}(x,\{a_{0},b_{0}\})^{\alpha_0}\leq m_{0}(x)\leq C_{0}\text{dist}(x,\{a_{0},b_{0}\})^{\alpha_0}, \qquad x\in [a_0,b_0],\label{eq:decay restatement m0}
\end{equation}
\begin{equation}
\frac{1}{C_{1}}\text{dist}(x,\{a_1,b_1\})^{\alpha_1}\leq m_{T}(x)\leq C_{1}\text{dist}(x,\{a_1,b_1\})^{\alpha_1}, \qquad x\in [a_1,b_1].\label{eq:decay restatement mT}
\end{equation}
For concreteness, we will also assume here that 
\begin{equation}
\alpha_0\geq\alpha_1,\label{eq:alpha beta assumption}
\end{equation}
but we remark that the opposite case can be studied, with analogous
results, by simply considering the time-reversed functions $(-u(x,T-t),m(x,T-t))$,
which solve (\ref{eq:MFGPR}), but with the roles of $m_{0}$ and
$m_{T}$ reversed. 

Correspondingly, in case of problem \rife{eq:MFGR}, we assume that the  coupling function $g$ at final time satisfies, for some given constants
$ \theta_1>0$, $c_1\geq0$, with $\theta_1\geq\theta$,
\begin{equation}
 g(m)=c_1Tm^{\theta_1}.\label{eq: assumption g=00003Dpower}
\end{equation}
\begin{rem}The factor of $T$ in the definition of $g$ is made explicit in order to adequately state the sharp long time behavior result, Theorem \ref{thm:long time}. This is the natural scaling for the final pay-off which is consistent with the behavior of the self-similar solution, see Section 2.  However we also note that our assumptions include, in particular, the case of terminal condition $u(\cdot,T)=0$.
\end{rem} 

\subsubsection{Characterization of the free boundary through the equation satisfied by the flow}

In this subsection, we properly establish the existence of the free boundary curves together with their basic characterization.  Our main tool will be the elliptic equation satisfied by the flow $\gamma$ of optimal characteristics (see \eqref{eq.ell.gammaBIS}).
 We recall from Section \ref{sec:self similar} that $\oal\in (0,1)$ is defined by
\[\oal=\frac{2}{2+\theta}.\]

\begin{thm}[Characterization of the free boundary and the flow]
 \label{thm:characteriz free boundary} Let $f$ be given by \rife{eq: assumption f=00003Dpower}.  Assume that  \rife{eq:equal mass assumption}--\rife{eq:f(marginals) interior holder} and \eqref{eq:support assumption}--\eqref{eq: assumption g=00003Dpower} hold, and that $f(m_{0})$
is semi--convex. Let $(u,m)$ be a solution to \eqref{eq:MFGR}
or \eqref{eq:MFGPR}. Then there exist two functions $\gamma_{L}<\gamma_{R}\in W^{1,\infty}(0,T)$,
such that 
\begin{equation}
\{m>0\}=\{(x,t)\in\mathbb{R}\times[0,T]:\gamma_{L}(t)<x<\gamma_{R}(t)\}.\label{eq:free bdary charact}
\end{equation}
Moreover, the flow $\gamma$ of optimal trajectories is well defined
on $(a_{0},b_{0})\times[0,T]$, we have
\[
\gamma\in W^{1,\infty}((a_{0},b_{0})\times(0,T))\cap C_{\emph{\text{loc}}}^{2,\alpha}((a_{0},b_{0})\times[0,T]),\,\,\,\gamma_{x}>0,\,\,\,\gamma_{L}(t)=\gamma(a_{0},t),\,\gamma_{R}(t)=\gamma(b_{0},t),
\]
 and $\gamma$ is a classical solution in $(a_{0},b_{0})\times(0,T)$
to the elliptic equation

\begin{equation}
\gamma_{tt}+\frac{\theta f(m_{0})}{(\gamma_{x})^{2+\theta}}\gamma_{xx}=\frac{f(m_{0})_{x}}{(\gamma_{x})^{1+\theta}}.\label{eq:flow equation}
\end{equation}
Moreover, there exists a constant
\[
C= \begin{cases} C\left(T,T^{-1},C_{0},\|f(m_{0})\|_{W^{1,\infty}},\|f(m_{0})_{xx}^{-}\|_{\infty},\theta, \frac{\theta_1}{\theta},c_1\right) & \text{if } (u,m) \text{ solves \eqref{eq:MFGR}},\\
C\left(T,T^{-1},C_{0},C_1,\|f(m_{0})\|_{W^{1,\infty}},\|f(m_{T})\|_{W^{1,\infty}},\|f(m_{0})_{xx}^{-}\|_{\infty}, \theta\right) & \text{if } (u,m) \text{ solves \eqref{eq:MFGPR}},
\end{cases} \]
such that 

\begin{equation}
\|\gamma\|_{W^{1,\infty}([a_{0},b_{0}]\times[0,T])}\leq C,\label{eq:-11}
\end{equation}
and, for each $(x,t)\in(a_{0},b_{0})\times[0,T]$,

\begin{equation}
\frac{1}{C}m_{0}(x)\leq m(\gamma(x,t),t).\label{eq:-12}
\end{equation}
\end{thm}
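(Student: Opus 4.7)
The strategy is to work with the smooth approximations $(\ue, \me)$ built in the proof of Theorem \ref{thm:well-posedness theorem} via the $r$-compatible approximations of Lemma \ref{lem:lemma compat approx}. Since $\me > 0$ and is smooth, the associated flow $\gamma^\vep \in C^2([-R,R] \times [0,T])$ of optimal trajectories is well defined with $\gamma_x^\vep > 0$, and Lemma \ref{lem.gamma} specialized to $f(m) = m^\theta$ yields
\begin{equation*}
\gamma^\vep_{tt} + \frac{\theta f(m_0^\vep)}{(\gamma^\vep_x)^{2+\theta}} \gamma^\vep_{xx} = \frac{f(m_0^\vep)_x}{(\gamma^\vep_x)^{1+\theta}}.
\end{equation*}
The proof then consists in obtaining uniform-in-$\vep$ $W^{1,\infty}$ estimates on $\gamma^\vep$ over $[a_0, b_0] \times [0,T]$, passing to the limit, and identifying the free-boundary curves with the traces $\gamma(a_0, \cdot)$ and $\gamma(b_0, \cdot)$ through mass conservation.

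For the uniform bounds, the time-Lipschitz estimate $|\gamma_t^\vep| = |u_x^\vep(\gamma^\vep, \cdot)| \leq C$ follows at once from the gradient bound of Proposition \ref{prop:Neumann theorem}, while $|\gamma^\vep| \leq C$ on $[a_0,b_0] \times [0,T]$ is precisely Proposition \ref{prop:bounded support}. The heart of the matter is a uniform upper bound on $\gamma_x^\vep$, which I would derive by a maximum principle argument on the equation obtained by differentiating the elliptic equation above in $x$. It is here that the semi-convexity hypothesis on $f(m_0)$ becomes essential: it transfers to the approximation via \eqref{eq:uniform semiconvexity} of Lemma \ref{lem:lemma compat approx}, giving the uniform one-sided bound on $f(m_0^\vep)_{xx}^-$ needed to absorb the forcing term. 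The initial condition $\gamma_x^\vep(\cdot, 0) = 1$ and, at $t = T$, either the $r$-compatibility of $m_T^\vep$ (for the planning problem) or the structure of $g$ (in the final-cost case) supply the matching boundary estimates.

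Given these $\vep$-uniform bounds, a subsequence converges uniformly, and weakly-$*$ in $W^{1,\infty}$, to some $\gamma$ with $\gamma_x \geq 0$. On each $[a_0', b_0'] \Subset (a_0, b_0)$, the decay assumption \eqref{eq:decay restatement m0} gives $\inf m_0 > 0$, so $f(m_0) \geq c > 0$ and the equation is uniformly elliptic with smooth coefficients. Interior $C^{1,\alpha}$ estimates for quasilinear equations, followed by Schauder regularity and the oblique-boundary adaptation at $t = 0, T$ used in the proof of Theorem \ref{thm:existence}, upgrade the convergence to $C^{2,\alpha}_{\text{loc}}((a_0,b_0) \times [0,T])$, so that $\gamma$ solves \eqref{eq:flow equation} classically in $(a_0,b_0) \times (0,T)$. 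Strict positivity $\gamma_x > 0$ then follows from the conservation-of-mass identity \eqref{masspreservationBIS}, $\gamma_x = m_0/(m \circ \gamma)$, combined with $\|m\|_\infty < \infty$ from Theorem \ref{thm:well-posedness theorem}. Setting $\gamma_L(t) := \gamma(a_0, t)$ and $\gamma_R(t) := \gamma(b_0, t)$, strict monotonicity of $\gamma(\cdot, t)$ gives $\gamma_L < \gamma_R$, while the mass identity \eqref{masspreservationTER} forces $m(\cdot, t) \equiv 0$ outside $[\gamma_L(t), \gamma_R(t)]$; inside one has $m(\gamma(x,t), t) = m_0(x)/\gamma_x(x,t) > 0$, yielding simultaneously \eqref{eq:free bdary charact} and the lower bound \eqref{eq:-12}.

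\textbf{Main obstacle.} The principal difficulty is securing the uniform upper bound on $\gamma_x^\vep$---equivalently, a uniform lower bound $m^\vep(\gamma^\vep(x,t), t) \geq C^{-1} m_0^\vep(x)$. The maximum principle computation for $\gamma_x^\vep$ (or a suitable convex function of it) produces a source term involving $f(m_0^\vep)_{xx}^-$, and it is precisely the semi-convexity of $f(m_0)$---preserved under the compatible approximation of Lemma \ref{lem:lemma compat approx}---that allows one to absorb this term uniformly in $\vep$. Once this estimate is in hand, the remaining steps (compactness, passage to the limit in the equation, identification of the free boundary, and the lower bound on $m$) are essentially routine consequences of the a priori bounds already established and standard quasilinear elliptic regularity.
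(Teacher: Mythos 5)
Your proposal reproduces the paper's proof: the same regularization via $r$-compatible approximations, the same maximum-principle bound on $\gamma_x^\vep$ from the $x$-derivative of the flow equation, with the semi-convexity of $f(m_0)$ transferred through \eqref{eq:uniform semiconvexity}, the same boundary estimates at $t=0$, $t=T$ (using, respectively, the $r$-compatibility and the structure of $g$), and the same passage to the limit using mass conservation to identify $\{m>0\}$ and obtain the lower bound \eqref{eq:-12}. The only ingredients left implicit are the explicit barrier in the maximum-principle step (the paper uses $w=\gamma_x^\vep-Kt^{\overline{\alpha}}$, where the exponent $\overline{\alpha}=2/(2+\theta)$ is forced because it makes the source term $f(m_0^\vep)_{xx}/(\gamma_x^\vep)^{1+\theta}$ balance against $\psi''(t)$ scale-invariantly) and the trivial estimate on the lateral boundaries $x=a_0,b_0$, where $\gamma_x^\vep=\me_0/\me\leq 1$ since $\min\me_0=\vep=\min\me$; neither affects the soundness of your plan.
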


\begin{proof}
Throughout this proof, as usual, the constant $C>0$ may increase
at each step. We will first treat the case in which $(u,m)$ solves
(\ref{eq:MFGPR}). As in the proof of Theorem \ref{thm:well-posedness theorem},
we let $(\ue,\me)$ be the solution to (\ref{eq:MFG-Neu}) given by
Proposition \ref{prop:Neumann theorem}, corresponding to the choices of
$m_{0}^{\vep},m_{T}^{\vep}$ given by (\ref{eq:compatible approximations}).
We may normalize the solution to satisfy $\int_{\T}\ue(\cdot,T)=0$.
We also fix $R$ large enough that $(-R,R)$ contains both $[a_{0}-2,b_{0}+2]$
and $[a_1-2,b_1+2]$. With this choice, we see from (\ref{eq:compatible approximations})
and (\ref{eq:-9}) that
\begin{equation}
\int_{-R}^{a_{0}}m_{0}^{\vep}=\int_{-R}^{a_1}m_{T}^{\vep},\text{ and }\int_{b_{0}}^{R}m_{0}^{\vep}=\int_{b_1}^{R}m_{T}^{\vep},\label{eq:compatible epsilon approx}
\end{equation}
which guarantees that $\gamma^{\vep}(a_{0},T)=a_1$, and $\gamma^{\vep}(b_{0},T)=b_1$,
or, equivalently, that $\gamma^{\vep}(\cdot,T)$ is a bijection
between $\text{supp }m_{0}$ and $\text{supp }m_{T}$. We are now
interested in a Lipschitz bound for $\gamma^{\vep}$. Recall  that
$
\gamma_{t}^{\vep}=-\ue_{x}(\gamma^{\vep}(x,t),t)$, 
so $\|\gamma_{t}^{\vep}\|_{L^{\infty}((a_{0},b_{0})\times[0,T])}$
is bounded due to Proposition \ref{prop:Neumann theorem} (we emphasize
that our choice of $R$ is already fixed).  Moreover, we have $\gamma^{\vep}\in C^{2}([-R,R]\times[0,T])$ and, by Lemma \ref{lem.gamma} 
we know that it satisfies   \rife{eq.ell.gammaBIS}; for the case $f(m)=m^\theta$, this equation may be rewritten as
%
%
%
%
\begin{equation}
\gamma_{tt}^{\vep}+\frac{\theta f(\me_{0})}{(\gamma_{x}^{\vep})^{2+\theta}}\gamma_{xx}^{\vep}=\frac{f(\me_{0})_{x}}{(\gamma_{x}^{\vep})^{1+\theta}}.\label{eq:flow equation epsilon}
\end{equation}
For $K>0$, we set 
\[v=\gamma_{x}^{\vep}, \quad \,w=v-Kt^{\oal}, \,\,\text{ and }\, \, k_1=\|f(m_{0})_{xx}^{-}\|_{\infty}.\]
We now want to show that, if $K$ is chosen sufficiently large,
\begin{equation}
\max_{[a_{0},b_{0}]\times[0,T]}w\leq \max_{\partial([a_{0},b_{0}]\times[0,T])}w.\label{eq:max principle on gamma_x}
\end{equation}
For this purpose we assume first that $\me_{0},\me_{T}$ are smooth.
Then $\gamma^{\vep}$ is smooth as well, and differentiating (\ref{eq:flow equation epsilon})
with respect to $x$, we get, for some function $b(x,t)$, and sufficiently small $\vep>0$,
\begin{equation}
v_{tt}+\frac{\theta f(\me_{0})}{v^{2+\theta}}v_{xx}+b(x,t)v_{x}=\frac{f(\me_{0})_{xx}}{v^{1+\theta}}\geq \frac{-2k_1}{v^{1+\theta}},\label{eq:-17}
\end{equation}
 where equation \eqref{eq:uniform semiconvexity} from Lemma \ref{lem:lemma compat approx} was used in the last inequality. Recalling the definition of $w$, this yields
\begin{equation}\label{eq:v subsol-1}
w_{tt}+\frac{\theta f(\me_{0})}{v^{2+\theta}}w_{xx}+b(x,t)w_{x}\geq\frac{-2k_1}{v^{1+\theta}}+  K \oal(1-\oal)t^{\oal-2}.
\end{equation}
Let $(x_{0},t_{0})$ be an interior maximum point of $w$. Then the right hand side of \eqref{eq:v subsol-1} must be non-positive, which may be rewritten as
\be\label{absK}
v(x_0,t_0)\leq \left(\frac{2k_1}{ K \oal(1-\oal)}\right)^{\frac{1}{1+\theta}}t_0^{\frac{2-\oal}{1+\theta}}=\left(\frac{2k_1}{ K \oal(1-\oal)}\right)^{\frac{1}{1+\theta}}t_0^{\oal}.
\ee
Since $\max_{[a_{0},b_{0}]\times[0,T]}w \geq 0$, we have $v(x_0, t_0)\geq Kt_0^{\oal}$. Hence inequality \rife{absK} is impossible if we choose  $K$ sufficiently large, namely if  $K>\left(\frac{2k_1}{\oal(1-\oal)}\right)^{\frac{1+\theta}{2+\theta}}$. This shows (\ref{eq:max principle on gamma_x}).
To remove the assumption that $\me_{0},\me_{T}$ are sufficiently
smooth to perform the above computations, we approximate them first
with convolutions, as was done in (\ref{eq:convolution approx}) (but
without any need to regularize $f$), and we simply note that (\ref{eq:max principle on gamma_x})
is stable under such an approximation.

In view of (\ref{eq:max principle on gamma_x}), it now suffices to
bound $w$ at the boundary points. When $t=0$, we have $w=v\equiv1$,
and when $x=a_{0}$ or $x=b_{0}$, $m_{0}^{\vep}(x)=\vep=\min\me$,
so that, recalling \eqref{masspreservationBIS},
\[
w(x,t)\leq v(x,t)=\frac{\me_{0}(x)}{\me(\gamma^{\vep}(x,t),t)}\leq1.
\]
Therefore, we are only left with estimating $v(x,T)$, for $x\in(a_{0},b_{0})$.
 We will assume that $x\in(a_{0},\frac{1}{2}(a_{0}+b_{0}))$, since
the converse case is completely symmetric. We first observe that the explicit form of the approximations \rife{eq:compatible approximations}, where $f(s)=s^\theta$, together with  
(\ref{eq:decay restatement m0}) and (\ref{eq:decay restatement mT}), imply, for some constants $c_0, c_1$: 
\begin{equation}
\frac{1}{c_{0}}( \text{dist}(x,\{a_{0},b_{0}\})^{\alpha_0}+ \vep )\leq \me_{0}(x)\leq c_{0}(\text{dist}(x,\{a_{0},b_{0}\})^{\alpha_0}+ \vep), \qquad x\in (a_0,b_0),\label{powersvep0}
\end{equation}
and
\begin{equation}
\frac{1}{c_{1}}(\text{dist}(x,\{a_1,b_1\})^{\alpha_1}+\vep)\leq \me_{T}(x)\leq c_{1}(\text{dist}(x,\{a_1,b_1\})^{\alpha_1}+\vep), \qquad x\in (a_1,b_1).
\label{powersvepT}
\end{equation}
Let us set $\overline{x}=\gamma^{\vep}(x,T)$; since we have
\begin{equation}
\int_{a_{0}}^{x}\me_{0}=\int_{a_1}^{\ox}m_{T}^{\vep}\label{eq:equal int}
\end{equation}
we deduce from \rife{powersvep0}--\rife{powersvepT} that $x\to a_0$ if and only if $\ox\to a_1$. Hence we can assume that $\overline{x}\leq\frac{a_1+b_1}{2}$ and we estimate 
\[
\frac{1}{c_0}((x-a_{0})^{\alpha_0+1}+\vep(x-a_{0}))\leq\int_{a_{0}}^{x}m_{0}^{\vep}=\int_{a_1}^{\overline{x}}m_{T}^{\vep}\leq c_1((\overline{x}-a_1)^{\alpha_1+1}+\vep(\overline{x}-a_1)).
\]
In particular, this implies that
\[
\frac{1}{c_0}(x-a_{0})^{\alpha_0+1}\leq c_1(\overline{x}-a_1)^{\alpha_1+1}\text{ or }\,\,\frac{1}{c_0}\vep(x-a_{0})\leq c_1\vep(\overline{x}-a_1),
\]
and since $\alpha_0\geq \alpha_1$ we deduce, for some constant $C$, that
\[
(x-a_{0})^{\alpha_0}\leq C\, (\overline{x}-a_1)^{\alpha_1}\,.
\]
As a result, using \rife{powersvep0}--\rife{powersvepT}, 
we have, for $\vep$ small enough,
\[
v(x,T) =\frac{\me_{0}(x)}{\me(\gamma^{\vep}(x,T),T)}\leq \frac{c_0(x-a_{0})^{\alpha_0}+\vep}{\frac{1}{c_1}(\ox-a_{1})^{\alpha_1}+\vep}\leq C.
\]
We thus conclude that $w\leq C$, hence
\begin{equation} \label{eq:gamma_x long time bound 1}
\gamma^{\vep}_x(x,t)\leq C(1+t^{\oal}).\end{equation}
This finally establishes that
\begin{equation}
\|\gamma^{\vep}\|_{W^{1,\infty}([a_{0},b_{0}]\times[0,T])}\leq C,\label{eq:-13}
\end{equation}
and, in particular,
\begin{equation}
\frac{1}{C}\me_{0}(x)\leq m^{\vep}(\gamma^{\vep}(x,t),t).\label{eq:-6}
\end{equation}
Now, recalling the proof of Theorem \ref{thm:well-posedness theorem},
we see that $m^{\vep}$ converges uniformly to the unique weak
solution $m$ of (\ref{eq:MFGPR}). Moreover, $\gamma^{\vep}\rightarrow\gamma$
uniformly in $[a_{0},b_{0}]\times[0,T]$, and (\ref{eq:-11}), (\ref{eq:-12})
follow. Now, assume that $x_{0}<\gamma(a_{0},t_{0})$. We have, for
$\vep$ small enough, $x_{0}<\gamma^{\vep}(a_{0},t_{0})$,
so Lemma \ref{lem.gamma} yields
\[
\int_{-R}^{x_{0}}\me\leq\int_{-R}^{a_{0}}m_{0}^{\vep},
\implies \int_{-R}^{x_{0}}m\leq\int_{-R}^{a_{0}}m_{0}=0, \]
and, thus, $m(x_{0},t_{0})=0$. Similarly, $m(x_{0},t_{0})=0$ whenever
$x_{0}>\gamma(b_{0},t_{0}).$ On the other hand, if $\gamma(a_{0},t_{0})<x_{0}<\gamma(b_{0},t_{0})$,
we have, by continuity, $\gamma(a_{0},t_{0})=\gamma(c_{0},t_{0})$
for some $a_{0}<c_{0}<b_{0}.$ As a result, (\ref{eq:-12}) yields
\[
m(x_{0},t_{0})=m(\gamma(c_{0},t_{0}),t_{0})\geq\frac{1}{C}m_{0}(c_{0})>0,
\]
and this proves (\ref{eq:free bdary charact}). We now recall from
the proof of Theorem \ref{thm:well-posedness theorem} that $(\ue,f(\me))$
is bounded in $C_{\text{loc}}^{2,\alpha}(([-R,R]\times[0,T])\cap\{m>0\})\times C_{\text{loc}}^{1,\alpha}(([-R,R]\times[0,T])\cap\{m>0\})$,
independently of $\vep$. Thus, for $(x,t)\in(a_{0},b_{0})\times[0,T]$,
letting $\vep\rightarrow0$ in the relations
\[
\gamma_{t}^{\vep}=-u_{x}^{\vep}(\gamma^{\vep}(x,t),t),\,\,\gamma_{x}^{\vep}=\frac{m_{0}^{\vep}(x)}{m^{\vep}(\gamma^{\vep}(x,t),t)}
\]
shows that $\gamma^{\vep}\rightarrow\gamma$ in $C_{\text{loc}}^{2}((a_{0},b_{0})\times[0,T])$,
\begin{equation} 
\gamma_{t}=-u_{x}(\gamma(x,t),t),\,\,\gamma_{x}=\frac{m_{0}(x)}{m(\gamma(x,t),t)},\label{eq:derivatives of flow}
\end{equation}
and, in particular, $\gamma\in C_{\text{loc}}^{2,\alpha}((a_{0},b_{0})\times[0,T])$.
Finally, letting $\vep\rightarrow0$ in (\ref{eq:flow equation epsilon})
yields (\ref{eq:flow equation}).

We now explain the necessary changes in the above proof to deal with
the case that $(u,m)$ solves (\ref{eq:MFGR}). We initially assume that $c_1>0$. The first modification
lies in the proof of (\ref{eq:gamma_x long time bound 1}), since our previous argument
to estimate $\gamma_{x}^{\vep}(\cdot,T)$ does not apply if $\me(\cdot,T)\neq\me_{T}$.
Assume then that the function $w$ achieves its maximum value at a point $(x_{0},T)$, with
$a_{0}<x_{0}<b_{0}$. 
We first observe that
\[
\ue(\cdot,T)=g(\me(\cdot,T)),
\]
so that 
\[
-\gamma_{t}^{\vep}(\cdot,T)=\ue_{x}(\gamma^\vep(\cdot,T), T) = g'(\gamma^\vep(\cdot,T)) \me_{x}(\gamma^\vep(\cdot,T), T)\,.
\]
Since $\me_x= \frac1{f'} f(\me)_x= \frac1{f'}\gamma_{tt}^\vep$, we get
\[ 
-\gamma_{t}^{\vep}(\cdot,T)= \frac{g'}{f'}\gamma_{tt}^{\vep}(\cdot,T)=Tc_1\frac{\theta_1}{\theta}(\me)^{\theta_1-\theta}\gamma_{tt}^{\vep}.
\]
Differentiating this relation with respect to $x$ once more yields
\[
T\, c_1\frac{\theta_1}{\theta}(\me)^{\theta_1-\theta}v_{tt}=-v_{t}-T\, c_1\frac{\theta_1}{\theta}(\theta_1-\theta)(\me)^{\theta_1-\theta-1}(\me_{x})^{2}\gamma_{x}f'(\me)\leq-v_{t}.
\]
Now, evaluating this at $x_{0}$, and using the fact that $w_{t}\geq0$
at $(x_{0},T)$, we get 
\[
Tc_1\frac{\theta_1}{\theta}(\me)^{\theta_1-\theta}v_{tt}\leq -K\oal T^{\oal-1}.
\]
 On the other hand, using $w_{x}=v_{x}=0$ and $w_{xx}=v_{xx}\leq0$, (\ref{eq:-17}) yields $v_{tt}\geq -\frac{2k_1}{v^{1+\theta}}$. Hence we get
\[
-\frac{2k_1}{v^{1+\theta}}\frac{\theta_1}{\theta}(\me)^{\theta_1-\theta} \leq -K \frac{\oal T^{\oal-2}}{c_1} \,. 
\]
Using the uniform bound on $m$ (see \eqref{eq:m epsilon upper bound}) we obtain, for $\vep$ sufficiently small,
\begin{equation} \label{eq:gamma_x(T) bound}
v\leq \left( \frac{2k_1 c_1\theta_{1}}{\oal\theta} \|m_0\|^{\theta_1-\theta}_{\infty}\right)^{\frac{1}{1+\theta}}\frac{T^{\frac{2-\oal}{1+\theta}}}{K^{\frac{1}{1+\theta}}} := C \frac{T^{\frac{2-\oal}{1+\theta}}}{K^{\frac{1}{1+\theta}}}=C \frac{T^{\oal}}{K^{\frac{1}{1+\theta}}}.
\end{equation}

If we choose $K>C^{\frac{1+\theta}{2+\theta}}$, then the right hand side of \eqref{eq:gamma_x(T) bound} is bounded above by $KT^{\oal}$, which yields $w(x,T)\leq 0$, completing the proof of (\ref{eq:gamma_x long time bound 1}).
We also observe that $g=c_1Tm^{\theta}$ satisfies $g'>0$, which, as seen in the proof of Theorem \ref{thm:well-posedness theorem},
is still sufficient to obtain uniform bounds of $(\ue,f(\me))$
in 
\begin{equation} \label{eq:3 1 1}
C_{\text{loc}}^{2,\alpha}(([-R,R]\times[0,T])\cap\{m>0\})\times C_{\text{loc}}^{1,\alpha}(([-R,R]\times[0,T])\cap\{m>0\}).
\end{equation}
When $c_1=0$, we can repeat the present proof for $g^\vep(m)=\vep m^{\theta}$. The functions $(\ue,f(\me))$ may still be estimated in $C^{2,\alpha}_{\text{loc}}\times C^{1,\alpha}_{\text{loc}}$ away from $t=T$, and we conclude as before, except that we are only able to obtain that $\gamma \in C_{\text{loc}}^{2,\alpha}\left((a_0,b_0)\times[0,T)\right)$. However, since we know from Theorem \ref{thm:well-posedness theorem} that $u\in C^{2,\alpha}_{\text{loc}}((\mathbb{R}\times [0,T])\cap \{m>0\}),$ the regularity of $\gamma$ up to $t=T$ follows from \eqref{eq:derivatives of flow}.
\end{proof}
We now obtain the optimal upper bound for the time evolution of the quantity $\gamma_x$, which is attained by the self-similar solutions of Section \ref{sec:self similar}.
\begin{cor}[Upper bound on $\gamma_x$]\label{cor: gamma_x upper bound} Under the assumptions of Theorem \ref{thm:characteriz free boundary}, let $(u,m)$ be a  solution to \eqref{eq:MFGR} or \eqref{eq:MFGPR}, let $\gamma$ be the flow of optimal trajectories for $(u,m)$, and define
\begin{equation} \label{eq:d(t) defi}
\mathscr{d}(t) = 
\begin{cases} 
t & \text{if } u \text{ solves \eqref{eq:MFGR}}, \\
\emph{\text{dist}}(t, \{0, T\}) & \text{if } u \text{ solves \eqref{eq:MFGPR}.}
\end{cases}
\end{equation}
Then there exists a constant $C>0$, with
\begin{equation} \label{eq:gamma_x upper bound C dependence}
C = 
\begin{cases} 
C\left(C_{0},\|f(m_{0})_{xx}^{-}\|_{L^{\infty}(\{m_{0}>0\})},\frac{\theta_1}{\theta},c_1\right) & \text{if } u \text{ solves \eqref{eq:MFGR}}, \\
C\left(C_{0},C_1,\|f(m_{0})_{xx}^{-}\|_{L^{\infty}(\{m_{0}>0\})}\right) & \text{if } u \text{ solves \eqref{eq:MFGPR}},
\end{cases}
\end{equation}
independent of $T$, such that
\[\gamma_x(x,t) \leq C(1+\mathscr{d}(t)^{\oal}).\]

\end{cor}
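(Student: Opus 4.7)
The plan is to return to the proof of Theorem \ref{thm:characteriz free boundary} and extract from it a bound with a constant that can be chosen independently of $T$. Recall that the proof constructed the auxiliary function $w(x,t) := \gamma_x^\vep(x,t) - K t^{\oal}$ on $[a_0,b_0] \times [0,T]$ and applied the maximum principle: at any interior maximum, \eqref{eq:v subsol-1} combined with $\gamma_x^\vep \geq K t^{\oal}$ forces a contradiction provided $K > \bigl(\tfrac{2k_1}{\oal(1-\oal)}\bigr)^{(1+\theta)/(2+\theta)}$, a threshold that depends only on $\oal$ and on $k_1 = \|f(m_0)_{xx}^-\|_\infty$, hence not on $T$. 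It therefore suffices to revisit the boundary values of $w$.

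On $\{t=0\}$ and on the lateral faces $\{a_0,b_0\} \times [0,T]$ one has $w \leq 1$, which is trivially $T$-independent. At $t=T$ the two problems diverge. For \eqref{eq:MFGR} with $g = c_1 T m^{\theta_1}$, the analysis leading to \eqref{eq:gamma_x(T) bound} produced $\gamma_x^\vep(x,T) \leq C T^{\oal}/K^{1/(1+\theta)}$ with prefactor depending only on $k_1, c_1, \theta_1/\theta$ and $\|m_0\|_\infty$; enlarging $K$ by a $T$-independent amount then forces $w(x,T)\leq 0$. For \eqref{eq:MFGPR}, the mass-comparison argument combined with the decay assumptions \eqref{eq:decay restatement m0}--\eqref{eq:decay restatement mT} yielded $\gamma_x^\vep(x,T) \leq C(C_0,C_1)$, again independent of $T$. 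Letting $\vep \to 0$ delivers the $T$-independent bound $\gamma_x(x,t) \leq C(1+t^{\oal})$ in both cases, which is the desired statement for the final-cost problem, where $\mathscr{d}(t)=t$.

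To complete the planning case we still need the bound $\gamma_x \leq C(1+(T-t)^{\oal})$, and we obtain it by time-reversal. The pair $(\tilde u, \tilde m)(x,s) := (-u(x,T-s), m(x,T-s))$ is again a solution of \eqref{eq:MFGPR} with the roles of $m_0$ and $m_T$ swapped, and its flow $\tilde\gamma$ is related to $\gamma$ by $\tilde\gamma_y(y,s) = \gamma_x(x,T-s)/\gamma_x(x,T)$ where $y = \gamma(x,T)$. Applying the bound just established to the reversed solution gives $\tilde\gamma_y(y,s) \leq C(1+s^{\oal})$; multiplying by the $T$-independent estimate $\gamma_x(x,T) \leq C(C_0,C_1)$ already obtained yields $\gamma_x(x,t) \leq C(1+(T-t)^{\oal})$. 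Taking the minimum with the forward bound produces $\gamma_x \leq C(1+\mathscr{d}(t)^{\oal})$, as claimed.

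The main obstacle is the careful bookkeeping needed to confirm $T$-independence of every constant appearing in the proof of Theorem \ref{thm:characteriz free boundary}, particularly at the $t=T$ boundary where the coupling to $g$ or to $m_T$ could a priori introduce hidden $T$-dependence; a secondary point is verifying that the bound transfers cleanly under time-reversal in the planning case, which amounts to observing that the structural hypotheses of the theorem are symmetric in the two marginals.
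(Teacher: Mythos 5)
Your first two paragraphs (the $T$-independence observation, the argument for \eqref{eq:MFGR}, and the forward bound $\gamma_x\leq C(1+t^{\oal})$ for \eqref{eq:MFGPR}) track the paper exactly. The gap is in the time-reversal step for the planning problem. You assert that it "amounts to observing that the structural hypotheses of the theorem are symmetric in the two marginals," but they are not. Theorem \ref{thm:characteriz free boundary} assumes semi-convexity of $f(m_0)$ only (the quantity $k_1=\|f(m_0)_{xx}^-\|_\infty$ enters through the differential inequality for $\gamma_x^\vep$, which involves only $f(m_0^\vep)_{xx}$), and it assumes $\alpha_0\geq\alpha_1$ in \eqref{eq:alpha beta assumption}. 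After time-reversal the marginals swap, so your argument would require $f(m_T)$ to be semi-convex — introducing a dependence on $\|f(m_T)_{xx}^-\|_\infty$ that is neither assumed nor present in the claimed constant \eqref{eq:gamma_x upper bound C dependence} — and would require $\alpha_1\geq\alpha_0$, the reverse of what is hypothesized, for the boundary estimate of $\tilde\gamma_y(\cdot,T)$ to go through.

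The paper sidesteps this by never reversing time: it reruns the same maximum-principle argument on $w=\gamma_x^\vep-K(T-t)^{\oal}$. Because the PDE for $\gamma_x^\vep$ and the interior contradiction only involve $f(m_0^\vep)_{xx}$, and because the boundary values are now $w(\cdot,T)=v(\cdot,T)\leq C(C_0,C_1)$ (already established) and $w(\cdot,0)=1-KT^{\oal}\leq 1$, this yields $\gamma_x\leq C(1+(T-t)^{\oal})$ under precisely the stated hypotheses. Combining with the forward bound gives $\gamma_x\leq C(1+\mathscr{d}(t)^{\oal})$. Your identity $\tilde\gamma_y(\gamma(x,T),s)=\gamma_x(x,T-s)/\gamma_x(x,T)$ is correct and the bookkeeping that follows is fine; the argument just invokes a time-reversed version of the theorem whose hypotheses you have not secured. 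Replace the reversal with the modified barrier $K(T-t)^{\oal}$ and the proof closes.
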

\begin{proof} In the proof of Theorem \ref{thm:characteriz free boundary}, we showed \eqref{eq:gamma_x long time bound 1}. In fact, by simply following the proof, one readily sees that the constant $C$ in \eqref{eq:gamma_x long time bound 1} may be chosen independently of $T$, and depending only on the quantities specified in \eqref{eq:gamma_x upper bound C dependence}. This observation applies for both \eqref{eq:MFGR} and \eqref{eq:MFGPR}. Thus, for \eqref{eq:MFGR} there is nothing left to prove. As for \eqref{eq:MFGPR}, repeating exactly the same argument for the function $w=v-K(T-t)^{\oal}$ yields
\[\gamma_x(x,t)\leq C(1+(T-t)^{\oal}).\]
Thus, combining this with \eqref{eq:gamma_x long time bound 1}, we conclude that, for $(u,m)$ solving \eqref{eq:MFGPR}
\[\gamma_x(x,t)\leq C(1+\min(t^{\oal},(T-t)^{\oal}))=C(1+\mathscr{d}(t)^{\oal}).\]
\end{proof}
\subsubsection{$C^{1,1}$ regularity, strict convexity, strict monotonicity, and long time behavior}

In this subsection we obtain, under adequate compatibility
and non-degeneracy assumptions on the data, uniform $W^{2,\infty}(0,T)$
estimates for the free boundary.  Additionally, we obtain strict convexity and
strict concavity for the left and right free boundary curves, respectively, and prove that for the terminal cost problem, \eqref{eq:MFGR}, the boundary is spreading outward. Finally, we quantify the exact rate of propagation of the support and the exact rate of decay in time for the density, which are the ones exhibited by the self-similar solutions of Proposition \ref{self-building}.

To obtain these extra properties on the free boundary, we strengthen the assumptions of the previous subsection. In particular, we will require  the following compatibility condition between terminal and initial data, namely that   
\begin{equation}
\alpha_0=\alpha_1,\,\,\theta_1=\theta.\label{eq:compatibility assumption}
\end{equation}
where $\alpha_0, \alpha_1, \theta_1$ are defined in \rife{eq:decay restatement m0}--\rife{eq: assumption g=00003Dpower}. This kind of assumption will guarantee 
that the function $\gamma_{x}$ is well-behaved at $t=T$. We will also strengthen  the nondegeneracy assumption on $m_0$ by requiring 
\begin{equation}
f(m_{0})_{xx}\leq0\text{ in }\{x\in (a_0,b_0) :\text{dist}(x,\{a_{0},b_{0}\})<\delta\}\text{ for some }\text{\ensuremath{\delta>0}}\,.\label{eq:concavity assumption}
\end{equation}
We observe that,
since $f(m_{0})$ is Lipschitz, (\ref{eq:concavity assumption}) necessarily
implies that   $\alpha_0=\frac{1}{\theta}$ in (\ref{eq:decay restatement m0}),
and
\begin{equation}
f(m_{0})_{x}(b_{0}^{-})<0<f(m_{0})_{x}(a_{0}^{+}).\label{eq:nondegeneracy}
\end{equation}
We begin by obtaining a uniform lower bound on $\gamma_{x}(\cdot,T)$
for solutions to (\ref{eq:MFGPR}). 
\begin{lem}
\label{lem:lem gamma_x(T) lower bd}Under the assumptions of Theorem
\ref{thm:characteriz free boundary}, let $(u,m)$ be a  solution
to \eqref{eq:MFGPR}, and assume that $\alpha_0=\alpha_1$. Then there
exists a constant  $C=C(C_0,C_1)$,
such that, for $x\in(a_{0},b_{0})$,
\[
\gamma_{x}(x,T)\geq\frac{1}{C}.
\]
\end{lem}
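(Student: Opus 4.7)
The argument will be symmetric to the bound $\gamma_x^{\vep}(x,T) \leq C$ obtained inside the proof of Theorem \ref{thm:characteriz free boundary}, exploiting the equality $\alpha_0 = \alpha_1$. My plan is to work at the approximate level with $(\ue, \me)$ and $\gamma^{\vep}$ as in Proposition \ref{prop:Neumann theorem} and Lemma \ref{lem:lemma compat approx}, establish a uniform lower bound
\[
\gamma^{\vep}_x(x,T) \geq \frac{1}{C}, \qquad x\in(a_0,b_0),
\]
independent of $\vep$, and then pass to the limit using the $C^2_{\text{loc}}$ convergence $\gamma^{\vep}\to\gamma$ on $(a_0,b_0)\times[0,T]$ established in the proof of Theorem \ref{thm:characteriz free boundary}.

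The starting point is the conservation of mass identity \eqref{masspreservationBIS} applied at $t=T$, which gives
\[
\gamma^{\vep}_x(x,T) = \frac{\me_0(x)}{\me_T(\ox)}, \qquad \ox:=\gamma^{\vep}(x,T),
\]
and the integral version
\[
\int_{a_0}^x \me_0 = \int_{a_1}^{\ox} \me_T,
\]
which holds because the $r$-compatible approximations satisfy $\gamma^{\vep}(a_0,T)=a_1$, $\gamma^{\vep}(b_0,T)=b_1$ (cf.\ \eqref{eq:compatible epsilon approx}). By symmetry, I only need to bound $\gamma_x^{\vep}(x,T)$ from below for $x$ with $x-a_0 \leq \frac{1}{2}(b_0-a_0)$; from the argument below \eqref{eq:equal int} we already know this forces $\ox - a_1 \leq \frac{1}{2}(b_1-a_1)$ (for $\vep$ small).

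In that regime, the power-like bounds \eqref{powersvep0}--\eqref{powersvepT} with $\alpha_0=\alpha_1=:\alpha$ yield
\[
\frac{1}{c_0(\alpha+1)}(x-a_0)^{\alpha+1} + \frac{\vep}{c_0}(x-a_0) \leq \int_{a_0}^x \me_0 = \int_{a_1}^{\ox}\me_T \leq \frac{c_1}{\alpha+1}(\ox-a_1)^{\alpha+1} + c_1\vep(\ox-a_1).
\]
Arguing as in the proof of Theorem \ref{thm:characteriz free boundary} --- splitting into the cases where one of the two terms dominates --- this produces a constant $C=C(C_0,C_1)$, independent of $\vep$, such that
\[
x - a_0 \leq C(\ox - a_1).
\]
Thanks to the hypothesis $\alpha_0=\alpha_1$, the roles of $(x,\me_0,a_0)$ and $(\ox,\me_T,a_1)$ can now be interchanged (this is exactly the step that was \emph{not} available under the weaker assumption \eqref{eq:alpha beta assumption} in Theorem \ref{thm:characteriz free boundary}), yielding the reverse inequality $\ox-a_1 \leq C(x-a_0)$. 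Combining both we get $(x-a_0)^{\alpha} \sim (\ox - a_1)^{\alpha}$ uniformly in $\vep$.

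Substituting into the ratio $\me_0(x)/\me_T(\ox)$ via \eqref{powersvep0}--\eqref{powersvepT} with $\alpha_0=\alpha_1$,
\[
\gamma_x^{\vep}(x,T) = \frac{\me_0(x)}{\me_T(\ox)} \geq \frac{\frac{1}{c_0}\bigl((x-a_0)^{\alpha}+\vep\bigr)}{c_1\bigl((\ox-a_1)^{\alpha}+\vep\bigr)} \geq \frac{1}{C},
\]
with $C=C(C_0,C_1)$. Sending $\vep\to 0^+$ and using the pointwise (in fact $C^2_{\text{loc}}$) convergence $\gamma^{\vep}_x(x,T)\to\gamma_x(x,T)$ on $(a_0,b_0)$ gives the claim. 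The only substantive step is the comparison $(x-a_0)\sim(\ox-a_1)$; this is where the compatibility $\alpha_0=\alpha_1$ is essential, and it is precisely the point that was one-sided in Theorem \ref{thm:characteriz free boundary}. Everything else is a direct consequence of conservation of mass.
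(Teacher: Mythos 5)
Your proof is correct and uses essentially the same approach as the paper: conservation of mass, the matched decay rates $\alpha_0=\alpha_1$ to get the comparison $\gamma(x,T)-a_1\lesssim x-a_0$, and then the identity $\gamma_x(x,T)=m_0(x)/m_T(\gamma(x,T))$. The two minor differences are cosmetic: the paper argues directly with the limiting solution rather than at the $\vep$-level (which simplifies the case analysis, since there is no $\vep$-floor in the decay bounds), and only the one-sided estimate $\overline{x}-a_1\leq C(x-a_0)$ is actually needed for the lower bound, so deriving the forward comparison and "combining both" is superfluous.
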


\begin{proof}
With no loss of generality, we assume that $x\in(a_{0},\frac{1}{2}\left(a_{0}+b_{0})\right)$,
and that $x$ is close enough to $a_{0}$ to guarantee that $\gamma(x,T)\in(a_1,\frac{1}{2}(a_1+b_1))$.
We have, by conservation of mass,

\[
\int_{a_{0}}^{x}m_{0}=\int_{a_1}^{\gamma(x,T)}m_{T}(\gamma(\cdot,T)).
\]
Thus, in view of (\ref{eq:decay restatement m0}) and (\ref{eq:decay restatement mT}),
\[
C_{0}(x-a_{0})^{\alpha_0+1}\geq \frac{1}{C_{1}}(\gamma(x,T)-a_1)^{\alpha_0+1} \implies \frac{(\gamma(x,T)-a_1)^{\alpha_0}}{(x-a_{0})^{\alpha_0}}\leq (C_{0}C_1)^{\frac{\alpha_0}{\alpha_0+1}}.
\]
This, combined with (\ref{eq:decay restatement m0}) and (\ref{eq:decay restatement mT})
once more, implies that
\[
\gamma_{x}(x,T)=\frac{m_{0}(x)}{m_{T}(\gamma(x,T),T)}\geq (C_{0}C_1)^{-1}\frac{(x-a_{0})^{\alpha_0}}{(\gamma(x,T)-a_1)^{\alpha_0}}\geq (C_{0}C_1)^{-(1+\frac{\alpha_0}{\alpha_0+1})}.
\]
\end{proof}
Next we obtain a global lower bound on $\gamma_{x}$. We also show that when \eqref{eq:concavity assumption} is strengthened to be both strict and global, as occurs in the self-similar solution, this bound can be improved to yield the optimal rate at which $\gamma_x$ may grow in time, complementing the upper bound of Corollary \ref{cor: gamma_x upper bound}.   

\begin{prop}
\label{prop:lower bound gamma x}(Lower bounds on $\gamma_x$) Under the assumptions of Theorem
\ref{thm:characteriz free boundary}, let $(u,m)$ be a solution
to \eqref{eq:MFGR} or \eqref{eq:MFGPR}. Assume that \eqref{eq:compatibility assumption} and \rife{eq:concavity assumption} hold. Then we have:
\begin{itemize}
\item[(i)] There exists a constant
$C>0$ such that, for $(x,t)\in(a_{0},b_{0})\times[0,T]$, $m(\gamma(x,t),t)\leq Cm_{0}(x)$, that is, 
\begin{equation}
\label{eq:-1} \gamma_x(x,t)\geq \frac{1}{C},
\end{equation}
and 
\begin{equation}\label{primeira}
C=
\begin{cases} 
C(C_{0},\delta^{-1})& \text{if } u \text{ solves \eqref{eq:MFGR}}, \\
C(C_{0},C_1,\delta^{-1}) & \text{if } u \text{ solves \eqref{eq:MFGPR}}.
\end{cases}
\end{equation}

\item[(ii)] Assume, in addition,  that $f(m_0)_{xx} \leq -\frac{1}{K} <0$ in $(a_0,b_0)$, and that $c_1>0$ in \rife{eq: assumption g=00003Dpower},  and let $\mathscr{d}$ be defined as in \eqref{eq:d(t) defi}. Then there exists a constant $C>0$ such that the sharp estimate
\begin{equation} \label{eq:-1 strict}\gamma_x(x,t)\geq  \frac{1}{C}(1+\mathscr{d}(t)^{\oal})\end{equation}
holds, and
\begin{equation}\label{segunda}
C=
\begin{cases} 
C\left(C_{0},c_1^{-1}
\right)& \text{if } u \text{ solves \eqref{eq:MFGR}}, \\
C\left(C_{0}, C_1,K\right) & \text{if } u \text{ solves \eqref{eq:MFGPR}}.
\end{cases}
\end{equation}

\end{itemize}
\end{prop}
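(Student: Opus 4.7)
The proof parallels Corollary~\ref{cor: gamma_x upper bound}, running a minimum-principle argument on $v^\epsilon := \gamma_x^\epsilon$ in the approximating problem of Proposition~\ref{prop:Neumann theorem} in place of the maximum-principle one used there; uniform-in-$\epsilon$ lower bounds then pass to the limit through the $C^2_{\mathrm{loc}}$-convergence $\gamma^\epsilon\to\gamma$ established in Theorem~\ref{thm:characteriz free boundary}. The role played by the semi-convexity of $f(m_0)$ in the source term $f(m_0^\epsilon)_{xx}/v^{1+\theta}$ of equation~\eqref{eq:-17} is now taken by the concavity of $f(m_0)$ near the endpoints $a_0, b_0$.

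\textbf{Part (i).} Let $w := v^\epsilon - c$ with $c > 0$ to be fixed. At any would-be negative interior minimum $(x_0,t_0)\in(a_0,b_0)\times(0,T)$, we have $v^\epsilon_x=0$ and $v^\epsilon_{xx},v^\epsilon_{tt}\geq 0$, so \eqref{eq:-17} yields
\[
0 \;\leq\; v^\epsilon_{tt} + \frac{\theta f(m_0^\epsilon)}{(v^\epsilon)^{2+\theta}}\, v^\epsilon_{xx} \;=\; \frac{f(m_0^\epsilon)_{xx}(x_0)}{(v^\epsilon(x_0,t_0))^{1+\theta}},
\]
which is incompatible with $f(m_0^\epsilon)_{xx}(x_0)<0$. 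By \eqref{eq:concavity assumption} (and a careful choice of approximation, see below) this rules out such a point in the boundary strips $\{\mathrm{dist}(\cdot,\{a_0,b_0\})<\delta\}$; in the complementary bulk the trivial estimate $v^\epsilon = m_0^\epsilon/m^\epsilon \geq (C_0\|m_0\|_\infty)^{-1}\delta^{\alpha_0}$ holds. The minimum of $v^\epsilon$ is therefore attained at $t=0$ (where $v^\epsilon \equiv 1$) or at $t=T$; in the latter case, for \eqref{eq:MFGPR} we invoke Lemma~\ref{lem:lem gamma_x(T) lower bd}, while for \eqref{eq:MFGR} with $c_1>0$ we differentiate the terminal identity $-\gamma^\epsilon_t(\cdot,T) = Tc_1\tfrac{\theta_1}{\theta}(m^\epsilon)^{\theta_1-\theta}\gamma^\epsilon_{tt}(\cdot,T)$ and exploit the sign $w_t\leq 0$ at a terminal minimum to get a uniform lower bound on $v^\epsilon(\cdot,T)$, exactly as in Theorem~\ref{thm:characteriz free boundary}; the case $c_1=0$ is recovered via the auxiliary $g^\epsilon(s)=\epsilon s^\theta$. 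Taking $c$ smaller than the resulting universal constant yields (i).

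\textbf{Part (ii).} Under the strict concavity $f(m_0)_{xx}\leq -1/K$, the sharp scaling follows by replacing $w$ above with $w := v^\epsilon - (1/K')(1+\mathscr{d}(t)^{\overline{\alpha}})$, with $K'$ large to be chosen. At a would-be interior negative minimum $(x_0,t_0)$ lying outside the thin layer where the approximation loses concavity, one has $v^\epsilon_{tt} \geq -(1/K')\overline{\alpha}(1-\overline{\alpha})\mathscr{d}(t_0)^{\overline{\alpha}-2}$, $(f(m_0^\epsilon))_{xx} \leq -1/(2K)$, and $v^\epsilon(x_0,t_0) < (1/K')(1+\mathscr{d}(t_0)^{\overline{\alpha}})$; substituting into \eqref{eq:-17} and using the identity $\overline{\alpha}(2+\theta)=2$ produces the algebraic inequality
\[
(K')^{2+\theta} \;<\; 2^{2+\theta}\, K\, \overline{\alpha}(1-\overline{\alpha}),
\]
which is contradicted by any $K'$ sufficiently large (depending only on $K, \overline{\alpha}, \theta$). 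The boundary analysis is exactly that of Part (i), possibly enlarging $K'$ to absorb the four boundary contributions; the scaling matches the self-similar solution of Section~\ref{sec:self similar} and is thus sharp.

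\textbf{Main obstacle.} The delicate point is that the compatible approximation \eqref{eq:compatible approximations}, built near each endpoint from $\sqrt{f(m_0)^2+f(\epsilon)^2}$, does not preserve concavity in a shrinking boundary layer: setting $\phi := f(m_0)$, a direct computation gives $(f(m_0^\epsilon))_{xx} = [\phi\phi_{xx}(\phi^2+f(\epsilon)^2) + \phi_x^2 f(\epsilon)^2]/(\phi^2+f(\epsilon)^2)^{3/2}$, whose positive second numerator term dominates for $|x-a_0| \lesssim f(\epsilon)^{1/(\alpha_0\theta)}$. We therefore run the minimum-principle argument on $\{\mathrm{dist}(\cdot,\{a_0,b_0\})\geq \eta(\epsilon)\}\times[0,T]$ with $\eta(\epsilon)\downarrow 0$ chosen to exhaust the bad layer, and control $v^\epsilon$ on its inner edge $x=a_0+\eta(\epsilon)$ uniformly in $\epsilon$ directly via the mass-conservation identity~\eqref{masspreservationTER}, in the spirit of the endpoint estimate at the start of the proof of Theorem~\ref{thm:characteriz free boundary}. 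Since $\eta(\epsilon)\to 0$, the bound extends in the limit to all of $[a_0,b_0]\times[0,T]$.
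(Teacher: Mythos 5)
Your structural idea — a minimum/maximum principle for an auxiliary function of the flow, with the concavity of $f(m_0)$ supplying the sign of the zero-order source — is the right one, and your computation showing that the compatible approximation \eqref{eq:compatible approximations} destroys concavity of $f(m_0^\epsilon)$ in a shrinking layer near $a_0,b_0$ is correct. However, you diverge from the paper in two coupled ways that leave a genuine gap: you run the argument on $v^\epsilon = \gamma_x^\epsilon$ with comparison function a constant, at the level of the $\epsilon$-approximation; the paper runs it on $v(x,t) = f(m(\gamma(x,t),t))$ with comparison $Cf(m_0)(x)+\epsilon t$, directly on the limit solution (which is already known to be smooth in the interior by Theorem~\ref{thm:characteriz free boundary} and the Schauder estimates applied to \eqref{eq.elluBIS}). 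That choice matters: since both $v$ and $f(m_0)$ vanish at $a_0,b_0$, the set $S=\{w>-\epsilon t\}\cap\{\mathrm{dist}(\cdot,\{a_0,b_0\})<\delta\}$ stays strictly away from the endpoints and from the approximation's bad layer, so neither boundary degeneracy nor loss of concavity ever enters.

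Your boundary analysis at the inner edge of the shrunken domain is the gap. You claim that $v^\epsilon$ on $\{x=a_0+\eta(\epsilon)\}$ can be controlled "uniformly in $\epsilon$ directly via the mass-conservation identity." But \eqref{masspreservationBIS} gives
\[
\gamma_x^\epsilon\bigl(a_0+\eta(\epsilon),t\bigr)
= \frac{m_0^\epsilon(a_0+\eta(\epsilon))}{m^\epsilon\bigl(\gamma^\epsilon(a_0+\eta(\epsilon),t),t\bigr)}
\geq \frac{c_0^{-1}\bigl(\eta(\epsilon)^{\alpha_0}+\epsilon\bigr)}{\|m^\epsilon\|_\infty},
\]
where the numerator degenerates as $\eta(\epsilon)\downarrow 0$ and the only denominator control available a priori is the uniform upper bound $\|m^\epsilon\|_\infty\leq C$. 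To get a lower bound uniformly in $\epsilon$ one would need $m^\epsilon$ to be correspondingly small near the free boundary — but that is precisely what Proposition~\ref{prop:lower bound gamma x} is trying to establish. This is why the argument "in the spirit of the endpoint estimate at the start of the proof of Theorem~\ref{thm:characteriz free boundary}" does not transfer: that endpoint estimate gives the trivial \emph{upper} bound $\gamma_x^\epsilon(a_0,t)=\epsilon/m^\epsilon\leq 1$ (since $\epsilon=\min m^\epsilon$), but there is no corresponding trivial \emph{lower} bound. The paper's choice of $v=f(m(\gamma))$ and the comparison function $Cf(m_0)+\epsilon t$ is exactly the device that makes the endpoint behavior harmless. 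A salvage of your approach would effectively amount to replacing the constant comparison function by one vanishing like $f(m_0)$ near $a_0,b_0$, at which point you have rediscovered the paper's argument.
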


\begin{proof}
We first treat the case in which $(u,m)$ solves (\ref{eq:MFGPR}).
Observe that,  since $f\in C^{\infty}(0,\infty)$, the interior Schauder estimates applied to \eqref{eq.elluBIS} imply that the solution $(u,m)$ is $C^{\infty}$ in the set $\{m>0\}\cap\{0<t<T\}$,
and so by Lemma \ref{eqfmgam} the function \[
v(x,t)=f(m(\gamma(x,t),t))
\]
solves, for $(x,t)\in[a_{0},b_{0}]\times[0,T]$,
\begin{equation}
-v_{tt}-\frac{\theta v}{\gamma_{x}^{2}}v_{xx}+v_{x}\frac{\theta v}{\gamma_{x}^{3}}\gamma_{xx}+\frac{\theta+1}{\theta}v^{-1}v_{t}^{2}=0.\label{eq:mg equation}\end{equation}
Since $v= \frac{f(m_0)}{\gamma_x^\theta}$ (by \rife{masspreservationBIS}), using (\ref{eq:flow equation}) and $\gamma_{tt}=  -\frac{v_x}{\gamma_x}$ (by \rife{eq.EulerBIS})
we deduce
\begin{equation}
\begin{split}
-v_{tt}-\frac{\theta v}{\gamma_{x}^{2}}v_{xx} &  =-\frac{v_{x}}{\gamma_{x}^{2}}\left(\frac{\theta f(m_0)\gamma_{xx}}{\gamma_x^{1+\theta}}\right)-\frac{\theta+1}{\theta}v^{-1}v_{t}^{2} =-\frac{v_{x}}{\gamma_{x}^{2}}\left(\frac{f(m_{0})_{x}}{\gamma_{x}^{\theta}}-\gamma_x\, \gamma_{tt} \right)-\frac{\theta+1}{\theta}v^{-1}v_{t}^{2}
\\ & 
\leq-\frac{v_{x}}{\gamma_{x}^{2}}\left(\frac{f(m_{0})_{x}}{\gamma_{x}^{\theta}}-v_{x}\right).\label{eq:mg subsol}
\end{split}
\end{equation}

Now, given $0<\vep<1$, for each $(x,t)\in[a_{0},b_{0}]\times[0,T]$,
we let
\[
w(x,t)=v(x,t)-Cf(m_{0})(x)-\vep t,
\]
where $C\geq1$ is a constant large enough to guarantee that $w(x,T)\leq0$,
and $w\leq0$ on the set $\{(x,t):\text{dist}(x,\{a_{0},b_{0}\})\geq\delta\}$.
Such a constant exists because of Lemma \ref{lem:lem gamma_x(T) lower bd},
and because, by (\ref{eq:decay restatement m0}), $m_{0}\geq\frac{1}{C_{0}}\delta^{\alpha_0}$
on $\{x:\text{dist}(x,\{a_{0},b_{0}\})\geq\delta\}$. Observe that, as a result of \eqref{eq:concavity assumption} and (\ref{eq:nondegeneracy}), we must have
\begin{equation}
f(m_{0})_{x}\neq0\text{ on }\{x:0<\text{dist}(x,\{a_{0},b_{0}\})<\delta\}.\label{eq:-23} \end{equation}
Let $(x_{0},t_{0})$ be an interior maximum of $w$ on the set 
\[
S:=\{w>-\vep t\}\cap\{\text{dist}(x,\{a_{0},b_{0}\})<\delta\}.
\]
We note that, in $S$, $v(x,t)>Cf(m_{0})$. Moreover, at $(x_{0},t_{0})$, $v_x=Cf(m_{0})_{x}$ and $D^{2}w\leq0$.
Therefore, in view of \eqref{eq:concavity assumption}, (\ref{eq:mg subsol}), and (\ref{eq:-23}), we
infer that
\begin{equation} \label{eq:gamma_x lower bound eq1}
0\leq-w_{tt}-\frac{\theta v}{\gamma_{x}^{2}}w_{xx}\leq\frac{-1}{\gamma_{x}^{2}}C\frac{f(m_{0})_{x}^{2}}{f(m_0)}\left(v-Cf(m_0)\right)+\frac{C\theta v}{\gamma_{x}^{2}}f(m_{0})_{xx}<0.
\end{equation}
This is a contradiction, so the maximum of $w$ in $S$ is achieved
at $\partial S$. At such a point, we have either $w=-\vep t$, $t=0$,
$t=T$, or $\text{dist}(x,\{a_{0},b_{0}\})=\delta$. By our choice
of $C$, in each of these cases we have $w\leq0$. Therefore, $w\leq0$
on all of $[a_{0},b_{0}]\times[0,T]$, that is,
\[
f(m(\gamma(x,t),t))\leq Cf(m_{0}(x))+\vep t,
\]
and the result follows by letting $\vep \rightarrow 0$.

Now, to modify this for the terminal cost problem $u(\cdot,T)=c_1Tf(m(\cdot,T))$
(recall that (\ref{eq:compatibility assumption}) holds), the only
issue we must address is that we do not know a priori that $w\leq0$
at $t=T$. It is therefore enough to prove that $w$ cannot achieve
a maximum in $S$ at some $(x_{0},T)$, where $x_{0}\in(a_{0},b_{0})$.
Assume otherwise. On one hand, we have, by \eqref{def.gammaBIS} and the continuity equation,
\begin{multline}\label{asdfasad}v_t(x,T)=f'(m)(m_t+m_x\gamma_t)(\gamma(x,T),T)=f'(m)(m_t-m_xu_x)(\gamma(x,T))\\
=f'(m)mu_{xx}(\gamma(x,T),T)=\theta v(x,T)u_{xx}(\gamma(x,T),T).
\end{multline}
On the other hand, by the definition of $v$ and the chain rule,
\begin{equation}\label{asdfasa123d} f(m)_{xx}(\gamma(x,T),T)=\left(\frac{v_x}{\gamma_x}\right)_x(x,T)\frac{1}{\gamma_x(x,T)}.\end{equation}
Differentiating the terminal condition implies that
$u_{xx}(\cdot,T)=c_1Tf(m)_{xx}(\cdot,T)$, so that, in view of \eqref{asdfasad} and \eqref{asdfasa123d}, we obtain
\begin{equation} \label{eq: gamma_x low bd final cost eq1}
v_{t}=\theta vu_{xx}(\gamma(x_{0},T),T)=\theta c_1Tvf(m)_{xx}(\gamma(x_{0},T),T)=\theta c_1Tv\left(\frac{1}{\gamma_{x}^{2}}v_{xx}-v_{x}\frac{1}{\gamma_{x}^{3}}\gamma_{xx}\right).
\end{equation}

Now, notice that $w_{xx}(x_0,T)=v_{xx}-Cf(m_{0})_{xx}\leq0$
and $f(m_{0})_{xx}(x_0)\leq0$, so $v_{xx}\leq 0$. Hence, in view of \eqref{eq:flow equation} and \eqref{eq.EulerBIS}, we obtain
\begin{equation}
v_{t}\leq-\theta c_1Tv_{x}\frac{v}{\gamma_{x}^{3}}\gamma_{xx}=-c_1Tv_{x}\left(\frac{f(m_{0})_{x}}{\gamma_{x}^{2+\theta}}-\frac{v_{x}}{\gamma_{x}^{2}}\right).\end{equation}
As a result, since $w_{t}\geq0$ and $w_{x}=0,$ that is, $v_{t}\geq \vep$ and $v_{x}=Cf(m_{0})_{x},$
recalling (\ref{eq:-23}) and the definition of $S$, we get
\[
\vep \leq v_t \leq -c_1TC\frac{1}{\gamma_{x}^{2}}\frac{f(m_{0})_{x}^{2}}{f(m_0)}\left(v-Cf(m_0)\right)\leq0.
\]
This is a contradiction, which proves \eqref{eq:-1}. \\
To prove part (ii), we now assume that $f(m_0)_{xx}\leq -\frac{1}{K}$, where $K>0$, and again we focus first on the case where $(u,m)$ solves \eqref{eq:MFGPR}. We repeat the above argument, but with a different choice for the function $w$, namely
\begin{equation} \label{eq:w defi strict}
    w(x,t)=v(x,t)-\zeta(t)f(m_0)(x) ,\,\,\text{ where }\,\,\zeta(t)=C\left(\frac{1}{t^{\oal\theta}}+\frac{1}{(T-t)^{\oal\theta}}.\right)
\end{equation}
Since $f(m_0)$ is now globally concave, we may also redefine $S$ to be simply
\[S:=\{w>0\}.\]
Then, instead of \eqref{eq:gamma_x lower bound eq1}, we obtain  
\begin{equation} \label{eq:gamma_x lower bound strict eq1}
\begin{split}
0\leq-w_{tt}-\frac{\theta v}{\gamma_{x}^{2}}w_{xx}& \leq\frac{-1}{\gamma_{x}^{2}}\zeta(t_0)\frac{f(m_{0})_{x}^{2}}{f(m_0)}w+\zeta''(t_0)f(m_0)+\frac{\zeta(t_0)\theta v}{\gamma_{x}^{2}}f(m_{0})_{xx} \\
& \leq f(m_0)^{-\frac2\theta}\left(f(m_0)^{1+\frac2\theta}\zeta''(t_0)+\zeta(t_0)\theta f(m_{0})_{xx}v^{1+\frac2\theta}\right)
\end{split}
\end{equation}
where we used that $\gamma_x= \left(\frac{f(m_0)}v\right)^{\frac1\theta}$ due to \rife{masspreservationBIS}. Since $f(m_0)_{xx} \leq -\frac{1}{K}$, and $v>\zeta(t)f(m_0)$ because $w>0$, we estimate the right hand side of  \rife{eq:gamma_x lower bound strict eq1}  obtaining
$$  
0 \leq -f(m_0)\left( - \zeta''(t_0)+\theta \frac{1}{K} \zeta(t_0)^{2+\frac2\theta}\right),
$$
It is straightforward to check that,  choosing $C$ sufficiently large, independently of $T$,  $\zeta$ satisfies
\[-\zeta''(t)+\theta \frac{1}{K}\zeta(t)^{2+\frac2\theta}>0.\]
Therefore, with this choice of $\zeta$, we obtain a contradiction. Moreover, since $w(\cdot,0)\equiv w(\cdot,T)\equiv -\infty$, we conclude that the set $S$ must be empty. That is,
\[v(x,t)\leq \zeta(t)f(m_0)(x),\,\,\,\, (x,t)\in (a_0,b_0)\times (0,T),\]
which, combined with \eqref{eq:-1}, readily implies \eqref{eq:-1 strict}. Finally, we prove \eqref{eq:-1 strict} for the case in which $(u,m)$ solves \eqref{eq:MFGR}. We again define $w$ according to \eqref{eq:w defi strict}, but this time $\zeta$ is defined by
\[\zeta(t)=\frac{C}{t^{\oal \theta}},\,\,\,\, t\in (0,T).\]
We may now follow the same proof as for \eqref{eq:MFGPR}, with the only issue being that we no longer have $w(\cdot,T)\equiv -\infty$, and, thus, we must consider the case of a maximum point $(x_0,T)$ of $w$.  We begin by noticing that, since $w_{xx}=v_{xx}-\zeta(T)f(m_0)_{xx}\leq0$, $w_x=0$, and $w_t\geq 0$, \eqref{eq: gamma_x low bd final cost eq1} implies 
\[ \zeta'(T)f(m_0)\leq v_t \leq -\frac{\theta c_1 T v}{\gamma_x^2} \frac{1}{K}\zeta(T) -c_1TC\frac{1}{\gamma_{x}^{2}}\frac{f(m_{0})_{x}^{2}}{f(m_0)}w \leq -\frac{\theta c_1 T v}{\gamma_x^2} \frac{1}{K}\zeta(T).  \]
This may be rearranged as
\[v(x_0,T)\leq\left( \frac{\oal K}{c_1}\right)^\frac{\theta}{2+\theta} T^{-\oal \theta}f(m_0)(x_0):=C_2 T^{-\oal \theta}f(m_0)(x_0).  \]
Therefore, if we choose $C>C_2$, we conclude that $v(x_0,T)\leq \zeta(T)f(m_0)(x_0),$ as wanted.
\end{proof}
We can now establish our main regularity result for the free boundary.
\begin{thm}[Regularity and convexity of the free boundary]\label{thm:free boundary regularity}
Under the assumptions of Theorem \ref{thm:characteriz free boundary},
let $(u,m)$ be a solution to \eqref{eq:MFGR} or \eqref{eq:MFGPR}.
Assume that (\ref{eq:compatibility assumption}) and (\ref{eq:concavity assumption})
hold. Let $\gamma_{L}=\gamma(a_{0},\cdot),\,\,\gamma_{R}=\gamma(b_{0},\cdot)$
be, respectively, the left and right free boundary curves. Then $\gamma_{L},\gamma_{R}\in W^{2,\infty}(0,T)$,
and there exist constants $K_1,K_2$, with $$K_1=K_1(C_{0},\|\gamma_{x}\|_{\infty},|f(m_{0})_x(a_0^+)|^{-1},|f(m_{0})_x(b_0^-)|^{-1})\, \text { and }\,K_2=K_2(C_{0},\|\gamma_{x}^{-1}\|_{\infty},|f(m_{0})_x(a_0^+)|,|f(m_{0})_x(b_0^-)|)$$
such that, for a.e. $t\in [0,T]$,
\begin{equation} \label{convexity of free boundary}
\frac{1}{K_1}\leq \Ddot\gamma_{L}(t)\leq K_2,\,\text{ and }\,-K_2\leq \Ddot\gamma_{R}(t)\leq-\frac{1}{K_1}.
\end{equation}
Moreover, when $(u,m)$ solves \eqref{eq:MFGR}, we have, for $t\in [0,T],$
\begin{equation}\label{monotonicity of free boundary} -K_2(c_1T+(T-t))\leq \dot \gamma_L(t)\leq -\frac{1}{K_1}(c_1T+(T-t)),\,\text{ and }\,\frac{1}{K_1}(c_1T+(T-t))\leq \dot \gamma_R(t)\leq K_2(c_1T+(T-t)).\end{equation}
\end{thm}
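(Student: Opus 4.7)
The plan is to exploit the fact that the flow equation \eqref{eq:flow equation} degenerates at the endpoints $x = a_0$ and $x = b_0$, where $f(m_0)$ vanishes. Formally, setting $x = a_0$ reduces the equation to
\[
\ddot \gamma_L(t) = \frac{f(m_0)_x(a_0^+)}{\gamma_x(a_0, t)^{1+\theta}},
\]
which, by the non-degeneracy \eqref{eq:nondegeneracy} and the two-sided bounds $\|\gamma_x^{-1}\|_\infty^{-1} \leq \gamma_x(a_0, t) \leq \|\gamma_x\|_\infty$ coming from Corollary \ref{cor: gamma_x upper bound} and Proposition \ref{prop:lower bound gamma x}, is bounded above and below by positive constants with exactly the dependence on the data claimed for $K_1, K_2$. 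The mirror calculation at $b_0$, using $f(m_0)_x(b_0^-) < 0$, yields the bounds on $\ddot \gamma_R$, establishing \eqref{convexity of free boundary}.

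To make this rigorous I would work with the approximating flow $\gamma^\vep$ from the proof of Theorem \ref{thm:characteriz free boundary}, which is classical on $[-R,R]\times [0,T]$. Evaluating \eqref{eq:flow equation epsilon} at $x = a_0$ yields
\[
\gamma_{tt}^\vep(a_0,t) \;=\; -\frac{\theta\, f(m_0^\vep)(a_0)}{(\gamma_x^\vep(a_0,t))^{2+\theta}}\,\gamma_{xx}^\vep(a_0,t) \;+\; \frac{f(m_0^\vep)_x(a_0)}{(\gamma_x^\vep(a_0,t))^{1+\theta}}.
\]
The second term converges to the desired expression because $f(m_0^\vep)_x(a_0)\to f(m_0)_x(a_0^+)$ by the explicit construction \eqref{eq:compatible approximations}, while the $\vep$-uniform two-sided control of $\gamma_x^\vep$ on $[a_0,b_0]$ passes to $x=a_0$ by continuity. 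The main obstacle will be the first term: $f(m_0^\vep)(a_0) = f(\vep)\to 0$, but one needs $\gamma_{xx}^\vep(a_0,t)$ not to blow up at rate $1/f(\vep)$ or worse. I expect to establish such a one-sided control by a barrier/maximum-principle argument for $\gamma_{xx}^\vep$ on a one-sided neighborhood of $a_0$, exploiting the local concavity $f(m_0)_{xx}\leq 0$ from \eqref{eq:concavity assumption}, which transfers via the semiconvexity estimate \eqref{eq:uniform semiconvexity} to a uniform lower bound on $f(m_0^\vep)_{xx}$ near the endpoints. Once $\gamma_{tt}^\vep(a_0,\cdot)$ is bounded uniformly in $\vep$, Arzelà-Ascoli applied to $\dot\gamma^\vep(a_0,\cdot)$ yields $\gamma_L\in W^{2,\infty}(0,T)$ with the claimed bounds.

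For the monotonicity statement \eqref{monotonicity of free boundary} in the MFG case \eqref{eq:MFGR}, the terminal condition $u(\cdot,T)=c_1 T f(m(\cdot,T))$ combined with the Lagrangian identity $v:= f(m\circ \gamma) = f(m_0)/\gamma_x^\theta$ gives, using $f(m)_x(\gamma,T) = v_x/\gamma_x$ and $v_x(a_0^+,T) = f(m_0)_x(a_0^+)/\gamma_x(a_0,T)^\theta$,
\[
\dot\gamma_L(T) \;=\; -u_x(\gamma_L(T),T) \;=\; -\frac{c_1 T\, f(m_0)_x(a_0^+)}{\gamma_x(a_0,T)^{1+\theta}}.
\]
Writing $\dot\gamma_L(t) = \dot\gamma_L(T) - \int_t^T \ddot\gamma_L(s)\, ds$ and inserting the convexity bounds \eqref{convexity of free boundary} immediately yields \eqref{monotonicity of free boundary}; the symmetric computation at $b_0$ handles $\dot\gamma_R$. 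The crux of the whole argument is the boundary control of $\gamma_{xx}^\vep$ in the preceding paragraph; everything else is a direct consequence of the equation satisfied by $\gamma$, the non-degeneracy of $f(m_0)$ at the endpoints, and the a priori bounds on $\gamma_x$ already established.
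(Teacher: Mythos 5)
Your formal computation correctly identifies the limit value $\ddot\gamma_L(t)=f(m_0)_x(a_0^+)/\gamma_x(a_0,t)^{1+\theta}$ and, with it, the right dependence of the constants $K_1,K_2$ on the data. But the step you yourself flag as the crux — controlling $\gamma_{xx}^\vep(a_0,t)$ uniformly in $\vep$, so that the term $\theta f(\vep)\gamma_{xx}^\vep(a_0,t)/(\gamma_x^\vep)^{2+\theta}$ vanishes in the limit — is left entirely unexecuted, and it is not a routine gap. The flow equation \eqref{eq:flow equation epsilon} is degenerate elliptic at $x=a_0$: the coefficient of $\gamma_{xx}^\vep$ is exactly $\theta f(m_0^\vep)(a_0)=\theta f(\vep)$, which tends to zero, so there is no uniform ellipticity there to feed a barrier argument, and the regularity established so far ($\gamma\in C^{2,\alpha}_{\rm loc}$ only on the \emph{open} set $(a_0,b_0)\times[0,T]$) gives no control of $\gamma_{xx}$ up to the endpoint. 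Indeed, the closely related quantity $u_{xx}$ is shown in Remark \ref{rem:uxx unbounded} to blow up at the free boundary for the self-similar solution, so boundedness of $\gamma_{xx}$ at $a_0$ is a genuine additional claim that would require $C^{1,\alpha}$ regularity of $f(m)$ up to the free boundary — something the paper does not have at this stage (and never proves in the form you would need). Your use of the concavity of $f(m_0)$ is the right ingredient, but it is already spent in Proposition \ref{prop:lower bound gamma x} to bound $\gamma_x$ from below; it does not obviously propagate to a one-sided bound on $\gamma_{xx}$ at the degenerate endpoint.

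The paper avoids the issue entirely by never working pointwise in $x$. Writing $\gamma_{tt}=\gamma_x^{-1}(f(m_0)\gamma_x^{-\theta})_x$ and integrating over $(a_0,x_0)$, then invoking the algebraic identity
\[
\left(\frac{1}{\gamma_{x}}\right)_{x}\frac{f(m_{0})}{\gamma_{x}^{\theta}}=\frac{1}{\theta}\left(\frac{1}{\gamma_{x}}\left(\frac{f(m_{0})}{\gamma_{x}^{\theta}}\right)_{x}-\frac{f(m_{0})_{x}}{\gamma_{x}^{1+\theta}}\right),
\]
one eliminates every $\gamma_{xx}$-type term and lands on
\[
\int_{a_{0}}^{x_{0}}\gamma_{tt}(x,\tau)\,dx=\frac{\theta}{\theta+1}\frac{f(m_{0})}{\gamma_{x}^{1+\theta}}(x_{0})+\frac{1}{\theta+1}\int_{a_{0}}^{x_{0}}\frac{f(m_{0})_{x}}{\gamma_{x}^{1+\theta}}\,dx,
\]
which only involves $\gamma_x$, $f(m_0)$, and $f(m_0)_x$, all bounded above and below. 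The two-sided bound $\frac{1}{C_1}(x_0-a_0)\le\int_{a_0}^{x_0}\gamma_{tt}\,dx\le C_2(x_0-a_0)$ then passes to $\gamma_L$ through second-order difference quotients $\gamma_L(t+h)+\gamma_L(t-h)-2\gamma_L(t)$, giving $W^{2,\infty}$ with the a.e. bounds \eqref{convexity of free boundary} — notably, this is weaker than the pointwise continuity of $\ddot\gamma_L$ your approach would implicitly deliver, and that weaker conclusion is precisely what makes the argument feasible. The same averaged device, combined with $\gamma_t(\cdot,T)=-c_1T\gamma_{tt}(\cdot,T)$ and a Taylor expansion, produces \eqref{monotonicity of free boundary} without ever needing a pointwise limit of $v_x$ or $\gamma_{xx}$ at $a_0$. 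Unless you can actually carry out the barrier argument you sketch — which I doubt is possible with the tools in hand — you should switch to the averaged/integrated formulation.
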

\begin{proof}
By symmetry, it is enough to show the estimates for $\gamma_{L}$.
Let $t\in(0,T)$, and let $h>0$ be such that $(t-h,t+h)\subset(0,T)$.
We begin by noting that (\ref{eq:flow equation}) may be written as
\[
\gamma_{tt}=\frac{1}{\gamma_{x}}\left(\frac{f(m_{0})}{\gamma_{x}^{\theta}}\right)_{x}.
\]
We therefore have, for $(x_{0},\tau)\in(a_{0},\frac{a_{0}+b_{0}}{2})\times[0,T],$

\begin{equation}\label{sabau}
 \int_{a_{0}}^{x_{0}}\gamma_{tt}(x,\tau)dx
=  \int_{a_{0}}^{x_{0}}\frac{1}{\gamma_{x}}\left(\frac{f(m_{0})}{\gamma_{x}^{\theta}}\right)_{x}dx
= \frac{f(m_{0})}{\gamma_{x}^{1+\theta}}(x_{0})-\int_{a_{0}}^{x_{0}}\left(\frac{1}{\gamma_{x}}\right)_{x}\frac{f(m_{0})}{\gamma_{x}^{\theta}}dx,
\end{equation}
where in the last step we integrated by parts and used the fact that
$\gamma_{x}$ is bounded below and $f(m_{0})(a_{0})=0$. Using
the identity
\[
\left(\frac{1}{\gamma_{x}}\right)_{x}\frac{f(m_{0})}{\gamma_{x}^{\theta}}=\frac{1}{\theta}\left(\frac{1}{\gamma_{x}}\left(\frac{f(m_{0})}{\gamma_{x}^{\theta}}\right)_{x}-\frac{f(m_{0})_{x}}{\gamma_{x}^{1+\theta}}\right),
\]
we infer from \rife{sabau} that
$$
\int_{a_{0}}^{x_{0}}\gamma_{tt}(x,\tau)dx  =  \left(1-\frac1\theta\right) \frac{f(m_{0})}{\gamma_{x}^{1+\theta}}(x_{0})+ \frac1\theta \int_{a_{0}}^{x_{0}} \left(\frac{1}{\gamma_{x}}\right)_{x}\frac{f(m_{0})}{\gamma_{x}^{\theta}}dx + \frac1\theta \int_{a_{0}}^{x_{0}} \frac{f(m_{0})_{x}}{\gamma_{x}^{1+\theta}}dx .
$$
Multiplying this equality by $\theta$ and adding to \rife{sabau} yields
\[
\int_{a_{0}}^{x_{0}}\gamma_{tt}(x,\tau)dx=\frac{\theta}{\theta+1}\frac{f(m_{0})}{\gamma_{x}^{1+\theta}}(x_{0})+\frac{1}{\theta+1}\int_{a_{0}}^{x_{0}}\frac{f(m_{0})_{x}}{\gamma_{x}^{1+\theta}}dx.
\]
Recalling that $f(m_0)$ is Lipschitz, with $f(m_0)(a_0)=0$, and that $\gamma_{x}$ is bounded above and below, we conclude that, for $x_{0}$ sufficiently
close to $a_{0}$, 
\begin{equation}
\frac{1}{C_1}(x_{0}-a_{0})\leq\int_{a_{0}}^{x_{0}}\gamma_{tt}(x,\tau)dx\leq C_2(x_{0}-a_{0}).\label{eq:-21}
\end{equation}
Next, observe that, for $x\in(a_{0},x_{0})$,
\[
\gamma(x,t+h)+\gamma(x,t-h)-2\gamma(x,t)=\int_{0}^{h}\int_{t-s}^{t+s}\gamma_{tt}(x,\tau)d\tau ds,
\]
and, therefore, integrating both sides,
\begin{align*}
\int_{a_{0}}^{x_{0}}\gamma(x,t+h)+\gamma(x,t-h)-2\gamma(x,t)dx= & \int_{0}^{h}\int_{t-s}^{t+s}\int_{a_{0}}^{x_{0}}\gamma_{tt}(x,\tau)dxd\tau ds.
\end{align*}
Using (\ref{eq:-21}), we see that this yields
\[
\frac{1}{C}(x_{0}-a_{0})h^{2}\leq\int_{a_{0}}^{x_{0}}\gamma(x,t+h)+\gamma(x,t-h)-2\gamma(x,t)dx\leq C(x_{0}-a_{0})h^{2},
\]
so, dividing by $(x_{0}-a_{0})$ and letting $x_{0}\rightarrow a_{0}^{+}$,
we obtain
\[
\frac{1}{C}h^{2}\leq\gamma_{L}(t+h)+\gamma_{L}(t-h)-2\gamma_{L}(t)\leq Ch^{2},
\]
which yields \eqref{convexity of free boundary}. 

Now assume that $(u,m)$ solves \eqref{eq:MFGR}. Recall that, since $u(\cdot,T)=c_1Tf(m(\cdot,T))$, we have
\begin{equation}\label{eq: gamma_t Robin bc} \gamma_t(\cdot,T)=-c_1T \gamma_{tt}(\cdot,T). \end{equation}
We observe that, by Taylor's theorem, for $x\in (a_0,b_0)$ and small $h>0$, we have
\[\gamma(x,T-h)=\gamma(x,T)-h\gamma_t(x,T)+\int_{T-h}^{T}(s-(T-h))\gamma_{tt}(x,s)ds. \]
Thus, integrating from $a_0$ to $x_0$ and using \eqref{eq: gamma_t Robin bc}, we obtain
\[\int_{a_0}^{x_0}\gamma(x,T-h)-\gamma(x,T)dx=hc_1T \int_{a_0}^{x_0}\gamma_{tt}(x,T)dx+\int_{T-h}^{T}\int_{a_0}^{x_0}(s-(T-h))\gamma_{tt}(x,s)dsdx, \]
so we infer from \eqref{eq:-21} that, for $x_0$ sufficiently close to $a_0$,
\[\frac{c_1T}{C}h(x_0-a_0) +\frac{1}{2C}h^2(x_0-a_0)\leq\int_{a_0}^{x_0}\gamma(x,T-h)-\gamma(x,T) dx \leq Cc_1Th(x_0-a_0)+\frac{1}{2}Ch^2(x_0-a_0).\]
Dividing by $(x_0-a_0)$ and letting $x_0 \rightarrow a_0^+$, we see that
\[ \frac{c_1T}{C}h + \frac{1}{2C}h^2\leq \gamma_L(T-h)-\gamma_L(T) \leq Cc_1Th +\frac{1}{2}Ch^2.\]
Finally, dividing by $h$ and letting $h\rightarrow 0^+$ yields
\[-Cc_1T\leq\dot \gamma_L(T)\leq \frac{-c_1T}{C}.\]
Thus, in view of \eqref{convexity of free boundary}, and noting that $\dot \gamma_L(t)= \dot \gamma_L(T) - \int_{t}^{T}\Ddot \gamma_L(s)ds$, we obtain \eqref{monotonicity of free boundary}. \end{proof}
Finally, we show that the support grows with algebraic rate $\oal=\frac{2}{2+\theta}$, and the density decays to $0$ with algebraic rate $-\oal$, as is expected from the model case of Section \ref{sec:self similar}.
\begin{thm}[Optimal rate of propagation and long time decay] \label{thm:long time}Under the assumptions of Theorem \ref{thm:characteriz free boundary},
let $(u,m)$ be a solution to \eqref{eq:MFGR} or \eqref{eq:MFGPR}, let $\gamma$ be the associated flow of optimal trajectories, and let $\mathscr{d}:[0,T]\rightarrow [0,\infty)$ be defined by \eqref{eq:d(t) defi}. Assume also that $-K \leq f(m_0)_{xx}\leq -\frac{1}{K}$ in $(a_0,b_0)$ for some $K>0$. Then there exists a constant $C>0$, with
\begin{equation}
C=
\begin{cases} 
C\left(C_{0}, c_1, c_1^{-1},|a_0|, |b_0|,  K\right)& \text{if } u \text{ solves \eqref{eq:MFGR}}, \\
C\left(C_{0},C_1,|a_0|,|b_0|, |a_1|, |b_1|, K\right) & \text{if } u \text{ solves \eqref{eq:MFGPR}},
\end{cases}
\end{equation}
such that, for every $(x,t)\in [a_0,b_0] \times [0,T]$,
\begin{equation} \label{eq: gamma long time}  \frac{1}{C} (1+\mathscr{d}(t)^{\oal})\leq |\emph{supp}(m(\cdot,t))|\leq C(1+\mathscr{d}(t)^{\oal}),\quad|\gamma(x,t)|\leq C(1 +\mathscr{d}(t)^{\oal}),
\end{equation}
\begin{equation} \label{eq: m long time}  \frac{1}{C} \frac{m_0(x)}{(1+\mathscr{d}(t)^{\oal})}\leq m(\gamma(x,t),t)\leq C\frac{m_0(x)}{(1+\mathscr{d}(t)^{\oal})}.    
\end{equation}
\end{thm}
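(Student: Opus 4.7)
The proof combines the sharp two-sided bounds on $\gamma_x$ obtained earlier with a conservation-of-momentum argument. Crucially, all constants produced will be $T$-independent because we avoid invoking Theorem \ref{thm:free boundary regularity}, whose constants depend implicitly on $T$ through $\|\gamma_x\|_\infty$.

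\emph{Step 1 (density estimate).} The mass-conservation identity \eqref{masspreservationBIS} of Lemma \ref{lem.gamma} reads $m(\gamma(x,t),t)\,\gamma_x(x,t)=m_0(x)$. Sandwiching this with the two-sided bound
$$\tfrac{1}{C}(1+\mathscr{d}(t)^{\oal})\ \le\ \gamma_x(x,t)\ \le\ C(1+\mathscr{d}(t)^{\oal}),$$
furnished by Corollary \ref{cor: gamma_x upper bound} and Proposition \ref{prop:lower bound gamma x}(ii) (both with $T$-independent $C$ under the strict concavity hypothesis $-K\le f(m_0)_{xx}\le -\frac{1}{K}$), immediately yields \eqref{eq: m long time}. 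Since $\gamma_L(t)=\gamma(a_0,t)$ and $\gamma_R(t)=\gamma(b_0,t)$, the length of the support equals $\int_{a_0}^{b_0}\gamma_x(y,t)\,dy$, so the same bounds give the first inequality of \eqref{eq: gamma long time}.

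\emph{Step 2 (momentum conservation).} For the bound on $|\gamma(x,t)|$, the key observation is that the momentum $t\mapsto \int_{\R}m(x,t)\,u_x(x,t)\,dx$ is constant along the flow of \eqref{mfgsys}. This follows from a direct integration by parts using the equations of motion: differentiating in time, using $m_t=(mu_x)_x$ and then the HJ equation $u_t=\tfrac{1}{2}u_x^2-f(m)$ to handle the term $\int m u_{xt}dx$, all boundary terms vanish because $m$ is compactly supported, and the only remaining term $\int m_x f(m)dx=\int(F(m))_x dx$ equals $0$ for the same reason (here $F(m)=\int_0^m f$). Consequently the center of mass
$$\mathcal M(t) := \int_\R x\,m(x,t)\,dx$$
is an affine function of $t$.

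\emph{Step 3 (bounding the center of mass).} For problem \eqref{eq:MFGR}, the terminal condition $u_x(\cdot,T)=c_1T\,f(m(\cdot,T))_x$ combined with the identity $\int m\,(m^\theta)_x\,dx=\tfrac{\theta}{\theta+1}\int (m^{\theta+1})_x dx=0$ forces the conserved momentum to vanish, so $\mathcal M(t)\equiv\mathcal M(0)=\int x\,m_0\,dx\in[a_0,b_0]$. For problem \eqref{eq:MFGPR}, the affine function $\mathcal M(t)$ interpolates between $\mathcal M(0)=\int x\,m_0\in[a_0,b_0]$ and $\mathcal M(T)=\int x\,m_T\in[a_1,b_1]$, so $|\mathcal M(t)|\le\max(|a_0|,|b_0|,|a_1|,|b_1|)$. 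In either case $|\mathcal M(t)|\le C_{\mathcal M}$ with $C_{\mathcal M}$ depending only on the specified endpoint data.

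\emph{Step 4 (conclusion).} Since $m(\cdot,t)$ is a probability measure supported on $[\gamma_L(t),\gamma_R(t)]$, we have $\gamma_L(t)\le\mathcal M(t)\le\gamma_R(t)$. Combined with the support-size estimate of Step 1, this yields
$$|\gamma_L(t)|,\ |\gamma_R(t)|\ \le\ |\mathcal M(t)|+(\gamma_R(t)-\gamma_L(t))\ \le\ C_{\mathcal M}+C(1+\mathscr{d}(t)^{\oal})\ \le\ C'(1+\mathscr{d}(t)^{\oal}).$$
Writing $\gamma(x,t)=\gamma_L(t)+\int_{a_0}^x\gamma_x(y,t)\,dy$ and applying the upper bound on $\gamma_x$ once more gives $|\gamma(x,t)|\le C(1+\mathscr{d}(t)^{\oal})$, completing \eqref{eq: gamma long time}. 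The main conceptual obstacle is finding a single quantity that pins down the absolute location of the (moving and expanding) support with a $T$-independent constant; the conservation-of-momentum identity of Step 2 provides exactly that, and it is the only new ingredient beyond the sharp $\gamma_x$ bounds established earlier.
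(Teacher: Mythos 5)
Your proof is correct and takes a genuinely different route from the paper's. For the bound on $|\gamma(x,t)|$ in \eqref{eq: gamma long time}, the paper invokes the qualitative geometric consequences of Theorem~\ref{thm:free boundary regularity}: for \eqref{eq:MFGR} it uses that the support is expanding (so $\gamma(x_0,t_0)=a_0$ for some $x_0$), and for \eqref{eq:MFGPR} it uses that $\{m>0\}$ is convex (so the chord from $(a_0,0)$ to $(a_1,T)$ stays inside). Your momentum-conservation argument replaces this entirely: the quantity $P(t)=\int_{\R} m\,u_x\,dx$ is a first integral of \eqref{mfgsys}, since $\frac{d}{dt}\int m u_x = -\int m f(m)_x = -\int (G(m))_x = 0$ with $G(s)=\int_0^s r f'(r)\,dr$, hence the barycenter $\mathcal M(t)=\int x\,m(x,t)\,dx$ is affine in $t$, and in the \eqref{eq:MFGR} case constant because the terminal condition $u_x(\cdot,T)=g'(m(T))m_x(T)$ forces $P(T)=\int m(T)g'(m(T))m_x(T)\,dx=0$. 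This is more elementary and self-contained --- it avoids Theorem~\ref{thm:free boundary regularity} altogether --- and it pins down the support more sharply than the paper's proof, since it locates the barycenter rather than merely producing one point of bounded size.

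One step deserves tightening. The differentiation of $P(t)$ and the vanishing of boundary terms cannot be carried out directly on the limit solution, because $(u,m)$ is only $C^2\times C^1$ inside $\{m>0\}$ and $u_{xx}$ blows up at the interface. The identity should be derived for the Neumann approximations $(u^\epsilon,m^\epsilon)$ on $[-R,R]$, where the boundary contribution is $\bigl[G(m^\epsilon)\bigr]_{-R}^{R}$, which is not zero for fixed $\epsilon$ (the approximate density equals $\epsilon$, not $0$, near $\pm R$) but tends to $0$ uniformly in $t$ as $\epsilon\to 0$ because $m^\epsilon\to m$ uniformly with $\mathrm{supp}\,m$ strictly contained in $(-R,R)$. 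Passing to the limit then gives $P$ constant. This is a routine approximation argument given the convergence established in Theorem~\ref{thm:well-posedness theorem}, but it should be stated rather than subsumed under ``all boundary terms vanish.''
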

\begin{proof}
Recalling \eqref{masspreservationBIS}, we observe that \eqref{eq: m long time} is simply obtained from combining the upper bound on $\gamma_x$ from Corollary \ref{cor: gamma_x upper bound} and the lower bound on $\gamma_x$ from Proposition \ref{prop:lower bound gamma x}, 
\begin{equation}\label{eq: gamma_x two sided sharp}\frac{1}{C}(1+\mathscr{d}(t)^{\oal})\leq \gamma_x(x,t) \leq C(1+\mathscr{d}(t)^{\oal}).\end{equation}
Now, integrating \eqref{eq: gamma_x two sided sharp} between $a_0$ and $b_0$ immediately yields the first equation of \eqref{eq: gamma long time}. Moreover, for the second equation of \eqref{eq: gamma long time}, it suffices to show that, for each $t_0\in [0,T]$, there exists at least one $x_0\in [a_0,b_0]$ such that $|\gamma(x_0,t_0)|\leq C$. When $(u,m)$ solves \eqref{eq:MFGR}, this follows from the fact that, by Theorem \ref{thm:free boundary regularity},  $\text{supp}(m(\cdot,t))$ is expanding, and, thus, in particular, $\gamma(x_0,t_0)=a_0$ for some $x_0\in [a_0,b_0]$. On the other hand, if $(u,m)$ solves \eqref{eq:MFGPR}, Theorem \ref{thm:free boundary regularity} implies that $\text{supp}(m)$ is a convex set. Therefore, we may choose $\gamma(x_0,t_0)$ to be the first coordinate of the intersection between $\mathbb{R}\times \{t_0\}$ and the line segment $\{(1-s)(a_0,0)+s(a_1,T): 0\leq s \leq 1\}$.

\end{proof}

\subsection{Regularity of the solution up to the free boundary}\label{subsec:reguH}

\subsubsection{Intrinsic scaling and H\"older continuity of $m$}

In this subsection, we show how the bounds on $\gamma_{x}$ and the intrinsic scaling of the problem allow us to improve the logarithmic modulus of continuity for $m$ to a H\"{o}lder one. Throughout the section,  we continue to assume that $f$ is given by \eqref{eq: assumption f=00003Dpower} , and we will assume the conditions of Theorem \ref{thm:free boundary regularity}, namely \eqref{eq:support assumption}--\eqref{eq: assumption g=00003Dpower} together with \rife{eq:compatibility assumption} and \rife{eq:concavity assumption}. We will focus on  obtaining H\"older regularity estimates for the function 
\be\label{eq: v defi}
v(x,t):=f(m(\gamma(x,t),t)), \qquad (x,t)\in [a_0,b_0] \times [0,T].
\ee 
This is equivalent to obtaining H\"older estimates for $m(x,t)$, in view of \rife{eq:-11} and \rife{eq:-1}.  

Our first result is a simple corollary of the bounds on $\gamma_x$, stated in the form of a Harnack inequality (cf. \cite[Thm 11.1]{DiB2}). 
\begin{prop}[Harnack inequality]
\label{prop:Harnack} Let the assumptions of Theorem \ref{thm:free boundary regularity} be in place, and let $(u,m)$ be a solution to \eqref{eq:MFGR} or \eqref{eq:MFGPR}, and let $v$ be defined by \eqref{eq: v defi}. There exists a constant 
\[
C=C(C_{0},\|\gamma_{x}\|_{\infty},\|\gamma_{x}^{-1}\|_{\infty},\theta)
\]
such that the following alternative holds. Let $x_{0}\in(a_{0},b_{0})$,
and let $\rho>0$ be such that $(x_{0}-\rho,x_{0}+\rho)\subset (a_{0},b_{0})$.
Then either $\rho\geq\text{\ensuremath{\frac{1}{2}}\emph{dist}}(x_{0},\{a_{0},b_{0}\})$
and $\sup_{(x_{0}-\rho,x_{0}+\rho)\times[0,T]} v\leq C\rho$, or
$\rho<\text{\ensuremath{\frac{1}{2}}\emph{dist}}(x_{0},\partial(a_{0},b_{0}))$
and

\begin{equation} \label{eq:DG Harnack}
\sup_{(x_{0}-\rho,x_{0}+\rho)\times[0,T]}v\leq C\inf_{(x_{0}-\rho,x_{0}+\rho)\times[0,T]}v.
\end{equation}
\end{prop}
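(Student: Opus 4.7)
The plan is to reduce the Harnack statement to a direct comparison for the initial density $m_0^{\theta}$, by exploiting the fact that $v$ can be expressed purely in terms of $m_0$ and $\gamma_x$. Concretely, the mass-preservation formula \eqref{masspreservationBIS}, combined with the choice $f(m)=m^{\theta}$, gives the pointwise identity
\begin{equation*}
v(x,t)=f(m(\gamma(x,t),t))=\frac{m_0(x)^{\theta}}{\gamma_x(x,t)^{\theta}},\qquad (x,t)\in[a_0,b_0]\times[0,T].
\end{equation*}
Under the hypotheses of Theorem \ref{thm:free boundary regularity}, Corollary \ref{cor: gamma_x upper bound} and Proposition \ref{prop:lower bound gamma x} furnish two-sided bounds $\|\gamma_x\|_\infty,\|\gamma_x^{-1}\|_\infty<\infty$ depending only on the quantities listed in the statement. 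Hence $v(x,t)$ is comparable, uniformly in $t$, to $m_0(x)^{\theta}$ with constants depending only on $\|\gamma_x\|_\infty$, $\|\gamma_x^{-1}\|_\infty$, and $\theta$.

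The analysis then reduces to controlling $\sup$ and $\inf$ of $m_0^{\theta}$ on the interval $(x_0-\rho,x_0+\rho)$. Recall that \eqref{eq:concavity assumption} forces $\alpha_0=1/\theta$ in \eqref{eq:decay restatement m0}, so that
\begin{equation*}
\frac{1}{C_0^{\theta}}\,\emph{dist}(x,\{a_0,b_0\})\leq m_0(x)^{\theta}\leq C_0^{\theta}\,\emph{dist}(x,\{a_0,b_0\}),\qquad x\in[a_0,b_0].
\end{equation*}
In the first alternative, $\rho\geq\frac12\emph{dist}(x_0,\{a_0,b_0\})$: then for every $x\in(x_0-\rho,x_0+\rho)$ a triangle inequality yields $\emph{dist}(x,\{a_0,b_0\})\leq\emph{dist}(x_0,\{a_0,b_0\})+\rho\leq 3\rho$, and the upper bound above gives $\sup_{(x_0-\rho,x_0+\rho)\times[0,T]}v\leq C\rho$. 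In the second alternative, $\rho<\frac12\emph{dist}(x_0,\{a_0,b_0\})$: then for every $x\in(x_0-\rho,x_0+\rho)$ one has
\begin{equation*}
\tfrac12\emph{dist}(x_0,\{a_0,b_0\})\leq\emph{dist}(x,\{a_0,b_0\})\leq\tfrac32\emph{dist}(x_0,\{a_0,b_0\}),
\end{equation*}
so the upper and lower bounds on $m_0^{\theta}$ are comparable on this interval up to a factor of $3C_0^{2\theta}$, and multiplying by the $\gamma_x$ bounds produces \eqref{eq:DG Harnack}.

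I do not anticipate a genuine obstacle here: all the hard work is already encoded in the two-sided bounds on $\gamma_x$ proved in the previous subsection, and the rest is a routine triangle-inequality argument on the power-type profile of $m_0$ near the endpoints. The only subtle point is to keep the constant $C$ independent of the relative position of $x_0$ inside $(a_0,b_0)$; this is precisely what the two alternatives are designed to handle (boundary-adjacent balls versus interior balls), and in both cases the constants depend only on $C_0$, $\|\gamma_x\|_\infty$, $\|\gamma_x^{-1}\|_\infty$, and $\theta$, as required.
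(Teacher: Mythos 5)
Your proposal is correct and takes essentially the same approach as the paper: it uses the mass-preservation identity $v=(m_0/\gamma_x)^{\theta}$, the two-sided bounds on $\gamma_x$, and the power-type profile $m_0^{\theta}\asymp\mathrm{dist}(\cdot,\{a_0,b_0\})$ forced by $\alpha_0=1/\theta$, then splits into the same two cases via the triangle inequality. The only cosmetic difference is that the paper normalizes $a_0=0$ and reduces to $x_0<b_0/2$ by symmetry, while you work directly with the distance function, which handles both endpoints at once.
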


\begin{proof}
For simplicity, we normalize $a_{0}=0$, and by symmetry we may assume
that $x_{0}<\frac{b_{0}}{2}.$ Then, in view of (\ref{eq:decay restatement m0}) (where, we recall, $\alpha_0=\frac1\theta$ under the current assumptions), and the fact that $\gamma_x=m_0v^{-\frac{1}{\theta}}$ is bounded above and below, we have, for some constant $C$,
and every $(x,t)\in(x_{0}-\rho,x_{0}+\rho)\times[0,T]$, 
\begin{equation}
\frac{1}{C}x\leq v(x,t)\leq Cx.\label{eq:-20}
\end{equation}
\textbf{Case 1:} $\rho<\frac{1}{2}x_{0}$. Then (\ref{eq:-20}) implies
\begin{equation*}
\sup_{(x_{0}-\rho,x_{0}+\rho)\times[0,T]}v  \leq C\left(\frac{3x_{0}}{2}\right) =3C^2 \left(\frac{1}{C}\left(\frac{1}{2}x_{0}\right)\right)
 \leq 3C^{2}\inf_{(x_{0}-\rho,x_{0}+\rho)\times[0,T]}v.
\end{equation*}
\textbf{Case 2: $\frac{1}{2}x_{0}\leq\rho$. }Then, in view of \eqref{eq:-20}
\[
\sup_{(x_{0}-\rho,x_{0}+\rho)\times[0,T]}v\leq C(x_0+\rho) \leq 3C\rho.
\]
\end{proof}
We note that, unlike in the linear theory, the above Harnack inequality is not yet sufficient to obtain H\"older regularity, because the result does not hold for, say, translations of the solution. Instead, we will proceed by obtaining analogues of the Caccioppoli inequality and De Giorgi type lemmas, adapted to the scaling of the equation satisfied by $v$ (recall \eqref{eq.v=fmgammamtheta}), which is much more diffusive in time than in space near the free boundary.
\begin{lem}[Intrinsic Caccioppoli inequality]
\label{lem:Caccioppoli}Let the assumptions of Theorem \ref{thm:free boundary regularity} hold, and let $(u,m)$ be a solution to \eqref{eq:MFGR} or \eqref{eq:MFGPR}. There exists a positive constant $C=C(\|\gamma_{x}\|_{\infty},\|\gamma_{x}^{-1}\|_{\infty})$
such that, for each $\zeta\in C_{c}^{\infty}((a_{0},b_{0})\times(0,T))$,
and each $k\geq0$, we have  
\begin{equation}
\int_{a_{0}}^{b_{0}}\int_{0}^{T}\zeta^{2}((\psi^{\pm}(v))_{x}^{2}+(\theta v)^{-1}(\psi^{\pm}(v))_{t}^{2})dtdx\leq C\int_{a_{0}}^{b_{0}}\int_{0}^{T} \psi^{\pm}(v)^{2}(\zeta_{x}^{2}+(\theta v)^{-1}\zeta_{t}^{2})dtdx,\label{eq:Caccioppoli}
\end{equation}
where $\psi^{\pm}(v)=(v-k)^{\pm}$.
\end{lem}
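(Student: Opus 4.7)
The plan is to use the divergence-form equation \eqref{eq.v=fmgammamtheta}-(i) satisfied by $v$, namely
\[
-\Bigl(\tfrac{v_x}{\gamma_x}\Bigr)_x \;-\; \Bigl(\tfrac{\gamma_x}{\theta v}\,v_t\Bigr)_t \;=\; 0,
\]
and run the classical Caccioppoli argument adapted to this anisotropic operator. Concretely, for $w:=\psi^+(v)=(v-k)^+$ (and analogously $\psi^-$), I would multiply the equation by the admissible test function $\zeta^{2} w$, which is compactly supported in $(a_0,b_0)\times(0,T)$ where $v>0$ and $v$ is $C^{1,\alpha}$ by Theorem \ref{thm:characteriz free boundary}, and integrate by parts in space-time. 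Using the Lipschitz truncation identities $w_x v_x=w_x^{2}$ and $w_t v_t=w_t^{2}$ a.e.\ on $\{v>k\}$ (and vanishing elsewhere), this yields the identity
\[
\int\!\!\int \zeta^{2}\Bigl(\tfrac{w_x^{2}}{\gamma_x}+\tfrac{\gamma_x w_t^{2}}{\theta v}\Bigr)\,dt\,dx \;=\; -\,2\!\int\!\!\int\Bigl(\zeta\zeta_x w\,\tfrac{w_x}{\gamma_x}+\zeta\zeta_t w\,\tfrac{\gamma_x w_t}{\theta v}\Bigr)\,dt\,dx.
\]

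Next I would apply Young's inequality $2ab\leq \tfrac{1}{2}a^{2}+2b^{2}$ to each of the two cross terms, choosing $a=\zeta w_x/\sqrt{\gamma_x}$, $b=\zeta_x w/\sqrt{\gamma_x}$ in the first, and $a=\zeta w_t\sqrt{\gamma_x/(\theta v)}$, $b=\zeta_t w\sqrt{\gamma_x/(\theta v)}$ in the second. The right-hand side is then absorbed into half of the left-hand side, giving
\[
\int\!\!\int \zeta^{2}\Bigl(\tfrac{w_x^{2}}{\gamma_x}+\tfrac{\gamma_x w_t^{2}}{\theta v}\Bigr)\,dt\,dx \;\leq\; 4\!\int\!\!\int w^{2}\Bigl(\tfrac{\zeta_x^{2}}{\gamma_x}+\tfrac{\gamma_x \zeta_t^{2}}{\theta v}\Bigr)\,dt\,dx.
\]
Finally, invoking the two-sided bounds on $\gamma_x$ from Theorem \ref{thm:characteriz free boundary} and Proposition \ref{prop:lower bound gamma x} (namely $\tfrac{1}{C}\leq \gamma_x\leq C$, both constants depending only on $\|\gamma_x\|_\infty,\|\gamma_x^{-1}\|_\infty$), one removes $\gamma_x$ from both integrals at the cost of multiplying by $C^{2}$, yielding exactly \eqref{eq:Caccioppoli}. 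The argument for $\psi^-(v)=(k-v)^+$ is identical once one notes that on $\{v<k\}$ one has $v_x=-w_x$, $v_t=-w_t$, so the cross terms flip sign in pairs and the absorbed structure is preserved.

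The only delicate point, which I would address first, is justifying that $\zeta^{2}\psi^\pm(v)$ is an admissible test function for the divergence equation. Since $\text{supp}(\zeta)$ is a compact subset of $(a_0,b_0)\times(0,T)$, the function $v$ is bounded above and (strictly) bounded below there, and $v$ is $C^{1,\alpha}$ by the regularity provided by Theorem \ref{thm:characteriz free boundary}; hence $\psi^\pm(v)$ is Lipschitz with distributional derivatives given by the chain rule a.e., and the coefficient $\gamma_x/(\theta v)$ is bounded on $\text{supp}(\zeta)$. This makes the integration by parts fully rigorous; all the remaining manipulations are the standard Caccioppoli calculation, so I do not foresee any further obstacle.
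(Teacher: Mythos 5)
Your argument is correct and follows essentially the same route as the paper: test the divergence-form equation \eqref{eq.v=fmgammamtheta}-(i) against $\zeta^2\psi^\pm(v)$, absorb the cross terms by Young's inequality, and remove $\gamma_x$ via the two-sided bounds. The only cosmetic difference is that you carry out the truncation via $w=\psi^\pm(v)$ and its chain-rule derivatives explicitly rather than keeping $(\psi^\pm)'(v)$ as a characteristic-function factor, but the computations and resulting constant coincide.
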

\begin{proof} 
In view of Lemma \ref{eqfmgam}, and recalling that $v$ is smooth in $(a_0,b_0)\times (0,T)$, we know that $v$ satisfies 
\begin{equation} \label{eq:DG v eq} -\left(\gamma_x^{-1}v_x\right)_x  - \left(\gamma_x(\theta v)^{-1}v_t\right)_t= 0.\end{equation}
Testing this equation against the function $\psi(v)^{\pm}\zeta^{2}$ yields
\begin{equation}
 \int_{a_{0}}^{b_{0}}\int_{0}^{T}\zeta^{2}(\gamma_x^{-1}v_x^2 +\gamma_x (\theta v)^{-1} v_t ^2) (\psi^{\pm})'(v) 
=-\int_{a_{0}}^{b_{0}}\int_{0}^{T}2\zeta\psi^{\pm}(v)( \gamma_x^{-1}v_x\zeta_x + \gamma_x (\theta v)^{-1}v_t\zeta_t).
\label{eq:caccio1}
\end{equation}
We may estimate the right hand side as follows:
\begin{multline}
\left| \int_{a_{0}}^{b_{0}}\int_{0}^{T} 2\zeta\psi^{\pm}(v)( \gamma_x^{-1}v_x\zeta_x + \gamma_x (\theta v)^{-1}v_t\zeta_t)\right|\leq \frac12\int_{a_{0}}^{b_{0}}\int_{0}^{T}\zeta^{2}(\gamma_x^{-1}v_x^2 +\gamma_x (\theta v)^{-1} v_t ^2)  \\
+2\int_{a_{0}}^{b_{0}}\int_{0}^{T}\psi^{\pm}(v)^2(\gamma_{x}^{-1}\zeta_{x}^{2}+\gamma_{x}(\theta v)^{-1}\zeta_{t}^{2}).\label{eq:caccio2}
\end{multline}
We also notice that $(\psi^{\pm})'(v)= \pm \chi_{\pm(v-k)\geq0}$, so that, as a result of \eqref{eq:caccio1} and \eqref{eq:caccio2},
\[\int_{a_{0}}^{b_{0}}\int_{0}^{T}\zeta^{2}(\gamma_x^{-1}(\psi^{\pm}(v))_x^2 +\gamma_x (\theta v)^{-1} (\psi^{\pm}(v))_t ^2)  \leq 4\int_{a_{0}}^{b_{0}}\int_{0}^{T}\psi^{\pm}(v)^2(\gamma_{x}^{-1}\zeta_{x}^{2}+\gamma_{x}(\theta v)^{-1}\zeta_{t}^{2}).\]
The result now follows from the fact that $\gamma_x$ is bounded above and below by positive constants.

\end{proof}

Our De Giorgi type lemma will be proved for the following special domains, adapted to the scaling of the equation \eqref{eq:DG v eq}.
\begin{defn}
Given $(x_{0},t_{0})\in(a_{0},b_{0})\times(0,T)$ and $\rho>0$, we
define the intrinsic rectangle $R_{\rho}(x_{0},t_{0})$ of radius
$\rho$ centered at $(x_{0},t_{0})$ by
\[
R_{\rho}(x_{0},t_{0}):=(x_{0}-\rho,x_{0}+\rho)\times(t_{0}-(\theta v(x_{0},t_{0}))^{-\frac{1}{2}}\rho,t_{0}+(\theta v(x_{0},t_{0}))^{-\frac{1}{2}}\rho).
\]
\end{defn}
\begin{lem}[De Giorgi type lemma on intrinsic rectangles]
\label{lem:De Giorgi} Let the assumptions of Theorem \ref{thm:free boundary regularity} hold, and let $(u,m)$ be a solution to \eqref{eq:MFGR} or \eqref{eq:MFGPR}.  There exists a positive constant $\nu$, with
\[
\nu^{-1}  =\nu^{-1}(C_0,\|\gamma_{x}\|_{\infty},\|\gamma_{x}^{-1}\|_{\infty}),\]
such that the following holds. Let $(x_{0},t_{0})\in(a_{0},b_{0})\times(0,T)$, $0<r<\frac{1}{4}$,
\[
\,\mu^{-}=\min_{R_{4\rho}(x_{0},t_{0})}\mg,\qquad \,\,\mu^{+}=\max_{R_{4\rho}(x_{0},t_{0})}\mg,\qquad \,\,\omega=\mu^+-\mu^-=\text{\emph{osc}}_{R_{4\rho}(x_0,t_0)}(v),
\]
and 
\begin{equation}
\rho\leq\frac{1}{8}\min\left(\text{\emph{dist}}(x_{0},\{a_{0},b_{0}\}),   (\theta v(x_{0},t_{0}))^{\frac{1}{2}}\text{\emph{dist}}(t_{0},\{0,T\})\right).\label{eq:distance assumption}
\end{equation}
Then 
\[
|\{\pm(\mg-(\mu^{\pm}\mp 2r\omega))>0\}\cap R_{2\rho}(x_{0},t_{0})|\leq\nu|R_{2\rho}(x_{0},t_{0})|\text{ implies that }\pm \mg\leq \pm(\mu^{\pm}\mp r\omega)\text{ in }R_{\rho}(x_{0},t_{0}).
\]

\end{lem}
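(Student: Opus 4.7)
By the symmetry of the statement, I focus on the $+$ case; the $-$ case is analogous, and in fact simpler, since the Harnack inequality (Proposition \ref{prop:Harnack}) forces $v\geq c^{-1}v(x_0,t_0)>0$ on $R_{4\rho}$ and there is no degeneracy to address. The strategy is to rescale so that the intrinsic rectangle becomes a fixed square in $\R^2$, on which \eqref{eq:DG v eq} becomes a genuinely uniformly elliptic divergence-form equation, and then to execute a standard elliptic De Giorgi iteration.

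\emph{Step 1 (uniform ellipticity after rescaling).} The distance assumption \eqref{eq:distance assumption} gives $4\rho\leq\tfrac12\mathrm{dist}(x_0,\{a_0,b_0\})$, so applying Proposition \ref{prop:Harnack} with radius $4\rho$ on the slab $(x_0-4\rho,x_0+4\rho)\times[0,T]$ places us in its second alternative and yields
\[
c^{-1}v(x_0,t_0)\leq v(x,t)\leq c\,v(x_0,t_0)\qquad\text{on }R_{4\rho}(x_0,t_0),
\]
with $c$ depending only on $C_0,\|\gamma_x\|_\infty,\|\gamma_x^{-1}\|_\infty,\theta$. Rescale via
\[
y:=\frac{x-x_0}{\rho},\qquad \tau:=\frac{(\theta v(x_0,t_0))^{1/2}(t-t_0)}{\rho},\qquad V(y,\tau):=v(x,t),
\]
so $R_{j\rho}(x_0,t_0)$ corresponds to the square $Q_j:=(-j,j)^2$ for $j\in\{1,2,4\}$, and \eqref{eq:DG v eq} becomes
\[
-\bigl(\gamma_x^{-1}V_y\bigr)_y-\Bigl(\gamma_x\tfrac{v(x_0,t_0)}{v}V_\tau\Bigr)_\tau=0,
\]
whose coefficients are confined to a fixed interval $[\lambda,\Lambda]$ by the Harnack bound above and the two-sided bound on $\gamma_x$ from Theorem \ref{thm:characteriz free boundary}. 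Rescaling the Caccioppoli inequality of Lemma \ref{lem:Caccioppoli} in the same way yields the standard energy estimate
\[
\iint\zeta^2|\nabla_{y,\tau}(V-k)^{\pm}|^2\,dy\,d\tau\leq C\iint\bigl((V-k)^{\pm}\bigr)^2|\nabla_{y,\tau}\zeta|^2\,dy\,d\tau
\]
on $Q_4$, with $C=C(\lambda,\Lambda)$.

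\emph{Step 2 (De Giorgi iteration).} The problem is now a purely classical elliptic De Giorgi step in the plane. Set $k_n:=\mu^+-r\omega-r\omega/2^n$ (so $k_0=\mu^+-2r\omega$ and $k_\infty=\mu^+-r\omega$), $\rho_n:=1+2^{-n}$, and pick $\zeta_n\in C_c^\infty(Q_{2\rho_n})$ equal to $1$ on $Q_{2\rho_{n+1}}$ with $|\nabla\zeta_n|\leq C\,2^n$. Let $A_n:=|\{V>k_n\}\cap Q_{2\rho_n}|$. Combining the rescaled Caccioppoli bound applied to $(V-k_n)^+\zeta_n$ with the two-dimensional Sobolev--Poincar\'e embedding and the Cavalieri-type inequality $(k_{n+1}-k_n)^2A_{n+1}\leq\iint\bigl((V-k_n)^+\zeta_n\bigr)^2\chi_{\{V>k_{n+1}\}}$ produces a recurrence
\[
A_{n+1}\leq\frac{Cb^n}{(r\omega)^2}A_n^{1+\sigma}
\]
for constants $b>1$, $\sigma>0$ depending only on $\lambda,\Lambda$. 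The affine renormalization $V\mapsto(V-k_0)/(r\omega)$, under which the new truncation levels are $O(1)$, absorbs the $(r\omega)^{-2}$ factor and gives $\widetilde A_{n+1}\leq Cb^n\widetilde A_n^{1+\sigma}$.

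\emph{Step 3 (conclusion and main obstacle).} By the classical iteration lemma, there exists a universal $\nu_0>0$ such that $\widetilde A_0\leq\nu_0$ forces $\widetilde A_n\to 0$, which means $V\leq\mu^+-r\omega$ on $Q_1$, hence $v\leq\mu^+-r\omega$ on $R_\rho(x_0,t_0)$. Since $Q_2$ is the rescaled image of $R_{2\rho}$ and $|Q_2|$, $|Q_4|$ are universal constants, the hypothesis $|\{v>\mu^+-2r\omega\}\cap R_{2\rho}|\leq\nu|R_{2\rho}|$ translates to a bound of the form $A_0\leq C(\lambda,\Lambda)\,\nu$, so a suitable choice of $\nu$ yields the claim. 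The main technical obstacle is ensuring that the factors of $r$ and $\omega$ cancel correctly after renormalization and that the time dilation $(\theta v(x_0,t_0))^{1/2}$ does not leak into the final $\nu$; both issues are resolved precisely by the Harnack-based two-sided control of $v/v(x_0,t_0)$ in Step 1, which makes the rescaled elliptic problem genuinely uniform with ellipticity constants independent of $\rho$, $\omega$, and the base point $(x_0,t_0)$.
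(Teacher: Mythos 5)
Your proposal is essentially correct and follows the same strategy as the paper: use the Harnack inequality (made applicable by \eqref{eq:distance assumption}) to gain two-sided control of $v/v(x_0,t_0)$ on $R_{4\rho}$, rescale time by $(\theta v(x_0,t_0))^{1/2}$ so that the intrinsic Caccioppoli inequality of Lemma \ref{lem:Caccioppoli} becomes a genuinely uniform one on ordinary cubes, and then run a standard De Giorgi iteration. The only cosmetic differences are that the paper rescales in the time variable alone (keeping the spatial scale $\rho$ and cubes $Q_{j\rho}$) and cites \cite[Lem. 5]{Vasseur} for the iteration, whereas you rescale both variables to unit squares and sketch the iteration by hand; also, the lower bound on $\gamma_x$ you invoke is supplied by Proposition \ref{prop:lower bound gamma x} rather than Theorem \ref{thm:characteriz free boundary}, though both are in force under the stated hypotheses.
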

\begin{proof}
We begin by noting that, in view of (\ref{eq:distance assumption})  and Proposition
\ref{prop:Harnack},
$R_{4\rho}(x_{0},t_{0})\subset(a_{0},b_{0})\times(0,T)$ and, for some constant $k_1>0$,
\begin{equation}
\max_{R_{4\rho}(x_{0},t_{0})}\mg\leq k_{1}\min_{R_{4\rho}(x_{0},t_{0})}\mg.\label{eq:degio 2}
\end{equation}
We now define, for $n\geq0$,
\[
r_{n}=r+\frac{1}{2^{n}}r,\qquad \,\,\rho_{n}=\rho+\frac{1}{2^{n}}\rho,\qquad \,\,k_{n}^{\pm}=\mu^{\pm}\mp r_{n}\omega.
\]
and we choose non-negative functions $\zeta_{n}\in C_{c}^{\infty}((a_{0},b_{0})\times[0,T])$
such that $\zeta_{n}\equiv1$ on $R_{\rho_{n+1}}(x_{0},t_{0})$, $\zeta_{n}\equiv0$
outside of $R_{\rho_{n}}(x_{0},t_{0})$, and
\[
|(\zeta_{n})_{x}|\leq\frac{C2^{n}}{\rho},\qquad |(\zeta_{n})_{t}|\leq\frac{C2^{n}(\theta v(x_{0},t_{0}))^{\frac{1}{2}}}{\rho}.
\]
As a result of Lemma \ref{lem:Caccioppoli}, (\ref{eq:Caccioppoli})
holds when taking $\zeta=\zeta_{n}$ and 
\[
\psi(\mg)=\psi_n^{\pm}(v):=(\mg-k_{n}^{\pm})_{\pm}.
\]
On $(x,t)\in Q_{2\rho}:=(x_{0}-2\rho,x_{0}+2\rho)\times(t_{0}-2\rho,t_{0}+2\rho)$, we now define the rescaled functions $w_n,\overline{\zeta_{n}}$:
by
\[
w_n^{\pm}(x,t)=\psi_{n}^{\pm}(\mg(x,t_{0}+(\theta \mg(x_{0},t_{0}))^{-\frac{1}{2}}(t-t_{0}))),\qquad \overline{\zeta}_{n}(x,t)=\zeta_{n}(x,t_{0}+(\theta \mg(x_{0},t_{0}))^{-\frac{1}{2}}(t-t_{0})).
\]
We see that (\ref{eq:Caccioppoli}) may be written as

\[
\int_{Q_{2\rho}}\overline{\zeta}_{n}^{2}( (w_n^{\pm})_{x}^{2}+ \mg(x_{0},t_{0})\mg^{-1}(w_n^{\pm})_{t}^{2})\leq C\int_{Q_{2\rho}}(w_n^{\pm})^{2} ((\overline{\zeta}_{n})_{x}^{2}+\mg(x_{0},t_{0})\mg^{-1}(\overline{\zeta}_{n})_{t}^{2}).
\]

In view of \eqref{eq:degio 2}, up to increasing the constant $C$, we thus obtain
\begin{equation}
\int_{Q_{2\rho}}\overline{\zeta}_{n}^{2}|Dw_n^{\pm}|^{2}\leq C\int_{Q_{2\rho}}|D\overline{\zeta}_{n}|^{2}(w_n^{\pm})^{2}\leq\frac{C4^{n}}{\rho^{2}}\int_{Q_{2\rho}}(w_n^{\pm})^{2}.\label{eq:caccio rescaled}
\end{equation}
This is now the usual (rather than intrinsic) Caccioppoli inequality,
and thus by the standard De Giorgi iteration argument (e.g. see \cite[Lem. 5]{Vasseur}), writing $w_{\infty}^{\pm}=\lim_{n\rightarrow\infty}w_n^{\pm}$,
we see that there exists $\nu>0$ such that
\begin{equation}
|\{w_{0}^{\pm}>0\}\cap Q_{2\rho}|\leq\nu|Q_{2\rho}|\text{ implies that }w_{\infty}^{\pm}=0\text{ in }Q_{\rho}.\label{eq:degio rescaled}
\end{equation}
Now, writing 
\[
\psi_{\infty}^{\pm}(\mg)=(\mg-(\mu^{\pm}\mp r\omega))_{\pm},
\]
 we have $w_{\infty}^{\pm}=\psi_{\infty}^{\pm}(v(x,t_{0}+(\theta \mg(x_{0},t_{0}))^{-\frac{1}{2}}(t-t_{0})) )$. Scaling back the time variable we have  $|Q_{2\rho}|=(\theta v(x_{0},t_{0}))^{\frac{1}{2}}|R_{2\rho}(x_{0},t_{0})|$ and
\[
|\{w_{0}^{\pm}>0\}\cap Q_{2\rho}|=(\theta v(x_{0},t_{0}))^{\frac{1}{2}}|\{\psi_{0}^{\pm}(\mg)>0\}\cap R_{2\rho}(x_{0},t_{0})|.
\]
Thus, we conclude the proof by noticing that (\ref{eq:degio rescaled}) may be equivalently written as:
\[
|\{\psi_{0}^{\pm}(\mg)>0\}\cap R_{2\rho}(x_{0},t_{0})|\text{\ensuremath{\leq\nu|R_{2\rho}(x_{0},t_{0})|} implies that }\psi_{\infty}^{\pm}(\mg)=0\text{ in }R_{\rho}(x_{0},t_{0}).
\]
\end{proof}

\begin{cor}[Reduction of oscillation]
\label{cor:reduction of osci} Let the assumptions of Theorem \ref{thm:free boundary regularity} be in place. Assume that the intrinsic rectangle
$R_{\rho}(x_{0},t_{0})$ satisfies (\ref{eq:distance assumption}).
There exists a constant $0<\sigma<1$, independent of the choice of
$x_0, t_0$ and $\rho$, such that
\begin{equation} \label{eq:osc decr}
\text{\emph{osc}}_{R_{\rho}(x_{0},t_{0})}v\leq\sigma\, \text{\emph{osc}}_{R_{4\rho}(x_{0},t_{0})} v.
\end{equation}
\end{cor}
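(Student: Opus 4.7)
The plan is to combine Proposition \ref{prop:Harnack} with Lemma \ref{lem:De Giorgi}, using a measure-theoretic dichotomy together with a dyadic shrinking argument to verify the smallness hypothesis of the latter.

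Assumption \eqref{eq:distance assumption} guarantees $4\rho \leq \tfrac{1}{2}\,\text{dist}(x_0,\{a_0,b_0\})$, so the Harnack alternative of Proposition \ref{prop:Harnack} provides a constant $C_H > 1$, depending only on the admissible data, with $\mu^+ \leq C_H \mu^-$ throughout $R_{4\rho}(x_0,t_0)$. In particular $\mu^- \geq \omega/(C_H-1)$, so $v$ is bounded below by a positive multiple of $\omega$ on all of $R_{4\rho}$. This is the decisive fact: it means that the weight $(\theta v)^{-1}$ appearing in the intrinsic Caccioppoli inequality (Lemma \ref{lem:Caccioppoli}) is bounded by an absolute multiple of $\omega^{-1}$, and the intrinsic time scale $(\theta v(x_0,t_0))^{-1/2}\rho$ defining $R_\rho$ is comparable to $\omega^{-1/2}\rho$; consequently all the energy estimates can be made uniform after absorbing the factor $\omega$.

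To extract the oscillation decay, fix $r < 1/4$ and note that the superlevel set $A^+ := \{v > \mu^+ - 2r\omega\} \cap R_{2\rho}$ and the sublevel set $A^- := \{v < \mu^- + 2r\omega\} \cap R_{2\rho}$ are disjoint, so at least one has measure $\leq \tfrac12 |R_{2\rho}|$. By symmetry (the roles of $A^+$ and $A^-$ are interchangeable by replacing $v$ with $\mu^+ + \mu^- - v$, which satisfies a structurally identical divergence-form equation thanks to the bounds on $\gamma_x$), we may assume this is $A^+$. A standard dyadic shrinking argument, based on iterating Lemma \ref{lem:Caccioppoli} on the telescoping levels $k_j = \mu^+ - (r + 2^{-j})\omega$ together with an isoperimetric-type estimate controlling the measure of the intermediate strips $\{k_j < v < k_{j+1}\}$, then produces $r_0 > 0$, depending only on $\nu$ and $C_H$, such that the corresponding superlevel set has measure at most $\nu |R_{2\rho}|$. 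Applying Lemma \ref{lem:De Giorgi} with this $r_0$ yields $v \leq \mu^+ - r_0 \omega$ on $R_\rho$, which is precisely \eqref{eq:osc decr} with $\sigma = 1 - r_0$.

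The main technical hurdle is the dyadic shrinking step: in the intrinsic-scaling framework the rectangles $R_\rho$ have a nonuniform, $v$-dependent aspect ratio, so one must carefully track how this anisotropy interacts with both the Caccioppoli estimate and the isoperimetric-type lemma (or, alternatively, with a logarithmic energy estimate of DiBenedetto type). It is precisely the Harnack-derived comparability $v \asymp \omega$ on $R_{4\rho}$ that keeps all constants in that iteration independent of the point $(x_0, t_0)$ and the radius $\rho$, which is what is required for the final constant $\sigma$ to be universal.
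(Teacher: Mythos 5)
Your proposal follows essentially the same route as the paper: use the Harnack alternative to get $v\asymp\omega$ on $R_{4\rho}$ (which makes the intrinsic rectangles and the weight $(\theta v)^{-1}$ uniformly comparable), rescale to unit scale, split by the measure dichotomy, iterate a De Giorgi isoperimetric estimate over a geometric level sequence to drive the superlevel measure below $\nu$, and then invoke Lemma \ref{lem:De Giorgi}. The only differences are cosmetic (the exact level sequence, and phrasing the $\mu^\pm$ symmetry via the substitution $\mu^++\mu^--v$ instead of simply using the $\psi^\pm$ form already built into Lemmas \ref{lem:Caccioppoli} and \ref{lem:De Giorgi}).
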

\begin{proof}
The proof of this corollary, included for the reader's convenience, will be a standard application of the classical arguments that yield interior H\"older continuity for functions that satisfy the Caccioppoli inequality (originally due to E. De Giorgi \cite{degiorgi}. See also, for instance, \cite{Vasseur}). Let $\nu, r,\mu^+, \mu^-,$ and $\omega$ be as in Lemma \ref{lem:De Giorgi}. To simplify notation, we write $R_{\rho}:=R_{\rho}(x_0,t_0).$

We begin by defining $z_0:[-2,2]\times [-2,2] \rightarrow [-1, 1]$ as
$$z_0(x,t):=2\omega^{-1}\left(v(x_0+\rho (x-x_0), t_0 + (\theta v(x_0,t_0))^{-\frac{1}{2}}\rho (t-t_0))- \mu^-\right)-1.$$
Notice that, since $16 = |[-2,2]\times[-2,2] |$, we must have
\begin{equation} \label{eq:DG alternative}|z_0 \leq 0| \geq 8 \text{ or } |z_0 \geq 0|\geq 8.\end{equation}
We assume the former, and remark that the proof in the alternative case is completely analogous. We define, for $n\geq 1$, $z_n:[-2,2]\times [-2,2] \rightarrow (-\infty, 1]$ by
\begin{equation}
    z_n(x,t):=\frac{1}{4r^n}(z_0-(1-4r^{n})).
\end{equation}
Notice that $z_n$ is nonincreasing, and we have
$$z_n \geq 0 \Longleftrightarrow v \geq \mu^+ - 2r^n\omega,$$
and, thus, by a change of variables, 
\begin{multline}\fint_{[-2,2]\times [-2,2]}|Dz_n^+|^2=
\fint_{R_{2\rho}  \cap \{v \geq \mu^+ - 2r^n\omega \}}\frac{\rho^2}{4r^{2n}\omega^2}( v_x^2 +(\theta v(x_0,t_0))^{-1} v_t^2)\\
=  \frac{\rho^2}{4 r^{2n}\omega^2}\fint_{R_{2\rho}}( \psi(v)_x^2 +(\theta v(x_0,t_0))^{-1} \psi(v)_t^2)
,\end{multline}
where $\psi(v)=(v-\mu^++2r^n\omega)^+.$ By Proposition \ref{prop:Harnack}, we thus infer that
\begin{equation}
\int_{[-2,2]\times [-2,2]}|Dz_n^+|^2\leq  \frac{ C \rho^2}{ r^{2n}\omega^2}\fint_{R_{2\rho}}( \psi(v)_x^2 +(\theta v)^{-1} \psi(v)_t^2)
\end{equation}
 where, as usual, $C$ is a generic constant that could be increased line by line.
On the other hand, Proposition \ref{prop:Harnack} and Lemma \ref{lem:Caccioppoli} imply that 
\begin{equation}
    \int_{R_{2\rho}}(\psi(v)_x^2 + (\theta v)^{-1}\psi(v)_t^2)\leq \frac{C}{\rho^2} \int_{R_{4\rho}} \psi(v)^2 \leq C \omega^2 r ^{2n} (\theta v(x_0,t_0))^{-1/2}.
\end{equation}
Thus, we deduce from the last two inequalities that 
\begin{equation} \label{eq:DG energy}    
\int_{[-2,2]\times [-2,2]}|Dz_n^+|^2 \leq C,\end{equation}
where $C$ is independent of $n$.
Now, assume that, for some $\delta>0$ and some $n\geq 1$, we have 
\begin{equation}
    \label{eq:DG inductive}
 |z_{n-1} \leq 0| \geq 8 + (n-1)\delta .\end{equation}
We will show that, as long as
\begin{equation}
    \label{eq:DG inductive 2}
 |z_n > 0|\geq\nu,\end{equation} then \eqref{eq:DG inductive} must also hold when $n$ is replaced by $n+1$.  Observe that, since we have
 \be\label{znnn}
 z_n= \frac{z_{n-1}-(1-r)}r\,,
 \ee
 then \eqref{eq:DG inductive 2} may be rewritten as
\begin{equation} \label{eq:DG iso 1} |z_{n-1} >(1-r)|\geq \nu.\end{equation}
On the other hand, since the first alternative in \eqref{eq:DG alternative} was assumed to hold, we have
\begin{equation} \label{eq:DG iso 2}|z_{n-1} \leq 0|\geq  |z_0 \leq 0|\geq 8.\end{equation}
We recall that $r<\frac{1}{4}$. A straightforward compactness argument\footnote{The explicit form of this estimate is known as the De Giorgi isoperimetric inequality (see \cite[Lem. 10]{Vasseur}).} then shows that, in view of \eqref{eq:DG energy}, \eqref{eq:DG iso 1}, and \eqref{eq:DG iso 2}, if $\delta>0$ is chosen sufficiently small, depending only on $\nu^{-1}$ and $C$, then
\[|0 <z_{n-1}  <1-r| \geq \delta.\]
Thus, we deduce from \eqref{eq:DG inductive}  and \rife{znnn} that
\[|z_{n} \leq 0| \geq |z_{n-1} \leq 0|+ |0 <z_{n-1}  <1-r| \geq 8 + n\delta. \]
Since the left hand side is bounded above by $16$, this recursive process must fail after finitely many steps, and, therefore, there exists $n\geq1$, depending only on $\nu^{-1}$ and $C$, such that \eqref{eq:DG inductive 2} does not hold.
Rewritten in terms of $v$, this means that
\[|\{v > \mu ^+ -2r^n \omega\}\cap R_{2\rho}| < \nu |R_{2\rho}|.\]
Thus, by Lemma \ref{lem:De Giorgi}, $v\leq \mu^+ -r^n \omega$ in $R_{\rho}$. In particular,
$$\text{osc}_{R_{\rho}}(v)\leq \mu^+ -r^n \omega - \mu^- = (1-r^n)\omega=(1-r^n)\text{osc}_{R_{4\rho}}(v).$$

\end{proof}
We may now prove our main regularity result for the density.
\begin{thm}[H\"older continuity of the density up to the free boundary] \label{thm:f(m) holder} Let the assumptions of Theorem \ref{thm:free boundary regularity} be in place, and let $(u,m)$ be a solution to \eqref{eq:MFGR} or \eqref{eq:MFGPR}.
Let $0<\delta_0<\min\left(1,\frac{1}{4}T\right)$, and let $\sigma$ be the constant of Corollary \ref{cor:reduction of osci}. There exist constants $C>0$, $0<s<1$, with
\[
C=C(\delta_0^{-1},C_{0},\|\gamma_{x}\|_{\infty},\|\gamma_{x}^{-1}\|_{\infty},\theta,(1-\sigma)^{-1}),\,\,\,\,s^{-1}=s^{-1}(C_{0},\|\gamma_{x}\|_{\infty},\|\gamma_{x}^{-1}\|_{\infty},(1-\sigma)^{-1}),
\]
such that
\[
\| f(m(\gamma(\cdot,\cdot),\cdot))\|_{C^{s}([a_{0},b_{0}]\times[\delta_0,T-\delta_0])}\leq C.
\]
 In particular, $m$ is H\"older continuous on $\R \times [\delta_0,T-\delta_0]$.
\end{thm}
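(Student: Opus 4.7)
The plan is to iterate the reduction-of-oscillation estimate of Corollary \ref{cor:reduction of osci} on a sequence of nested intrinsic rectangles to obtain a geometric decay of $\text{osc}(v)$, then to convert this intrinsic decay into a genuine H\"older modulus in the Euclidean metric, and finally to transfer the estimate from $v := f(m(\gamma(\cdot,\cdot),\cdot))$ to $m$ via the bi-Lipschitz change of variables $(x,t)\mapsto(\gamma(x,t),t)$.

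For the iteration, I would fix $(x_0,t_0)\in[a_0,b_0]\times[\delta_0,T-\delta_0]$ and set $d(x_0):=\mathrm{dist}(x_0,\{a_0,b_0\})$. Because $\gamma_x$ is two-sidedly controlled (Corollary \ref{cor: gamma_x upper bound} and Proposition \ref{prop:lower bound gamma x}), and $f(m_0)\asymp d(x_0)$ in view of \eqref{eq:decay restatement m0} with $\alpha_0=1/\theta$, the identity $v = f(m_0)/\gamma_x^{\theta}$ yields $v(x_0,t_0)\asymp d(x_0)$. Choose
\[
\rho_0 \;:=\; \tfrac{1}{16}\min\!\bigl(d(x_0),\,(\theta\, v(x_0,t_0))^{1/2}\delta_0\bigr),
\]
so that condition \eqref{eq:distance assumption} is satisfied for every $R_\rho(x_0,t_0)$ with $\rho\leq\rho_0$. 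Iterating Corollary \ref{cor:reduction of osci} along $\rho_k:=4^{-k}\rho_0$ then gives
\[
\text{osc}_{R_{\rho_k}(x_0,t_0)}(v)\;\leq\;\sigma^{k}\,\text{osc}_{R_{\rho_0}(x_0,t_0)}(v)\;\leq\; C\,\sigma^{k},
\]
and setting $s := -\log\sigma/\log 4 \in (0,1)$ this reads $\text{osc}_{R_\rho(x_0,t_0)}(v)\leq C(\rho/\rho_0)^{s}$ for all $\rho\leq\rho_0$.

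To pass to a Euclidean H\"older estimate, I would compare two points $P_1=(x_1,t_1)$, $P_2=(x_2,t_2)$ with $d(x_1)\leq d(x_2)$. If $P_2$ lies in $R_{\rho_0^{(1)}}(P_1)$, the smallest $\rho$ with $P_2\in R_\rho(P_1)$ satisfies $\rho\asymp\max(|x_1-x_2|,\,d(x_1)^{1/2}|t_1-t_2|)$, and the intrinsic bound converts into $|v(P_1)-v(P_2)|\leq C|P_1-P_2|^{s'}$ for some $s'\in(0,s)$. Otherwise, either $|P_1-P_2|$ is bounded below by a positive constant controlled by $\delta_0$ (trivial from $\|v\|_{\infty}\leq C$), or $x_1$ is so close to the free boundary that $v(P_1)\asymp d(x_1)$ is itself smaller than $|P_1-P_2|^{s'}$, producing the H\"older bound directly from the triangle inequality. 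Once H\"older regularity of $v$ on $[a_0,b_0]\times[\delta_0,T-\delta_0]$ is established, the bi-Lipschitz character of $\gamma$ on this set transfers the estimate to $f(m)$ on its image, which is precisely $\overline{\{m>0\}}\cap(\R\times[\delta_0,T-\delta_0])$; since $f(m)\equiv 0$ on the free boundary, extension by zero gives the H\"older bound on the whole slab $\R\times[\delta_0,T-\delta_0]$, and applying the H\"older-continuous inverse $f^{-1}(s)=s^{1/\theta}$ yields the analogous estimate for $m$ itself.

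The main obstacle is the bookkeeping in the second step, since the time half-side $(\theta v(x_0,t_0))^{-1/2}\rho\asymp d(x_0)^{-1/2}\rho$ of an intrinsic rectangle degenerates as $d(x_0)\to 0^+$; the uniformity of the resulting exponent $s'$ must be obtained by exploiting the matching vanishing of $v$ itself, which compensates the geometric degeneracy. Once this scale analysis is carried out, the remainder of the argument fits the standard De Giorgi iteration template, and the dependence of the constants on the quantities listed in the statement is inherited directly from Proposition \ref{prop:Harnack} and Lemmas \ref{lem:Caccioppoli}--\ref{lem:De Giorgi}.
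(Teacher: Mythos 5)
Your proposal matches the paper's proof in both strategy and the essential mechanism: iterating the reduction-of-oscillation estimate on intrinsic rectangles centered at one of the two points, splitting into the case that the second point lies inside $R_{4\rho_0}$ (where the geometric decay converts to a H\"older modulus after noting $\rho_0\asymp \delta_0\,d(x_0)$ and $v(x_0,t_0)\asymp d(x_0)$ cancel the apparent degeneracy) versus outside (where the smallness of $v$ near the free boundary gives the bound directly), and then transferring to $m$ through the bi-Lipschitz map $\Gamma$ and the H\"older inverse $f^{-1}$. This is essentially the same argument as the paper.
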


\begin{proof}
Let $(x_{0},t_{0}),(x_1,t_1)\in(a_{0},b_{0})\times[\delta_0,T-\delta_0]$, where $x_1<x_0$. As usual, we will consider intrinsic rectangles centered at $(x_0,t_0)$, so we abbreviate $R_{\rho}:=R_{\rho}(x_0,t_0)$. As in
the proof of Proposition \ref{prop:Harnack}, we may assume that
$a_{0}=0$ and $x_{0}<\frac{1}{2}\min\left(b_{0},1\right)$, and write, for some constant $k_1>1$, and each $(x,t)\in R_{4 \rho},$
\begin{equation} \label{eq:DG A}
\frac{1}{k_1}x\leq v(x,t)\leq k_1x.
\end{equation}
Therefore,
$x_{0}\leq \sqrt{x_0} \leq \sqrt{k_1v(x_{0},t_{0})}$.
Hence, letting
\[
\rho_{0}=\frac{1}{8\sqrt{k_1}(1+\sqrt{\theta})}\delta_0 x_{0},\,\,a=\max(|x_1-x_{0}|,\sqrt{\theta v(x_{0},t_{0})}|t_1-t_{0}|),
\]
we see that, in particular, $\rho_{0}$ satisfies (\ref{eq:distance assumption}). Setting now
$$
a=\max(|x_1-x_{0}|,\sqrt{\theta v(x_{0},t_{0})}|t_1-t_{0}|)
$$
we distinguish two alternative cases.

\textbf{Case 1.} $(x_1,t_1)\in R_{4\rho_{0}}$. Equivalently,
we have $a<4\rho_{0}$. Let $n\geq0$ be the unique integer such that
\[
\frac{1}{4^{n}}\rho_{0}\leq a<\frac{1}{4^{n-1}}\rho_{0}.
\]
Iterating Corollary \ref{cor:reduction of osci}, we see that, in view of \eqref{eq:DG A},
\[
\text{osc}_{R_{4^{-(n-1)}\rho_{0}}}(v)\leq\sigma^{n-1}\text{osc}_{R_{4\rho_{0}}}(v)\leq \frac{1}{\sigma}\sigma^{n}k_1(x_{0}+4\rho_0)\leq C\sigma^{n}\rho_{0}\delta_0^{-1}.
\]
Moreover, by increasing the value of $\sigma$ if necessary, we may assume that $\sigma>\frac{1}{4}$ , so that $s=-\log\sigma(\log4)^{-1}$ satisfies $0<s<1$. Thus, observing that $(x_1,t_1)\in R_{4^{-(n-1)}\rho_0}$ and $n\geq-\log\left(\rho_{0}^{-1}a\right)(\log4)^{-1}$,
 we have
\[
|\mg(x_1,t_1)-\mg(x_{0},t_{0})|\leq C(\rho_{0}^{-1}a)^{s}\rho_{0}\delta_0^{-1}\leq C\rho_{0}^{1-s}\delta_0^{-1}(|x_1-x_{0}|^{s}+(\theta\mg(x_{0},t_{0}))^{\frac{s}{2}}|t_1-t_{0}|^{s}).
\]

\textbf{Case 2.} $(x_1,t_1)\notin R_{4\rho_{0}}.$ Then $a\geq4\rho_{0}$,
and so, since $x_1<x_{0}$, appealing to the lower bound on $\gamma_x=\left(f(m_0(x))v(x,t)^{-1}\right)^{\frac{1}{\theta}}$, and allowing the constant $C$ to increase at each step, we have
\begin{align*}
|\mg(x_1,t_1)-\mg(x_{0},t_{0})| & \leq C(f(m_{0})(x_1)+f(m_{0})(x_{0}))\leq  2CC_0^{\theta}x_0 \leq C\delta_0^{-1}\rho_{0}\\
 & \leq  C\delta_0^{-1}a \leq C\delta_0^{-1}(|x_{1}-x_{0}|+\sqrt{\theta \mg(x_{0},t_{0})}|t_{1}-t_{0}|).
\end{align*}
 Finally, to see that $m$ itself is also H\"older continuous, we simply observe that, by \eqref{eq:-11} and Proposition \ref{prop:lower bound gamma x}, the inverse of the map $(x,t) \mapsto \Gamma(x,t):=(\gamma(x,t),t)$ is Lipschitz. Therefore, since $f^{-1}:[0,\infty)\rightarrow [0,\infty)$ is H\"older continuous on bounded sets,
\[ m=f^{-1} \circ v \circ \Gamma^{-1}: \text{supp}(m) \cap (\mathbb{R}\times [\delta_0,T-\delta_0]) \rightarrow \mathbb{R}\] is the composition of H\"older continuous functions. 
\end{proof}
\begin{rem} The result of Theorem \ref{thm:f(m) holder} may be improved to obtain H\"older continuity up to the initial time $t=0$, by working with one-sided (in time) analogues of the intrinsic rectangles $R_{\rho}$. Moreover, in the case of \eqref{eq:MFGPR}, the H\"older regularity may be established up to $t=T$ as well, by simply imposing on $m_T$ the same assumptions as those of $m_0$, and applying the continuity result at $t=0$ to the reflected functions $(-u(x,T-t),m(x,T-t))$.

\end{rem}

\subsubsection{H\"older continuity of $Du$} 

We now address the regularity of $u$ in view of the H\"older regularity of $m$. For \eqref{eq:MFGPR}, since the solution is not unique outside of $\{m>0\}$, we choose to work with the specific $u$ that was constructed in the proof of Theorem \ref{thm:well-posedness theorem}. With this choice, for both \eqref{eq:MFGR} and \eqref{eq:MFGPR}, we observe that $u$ is the unique $\text{BUC}(\R \times [0,T])$ viscosity solution to the HJ equation with terminal condition $u(\cdot,T)$. This implies that $u$ satisfies the representation formula
$$
u(x,t)= \inf_{\gamma \in H^1((t,T)), \; \gamma(t)=x} \int_t^T \frac12 |\dot \gamma(s)|^2 +f(m(\gamma(s),s)) \ ds +u(\gamma(T),T). 
$$
We also note that, from the proof of Theorem \ref{thm:well-posedness theorem}, the function $-u(x,T-t)$ is also the unique viscosity solution to the HJ equation with terminal condition $-u(\cdot,0)$, and, therefore, also satisfies the representation formula.
\begin{thm} \label{thm: Du Holder}Assume that $f(m)$ is in {$C^\beta_{\emph{\text{loc}}}$.} Then the map $u$ is { $C^{1,\frac{\beta}{2}}_{\text{\emph{loc}}}$} in $\R\times (0,T)$. 
\end{thm}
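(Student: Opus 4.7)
The approach is the classical one for Hamilton--Jacobi equations with H\"older right-hand side: perturb an optimal trajectory by a cutoff of variable time scale $\tau$ and balance the resulting kinetic error against the H\"older error from $f(m)$. This produces a two-sided Zygmund-type inequality for $u$ in the space variable, which encodes $C^{1,\beta/2}$ regularity of $u_x$, and then the Hamilton--Jacobi equation transfers this regularity to $u_t$.

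\textbf{Step 1 (spatial semi-concavity).} Fix a compact set $K\subset \mathbb{R}\times(0,T)$ and $(x_0,t_0)\in K$. Let $\gamma^{*}:[t_0,T]\to\mathbb{R}$ achieve the infimum in the representation formula for $u(x_0,t_0)$; note that along trajectories originating in $K$, standard Lipschitz bounds on $u$ confine $\gamma^{*}$ to a fixed compact set on which $f(m)\in C^{\beta}$. For $\tau\in(0,T-t_0)$, choose a cutoff $\phi\in C^{1}([t_0,T])$ with $\phi(t_0)=1$, $\phi\equiv 0$ on $[t_0+\tau,T]$, and $|\dot\phi|\le C/\tau$. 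Since $\phi(T)=0$, the curves $\gamma^{*}\pm h\phi$ are admissible competitors for $u(x_0\pm h, t_0)$ with the same terminal point, and adding the two inequalities (the cross terms $\pm h\,\dot\gamma^{*}\dot\phi$ cancel) gives
\begin{equation*}
u(x_0+h,t_0)+u(x_0-h,t_0)-2u(x_0,t_0)\;\le\; h^{2}\!\!\int_{t_0}^{t_0+\tau}\!\!\dot\phi^{2}\,ds \;+\; \int_{t_0}^{t_0+\tau}\!\!\bigl[f(m(\gamma^{*}+h\phi,s))+f(m(\gamma^{*}-h\phi,s))-2f(m(\gamma^{*},s))\bigr] ds.
\end{equation*}
The first integral is bounded by $Ch^{2}/\tau$ and, by the local $C^{\beta}$ estimate on $f(m)$, the second by $C\tau h^{\beta}$. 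The balance $\tau=h^{1-\beta/2}$ gives the one-sided estimate
$u(x_0+h,t_0)+u(x_0-h,t_0)-2u(x_0,t_0)\le Ch^{1+\beta/2}$.

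\textbf{Step 2 (spatial semi-convexity via reversed time).} As remarked just before the theorem, the function $w(x,t):=-u(x,T-t)$ is the unique BUC viscosity solution of a Hamilton--Jacobi equation of the same form, with right-hand side $f(m(x,T-t))\in C^{\beta}_{\mathrm{loc}}$, and satisfies the corresponding representation formula. Applying Step~1 to $w$ and reversing time yields
$u(x_0+h,t_0)+u(x_0-h,t_0)-2u(x_0,t_0)\ge -Ch^{1+\beta/2}$.
Combined with the a priori Lipschitz bound on $u$, this two-sided Zygmund estimate gives by a standard dyadic telescoping argument that $u(\cdot,t_0)\in C^{1,\beta/2}$ with norms uniform in $(x_0,t_0)\in K$; in particular $u_x$ exists pointwise and is $(\beta/2)$-H\"older in $x$, uniformly in $t$.

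\textbf{Step 3 (joint regularity).} There are two complementary ways to promote the preceding bound to joint $C^{1,\beta/2}$ regularity in $(x,t)$, and I would use both to cross-check. The direct route is a time analogue of Step~1: for $(x_0,t_0+k)$ one modifies $\gamma^{*}$ by a cutoff perturbation of amplitude $\delta=x_0-\gamma^{*}(t_0+k)=O(k)$ to realize $x_0$ as the initial point at time $t_0+k$, while for $(x_0,t_0-k)$ one uses the trivial constant extension $\tilde\gamma(s)=x_0$ on $[t_0-k,t_0]$ glued to $\gamma^{*}$ on $[t_0,T]$; an analogous balancing of a cutoff scale yields $|u(x_0,t_0+k)+u(x_0,t_0-k)-2u(x_0,t_0)|\le Ck^{1+\beta/2}$, hence $(\beta/2)$-H\"older regularity of $u_t$ in $t$. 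Alternatively, once $u_x$ is known to be continuous, the Hamilton--Jacobi equation $u_t=\tfrac12 u_x^{2}-f(m)$ determines $u_t$ pointwise: the right-hand side is $(\beta/2)$-H\"older in $x$ by Step~2 (since $u_x$ is bounded and $(\beta/2)$-H\"older in $x$), and one uses the spatial estimate together with the identity $u_x(x,t_1)-u_x(x,t_2)=\partial_x\int_{t_2}^{t_1}u_t(x,s)\,ds$ to close the H\"older estimate in $t$. Separate $(\beta/2)$-H\"older bounds for $Du$ in $x$ and in $t$ then give the joint $C^{1,\beta/2}_{\mathrm{loc}}$ regularity.

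\textbf{Main obstacle.} The crux is the scale $\tau=h^{1-\beta/2}$ in Step~1: the merely H\"older (not Lipschitz) character of $f(m)$ forces a nontrivial optimization between the kinetic term $h^{2}/\tau$ and the H\"older term $\tau h^{\beta}$, and this precisely produces the $1+\beta/2$ exponent (and explains the factor $1/2$ in $\beta/2$). The secondary difficulty is in Step~3: time perturbations of optimal trajectories are less symmetric than spatial ones, so some care is needed to build admissible competitors for $(x_0,t_0\pm k)$ without introducing errors of order larger than $k^{1+\beta/2}$; the interior hypothesis $t_0\in(0,T)$ is essential there.
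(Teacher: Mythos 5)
Your Steps 1 and 2 reproduce the paper's argument specialized to the case $h'=0$: perturbing the optimal trajectory by $\gamma^*\pm h\phi$, balancing $Ch^2/\tau$ against $C\tau h^\beta$ with $\tau=h^{1-\beta/2}$, and using time reversal to get the matching semiconvexity bound all appear, in equivalent form, in the paper. So the core mechanism is the same. The deviation, and the gap, is in Step 3.

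The paper never performs a separate time argument. Instead, the semiconcavity estimate in the paper is joint in $(x,t)$ from the start: the competitor is
\[
\gamma_\pm(s)=\gamma^*(\theta_1^\pm s+\theta_2^\pm)+\theta_3^\pm\bigl(s-(t+\tau)\bigr),
\]
a \emph{time-rescaling} of $\gamma^*$ plus a linear shift, tuned so that $\gamma_\pm(t\pm h')=x\pm h$ and $\gamma_\pm(t+\tau)=\gamma^*(t+\tau)$. Summing the two resulting inequalities, the kinetic mismatch reduces to
\[
\int_t^{t+\tau}\frac{(h'\dot\gamma^*-h)^2}{\tau^2-(h')^2}\,d\sigma\ \le\ C\,\frac{\tau(h^2+(h')^2)}{\tau^2-(h')^2},
\]
so with $\tau=(|h|+|h'|)^{1-\beta/2}$ one gets $u(x+h,t+h')+u(x-h,t-h')-2u(x,t)\le C(|h|+|h'|)^{1+\beta/2}$ directly, and the Cannarsa--Sinestrari theorem converts this (plus the time-reversed version) into $Du\in C^{\beta/2}_{\mathrm{loc}}$.

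Your Step 3(a) does not close. Relocating the starting point to $x_0$ at time $t_0+k$ via a cutoff shift of amplitude $\delta=x_0-\gamma^*(t_0+k)=-\int_{t_0}^{t_0+k}\dot\gamma^*\approx -k\,\dot\gamma^*(t_0)$ produces a kinetic cross term
\[
\int_{t_0+k}^{t_0+k+\tau}\delta\,\dot\gamma^*\,\dot\phi\,ds\ \approx\ -\delta\,\dot\gamma^*(t_0+k)\ \approx\ k\,|\dot\gamma^*(t_0)|^2,
\]
which is first order in $k$. Meanwhile the constant-extension competitor for $(x_0,t_0-k)$ contributes zero kinetic energy on $[t_0-k,t_0]$, and the missing piece of the optimal trajectory contributes $-\int_{t_0}^{t_0+k}\tfrac12|\dot\gamma^*|^2\approx -\tfrac{k}{2}|\dot\gamma^*(t_0)|^2$. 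The net is $+\tfrac{k}{2}|\dot\gamma^*(t_0)|^2=O(k)$, which destroys the desired $O(k^{1+\beta/2})$ bound. This is exactly the obstruction that the paper's time-rescaling is designed to kill: by squeezing the curve into a shorter interval rather than shifting it transversally, the kinetic mismatch becomes quadratic in the time increment. Route 3(b) is also not available under the hypotheses: the identity $u_x(x,t_1)-u_x(x,t_2)=\partial_x\int_{t_2}^{t_1}u_t\,ds$ requires differentiating $\tfrac12 u_x^2-f(m)$ in $x$, which needs $u_{xx}$ and $f(m)_x$, neither of which is controlled. The fix is to fold your Step 3 into Step 1 by replacing the cutoff-shift perturbation with a time-rescaled competitor, as the paper does.
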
 

\begin{proof} In view of the proof of Theorem \ref{thm:well-posedness theorem}, we know that $u_x$ is globally bounded. Let us first check that $u$ is locally semiconcave with a semiconcavity modulus of the form  $\omega(r)= {Cr^{\beta/2}}$. The argument is known and goes back to \cite{CaSo}. Fix $(x,t)\in \R\times {I}${, where $I$ is a fixed compact sub-interval of $(0,T)$}. Let $\gamma$ be optimal for $u(x,t)$ and $0<|h|,h'<\tau$ small. We set 
$$
\gamma_{\pm}(s):= \gamma(\theta_1^\pm s+\theta_2^\pm) + \theta^\pm_3 (s-(t+\tau)) \qquad \forall s\in [t\pm h', t+\tau], 
$$
where $\theta_1^\pm = \frac{\tau}{\tau-\pm h'}$, $\theta_2^\pm= t- \theta_1^\pm(t\pm h')$, $\theta^\pm_3 =- \frac{\pm h}{\tau-\pm h'}$. Note that 
$$
\theta_1^\pm (t\pm h')+\theta_2^\pm = t, \; \theta_1^\pm (t+\tau)+\theta_2^\pm= t+\tau, \;
\gamma_{\pm}(t\pm h') = x\pm h, \; \gamma_{\pm}(t+\tau)= \gamma(t+\tau), 
$$
so that 
\begin{align*}
& u(x+h,t+h')+u(x-h,t-h')-2u(x,t) \\
& \leq \int_{t+h'}^{t+\tau} \frac12 |\dot \gamma_+|^2 + f(m(\gamma_+,s)) ds 
+ \int_{t-h'}^{t+\tau} \frac12 |\dot \gamma_-|^2 + f(m(\gamma_-,s)) ds
- 2  \int_{t}^{t+\tau} \frac12 |\dot \gamma|^2 + f(m(\gamma,s)) ds  \\ 
& \leq \int_{t}^{t+\tau} \frac{\theta_1^+}{2} |\dot \gamma +{\theta^+_3/\theta^+_1}|^2 +\frac{1}{\theta_1^+} f\left(m\left(\gamma(s)+\theta^+_3\left((s-\theta_2^+)/\theta_1^+-(t+\tau)\right),(s-\theta_2^+)/\theta_1^+\right)\right) ds \\
& \qquad + \int_{t}^{t+\tau} \frac{\theta_1^-}{2} |\dot \gamma +\theta^-_3/\theta^-_1|^2 +\frac{1}{\theta_1^-} f(m(\gamma(s)+\theta^-_3((s-\theta_2^-)/\theta_1^--(t+\tau)),(s-\theta_2^-)/\theta_1^-)) ds \\
& \qquad - 2  \int_{t}^{t+\tau} \frac12 |\dot \gamma|^2 + f(m(\gamma(s),s)) ds .
\end{align*}
Using the H\"older regularity of $f(m)$ and the fact that $u$ is bounded we find
\begin{align*}
& u(x+h,t+h')+u(x-h,t-h')-2u(x,t) \\
& \leq \int_{t}^{t+\tau} \frac12(\theta_1^+ +\theta_1^--2) |\dot \gamma|^2 + \dot \gamma( {\theta_3^++ \theta_3^-}) +\frac12 ( (\theta_3^+)^2/\theta_1^++ (\theta_3^-)^2/\theta_1^-)  ds \\
& \qquad + \int_{t}^{t+\tau} (1/\theta_1^++ 1/\theta_1^--2) f(m(\gamma(s), s)) ds\\
& \qquad  +{C_I } \int_{t}^{t+\tau}   \frac{1}{\theta_1^+} (|\theta^+_3((s-\theta_2^+)/\theta_1^+-(t+\tau){)}|{^\beta} + |{(}(s-\theta_2^+)/\theta_1^+)
-s|^{\beta}) ds\\
& \qquad  +{C_I}  \int_{t}^{t+\tau} \frac{1}{\theta_1^-} (|\theta^-_3((s-\theta_2^-)/\theta_1^--(t+\tau){)}|{^\beta} + |{(}(s-\theta_2^-)/\theta_1^-)-s|{^\beta}) ds \\
& \leq C{  \left(\frac{\tau (h')^2}{\tau^2-(h')^2} +\frac{\tau (h)^2}{\tau^2-(h')^2} + C_I( |h|^\beta \tau+|h'|^{\beta} \tau)\right) }
\end{align*}
where $C= C( \|u_x\|{_{\infty}})$, since $|\dot\gamma|= |u_x|$.
We choose $\tau=({|h|}+h')^{\delta}$ with ${\delta} = {1-\beta/2}$, which leads to 
$$
u(x+h,t+h')+u(x-h,t-h')-2u(x,t) \leq C({|h|}+h')^{{1+\frac{\beta}{2}}}. 
$$
This inequality implies the local semiconcavity of $u$ with modulus $\omega(r)=Cr^{{\beta/2}}$  \cite[Thm. 2.1.10]{CaSiBook}. 

We now set $w(x,t)= -u(x,T-t)$. Then, as above, $w$ is locally semiconcave with a modulus $\omega(r)=C{r^{\beta/2}}$ since the semiconcavity property does not rely on the regularity of the terminal value. Hence $u$ is semiconcave and semiconvex with modulus $\omega$. Following  \cite[Thm. 3.3.7]{CaSiBook}, the derivatives of $u$ are therefore locally ${\beta/2}-$H\"older continuous. \end{proof}

\section{Infinite speed of propagation: the case of entropic coupling}\label{subsec:infinitespeed}

This section is devoted to the special case of so-called entropic coupling, namely when $f(m)=\log m$. 
We will show that, in this case, the evolution of $m$ has the property of  infinite speed of propagation; marginals with compact support evolve into positive, smooth densities.  

\subsection{The periodic case}\label{subsec.periodic}

As before, we start by considering  periodic solutions  defined on the torus $\T$, for arbitrarily large $R$:
\begin{equation}\label{log-per}
\begin{cases}
-u_{t}+\frac{1}{2}u_{x}^{2}=\log(m) & (x,t)\in\T\times(0,T),\\
m_{t}-(mu_{x})_{x}=0 & (x,t)\in\T\times(0,T)\,,
\end{cases} 
\end{equation}
complemented  either with prescribed marginals
\be\label{plan-log}
m(x,0)=m_{0}(x),\,\,m(x,T)=m_{T}(x)\,, \,\,  x\in\T.
\ee
or with final pay-off condition
\be\label{mfg-log}
u(x,T)= g(m(x,T))\,, \,\,  x\in\T.
\end{equation}
We define solutions which are smooth in $(0,T)$, with  traces at $t=0,t=T$ in the space of measures. For this purpose, we denote by $\cP(\T)$ the set of Borel probability measure on $\T$, endowed with the weak--$*$ convergence.

\begin{defn}\label{sollog} We say that $(u,m)$ is a (classical) solution of \rife{log-per}  if $(u,m)  \in C^2(\T\times (0,T))\times C^1(\T\times (0,T))$, $m>0$  in $\T\times (0,T)$ and  the equations are satisfied in a classical sense. For  $m_0, m_T\in \cP(\T)$, we say that  \rife{plan-log}    is satisfied if    $m\in C([0,T];\cP(\T))$ and $m(0)=m_0, m(T)=m_T$. Respectively, we say that   \rife{mfg-log} is satisfied, if $m\in C([0,T];\cP(\T))$, $m(T)\in L^1(\T)$ and $\lim_{t\to T^-} u(x,t)= g(m(x,T))$ for every $x\in \T$.
\end{defn}

The main result of this subsection is the following theorem.

\begin{thm}\label{ex-log-torus} Assume that $m_0, m_T\in C_c(\T)$. Then the following holds:

\begin{enumerate}
\item There exists a unique (up to addition of a constant to $u$) solution $(u,m)$ of \rife{log-per}--\rife{plan-log} such that $m \in L^\infty(\T\times (0,T))$. In addition, $u,m\in C^{\infty}(\T\times (0,T))$.

\item Assume that $g(s)= c_T \log (s)$, for some $c_T\geq 0$. Then there exists a unique  solution $(u,m)$ of \rife{log-per}--\rife{mfg-log} such that $m \in L^\infty(\T\times (0,T))$. In addition, $u,m\in C^{\infty}(\T\times (0,T])$ and $m>0$ in $(0,T]$.
\end{enumerate}
\end{thm}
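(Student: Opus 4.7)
My plan is to obtain the solutions claimed in Theorem~\ref{ex-log-torus} by approximation: regularize the (compactly supported) marginals to strictly positive data, invoke Theorem~\ref{thm:existence} to produce classical solutions $(u^\vep,m^\vep)$, and then pass to the limit using uniform estimates on compact subsets of the open strip $\T\times(0,T)$. Concretely, I would set
\begin{equation*}
m_0^\vep=(m_0*\eta_\vep+\vep)/c_0^\vep,\qquad m_T^\vep=(m_T*\eta_\vep+\vep)/c_T^\vep,
\end{equation*}
with $\eta_\vep$ a standard mollifier and normalizing constants chosen so that $\int_\T m_j^\vep=1$; these are smooth, strictly positive, with $\|m_j^\vep\|_\infty$ uniformly bounded and $m_j^\vep\to m_j$ uniformly on $\T$. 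The same recipe is used on both endpoints in part~(1) and only on $m_0$ in part~(2), with $g(s)=c_T\log s$ regularized to $g(s)+\vep s$ when $c_T=0$ to meet the strict monotonicity required by Theorem~\ref{thm:existence}. Corollary~\ref{cor: m bound} then yields $\|m^\vep\|_\infty\leq C$ uniformly, so $\log m^\vep$ is bounded from above independently of $\vep$.

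The main obstacle is that the global Lipschitz and energy estimates of Propositions~\ref{prop:gradient bound} and~\ref{prop:energy estimate} depend on $\|\log m_j^\vep\|_{W^{1,\infty}}$, which diverges because the original marginals vanish on open sets. The key point enabling a way around this is that, for $f(m)=\log m$, one has $mf'(m)\equiv 1$, so the quasilinear elliptic equation \eqref{eq.elluBIS} for $u^\vep$ becomes
\begin{equation*}
-u^\vep_{tt}+2u^\vep_x u^\vep_{xt}-((u^\vep_x)^2+1)u^\vep_{xx}=0,
\end{equation*}
which is uniformly elliptic as soon as $u^\vep_x$ is bounded, and Lemma~\ref{lem:minimum principle} tells us that $\log m^\vep$ obeys the maximum and minimum principles on every compact subset of $\T\times[0,T]$. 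I would then run the Bernstein argument of Proposition~\ref{prop:gradient bound} with a time-localized auxiliary function of the form $\frac12(u^\vep_x)^2+\zeta(t)^2(\tilde u^\vep)^2$, where $\zeta\in C_c^\infty((0,T))$ is a cutoff, so that only the (uniform) upper bound on $\log m^\vep$, and not any derivative bound on the marginals, enters the constants. This produces a local Lipschitz estimate $\|u^\vep_x\|_{L^\infty(\T\times[\delta,T-\delta])}\leq C(\delta)$. Combined with the minimum principle and a Moser--Harnack inequality for the now uniformly elliptic scalar equation solved by $\log m^\vep$ (equation \eqref{eq:f(m) equation} with $mf'(m)\equiv 1$), this yields the critical interior lower bound $m^\vep\geq c(\delta)>0$ on $\T\times[2\delta,T-2\delta]$, uniformly in $\vep$.

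Once two-sided interior bounds are in place for $m^\vep$ and $u^\vep$ is uniformly Lipschitz on compact subsets of $\T\times(0,T)$, Schauder bootstrap produces uniform $C^k_{\mathrm{loc}}(\T\times(0,T))$ bounds for every $k$, and a subsequence converges in $C^\infty_{\mathrm{loc}}$ to a classical limit $(u,m)$ with $m>0$ and $m\in L^\infty$. The boundary conditions follow from weak-$*$ continuity of $t\mapsto m(t)\in\cP(\T)$ (derived from the continuity equation and the $L^\infty$ bound) together with $m^\vep(0)\to m_0$ and, in the planning case, $m^\vep(T)\to m_T$ uniformly. For part~(2) with $c_T>0$, strict monotonicity $g'>0$ allows the Lieberman--Schauder analysis to extend the estimates up to $t=T$, yielding $m>0$ on $(0,T]$; when $c_T=0$, the terminal condition $u(\cdot,T)=0$ is a smooth Dirichlet datum and standard Schauder at $t=T$ gives the same conclusion. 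Uniqueness is the Lasry--Lions identity: for two solutions $(u,m),(\tilde u,\tilde m)$,
\begin{equation*}
\int_0^T\!\!\int_\T \tfrac12(m+\tilde m)(u_x-\tilde u_x)^2+(m-\tilde m)(\log m-\log\tilde m)\,dx\,dt=0,
\end{equation*}
and strict monotonicity of $\log$, combined with $m>0$ on $\T\times(0,T)$, forces $m=\tilde m$ and $u-\tilde u$ constant there (further pinned down by the coupling $u(T)=g(m(T))$ in part~(2)).
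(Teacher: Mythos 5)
Your high-level strategy (regularize the marginals, invoke Theorem~\ref{thm:existence} on the periodic torus, obtain interior estimates uniform in $\vep$, and pass to the limit) matches the paper's. But the route you take to the two key interior bounds — on $u^\vep$ and on $\log m^\vep$ from below — has a genuine gap, and it is precisely the step where the paper deviates from Bernstein-type maximum-principle arguments.

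First, the time-localized Bernstein estimate does not close as you describe. In Proposition~\ref{prop:gradient bound}, at an interior maximum of the auxiliary function one arrives at a bound of the form $u_x^4\leq \frac{C}{T^2}(T+\mathrm{osc}(u))^4$, so the estimate is controlled by $\mathrm{osc}(u^\vep)$, not merely by the sup of $\log m^\vep$. Replacing $\tilde u^2$ by $\zeta(t)^2\tilde u^2$ removes the dependence on the boundary data, but still leaves $\mathrm{osc}(u^\vep)$ (or a local $L^\infty$ bound on $u^\vep$) in the constants. For compactly supported $m_0$, the only oscillation bound available from Proposition~\ref{prop: oscillation bound} blows up with $\|\log m_0^\vep\|_\infty$, so you need an independent local bound on $u^\vep$. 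The paper supplies this via a duality/optimal-transport argument (Propositions~\ref{energy-log} and~\ref{bounduloc}): the variational optimality of $(m,u_x)$ and McCann's displacement convexity give uniform control of the moments and entropy, from which one extracts $|u(x,t)|\leq C(1+|x|^2)\max(t^{-1},(T-t)^{-1})$ with constants depending only on $\|m_0\|_\infty$, $\|m_T\|_\infty$ and second moments. Nothing in your scheme substitutes for this step.

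Second, the proposed Moser--Harnack inequality for $\log m^\vep$ is circular as stated. With $f=\log$, the equation \eqref{eq:--4} satisfied by $v=\log m$ reads $Q(v)-v_x^2+(v_t-v_xu_x)^2=0$: the ``lower-order terms'' are quadratic in $Dv$, so rewriting them as $b\cdot Dv$ with $b$ in $L^\infty$ (or even $L^2_{\mathrm{loc}}$ in two variables) requires knowing that $Dv$ is bounded — which is essentially the conclusion. Krylov--Safonov/Moser Harnack does not apply directly to equations with natural gradient growth without extra structure or smallness. The paper instead obtains the local lower bound $\log m^\vep \geq -C_\delta$ on $\T\times[\delta,T-\delta]$ by a completely different mechanism: the displacement convexity inequality $\frac{d^2}{dt^2}\int_\T|\log(m/K)|^p\geq 0$, combined with a Poincar\'e-type estimate and an iteration in $p$ (Proposition~\ref{stimalog-inside}). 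This is the argument you would need to supply.

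Once those two bounds are available, the rest of your plan (local Lipschitz bound on $u^\vep_x$, uniform ellipticity of \eqref{eq.elluBIS}, Schauder bootstrap, passage to the limit, weak-$*$ continuity of $t\mapsto m(t)$ from \eqref{wass-unif}) is essentially the paper's. For uniqueness, your Lasry--Lions identity is the right tool, but it cannot be written down directly in the form you state: $u$ and $m$ are only classical in the open strip and $u$ may diverge as $t\to 0^+$ or $T^-$ on $\{m=0\}$, so one must first establish the one-sided traces $u(0^+)\in L^1(dm_0)$, $u(T^-)\in L^1(dm(T))$ (using the time-monotonicity coming from $-u_t+\frac12 u_x^2\leq\log\|m\|_\infty$), and then pass to the endpoints by truncation. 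This trace analysis is the substantive part of the paper's uniqueness proof that your sketch omits.
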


We recall (see Theorem \ref{thm:existence}) that, if  $m_0, m_T\in C^{1,\alpha}(\T)$ and are strictly positive, then the problems
 \rife{log-per}--\rife{plan-log} and   \rife{log-per}--\rife{mfg-log} admit  a unique classical solution $(u,m)$ (up to addition of  a constant to $u$, in case of planning conditions \rife{plan-log}).   Therefore, for the proof of Theorem \ref{ex-log-torus}, we will proceed by approximating $m_0, m_T$ with strictly positive smooth measures.   
 
 As a first step, we derive estimates which are independent of lower bounds of $m_0, m_T$ as well as independent of $R$. We denote by $W_2(\mu,\nu)$ the $2$--Wasserstein distance between measures $\mu,\nu\in \cP(\T)$, and, for a single measure $\mu\in L^1(\T)$, we denote the entropy of $\mu$ by $\cE(\mu)=\int_{\T} \mu\log\mu\,dx$  and by $M_2(\mu)=\int_{\T} |x|^2\,d\mu$ its second order moment.



\begin{prop}
\label{energy-log}
Let $(u,m)$ be a solution of    \rife{log-per},   and assume that $u,m$ are continuous in $\T\times [0,T]$. 

\begin{enumerate}

\item If $(u,m)$ solves \rife{log-per}--\rife{plan-log},  there exists a constant $K$, only depending on $T$,   $\cE(m_0), \cE(m_T), M_2(m_0), M_2(m_T)$ (and independent of $R$) such that
\be\label{en-bound}
\int_0^T \intt m\, |u_x|^2\, dxdt + \int_0^T \intt |m \log(m)|\, dxdt  \leq   K \,,
\ee
\be\label{momest}
\sup_{t\in (0,T)}\,\,   \intt m(t)|x|^2\, dx  \leq K,  
\ee
and
\be\label{pointt}
 \sup_{t\in (0,T)}\,\,  \intt m(t)u_x^2\, dx +   \sup_{t\in (0,T)}\,\, \left | \intt m(t)\, \log m(t)\right|   \leq K \,.
\ee
As a consequence, it also holds that
\be\label{wass-unif}
W_2(m(t),m(s)) \leq K \, |t-s|^{\frac12} \qquad \forall t,s\in (0,T)\,.
\ee
\item If  $(u,m)$ solves \rife{log-per}--\rife{mfg-log} (with $g(m)=c_T\log(m)$),  then the estimates \rife{en-bound}, \rife{momest}, \rife{wass-unif} hold, for some $K$ only depending on $T, \cE(m_0), M_2(m_0)$. If $c_T>0$, then \rife{pointt}   holds true as well,  with $K$ depending also on $(c_T)^{-1}$. 
\end{enumerate}
\end{prop}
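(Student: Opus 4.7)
The plan is to derive the estimates on smooth, strictly positive solutions (obtained by approximating $m_0, m_T$ with positive regularizations as in Theorem~\ref{thm:existence}), with constants depending only on the quantities listed in the statement and independent of both the regularization and of $R$. The starting point is the \emph{energy identity}
\[
\int_{\T} u(0)\, m_0 \,dx - \int_{\T} u(T)\,m(T)\,dx = \int_0^T\!\!\int_{\T}\Bigl(\tfrac12\, m\, u_x^2 + m\log m\Bigr)\,dx\,dt,
\]
obtained by testing the HJ equation against $m$, the continuity equation against $u$, and integrating by parts. Writing $m\log m = F(m) + m$ with $F(m) = m\log m - m$ (so $F' = f$), this controls the action $\int_0^T\!\int_{\T}(\tfrac12\, m u_x^2 + F(m))$ in terms of $\int u(0)m_0 - \int u(T)m(T)$.

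The first task is to bound this action from above by a variational competitor, exploiting the fact that, since $f=\log$ is increasing, $(m,-u_x)$ minimizes the convex functional $\cB(\tilde m,\tilde v) = \int_0^T\!\int(\tfrac12 \tilde m|\tilde v|^2 + F(\tilde m))$ (augmented by $c_T\int F(\tilde m(T))$ in the MFG case) among admissible flows with the prescribed marginal(s). For the planning problem \rife{plan-log}, I take $\tilde m$ to be the $W_2$-geodesic interpolation between $m_0$ and $m_T$: its kinetic energy equals $W_2(m_0,m_T)^2/T$, and the classical (McCann) displacement convexity of the entropy ensures $\int F(\tilde m(t)) \leq \max(\cE(m_0),\cE(m_T)) - 1$ at every $t$. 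Together with $W_2(m_0,m_T)^2 \leq 2(M_2(m_0)+M_2(m_T))$ this gives the desired upper bound. For the MFG problem \rife{mfg-log}, the static competitor $\tilde m\equiv m_0$, $\tilde v\equiv 0$ directly yields $\int u(0) m_0 \leq (T+c_T)\cE(m_0)$.

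To convert the action bound into \rife{en-bound} and \rife{momest} I need a corresponding lower bound on $\int F(m(t))$, supplied by the Gaussian entropy inequality $\int m\log m \geq -\tfrac12\log(2\pi e\,M_2(m))$, valid for every probability density on $\R$. The uniform-in-$t$ control of $M_2(m(t))$ needed to apply this comes from the Benamou-Brenier bound $W_2(m(t),m(s))^2 \leq |t-s|\int_s^t\!\int m u_x^2$ combined with the triangle inequality $\sqrt{M_2(m(t))} \leq \sqrt{M_2(m_0)} + W_2(m_0,m(t))$. Setting $A := \int_0^T\!\int m u_x^2$ gives $M_2(m(t)) \leq C(M_2(m_0) + TA)$, and reinserting this in the Gaussian lower bound turns the upper action bound into a self-improving inequality $A \leq C_1 + C_2 \log(C_3 + C_4 A)$; this closes by a Gronwall-type argument and yields both \rife{en-bound} and \rife{momest}. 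The Wasserstein Hölder estimate \rife{wass-unif} then follows immediately from Benamou-Brenier applied with \rife{en-bound}.

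For the pointwise-in-time bounds \rife{pointt} I use two further structural ingredients. The displacement convexity formula \rife{eq:displacement integral} applied with $h(m)=m\log m$ (so $mh''(m)=1$) shows that $t \mapsto \int m(t)\log m(t)$ is convex along the MFG flow, hence bounded above by $\max(\cE(m_0),\cE(m_T))$ for \rife{plan-log}; for \rife{mfg-log}, the terminal condition $u(T)=c_T\log m(T)$ identifies $\int u(T)m(T)=c_T\cE(m(T))$, so the energy identity together with the previous upper bound on $\int u(0)m_0$ provides an upper bound on $\cE(m(T))$ as long as $c_T>0$ (hence the dependence on $c_T^{-1}$), and convexity propagates the bound to all $t$. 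Combined with the Gaussian lower bound this gives the pointwise entropy estimate. For the pointwise kinetic energy I exploit the \emph{conservation law} $\frac{d}{dt}\int(\tfrac12\, m u_x^2 - F(m)) = 0$, obtained by a direct computation: both time derivatives separately equal $\int \Phi(m) u_{xx}$ with $\Phi(m) = \int_0^m s f'(s)\,ds$, which reduces to $\Phi(m) = m$ in the logarithmic case. Averaging this over $[0,T]$ fixes the conserved value via the $L^1$-in-time bound \rife{en-bound}, and the pointwise control of $\int m u_x^2$ follows. The main obstacle throughout is keeping all constants uniform both in the positive regularization and in $R$; the key to this is to view $m(\cdot,t)$ as a probability on $\R$ (through the standard identification of the torus with a fundamental domain), so that the Gaussian and Wasserstein inequalities on $\R$ apply verbatim.
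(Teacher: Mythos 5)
Your proof is correct and follows the same overall structure as the paper's: energy identity via duality, upper action bound via a competitor exploiting the variational characterization (Wasserstein geodesic plus McCann displacement convexity for the planning problem, a simple admissible flow for the MFG problem), a lower bound on the entropy term to close, and the conservation law plus the displacement-convexity upper bound $\int m\log m\leq\max(\cE(m_0),\cE(m_T))$ for the pointwise estimates \eqref{pointt}. The one place where you take a genuinely different route is the entropy lower bound: you invoke the Gaussian maximum-entropy inequality $\int m\log m\geq-\tfrac12\log(2\pi e\,M_2(m))$, which turns the action bound into a self-improving inequality $A\leq C_1+C_2\log(C_3+C_4A)$ that closes by elementary means. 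The paper instead uses the pointwise bound $(s\log s)_-\leq c\sqrt s$ followed by Young's inequality against the weight $(1+|x|^2)$ and a Gronwall step for the second moment. Your route is arguably cleaner conceptually (one sharp inequality in place of two soft ones plus Gronwall), but requires identifying $\T$ with a fundamental domain in $\R$ as you correctly note; the paper's route sidesteps the Gaussian inequality at the cost of the Gronwall loop. For the MFG case, you make the paper's implicit step explicit by choosing the static competitor $\tilde m\equiv m_0$, $\tilde v\equiv 0$, which is a valid admissible flow and yields the announced bound $\int u(0)m_0\leq(T+c_T)\cE(m_0)$. Finally, your conserved quantity $\tfrac{d}{dt}\int(\tfrac12 mu_x^2-F(m))=0$ (with both derivatives equal to $\int m\,u_{xx}$ when $f=\log$) is correct; it is a rescaled-and-normalized version of the paper's identity \eqref{preserve} and leads to the same conclusion. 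No gaps.
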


\begin{proof} 
We first consider the case of problem \rife{log-per}--\rife{plan-log}.  Using the equation of $m$, we compute
\begin{align*}
\frac{d}{dt} \frac12 \intt m\, |x|^2\, dx & = \frac12 \intt m_t\, |x|^2\, dx = - \intt x\, m\, u_x
\\
& \leq \frac12 \intt m\, |x|^2\, dx + \frac 12 \intt m\, |u_x|^2\, dx\,.
\end{align*}
Hence, Gronwall's lemma implies that, for some constant depending only on $T$,
\be\label{Gro}
\intt m(t)\, |x|^2\, dx \leq C_T\, \int_0^T \intt m\, |u_x|^2\, dx+ \intt m_0\, |x|^2\, dx\,.
\ee
On the other hand, we know that $(m,u_x)$ is the optimizer of the functional 
$$
{\mathcal B}(m,v):= \int_0^T\!\! \!\intt \frac 12\, |v|^2 dm+  \int_0^T\!\! \!\intt m\log(m)\, dxdt\,,
\quad \text{subject to } \begin{cases} m_t - (vm)_x=0 \,\,  \hbox{in $\T\times (0,T)$} & \\ m(0)=m_0\,, 
m(T)=m_T \,.& \end{cases}
$$

This means that
$$
\int_0^T \intt m\, |u_x|^2\, dxdt + \int_0^T \intt m \log(m)\, dxdt 
\leq \int_0^T\!\! \!\int_{\T} \frac 12\, |v|^2 d\mu+  \int_0^T\!\! \!\int_{\T} \mu\log(\mu)\, dxdt
$$
for any $(\mu, v)$ such that $\mu_t = (\mu v)_x$, with $\mu(0)=m_0, \mu(T)=m_T$.  Let us take $\mu$  as the Wasserstein geodesic connecting $m_0, m_T$. By McCann's classical displacement convexity result for Wasserstein geodesics \cite{McCann}, we know that $\int_{\T} \mu\log(\mu)\, dx$ is convex, hence
$$
\int_{\T} \mu\log(\mu)\, dx \leq \max\left( \cE(m_0)\,, \cE(m_T)\right)\,.
$$
We deduce that
$$
\int_0^T \intt m\, |u_x|^2\, dxdt + \int_0^T \intt m \log(m)\, dxdt  \leq C_T\, W_2(m_0, m_T)+ \max\left( \cE(m_0)\,, \cE(m_T)\right)\,.
$$
This yields, for some constant $C$ independent of $R$,
\be\label{posneg}
\int_0^T \intt m\, |u_x|^2\, dxdt + \int_0^T \intt (m \log(m))_+\, dxdt \leq C + \int_0^T \intt (m\log m)_-dxdt\,.
\ee
Here and below, the constants will be independent of $R$, although they may depend on $T$, $\cE(m_0)$, $\cE(m_T)$, $M_2(m_0)$, $M_2(m_T)$.
Since $(s\log s)_-\leq c\,\sqrt s$ we have
\be\label{posneg1}
\begin{split}
\int_0^T \intt (m \log m)_-dxdt  & \leq c\, \int_0^T \intt \sqrt m\, dxdt 
\\ & \leq C+ \vep  \int_0^T \intt  m\,(|x|^2+1) dxdt + C_\vep \int_0^T \intt \frac1{1+ |x|^2}\, dxdt
\end{split}
\ee
by Young's inequality. Then \rife{posneg} yields
\be\label{posneg2}
\int_0^T \intt m\, |u_x|^2\, dxdt + \int_0^T \intt (m \log(m))_+\, dxdt   \leq C+ \vep  \int_0^T \intt  m\,(|x|^2+1) dxdt + C_\vep \int_0^T \intt \frac1{1+ |x|^2}\, dxdt\,.
\ee
From \rife{Gro}, after integration  we deduce
\begin{align*}
\int_0^T \intt  m\,(|x|^2+1) dxdt  & \leq T\, C_T \int_0^T \intt m\, |u_x|^2\, dxdt  + C_T \, M_2(m_0)
\\ & \leq \vep\, T\, C_T\int_0^T \intt  m\,(|x|^2+1) dxdt + C(\vep, T, m_0)\,.
\end{align*}
Choosing $\vep$ suitably small, we get an estimate for $\int_0^T \intt  m\,(|x|^2+1) dxdt $. In turn, from \rife{posneg1} and \rife{posneg2},  we deduce \rife{en-bound}.
Now, the right-hand side in  \rife{Gro} is controlled, and we get the estimate \rife{momest}. Moreover,  from the equation satisfied by $m$ and the estimate \rife{en-bound}, we immediately deduce \rife{wass-unif}.

We are left with the pointwise estimate of $\intt mu_x^2\, dx$. For this purpose, we observe that 
\be\label{preserve}
\frac{d}{dt} \left[\intt mu_x^2\, dx- \intt m\log m \, dx  \right]=0.
\ee
Therefore,
$$
\intt mu_x^2\, dx- \intt m\log m \, dx  = \frac1T \int_0^T \left\{\intt mu_x^2\, dx- \intt m\log m \, dx\right\} dt \,.
$$
We deduce that
\be\label{balance}
\left| \intt mu_x^2\, dx- \intt m\log m \, dx \right| \leq \frac1T \left\{\int_0^T \intt m\, |u_x|^2\, dxdt + \int_0^T \intt |m \log(m)|\, dxdt \right\} \leq C\,.
\ee
Since, by the displacement convexity formula \rife{eq:displacement integral}, we have 
$$
\intt m\log m \, dx \leq \max\left( \cE(m_0)\,, \cE(m_T)\right),
$$
we conclude by \rife{balance} that $\left|\intt m\log m \, dx\right|$ is bounded, so $ \intt mu_x^2\, dx$ is bounded above, uniformly in $t$.  This yields \rife{pointt}.

In case of problem \rife{log-per}--\rife{mfg-log}, the only difference is in the first estimate, \eqref{en-bound}.  By the optimality condition, using that $g(s)= c_T\log(s)$, we know that  
\begin{align*}
 \intt u(x,0)\,m_0\,dx   \leq   \frac12 \int_0^T \intt \mu |v|^2\, dxds  +  \int_0^T  \intt \mu\, \log \mu\, dxds  +  c_T \intt \mu_T\log(\mu_T)\, dx 
\end{align*} 
for any curve $\mu(t)$ such that $\mu(0)=m_0$ and $\mu(T)=\mu_T$, with $\mu_t= (\mu\,v)_x$.\\
It is enough to choose some $\mu_T$ with finite entropy to obtain a global estimate of the right-hand side. 
%
Since we also have, by duality,
$$
\int_0^T \intt m\, |u_x|^2\, dxdt + \int_0^T \intt m \log(m)\, dxdt + c_T \intt m(T)\log(m(T))dx = \intt u(x,0)\,m_0\,dx 
$$
we get immediately \rife{posneg} if $c_T=0$.  If $c_T>0$, we estimate the term at $t=T$ in a similar way as in \rife{posneg1}, \rife{posneg2}, and we end up with 
\begin{align*}
\int_0^T \intt m\, |u_x|^2\, dxdt  & + \int_0^T \intt |m \log(m)|\, dxdt  + c_T \intt |m(T) \log(m(T))|\, dx \leq 
\\ & 
C+ \vep  \int_0^T \intt  m\,(|x|^2+1) dxdt + \vep \intt m(T) (|x|^2+1)dx
 + C_\vep (1+ T)  \intt \frac1{1+ |x|^2}\, dxdt \,.
\end{align*}
Using \rife{Gro} and choosing $\vep$ suitably small,   the  estimates \rife{en-bound}, \rife{momest} and \rife{wass-unif} follow as before. Notice that, if $c_T>0$, we also estimate $\cE(m(T))$ and then we are back to the previous case, obtaining \rife{pointt} as well. 
\end{proof}

\vskip1em
Now we show a local bound  on the value function $u$, which is independent  of the period $R$.

\begin{prop}\label{bounduloc} Let $(u,m)$ be a solution of    \rife{log-per},   and assume that $u,m$ are continuous in $\T\times [0,T]$. 

\begin{enumerate}

\item If $(u,m)$ solves \rife{log-per}--\rife{plan-log},  there exists a constant  $C>0$, depending   on $\|m_{0}\|_{\infty},\|m_{T}\|_{\infty}$, $M_2(m_0), M_2(m_T)$, but independent of $R$, such that, if we normalize $u$ such that $\int u(T)m_{T}=0$, then we have
\be\label{stimuloc}
- \frac C t (1+ |x|^2) \leq u(x,t) \leq  \frac{C}{T-t} (1+ |x|^2) \qquad \forall t\in (0,T)\,, x\in \R\,.
\ee
\item If  $(u,m)$ solves \rife{log-per}--\rife{mfg-log} (with $g(m)=c_T\log(m)$),  then the estimate \rife{stimuloc} holds  for some $C$ only depending on $\|m_{0}\|_{\infty}, M_2(m_0), c_T$.  
\end{enumerate}
\end{prop}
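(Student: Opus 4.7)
The plan is to combine the value-function representation of $u$ with the time-reversal symmetry of the system~\rife{log-per} and the basic MFG duality identity, feeding in the energy bounds of Proposition~\ref{energy-log}. Since $(u,m)$ is a classical solution and $\log m \le \log\|m\|_\infty$ is bounded above (Corollary~\ref{cor: m bound}), $u$ admits the Hopf--Lax type representation $u(x,t) = \inf_{\gamma(t)=x} \int_t^T [\tfrac{1}{2}|\dot\gamma|^2 + \log m(\gamma,s)]\,ds + u(\gamma(T),T)$. I will test this against the affine competitor $\gamma_y(s) := x + \tfrac{s-t}{T-t}(y-x)$ for arbitrary $y\in\T$, then average the resulting inequality against a probability density $\mu$ on $\T$ and bound $\log m \le \log\|m\|_\infty$, producing
\[
u(x,t) \;\le\; \frac{|x|^2 + M_2(\mu)}{T-t} \;+\; (T-t)\log\|m\|_\infty \;+\; \int_\T u(T,y)\mu(y)\,dy.
\]
For the planning problem I will take $\mu = m_T$: the last term vanishes by the chosen normalization, and $M_2(m_T)$ is part of the data. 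For the final pay-off problem I will take $\mu = m(T)$: $M_2(m(T))$ is controlled by~\rife{momest}, while the boundary term equals $c_T\,\cE(m(T))$, which is either trivially $0$ (if $c_T=0$) or bounded via~\rife{pointt} (if $c_T>0$). Absorbing the additive constants into the quadratic term gives the upper half of~\rife{stimuloc}.

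\textbf{Lower bound via time-reversal.} I will not attack the lower bound directly, because a classical subsolution of the HJ equation is not available (the right-hand side $\log m$ is unbounded below). Instead, setting $v(x,t) := -u(x,T-t)$ and $\tilde m(x,t) := m(x,T-t)$, a short calculation shows that $(v,\tilde m)$ is again a classical solution of~\rife{log-per} on $[0,T]$, with the roles of the initial and terminal marginals exchanged. Applying the upper-bound argument above to $v$ with the choice $\mu = m_0$ gives
\[
v(x,t) \;\le\; \frac{|x|^2 + M_2(m_0)}{T-t} + (T-t)\log\|m\|_\infty \;-\; \int_\T u(0,y)m_0(y)\,dy,
\]
so it suffices to produce an upper bound for $-\int u(0)m_0$. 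For this I will use the standard MFG energy identity, obtained by multiplying the HJ equation by $m$, the continuity equation by $u$, and integrating by parts in space-time:
\[
\int_\T u(0)m_0\,dy \;=\; \int_\T u(T)m(T)\,dy \;+\; \int_0^T\!\!\int_\T\left[\tfrac{1}{2}mu_x^2 + m\log m\right]dy\,ds.
\]
The first term on the right is $0$ by normalization in case~(1), and equals $c_T\cE(m(T))$ (hence $0$ or bounded) in case~(2); the space-time integral is controlled by~\rife{en-bound}. After the change of variables $\tau = T-t$, this yields exactly $u(x,\tau)\ge -C(1+|x|^2)/\tau$.

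\textbf{Main obstacle.} The delicate point is case~(2) with $c_T=0$, where the pointwise entropy bound~\rife{pointt} is not available. One has to notice that $\cE(m(T))$ enters the argument only through the prefactor $c_T\cE(m(T))=\int u(T)m(T)$, which is trivially $0$ when $c_T=0$. Once this dichotomy is handled, the rest is routine: the Hopf--Lax inequality is tested on a one-parameter family of straight lines, the quadratic kinetic cost produces the $|x|^2/(T-t)$ (and, after time-reversal, $|x|^2/t$) singularity in~\rife{stimuloc}, and the bounds of Proposition~\ref{energy-log} absorb the remaining error terms uniformly in the period $R$.
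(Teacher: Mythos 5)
Your proposal is correct and is essentially the paper's argument: the crucial lower bound on $\int_\T u(0)\,dm_0$ is obtained from the same energy/duality identity combined with Proposition~\ref{energy-log}, and the pointwise bounds follow from testing against transport competitors with quadratic kinetic cost ($\mu = m_T$ or $\mu=m(T)$ on $(t,T)$ for the upper bound, $\mu = m_0$ on $(0,t)$ for the lower). Phrasing this via the Lagrangian Hopf--Lax inequality with affine paths averaged against a probability measure, rather than the paper's Eulerian duality against a Wasserstein geodesic, and deriving the lower half of~\rife{stimuloc} by time-reversal rather than by repeating the competitor argument over $(0,t)$ directly, are equivalent manipulations that produce the identical estimate with the identical constants.
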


\begin{proof}  First we consider the case of \rife{log-per}--\rife{plan-log}, and we adapt a similar proof given in  \cite[Lemma 4.2]{Porretta} for bounded domains.  

We observe that, from the standard duality between $u,m$ we have
$$
\int_0^T \intt m\, |u_x|^2\, dxdt + \int_0^T \intt m \log(m)\, dxdt 
= \intt u(x,0)m_0 -  \intt u(x,T)m_T
$$
which implies, using Proposition \ref{energy-log} and  $\int u(T)m_{T}=0$, that
\be\label{uosot}
\intt u(x,0)m_0 \geq - K\,.
\ee
Notice that the constant depends on $m_0, m_T$ through the entropy, by Proposition \ref{energy-log}; however, for bounded functions,  $\cE(m)$ is itself estimated in terms of $M_2(m)$ and $\|m\|_\infty$.

Now, let us consider the Wasserstein geodesic $\mu(\cdot)$ which connects, in time $(0,t)$,  $m_0$ with  any measure $\lambda$, supposed to be compactly supported in $(-R/2, R/2)$. This means that $\mu (t)=\lambda$, $ \mu(0)=m_0$ and $\mu_s= (\mu v)$ in $ (0,t)$, for some $v$ such that $\int_0^t \intt |v|^2d\mu<\infty$.  By duality with the equation satisfied by $u$, we have
\begin{align*}
  -\intt u(x,t)d\lambda + \intt u(x,0)\,m_0\,dx  &  +  \frac12\int_0^t  \intt |u_x|^2\,\mu\, dxds   =\int_0^t  \intt \mu\, \log m\, dxds + \int_0^t   \intt \mu\, v\, u_x\, dxds
\\  \qquad &  \leq  \frac12\int_t^T \intt |u_x|^2\,\mu\, dxds+ \frac12 \int_0^t \intt \mu |v|^2\, dxds 
+ T \log(\|m\|_\infty \vee 1)
\end{align*} 
which yields
\begin{align*}
\intt u(x,t)d\lambda  & \geq  \intt u(x,0)\,m_0\,dx - \frac12 \int_0^t \intt \mu |v|^2\, dxds -  T \log(\|m\|_\infty \vee 1)
\\ &  \geq  -K - \frac c{t} \, W_2(\lambda, m_0)^2 -  T \log(\|m\|_\infty \vee 1) \,,
\end{align*} 
where we used  \rife{uosot} and the scaling  of Wasserstein geodesic. 
If we let $\lambda $ converge (in the weak-$*$ topology) towards a Dirac mass $\de_{x_0}$, we get
$$
u(x_0,t) \geq  - K - \frac c{t} \, \intr |x_0-y|^2 dm_0(y) - T \log(\|m\|_\infty \vee 1)\,.
$$
We recall that, by Corollary \ref{cor: m bound}, $\|m(t)\|_\infty$ is controlled by the initial-terminal values. 
Hence, there exists a constant $C$, depending on $\|m_0\|_\infty, \|m_T\|_\infty, T$ and $M_2(m_0)$, $M_2(m_T)$, such that
$$
u(x_0,t) \geq  -\frac C{t} (1+ |x_0|^2)\,.
$$
A similar argument (instead of \rife{uosot} we simply use that $\intt u(x,T)m_T=0$) shows  the upper bound $u(x_0,t) \leq   \frac C{T-t} (1+ |x_0|^2)$ and concludes the proof of \rife{stimuloc}.

In the case of \rife{mfg-log}, the duality equality takes the form
$$
\int_0^T \intt m\, |u_x|^2\, dxdt + \int_0^T \intt m \log(m)\, dxdt 
= \intt u(x,0)m_0 -  c_T\intt m(T) \log(m(T))dx 
$$
and  \rife{uosot}   follows again from   Proposition \ref{energy-log}.  We also recall that the $L^\infty$ bound on $m$ is given by Proposition \ref{prop: oscillation bound}. Then we obtain as before the lower estimate of $u$. For the upper estimate, we just observe that $\intt u(T)m(T)dx= c_T \intt m(T)\log(m(T)) dx$  and this is bounded above (uniformly with respect to $R$) if either $c_T=0$ or $c_T>0$ (from Proposition \ref{energy-log}). Hence we repeat the argument above using any geodesic connecting $m(T)$ with a Dirac mass.
\end{proof}

In the next step we aim at showing that, if $(u,m)$ is a solution of \rife{log-per},  then $m(t)$ becomes positive for $t>0$ even if starting from a compactly supported initial measure.  For this purpose, we use in a key way the displacement convexity estimates.
However, we warn the reader that, while the  estimates of Proposition \ref{energy-log} and \ref{bounduloc} were all independent of the period $R$, this will not be the case for the following bounds on $\log (m)$.

\begin{lem}
Let $(u,m)$ be a solution to \rife{log-per}, where $m_0, m_T>0$. Set $K=\max(\|m_{0}\|_{\infty},\|m_{T}\|_{\infty})$. Then, for each
integer $p\geq1$,

\begin{equation} 
\frac{d^{2}}{dt^{2}}\int_{\T}\left|\log\left(\frac{m}{K}\right)\right|^{p}\geq0.\label{logmK}
\end{equation}
Moreover, there exists a constant $C >0$, depending   on $\|m_{0}\|_{\infty},\|m_{T}\|_{\infty}$,
such that, for each $t\in[0,T]$,
\begin{equation}
\left\Vert \log\left(\frac{m(t)}{K}\right)\right\Vert _{\infty}^{2}\leq R\, \frac{d^{2}}{dt^{2}}\int_{\T}\left|\log\left(\frac{m(t)}{K}\right)\right|+C\label{logmK2}
\end{equation}
\end{lem}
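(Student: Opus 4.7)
The inequality \eqref{logmK} is a direct application of the displacement convexity identity \eqref{eq:displacement integral} with $h(m):=|\log(m/K)|^p$. Corollary \ref{cor: m bound} gives $m\le K$ on $\T\times[0,T]$, so on the range of $m$ we have $h(m)=(\log(K/m))^p$, smooth on $(0,K]$. Setting $\phi:=\log(K/m)\ge 0$, a direct computation yields
\[
m\,h''(m)=\frac{p\,\phi^{p-2}}{m}\bigl[(p-1)+\phi\bigr]\ge 0,
\]
with the understanding that for $p=1$ the indeterminate product $\phi^{-1}\cdot\phi$ equals $1$ (verified directly, giving $mh''(m)=1/m$). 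Since $f'(m)=1/m>0$, the integrand on the right-hand side of \eqref{eq:displacement integral} is pointwise nonnegative, and \eqref{logmK} follows.

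For \eqref{logmK2} I set $w:=\log(m/K)\le 0$ and first specialise the above to $p=1$: taking $h(m)=-\log(m/K)$, so $mh''(m)=1/m$, the identity \eqref{eq:displacement integral} gives
\[
\frac{d^2}{dt^2}\intt|w|\,dx\;=\;\intt\bigl(u_{xx}^2+w_x^2\bigr)\,dx\;\ge\;\intt w_x^2\,dx.
\]
Combining this with \eqref{logmK} for $p=1$, which makes $t\mapsto\intt|w(t)|\,dx$ convex on $[0,T]$, I obtain
\[
\intt|w(t)|\,dx\;\le\;\max\Bigl(\intt|\log(m_0/K)|,\,\intt|\log(m_T/K)|\Bigr)=:C_1,
\]
and in particular the mean $\bar w(t):=R^{-1}\intt w(t)\,dx$ satisfies $|\bar w(t)|\le C_1/R$.

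It only remains to control $\|w(t)\|_\infty$ by $\|w_x(t)\|_{L^2}$ and $\bar w(t)$. Since any two points of $\T$ can be joined by an arc of length at most $R/2$, Cauchy--Schwarz gives
\[
(w(x)-w(y))^2\;\le\;\frac{R}{2}\intt w_\xi^2\,d\xi\qquad\forall\,x,y\in\T.
\]
Picking $y=y(t)\in\T$ with $w(y)=\bar w(t)$ (which exists by continuity of $w(\cdot,t)$), it follows that $\|w(t)\|_\infty\le|\bar w(t)|+\sqrt{(R/2)\intt w_x^2\,dx}$, and $(a+b)^2\le 2a^2+2b^2$ yields
\[
\|w(t)\|_\infty^2\;\le\;R\intt w_x^2\,dx+2\bar w(t)^2\;\le\;R\,\frac{d^2}{dt^2}\intt|w|\,dx+\frac{2C_1^2}{R^2},
\]
which is exactly \eqref{logmK2} with $C=2C_1^2/R^2$.

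\textbf{Main obstacle.} The only real technical point is the sign check $mh''(m)\ge 0$ in the proof of \eqref{logmK}; it depends crucially on the one-sided bound $m\le K$ supplied by Corollary \ref{cor: m bound}, for otherwise the bracket $(p-1)+\phi$ with $\phi=\log(K/m)$ can change sign and the displacement convexity breaks down. Once that positivity is in hand, both \eqref{logmK} and \eqref{logmK2} follow routinely, the latter through the sharp Poincar\'e constant $R/2$ on $\T$ which produces precisely the coefficient $R$ appearing in the statement.
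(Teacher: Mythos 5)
Your proof of inequality \eqref{logmK} is correct and in fact cleaner than the paper's: by working directly with $\phi=\log(K/m)\ge0$ you avoid the paper's case distinction between even and odd $p$ (the paper takes $h(m)=\log(m/K)^p$ and checks that $h$ is positive and convex for $p$ even, negative and concave for $p$ odd). Both routes apply \eqref{eq:displacement integral} and the upper bound $m\le K$ from Corollary \ref{cor: m bound}.

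For \eqref{logmK2}, however, there is a genuine gap. You bound the mean $\bar w(t)$ by
$C_1:=\max\bigl(\intt|\log(m_0/K)|,\intt|\log(m_T/K)|\bigr)$, obtained from the convexity of $t\mapsto\intt|w(t)|$ and its boundary values. But $\intt|\log(m_0/K)|=R\log K-\intt\log m_0$, and the term $-\intt\log m_0$ blows up as $\min m_0\to0$ on a set of positive measure. Hence your constant $C=2C_1^2/R^2$ depends on $\min m_0,\min m_T$, not just on $\|m_0\|_\infty,\|m_T\|_\infty$ as the lemma asserts. This is not a cosmetic issue: the lemma is applied, in the proof of Proposition \ref{stimalog-inside} and hence of Theorem \ref{ex-log-torus}, to approximations $m_0^\vep,m_T^\vep$ of compactly supported densities, so $\min m_0^\vep\to0$ and your $C_1$ diverges, destroying the uniformity that the argument requires.

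The fix — and the route the paper actually takes, invoking ``the fact that $m$ is a density'' — is to not use the boundary data at all. Since $\intt m(\cdot,t)\,dx=1$ and $|\T|=R$, the intermediate value theorem produces for each $t$ a point $x_0(t)$ with $m(x_0(t),t)=1/R$, hence $|w(x_0(t),t)|=|\log(1/(RK))|=\log(RK)$ (finite since $K\ge1/R$, which follows from $\sup m_0\ge R^{-1}\int m_0=1/R$). Replacing your mean-value point $y(t)$ by this $x_0(t)$ gives
$\|w(t)\|_\infty^2\le 2(\log(RK))^2+R\intt w_x^2\le R\,\frac{d^2}{dt^2}\intt|w|+2(\log(RK))^2$,
with a constant depending only on $R$ and $K=\max(\|m_0\|_\infty,\|m_T\|_\infty)$, as needed.
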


\begin{proof}
Letting $h(m)=\log(\frac{m}{K})^{p}$, we obtain
\[
h'(m)=p\log\left(\frac{m}{K}\right)^{p-1}\frac{1}{m},\,\,\,h''(m)=p\left(p-1-\log\left(\frac{m}{K}\right)\right)\log\left(\frac{m}{K}\right)^{p-2}\frac{1}{m^{2}}.
\]
We observe that,  by Corollary \ref{cor: m bound} and by definition of $K$, we have  $\frac{m}{K}\leq1$. 
Hence, in the range of $m$, $h$ is positive and convex when $p$
is even, and negative and concave when $p$ is odd. The displacement
convexity formula \eqref{eq:displacement integral} yields
\[
\frac{d^{2}}{dt^{2}}\int_{\T}h(m)=\int_{\T}h''(m)(m^{2}u_{xx}^{2}+m_{x}^{2}),
\]
which, in particular, shows \eqref{logmK}, and, setting $p=1$, we obtain
\begin{align*}
\int_{\T} \left(\log\left( \frac{m}{K}\right)_{x}\right)^2\leq \frac{d^2}{dt^2}\int_{\T}\left| \log\left(\frac{m}{K}\right) \right|.
\end{align*}
On the other hand, by the fundamental theorem of calculus, and the
fact that $m$ is a density, we have
\[
\left\| \log\left(\frac{m(t)}{K}\right)-\log\left(\frac{1}{K}\right) \right\|_{\infty}^{2}\leq R\int_{\T}\left(\log\left( \frac{m}{K}\right)_{x}\right)^{2},
\]
and \eqref{logmK2} follows.
\end{proof}

We can now state the (local in time) uniform bound from below, which is independent of $\|m_{0}^{-1}\|_{\infty},\|m_{T}^{-1}\|_{\infty}$.

\begin{prop}\label{stimalog-inside}
Let $(u,m)$ be a solution to \rife{log-per}, where $m_0, m_T>0$.  There exists a constant $C_R>0$,
only depending on  $\|m_{0}\|_{\infty},\|m_{T}\|_{\infty}$ and $R$, 
such that, for each $t\in(0,T),$
\[
\|\log m(t)\|_{\infty}\leq C_R\left(\frac{1}{t^{2}}+\frac{1}{(T-t)^{2}}\right).
\]
\end{prop}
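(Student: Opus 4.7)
The plan is to reduce the desired pointwise estimate to a nonlinear ODE comparison for the convex quantity
$$\phi(t) := \int_{\T}|\log(m(t,\cdot)/K)|\,dx, \qquad K := \max(\|m_0\|_\infty,\|m_T\|_\infty).$$
By Corollary~\ref{cor: m bound}, $m\le K$ so the integrand is nonnegative. The preceding lemma already supplies the two essential inputs: the convexity $\phi''\ge0$ from \eqref{logmK}, and the pointwise-in-$t$ inequality
$$\|\log(m(t)/K)\|_\infty^{2} \,\le\, R\,\phi''(t) + C$$
from \eqref{logmK2}, with $C$ depending only on $\|m_0\|_\infty,\|m_T\|_\infty$.

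The key step is to close a nonlinear ODE inequality for $\phi$ itself. Since $m\le K$, the trivial pointwise estimate $|\log(m/K)|\le\|\log(m/K)\|_\infty$ integrates to $\phi(t)\le R\|\log(m(t)/K)\|_\infty$, and combining with the preceding display yields
$$\phi''(t) \,\ge\, \frac{1}{R^{3}}\phi(t)^{2} - \frac{C}{R}, \qquad t\in(0,T).$$
I would then compare $\phi$ with an explicit barrier $\Psi(t) := A\bigl(t^{-2}+(T-t)^{-2}\bigr)+B$. A direct computation, using $\Psi^2\ge A^2 t^{-4} + A^2(T-t)^{-4}$ (because the cross terms are nonnegative), shows that for $A$ of order $R^3$ and $B$ of order $R\sqrt{C}$ the barrier satisfies the reverse inequality $\Psi''\le \Psi^2/R^3-C/R$. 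Since $\Psi(t)\to+\infty$ as $t\to 0^+,T^-$ while $\phi(0),\phi(T)$ are finite by the assumption $m_0,m_T>0$, at any interior maximum $t_0$ of $\phi-\Psi$ one would have simultaneously $\phi''(t_0)\le\Psi''(t_0)$ and $\phi(t_0)>\Psi(t_0)$, contradicting the two inequalities. This yields
$$\phi(t) \,\le\, C_R\bigl(t^{-2}+(T-t)^{-2}\bigr) \qquad \text{on }(0,T).$$

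Finally, I must upgrade the bound on the integral $\phi$ to the pointwise bound on $V(t):=\|\log(m(t)/K)\|_\infty$ stated in the proposition. Starting from $V(t)^2\le R\phi''(t)+C$, I would integrate over a symmetric subinterval $[t_0-\delta,t_0+\delta]$ with $\delta\asymp \min(t_0,T-t_0)$; convexity of $\phi$ together with the barrier estimate from the previous step lets me control the boundary values $\phi'(t_0\pm\delta)$ by $C_R/\min(t_0,T-t_0)^3$, giving an $L^2_t$ estimate $\int_{t_0-\delta}^{t_0+\delta}V^2\le C_R/\min(t_0,T-t_0)^3$. To promote this to an $L^\infty_t$ bound of the correct scaling $t^{-2}+(T-t)^{-2}$, one exploits the extra information hidden in $\phi''$: the identity $\phi''=\int_{\T}\bigl(u_{xx}^{2}+|(\log m)_x|^{2}\bigr)dx$ furnishes an $L^2$ control on $D\log m$, and the Lebesgue–Morrey type continuity estimate of Proposition~\ref{prop:interior modulus}, combined with the existence of $x_1\in\T$ with $m(x_1,t)=1/R$ (from $\int m=1$), transfers the integrated bound into a pointwise one.

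The main obstacle is precisely this last step: a generic bounded convex function need not have a pointwise bounded second derivative, so obtaining a pointwise estimate on $\|\log m(t)\|_\infty$ from a pointwise estimate on $\phi(t)$ is not formal. Closing this gap is what forces us to use the specific form $\phi''=\int(u_{xx}^2+|(\log m)_x|^2)\,dx$ and the continuity modulus for $\log m$ of Proposition~\ref{prop:interior modulus}, both of which are intrinsic to the MFG structure and insensitive to a lower bound on $m_0,m_T$. With these ingredients the scaling works out exactly to yield the $C_R(t^{-2}+(T-t)^{-2})$ profile predicted by the barrier~$\Psi$.
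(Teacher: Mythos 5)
Your barrier argument in steps 1--5 is correct: from $\phi''\geq0$ (Lemma, \eqref{logmK} with $p=1$), the trivial bound $\phi(t)\leq R\|\log(m(t)/K)\|_\infty$, and \eqref{logmK2}, the nonlinear ODE inequality $\phi''\geq R^{-3}\phi^2-C/R$ follows, and the comparison with $\Psi(t)=A(t^{-2}+(T-t)^{-2})+B$ is legitimate. This is a genuinely different way to obtain time-localized control of $\phi$ than the paper's, which instead writes $|\log(m/K)|=u_t-u_x^2/2+\log K$ and integrates the Hamilton--Jacobi equation against the local bound on $u$ from Proposition~\ref{bounduloc} to control $\iint|\log(m/K)|$.

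The gap is in step 6, and it is serious. The paper's crucial mechanism for passing from time-averaged control of $\|\log(m(t)/K)\|_\infty^2$ to a pointwise-in-$t$ bound is the \emph{full family} of convex functionals $\phi_p(t)=\int_\T|\log(m/K)|^{2p}$, all of which satisfy $\phi_p''\geq0$ by \eqref{logmK}, not only $p=1$. The paper's argument uses convexity of $\phi_p$, the subadditivity $(a+b)^{1/p}\leq a^{1/p}+b^{1/p}$, integration in the auxiliary parameter $s$, and then the limit $p\to\infty$, to obtain a ``maximum controlled by time-average'' inequality,
\[
\|\log(m/K)\|_{L^\infty(\T\times[t_0,T-t_0])}^2\leq \frac{2}{t_0}\int_{t_0/2}^{T-t_0/2}\|\log(m(t)/K)\|_\infty^2\,dt,
\]
which is exactly what you need and cannot be obtained from $p=1$ alone. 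Your $L^2_t$ estimate $\int_{t_0-\delta}^{t_0+\delta}V^2\lesssim C_R/t_0^3$ is correct, but a pointwise estimate on $V$ does not follow from it without such a maximum principle; as you yourself note, a bounded convex $\phi$ can have $\phi''$ large at any one instant.

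Your proposed repair via Proposition~\ref{prop:interior modulus} does not close the gap, and in fact is circular. That estimate requires a local bound on $\iint|D\log m|^2$ including the time derivative $(\log m)_t$. From the continuity equation one has $(\log m)_t=(\log m)_x\,u_x+u_{xx}$, so controlling $\iint|(\log m)_t|^2$ by $\phi''=\int_\T(u_{xx}^2+(\log m)_x^2)$ requires a local bound on $\|u_x\|_\infty$. But all of the available a priori bounds on $u_x$ here are not uniform in $\min m_0,\min m_T$: Proposition~\ref{prop:gradient bound} depends on $\|f(m_0)\|_{W^{1,\infty}}$, which is $\|\log m_0\|_{W^{1,\infty}}$ for the entropic coupling, and the estimate from \cite{Porretta} invoked in the proof of Theorem~\ref{ex-log-torus} gives $|u_x|^2/4+\log m\leq C_\delta$, i.e.\ a bound on $u_x$ that degenerates exactly when $\log m$ is not bounded below --- which is what you are trying to prove. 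In the paper the local bound on $u_x$ is derived \emph{after} Proposition~\ref{stimalog-inside}, not before. So to complete your proof you need the $p\to\infty$ convexity argument (or an equivalent maximum-by-average device that does not detour through $D_t\log m$).
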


\begin{proof}
Let $t_{0}\in[0,\frac{T}{2}]$. In view of (\ref{logmK}), we have,
for integers $p\geq1$ and $s\in(0,\frac{t_{0}}{2})$,

\begin{multline*}
\left(\frac{1}{(T-2t_{0})}\int_{t_{0}}^{T-t_{0}}\int_{\T}\left|\log\left(\frac{m}{K}\right)\right|^{2p}\right)^{\frac{1}{p}}\leq\left(\int_{\T}\left|\log\left(\frac{m(t_{0}-s)}{K}\right)\right|^{2p}+\int_{\T}\left|\log\left(\frac{m(T-t_{0}+s)}{K}\right)\right|^{2p}\right)^{\frac{1}{p}}\\
\leq \left(\int_{\T}\left|\log\left(\frac{m(t_{0}-s)}{K}\right)\right|^{2p}\right)^{\frac{1}{p}}+\left(\int_{\T}\left|\log\left(\frac{m(T-t_{0}+s)}{K}\right)\right|^{2p}\right)^{\frac{1}{p}}.
\end{multline*}
Thus, integrating in $s$, we infer that

\[
\left(\int_{t_{0}}^{T-t_{0}}\int_{\T}\left|\log\left(\frac{m}{K}\right)\right|^{2p}\right)^{\frac{1}{p}}\leq\frac {2(T-2t_{0})^{\frac{1}{p}}}{t_{0}}\int_{\frac{t_{0}}{2}}^{T-\frac{t_{0}}{2}}\left(\int_{\T}\left|\log\left(\frac{m}{K}\right)\right|^{2p}\right)^{\frac{1}{p}},
\]
and letting $p\rightarrow\infty$ yields
\[
\left\Vert \log\left(\frac{m}{K}\right)\right\Vert _{L^{\infty}(\T \times [t_{0},T-t_{0}])}^{2}\leq\frac{2}{t_{0}}\int_{\frac{t_{0}}{2}}^{T-\frac{t_{0}}{2}}\left\Vert \log\left(\frac{m(t)}{K}\right)\right\Vert _{\infty}^{2}dt.
\]
Now, integrating (\ref{logmK2}) against a test function $\zeta$ supported
in $[\frac{t_{0}}{3},T-\frac{t_{0}}{3}]$, satisfying $0\leq\zeta\leq1$,
$\zeta\equiv1$ in $[\frac{t_{0}}{2},T-\frac{t_{0}}{2}]$, and $|\zeta''|\leq\frac{C}{t_{0}^{2}}$,
we get
\[
\left\Vert \log\left(\frac{m}{K}\right)\right\Vert _{L^{\infty}(\T \times [t_{0},T-t_{0}])}^2\leq\frac{C\, R}{t_{0}^{3}}  \int_{\frac{t_{0}}{3}}^{T-\frac{t_{0}}{3}}\int_{\T}\left|\log\left(\frac{m}{K}\right)\right|dt+\frac{C}{t_{0}}.
\]
Finally, by Proposition  \ref{bounduloc}, $u$ is bounded by $\frac{C(1+ R^2)}{t_{0}}$ on $[t_0,T-t_0]$,
and, hence, by the HJ equation, we estimate 
$$
\int_{\frac{t_{0}}{3}}^{T-\frac{t_{0}}{3}}\int_{\T} \left|\log\left(\frac{m}{K}\right)\right|\leq \frac{C  R^3}{t_{0}}\,.
$$
This yields
\[
\left\Vert \log\left(\frac{m}{K}\right)\right\Vert _{L^{\infty}(\T \times [t_{0},T-t_{0}])}\leq\frac{C(1+R^2)}{t_{0}^{2}},
\]
which implies the result.
\end{proof}

Finally, we have all the ingredients for the proof of Theorem \ref{ex-log-torus}.

{\bf Proof of Theorem \ref{ex-log-torus}.} \quad 
We start with the case of problem \rife{log-per}--\rife{plan-log}. Let $m_0^\vep, m_T^\vep$ be two sequences of functions such that $m_0^\vep, m_T^\vep \in C^{1,\alpha}(\T)$, $m_0^\vep, m_T^\vep>0$ in $\T \times [0,T]$ and $m_0^\vep, m_T^\vep$  converge uniformly to  $m_0, m_T$ respectively.  Such an approximation can be readily built by convolution, for instance. By Theorem \ref{thm:existence}, there exists a smooth positive solution $(u^\vep, m^\vep)$ of \rife{log-per}, where we normalize $u^\vep$ such that
$$
\intt u^\vep(T)\,m_T^\vep dx=0\,.
$$
By Corollary \ref{cor: m bound}, we know that $m^\vep$ is uniformly bounded. Then, from Proposition \ref{bounduloc}, we deduce that $u^\vep$ is locally bounded in $(0,T)$. It also follows from Proposition \ref{stimalog-inside} that $\log(m^\vep)$ is locally bounded in $(0,T)$ (i.e. $m^\vep$ is locally uniformly bounded below). In turn, this implies that $u_x$ is locally bounded in $(0,T)$; one can use for example  \cite[Thm 6.5]{Porretta} which shows\footnote{The proof in \cite{Porretta} is given for Neumann boundary conditions, but applies identically to periodic solutions} that $\frac{|u_x^\vep|^2}4+ \log(m^\vep) \leq C_\de$ for every $x\in \T, t\in (a+ \de, b-\de)$, where $C_\de$ only depends on $\de$ and  the bound on $u^\vep$   in $(a,b)$. Since $u^\vep$ satisfies 
\be\label{eq}
-u_{tt}+2u_{x}u_{xt}-(u_{x}^{2}+ 1)u_{xx}=0 \,,
\ee
once $(u^\vep)_x$ is locally bounded  then the above equation becomes a quasilinear equation which has    bounded uniformly elliptic coefficients in any compact subset of $\T \times (0,T)$. By a standard bootstrap regularity from Schauder's estimates, we deduce that $u^\vep$ is locally bounded in $C^{k,\alpha}$ (for every $k\in N, \alpha\in (0,1)$). Hence, $u^\vep$ converges in $\T \times (0,T)$ towards  a function $u$ which is $C^\infty$. Since $m^\vep= \exp( -(u^\vep)_t + |(u^\vep)_x|^2/2)$, we also have $m^\vep$ converging to some $m\in C^\infty (\T \times (0,T))$. But the global estimates also imply that $m\in L^\infty(Q_T)$. In addition, by \rife{wass-unif}, we also have that 
$m^\vep(t)$ is equi-continuous in the Wasserstein space of measures, hence it uniformly converges in $[0,T]$. We deduce that $m\in C^0([0,T]; \cP(\T))$ and $m(0)=m_0, m(T)=m_T$. This concludes the proof that $(u,m)$ is a solution of \rife{log-per}, which is classical inside $(0,T)$.

In case of problem \rife{log-per}--\rife{mfg-log}, the proof is similar, except that we only approximate $m_0$. The $L^\infty$ bound on $m^\vep$ follows from Proposition \ref{prop: oscillation bound}, then we argue as before to deduce that $m^\vep$ is locally uniformly bounded below, and $u^\vep, m^\vep $ are locally bounded in $C^{k,\alpha}$. Applying Propositions  \ref{prop: oscillation bound} and \ref{prop:gradient bound} to  problem \rife{log-per}--\rife{mfg-log}   for $t\in (T/2,T)$, we conclude that $u_\vep, Du^\vep $ are uniformly bounded up to $t=T$, and $m^\vep$ is bounded below up to $t=T$.  By regularity of equation \rife{eq} up to the boundary $t=T$, we conclude that $u,m$ are smooth up to $t=T$ and $u(T)= c_T \log (m(T))$.

For the uniqueness of solutions, we use some argument which was already developed   for much weaker notions of  solutions (\cite{Carda1}, \cite{CP-CIME}, \cite{OPS}). Let $(u,m)$ be a solution which is classical inside, as in Definition \ref{sollog}, and such that $m  \in L^\infty(\T \times (0,T))$. First we notice that, from \rife{preserve}, the bound on $m$ implies that $mu_x^2\in L^1(\T\times (0,T))$. Next we  observe that $u$ satisfies 
$-u_{t}+\frac{1}{2}u_{x}^{2} \leq \log(\|m\|_\infty)$,  so (up to a time translation) we can assume that $u(\cdot, x)$ is nondecreasing.  this implies  that $u(t)$ admits one-sided traces at $t=0$, $t=T$, namely two measurable functions (not necessarily finite) defined as    $u(x,0^+)= \lim\limits_{t\downarrow 0} u(x,t)\in \R\cup\{-\infty\}$, $u(x,T^-)= \lim\limits_{t\uparrow 0} u(x,t)\in \R\cup\{+\infty\}$. We first show that $u(0^+)\in L^1(dm_0)$; 
indeed, since $(u,m)$ is smooth in $\T \times (0,T)$, we know that 
\be\label{t0t1}
\intt u(t_0)m(t_0) - \intt u(t_1)m(t_1) = \int_{t_0}^{t_1}  \intt mu_x^2 +\int_{t_0}^{t_1}\intt m\log(m), \qquad\forall \, 0<t_0<t_1<T\,.
\ee
Choosing, for instance, $t_1= \frac T2$, it follows that $\intt u(t_0)m(t_0)$ is bounded below, for every arbitrarily small $t_0$.  Since $u$ is nondecreasing, we deduce  $\intt u(s)m(t_0) \geq - C$ for any $s>t_0$; letting  $t_0\to 0$ (and using that $m\in C([0,T]; \cP(\T))$) yields  $\intt u(s)m_0 \geq - C$. Then by monotone convergence we deduce, as $s\to 0^+$,  that $\intt u(0^+)m_0 \geq - C$. Since the opposite inequality is clear by monotonicity and Proposition \ref{bounduloc}, we find that $u(0^+)\in L^1(dm_0)$. 
Similarly we reason  to show that $u(T^-)\in L^1(dm_T)$ (in case of  problem \rife{plan-log}) or that $m(T)\log(m(T))\in L^1(\T)$ (in case of \rife{mfg-log} with $c_H>0$).  

Now, with a  truncation argument, from the equality  \rife{t0t1} we will show that $(u,m)$ satisfies
\be\label{ide}
 \intt u(0^+)dm_0 - \intt u(T^-)dm(T) = \int_0^T \intt mu_x^2 + \int_0^T\intt m\log(m) \,.
 \ee
Indeed, we  first  replace $u$ by truncations $u_k:= \min(k, \max(u,-k))$,  we multiply the HJ equation by $m$ and  we integrate in $(t_0, t_1)$; next we can let $t_0\to 0^+, t_1 \to T^-$ (using the weak-$^*$ convergence of $m$  and the strong $L^1$ convergence of $u_k$ at $t=0,t=T$). Then we finally let $k\to \infty$ (thanks to $u(0^+)\in L^1(dm_0), u(T^-) \in L^1(dm(T))$) and we obtain \rife{ide}. 

In a similar way, one can prove that, for any couple of  solutions $(u,m)$, $(\tilde u, \tilde m)$, it holds
 \be\label{ideh}
 \intt u(0^+)dm_0 - \intt u(T^-)d\tilde m(T) \leq  \int_0^T \intt \tilde m\left(   \tilde u_x \, u_x-\frac12 u_x^2\right)  + \int_0^T\intt \tilde m\log(m)\,.
\ee
The proof of \rife{ideh} can be done, as before, replacing first $u$ with its truncation $u_k$ and integrating in $(t_0, t_1)$:
$$
\intt u_k(t_0)\tilde m(t_0) - \intt u_k(t_1)\tilde m(t_1) \leq  \int_{t_0}^{t_1} 
\intt \tilde m \left((u_k)_x\tilde u_x -\frac12(u_k)_x^2\right)+ \int_{t_0}^{t_1} \intt \tilde m\log(m)\mathop{1}\nolimits_{\{|u|<k\}}\,.
$$
The right-hand side integrand is easily dominated from above. Once more, we can let first $t_0 \to 0, t_1\to T$ and then $k\to \infty$, in order to get \rife{ideh}. 

From \rife{ide}, \rife{ideh}, the uniqueness follows as in the classical Lasry-Lions monotonicity argument.  For problem \rife{log-per}--\rife{plan-log}, we take   two solutions normalized such that $ \intt u(T^-)dm_T=\intt \tilde u(T^-)dm_T=0$. Using \rife{ide}, \rife{ideh} for both couples we obtain that
\begin{align*}
  & \int_0^T \intt \tilde m \left(\frac12  u_x^2 - \frac12 \tilde u_x^2 - (u_x - \tilde u_x ) \, \tilde u_x \right) + \int_0^T \intt   m \left(\frac12  \tilde u_x^2 - \frac12   u_x^2 - (\tilde u_x -  u_x ) \,  u_x \right) 
 \\ &  + \int_0^T\intt  ( m\log(m) - \tilde m \log(\tilde m) ) (m-\tilde m) \leq 0,
\end{align*}
which implies $m= \tilde m$ and $u_x= \tilde u_x$. From the HJ equation we deduce that $u-\tilde u= C$, and this concludes the proof. For the problem \rife{log-per}--\rife{mfg-log}, we proceed similarly and we get uniqueness using the coupled condition $u(T)= g(m(T))$.
\qed

%
 

\subsection{Preservation of monotonicity of the solutions}

In this subsection, we show that the MFG system preserves a certain monotonicity property. As the phenomenon does not depend on the specific form of the coupling functions $f$ and $g$, we suppose here that $f$ and $g$ are smooth and nondecreasing on $(0,\infty)$. We work in the periodic setting and assume the structure condition:
\begin{equation}\label{hyp.symm}
\begin{array}{c}
\text{the densities $m_0, m_T: \T\to \R$ are  even,}\\
\text{ nonincreasing on $[0,R/2]$ and nondecreasing on $[-R/2,0]$.}
\end{array}
\end{equation}

\begin{lem}\label{lem.convexity}  Assume that $(u,m)\in C^2(\T\times [0,T])\times C^1( (\T\times [0,T]))$ is the unique classical solution to \eqref{eq:Per MFG}--\rife{couplingT}  or \eqref{eq:Per MFG}--\rife{per-MFGP}, such that $m$ is positive on $\T\times [0,T]$ and  \eqref{hyp.symm} holds. Then 
\begin{equation}\label{PptExtram}
\forall t\in [0,T], \; m(\cdot,t):\T\to \R\;\text{is even, nonincreasing on $[0,R/2]$ and nondecreasing on $[-R/2,0]$.}
\end{equation}
In addition any optimal trajectory $\gamma(x,\cdot)$ starting from $x\in [0,R/2]$ is concave in time. Finally, for the MFG problem \eqref{eq:Per MFG}--\rife{couplingT}, $\gamma(x,\cdot)$  is nondecreasing in time for any $x\in [0,R/2]$ and $u_x$ is nonpositive in $[0,R/2]\times [0,T]$.
\end{lem}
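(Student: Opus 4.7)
The plan proceeds in three steps, of increasing delicacy.

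First, the spatial symmetry of $u$ and $m$ follows from the uniqueness clause of Theorem \ref{thm:existence}. Under \eqref{hyp.symm}, the system \eqref{eq:Per MFG} together with the planning condition \rife{per-MFGP} (or the MFG condition \rife{couplingT}) is invariant under the reflection $x \mapsto -x$, so $(u(-x,t), m(-x,t))$ is another classical solution with the same boundary data (up to an additive constant in $u$ for the planning case). By uniqueness, $m(-x,t) \equiv m(x,t)$ and $u_x(-x,t) \equiv -u_x(x,t)$. Combined with $R$-periodicity this yields symmetry about $R/2$ as well, so $u_x(0,t) = u_x(R/2,t) = 0$ for every $t$. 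The characteristic ODE \eqref{def.gammaBIS} then forces $\gamma(0,t) \equiv 0$ and $\gamma(R/2,t) \equiv R/2$, and since $\gamma_x > 0$ by \eqref{masspreservationBIS} and the positivity of $m$, the flow $\gamma(\cdot,t)$ is a $C^1$-diffeomorphism of $[0,R/2]$ onto itself.

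The key step is to establish the time-concavity of $\gamma(x,\cdot)$ for $x \in [0,R/2]$. By Lemma \ref{lem.gamma}, $\gamma_{tt}(x,t) = f'(m(\gamma,t))\,m_x(\gamma(x,t),t)$, so (using the diffeomorphism property from Step 1) concavity in $t$ is precisely equivalent to the spatial monotonicity \eqref{PptExtram}. The plan is to apply the maximum principle to $w := \gamma_{tt}$ on the rectangle $[0,R/2] \times [0,T]$: differentiating the quasilinear elliptic equation \eqref{eq.ell.gammaBIS} twice in $t$ yields a linear elliptic PDE for $w$ with principal symbol $-A\,\partial_x^2 - \partial_t^2$, where $A = m_0 f'(m_0/\gamma_x)/\gamma_x^3 > 0$, and with no zero-order term. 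The boundary values of $w$ are $w \equiv 0$ on the lateral sides $\{x = 0, R/2\}$ by Step 1; $w(x,0) = f'(m_0)(m_0)_x \leq 0$ by \eqref{hyp.symm}; and for the planning problem, $w(x,T) = f'(m_T(\gamma(x,T)))(m_T)_x(\gamma(x,T)) \leq 0$, since $\gamma(x,T) \in [0,R/2]$ and $m_T$ is nonincreasing there. In the planning case, $w \leq 0$ on the full boundary and the maximum principle closes the argument.

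The main obstacle is the MFG terminal condition: differentiating $u(\cdot,T) = g(m(\cdot,T))$ yields only the oblique relation $f'(m)\,\gamma_t(\cdot,T) + g'(m)\,w(\cdot,T) = 0$ at $t = T$, which does not directly sign $w(x,T)$. To rule out a positive interior maximum of $w$ at some $(x_0,T) \in (0,R/2) \times \{T\}$, Hopf's lemma would give $w_t(x_0,T) > 0$, while computing $w_t = \partial_t[f'(m)\,m_x(\gamma,\cdot)]$ via the continuity equation along $\gamma$ and combining with the oblique relation should yield a sign contradiction. Once $w \leq 0$ is established, the remaining claims for \eqref{eq:Per MFG}--\rife{couplingT} are immediate: at $t = T$,
\[
\gamma_t(x,T) = -u_x(\gamma(x,T),T) = -g'(m)\,m_x(\gamma(x,T),T) \geq 0
\]
by $g' \geq 0$ and the just-proved monotonicity of $m(\cdot,T)$; time-concavity of $\gamma(x,\cdot)$ then gives $\gamma_t(x,t) \geq \gamma_t(x,T) \geq 0$ for all $t \in [0,T]$, so $\gamma(x,\cdot)$ is nondecreasing, and finally $u_x(\gamma(x,t),t) = -\gamma_t(x,t) \leq 0$, which yields $u_x \leq 0$ on $[0,R/2] \times [0,T]$ since $\gamma(\cdot,t)$ is onto $[0,R/2]$.
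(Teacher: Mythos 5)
Your Step 1 (symmetry, $u_x(0,t)=u_x(R/2,t)=0$, $\gamma(\cdot,t)$ a diffeomorphism of $[0,R/2]$) and Step 3 (deducing monotonicity of $\gamma$ and $u_x\leq 0$ from concavity and the terminal condition) are both correct and match the structure of the paper's proof. The problem is Step 2.

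The plan to apply the maximum principle to $w:=\gamma_{tt}$ does not work, because differentiating the quasilinear equation $\gamma_{tt}+A(\gamma_x)\gamma_{xx}=B(\gamma_x)$ twice in $t$ does \emph{not} produce a homogeneous linear elliptic equation in $w$ with no zero-order term. After two $t$-derivatives the equation for $w$ takes the form
\begin{equation*}
w_{tt}+A\,w_{xx}+\bigl[A_p\gamma_{xx}-B_p\bigr]w_x
\;=\; -2A_p\,\gamma_{xt}\,\gamma_{xxt}-A_{pp}\,\gamma_{xt}^2\,\gamma_{xx}+B_{pp}\,\gamma_{xt}^2 ,
\end{equation*}
where $A_p,A_{pp},B_p,B_{pp}$ are evaluated at $\gamma_x$. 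The right-hand side involves $\gamma_{xt}$, $\gamma_{xxt}$, and $\gamma_{xx}$, which are \emph{not} derivatives of $w$ and whose signs are not controlled. Thus $w$ satisfies an inhomogeneous equation with a source term of indefinite sign, and the maximum principle cannot be invoked. (Differentiating once in $t$ to get the equation for $v=\gamma_t$ does close up nicely; the obstruction is specific to the second derivative.) The same difficulty would appear if you tried to get an equation directly for $m_x$ or $M_{xx}$, which is why the paper does not proceed this way.

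What the paper does instead is to prove convexity of $M(x,t)=\tfrac12-\int_0^x m(y,t)dy$ in the $x$-variable by a Korevaar-type argument: one introduces the partial convex envelope $\tilde M(\cdot,t)$ of $M(\cdot,t)$, shows that $\tilde M$ is a viscosity supersolution of the quasilinear equation \eqref{eq.MMBIS} satisfied by $M$ (with the appropriate boundary and oblique terminal conditions), and then uses a strict-subsolution comparison to conclude $M=\tilde M$, i.e.\ $M_{xx}\geq 0$, hence $m_x\leq 0$ on $[0,R/2]$. This avoids ever differentiating the PDE twice, and in particular it handles the terminal boundary condition \eqref{couplingT} uniformly within the viscosity framework (precisely the point that your sketch left as ``should yield a sign contradiction''). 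Your observation that time-concavity of $\gamma$ is equivalent to $m_x\leq 0$ via the Euler equation \eqref{eq.EulerBIS} and the diffeomorphism property is correct, and is also how the paper passes from the monotonicity of $m$ to the concavity of $\gamma$; but the equivalence alone does not help, since it converts one statement that you cannot prove by direct max-principle methods into another of the same kind.
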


Of course, par approximation,  this preservation of the structure also holds in the whole space: in the next subsection, we shall use it in the case $f(m)=\log(m)$ to build classical solutions in the whole space. In the case $f(m)=m^\theta$, it shows that, if $m_0,m_T$ are even on $\R$ and nonincreasing on $[0,+\infty)$, then any trajectory starting from $x\in [0,R/2]$ is concave in time: compare with Theorem \ref{thm:free boundary regularity}.  

\begin{proof} We do the proof in the  MFG case, i.e., when $(u,m)$ solves \eqref{eq:Per MFG}--\eqref{couplingT}, with the proof for the planning problem \eqref{eq:Per MFG}--\eqref{per-MFGP} being similar and simpler. By the symmetry assumption and the uniqueness of the solution, $m(\cdot,t), u(\cdot, t) $ are even  for any $t\in [0,T]$. Thus $m_x(0, t)=u_x(0,t)=0$. Let us set 
$$
M(x,t)= 1/2-\int_0^x m(y,t)dy, \qquad  (x,t)\in [0,R/2]\times [0,T].
$$
We first note that $M$ is a classical solution to 
\begin{equation} \label{eq.MM}
\displaystyle - {\rm Tr} \left( \left( \begin{array}{cc}
\frac{M_t^2}{M_x^2}  & -\frac{M_t}{M_x}\\  -\frac{M_t}{M_x} & 1
\end{array}\right) D^2_{x,t}M\right) +M_xf'(-M_x)M_{xx}=0  \qquad \text{in} \; [0,R/2]\times (0,T)\\
\end{equation}
with boundary condition, for  $(x,t)\in [0,R/2]\times [0,T]$, 
\begin{equation} \label{eq.Mbc1}
M(0,t)= 1/2, \; M(R/2,t)=0,\; M(x,0)=M_0(x), \qquad \text{where}\; M_0(x)=1/2- \int_0^x m_0(y)dy,
\end{equation}
and 
\begin{equation} \label{eq.Mbc2}
M_t(x,T)+M_x(x,T) g'(-M_x(x,T))M_{xx}(x,T)=0.
\end{equation}
The elliptic equation \eqref{eq.MM} was proved in Lemma \ref{lem.eqMMBIS} (where we also explained that $u_x=M_t/M_x$). 
The boundary conditions \eqref{eq.Mbc1} at $x=0$ and $t=0$ hold by definition. For $x=R/2$, it comes from the fact that $m$ is a probability measure and from the symmetry. The boundary condition  \eqref{eq.Mbc2} at $t=T$ comes from the boundary condition for $u$, which implies that 
$$
(M_t/M_x)(x,T)=u_x(x,T) = (g'(m)m_x)(x,T)= g'(-M_x(x,T))(-M_{xx}(x,T)).
$$

The main part of the proof consists in showing that $x\to M(x,t)$ is convex on $[0,R/2]$ for any $t\in [0,T]$. For this we consider the map 
$$
\tilde M(x,t)= \inf_{\begin{array}{c} \lambda y+(1-\lambda)z=x, \\ y,z\in [0,R/2], \; \lambda\in [0,1]\end{array}} \lambda M(y,t)+(1-\lambda)M(z,t). 
$$
Note that $\tilde M\leq M$ and  that $\tilde M$ is continuous and satisfies the boundary condition \eqref{eq.Mbc1} by our assumption on $m_0$. We now prove that $\tilde M$ is a viscosity supersolution to the elliptic equation \eqref{eq.MM} and satisfies the boundary condition \eqref{eq.Mbc2} in the viscosity sense.

Assume that $\phi$ is a test function touching $\tilde M$ from below at $(x_0,t_0)\in (0,R/2)\times (0,T]$. If $t_0<T$, we have to check that $\phi_x(x_0,t_0)\neq 0$ and that 
\begin{equation}\label{kklaim}
\displaystyle - {\rm Tr} \left( \left( \begin{array}{cc}
\frac{\phi_t^2}{\phi_x^2}  & -\frac{\phi_t}{\phi_x}\\  -\frac{\phi_t}{\phi_x} & 1
\end{array}\right) D^2_{x,t}\phi\right) +\phi_xf'(-\phi_x)\phi_{xx}\geq 0  \qquad \text{at} \; (x_0,t_0).
\end{equation}
If $t_0=T$, we have to prove that
\begin{equation}\label{kklaimBC} 
\phi_t(x_0,T)+ \phi_x(x_0,T) g'(-\phi_x(x_0,T))\phi_{xx}(x_0,T)\geq 0.
\ee
Note that, if $\tilde M(x_0,t_0)=M(x_0,t_0)$,  these inequalities hold because $M$ satisfies \eqref{eq.MM} and \eqref{eq.Mbc2}. Thus, we assume from now on that $\tilde M(x_0,t_0)<M(x_0,t_0)$. Let $y_0<x_0<z_0$ and $\lambda_0\in (0,1)$ be optimal in the definition of $\tilde M(x_0,t_0)$. \\

{\it In this step we assume that $t_0<T$.} By optimality of $(y_0,z_0,\lambda_0)$,  and the fact that, by symmetry, $M_{xx}(0,t_0)=m_x(0,t_0)=M_{xx}(R/2,t_0)=m_x(R/2,t_0)=0$, we have that
\begin{equation}\label{eziurledkgf}
\begin{array}{c}
 \phi_x(x_0,t_0)=\frac{M(z_0,t_0)-M(y_0,t_0)}{z_0-y_0}<0 \\
 \phi_x(x_0,t_0)= M_x(y_0,t_0) \;\; \text{or} \;\; y_0=0,  \quad \phi_x(x_0,t_0)=M_x(z_0,t_0) \;\; \text{or} \;\; z_0=R/2 , \\
\phi_{xx}(x_0,t_0)\leq 0, \; M_{xx}(y_0,t_0)\geq 0, \; M_{xx}(z_0,t_0)\geq 0. 
\end{array}
\end{equation}
Fix $\theta,\theta_1,\theta_2\in \R$ such that $\lambda_0\theta_1+(1-\lambda_0)\theta_2=\theta$, with $\theta_1=0$ if $y_0=0$ and $\theta_2=0$ if $z_0=R/2$. For $h$ and $s$ small, we have 
$$
\phi(x_0+h\theta,t_0+s) \leq \tilde M(x_0+h,t_0+s) \leq \lambda_0M(y_0+\theta_1h,t_0+s)+(1-\lambda_0) M(z_0+\theta_2 h, t_0+s),
$$
with an equality at $h=s=0$. This implies that 
\begin{equation}\label{iauezlinrsdfg}
\phi_t(x_0,t_0)= \lambda_0 M_t(y_0,t_0)+(1-\lambda_0) M_t(z_0,t_0),
\end{equation}
and
$$ \left( \begin{array}{cc} \theta^2\phi_{xx}& \theta \phi_{xt}\\ \theta\phi_{xt}& \phi_{tt}\end{array}\right)(x_0,t_0)\leq \lambda_0 \left( \begin{array}{cc} \theta_1^2 M_{xx}& \theta_1 M_{xt}\\ \theta_2 M_{xt}& M_{tt}\end{array}\right)(y_0,t_0)+
(1-\lambda_0)\left( \begin{array}{cc} \theta_2^2 M_{xx}& \theta_2 M_x\\ \theta_2 M_x& M_{tt}\end{array}\right)(z_0,t_0).
$$
Multiplying  the previous inequality by $\left(\begin{array}{cc}1&-1\\-1&1\end{array}\right)\geq0$ and taking the trace gives 
\begin{equation}\label{iauezlinrsdfg2}
\begin{array}{l}\Bigl(\theta^2\phi_{xx} -2 \theta \phi_{xt}+\phi_{tt}\Bigr)(x_0,t_0)\\
\qquad \leq  \lambda_0\Bigl( \theta_1^2 M_{xx}   -2\theta_1M_{xt} +M_{tt}\Bigr)(y_0,t_0)+
(1-\lambda_0)\Bigl( \theta_2^2 M_{xx}   -2\theta_2 M_{xt} +M_{tt}\Bigr)(z_0,t_0) . 
\end{array}
\end{equation}
Let us choose $\theta=\phi_t(x_0,t_0)/\phi_x(x_0,t_0)$,  $\theta_1=M_t(y_0,t_0)/M_x(y_0,t_0)$ and $\theta_2 = M_t(z_0,t_0))/M_x(z_0,t_0)$: this choice is licit because, if $y_0=0$, the boundary conditions \eqref{eq.Mbc1} imply that $M_t(0,t_0)=0$, and thus $\theta_1=0$. We obtain in the same way that $\theta_2=0$ if $z_0=R/2$.
Then \eqref{eziurledkgf} and \eqref{iauezlinrsdfg} imply that  $\lambda_0\theta_1+(1-\lambda_0)\theta_2=\theta$ holds. With this choice of $\theta$, $\theta_1$ and $\theta_2$, \eqref{iauezlinrsdfg2} becomes 
\begin{align*}
&\Bigl(\frac{\phi_t^2}{\phi_x^2}\phi_{xx} -2 \frac{\phi_t}{\phi_x}\phi_{xt}+\phi_{tt}\Bigr)(x_0,t_0)\\
&\qquad \leq  \lambda_0\Bigl(\frac{M_t^2}{M_x^2} M_{xx}   -2\frac{M_t^2}{M_x^2} M_{xt} +M_{tt}\Bigr)(y_0,t_0)+
(1-\lambda_0)\Bigl(\frac{M_t^2}{M_x^2} M_{xx}   -2\frac{M_t^2}{M_x^2} M_{xt} +M_{tt}\Bigr)(z_0,t_0)  
\\
&\qquad  =  \lambda_0 (M_xf'(-M_x)M_{xx})(y_0,t_0) +(1-\lambda_0)(M_xf'(-M_x)M_{xx})(z_0,t_0),
\end{align*}
where we used the equation satisfied by $M$ for the last equality. Recalling that $M_x<0$, that $f'\geq0$ and that $M_{xx}(y_0,t_0)\geq 0$ and $M_{xx}(z_0,t_0)\geq 0$ while $\phi_{xx}(x_0,t_0)\leq 0$ (by \eqref{eziurledkgf}) gives \eqref{kklaim}. \\

{\it We now assume that $t_0=T$} and check the boundary condition \eqref{kklaimBC}. To fix the ideas, we assume that $y_0>0$ and $z_0<R/2$, the other cases being similar. Note that \eqref{eziurledkgf} still holds in this case. Moreover, as, for any $s\leq 0$  small we have 
$$
\phi(x_0,T+s) \leq \tilde M(x_0,T+s) \leq \lambda_0M(y_0,T+s)+(1-\lambda_0) M(z_0,T+s),
$$
we get 
$$
\phi_t(x_0,T)\geq \lambda_0M_t(y_0,T)+(1-\lambda_0) M_t(z_0,T).
$$
Thus, as  $\phi_x(x_0,T)<0$, $g'\geq0$ and $\phi_{xx}(x_0,T) \leq 0$ while $M_{xx}(y_0,T)\geq 0$ and $M_{xx}(z_0,T)\geq0$, 
\begin{align*}
& \phi_t(x_0,T)+ \phi_x(x_0,T) g'(-\phi_x(x_0,T))\phi_{xx}(x_0,T) \geq \lambda_0M_t(y_0,T)+(1-\lambda_0) M_t(z_0,T) \\ 
&\qquad \geq    \lambda_0(M_t(y_0,T)+ M_x(y_0,T) g'(-M_x(y_0,T))M_{xx}(y_0,T))\\
& \qquad \qquad +(1-\lambda_0) (M_t(z_0,T)+ M_x(z_0,T) g'(-M_x(z_0,T))M_{xx}(z_0,T)) \ = \ 0,
\end{align*}
which is \eqref{kklaimBC}.\\

{\it Conclusion.} We have proved that $\tilde M$ is a viscosity supersolution to the elliptic equation satisfied by $M$ (including the boundary conditions). Using the regularity of $M$, we can choose $\lambda>0$ (large) and $\epsilon>0$ (arbitrarily small) such that the map $\hat M_{ \epsilon,\lambda}(x,t)= M(x,t)-2\epsilon +\epsilon\exp\{-\lambda t\}$ is a classical strict subsolution of this equation (including the boundary conditions). This implies that $\hat M_{ \epsilon,\lambda}\leq \tilde M$. Then, letting $\epsilon\to0$, we get $M\leq \tilde M$. As by construction, $\tilde M\leq M$, we conclude that $M=\tilde M$; hence $M$ is convex with respect to the $x$ variable, and then  $m=-M_x$ is nonincreasing with respect to the $x$ variable on $[0,R/2]\times [0,T]$. Fix now $x\in [0,R/2]$ and let $\gamma(x,\cdot)$ be optimal solution starting from $(x,0)$, i.e. the solution to 
$$
 \gamma_t = -u_x(\gamma, t), \qquad \gamma(x,0)= x. 
 $$
By symmetry and periodicity, $u_x(0,t)= u_x(R/2,t)=0$ for  $t\in [0,T]$. Therefore $\gamma(x,t)\in [0,R/2]$ for $(x,t)\in [0,R/2]\times [0,T]$ and, in view of the Euler equation \eqref{eq.EulerBIS} and the monotonicity of $m$, 
 $\gamma$ is concave in $t$.  Differentiating the terminal condition \eqref{couplingT} with respect to the space variable implies that 
$$
 \gamma_t(x,T)= -g'(m(\gamma(x,T),T))m_x(\gamma(x,T),T)\geq0.
$$ 
As $\gamma(x,\cdot)$ is concave, we infer that $ \gamma_t(x, \cdot)$ is nonnegative on $[0,T]$. As $\gamma_t = -u_x(\gamma)$ and $x\mapsto \gamma(x,t)$ is onto from $[0,R/2]$ to itself, this implies that $u_x$ is nonpositive. 
 \end{proof}


 \subsection{Solutions in the whole space} 
 
We now work in the whole space, returning to the entropic coupling function $f(m)=\log(m)$ (and $g(m)=c_T \log (m(T))$). Our main result is the existence of a classical solution to \rife{eq:MFGR} or to \rife{eq:MFGPR}  under the structure condition \eqref{hyp.symm}. We adapt Definition \ref{sollog} to the case that $x$ belongs to the whole space. In what follows, $\cP_2(\R)$ will denote the set of Borel probability measures on $\R$ with a finite second order moment, equipped with the $2$--Wasserstein distance.

\begin{defn}\label{sollog2} We say that $(u,m)$ is a (classical) solution to  \rife{eq:MFGR}   if $(u,m)  \in C^2(\R\times (0,T))\times C^1(\R\times (0,T)),$ with $m>0$  in $\R\times (0,T]$, if  $m\in C([0,T];\cP_2(\R))$ with $m(0)=m_0$, if the equations are satisfied in the classical sense for $t\in (0,T)$ and, finally, if  $m(T)\in  L^1(\R)$ and $\lim_{t\to T^-} u(x,t)= g(m(x,T))$ for every $x\in \R$.

Similarly, we say that $(u,m)$ is a (classical) solution to   \rife{eq:MFGPR}  if $(u,m)  \in C^2(\R\times (0,T))\times C^1(\R\times (0,T)),$ with $m>0$  in $\R\times (0,T)$, if  $m\in C([0,T];\cP_2(\R))$ with $m(0)=m_0$ and $m(T)=m_T$, and if the equations are satisfied in the classical sense for $t\in (0,T)$.
\end{defn}
Let us notice that, whenever \eqref{hyp.symm} holds, in view of the preservation of symmetry property of Lemma \ref{lem.convexity}, the solutions to the MFG system with periodic and Neumann boundary conditions coincide. For this reason, we will not require the analysis of Subsection \ref{subsubsec:Neumann} in this case.

\begin{thm}\label{thm.caselogRsym} Assume that  $f(m)=\log(m)$. 
\begin{enumerate}
\item  Assume that  $m_0,m_T$ are continuous, compactly supported densities on $\R$, even, nonincreasing on $[0,\infty)$, with $m_0\in C^{1,\alpha}_{\emph{loc}}(\{m_0>0\})$. Then there exists a unique (up to addition of a constant to $u$) solution $(u,m)\in C^2(\R\times (0,T))\times C^1(\R\times (0,T))$ of \rife{eq:MFGPR}  such that  $m$ is continuous   and bounded on  $\R\times [0,T]$ and $\frac {u(t)}{(1+ |x|^2)}\in L^\infty(\R)$, for every $t\in (0,T)$. 

\item  Assume that $m_0$ is a continuous, compactly supported density on $\R$, even, nonincreasing on $[0,\infty)$, with with $m_0\in C^{1,\alpha}_{\emph{loc}}(\{m_0>0\})$, and $g(s)= c_T \log (s)$, for some $c_T\geq 0$.  Then there exists a unique  solution $(u,m)\in C^2(\R\times (0,T])\times C^1(\R\times (0,T])$ of \rife{eq:MFGR}  such that  $m$ is continuous   and bounded on  $\R\times [0,T)$ and $\frac {u(t)}{(1+ |x|^2)}\in L^\infty(\R)$, for every $t\in (0,T)$. 

\end{enumerate}
\end{thm}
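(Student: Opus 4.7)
The proof proceeds by a vanishing-period approximation: one constructs $(u,m)$ as a limit of the periodic solutions $(u^R, m^R)$ on the tori $\mathbb{T}_R$ as $R \to \infty$, exploiting the symmetry--monotonicity preservation of Lemma \ref{lem.convexity} to keep the approximations under control. Fix $R$ large enough that the supports of $m_0$ (and of $m_T$ in case (1)) sit inside $[-R/2, R/2]$, and view $m_0,m_T$ as even continuous densities on $\mathbb{T}_R$ satisfying \eqref{hyp.symm}. Let $(u^R,m^R)$ be the classical solution given by Theorem \ref{ex-log-torus}, normalized (in case (1)) so that $\int u^R(\cdot,T) m_T = 0$. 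By Lemma \ref{lem.convexity}, $m^R(\cdot,t)$ remains even and nonincreasing on $[0,R/2]$ for every $t$; in particular the flow of optimal trajectories $\gamma^R$ sends $[0,R/2]$ into itself, which prevents mass from ``wrapping around'' as $R$ grows.

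The next step is to collect $R$-independent a priori bounds. Corollary \ref{cor: m bound} gives $\|m^R\|_\infty \le \|m_0\|_\infty \vee \|m_T\|_\infty$; Proposition \ref{energy-log} supplies the second-moment bound $\sup_t \int |x|^2 m^R(\cdot,t)\,dx \le K$, the global energy estimate on $\int_0^T\!\int m^R|u^R_x|^2 + |m^R\log m^R|$, and the Wasserstein modulus $W_2(m^R(t), m^R(s)) \le K|t-s|^{1/2}$; Proposition \ref{bounduloc} gives the quadratic-growth bound $|u^R(x,t)| \le C(1+|x|^2)/\min(t,T-t)$. Interior regularity, uniform in $R$, then follows the bootstrap of Theorem \ref{ex-log-torus}: on any compact $\mathcal{K} \Subset \mathbb{R}\times(0,T)$, the local $L^\infty$ bound on $u^R$ combined with the pointwise inequality $|u_x|^2/4 + \log m \le C_\mathcal{K}$ from \cite[Thm 6.5]{Porretta} yields a uniform bound on $u^R_x$ and a two-sided bound on $\log m^R$; equation \eqref{eq.elluBIS} becomes uniformly elliptic with bounded coefficients on $\mathcal{K}$, and Schauder theory delivers uniform $C^{k,\alpha}$ bounds on $u^R$ and on $m^R = \exp(-u^R_t+(u^R_x)^2/2)$. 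In particular $m^R \ge c_\mathcal{K} > 0$ on $\mathcal{K}$.

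Passing to a subsequence one obtains a limit $(u,m) \in C^\infty(\mathbb{R}\times(0,T))$ solving the MFG system classically with $m>0$; the Wasserstein equicontinuity and moment bound give $m \in C([0,T];\mathcal{P}_2(\mathbb{R}))$ with $m(0)=m_0$, and similarly $m(T)=m_T$ in case (1). In case (2) with $c_T>0$, the oblique-boundary Schauder estimates (as used in the proof of Theorem \ref{ex-log-torus}) extend the uniform $C^{k,\alpha}$ bounds up to $t=T$, permitting the passage to the limit in the terminal condition $u^R(\cdot,T) = c_T\log m^R(\cdot,T)$; when $c_T=0$, Dirichlet Schauder estimates at $t=T$ play the same role. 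Uniqueness is obtained by the whole-space version of the Lasry--Lions duality argument at the end of the proof of Theorem \ref{ex-log-torus}: the monotonicity $u$ in $t$ (up to an additive constant) lets one define one-sided traces of $u$ at $t=0,T$, and the controls $m\in L^\infty \cap L^\infty(0,T;\mathcal{P}_2)$ together with the at-most-quadratic growth of $u$ justify the truncation/integration-by-parts yielding identities analogous to \eqref{ide} and \eqref{ideh}; the strict monotonicity of $m \mapsto \log m$ then forces $m = \tilde m$ and $u_x = \tilde u_x$ for any two solutions.

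The principal obstacle is the $R$-uniform local positivity of $m^R$: the direct $\log m$ bound from Proposition \ref{stimalog-inside} blows up as $R\to\infty$ and is therefore useless, so the argument must route through the $R$-independent local $L^\infty$ control on $u^R$ (resting on the moment and energy estimates of Propositions \ref{energy-log}--\ref{bounduloc}) and the structural pointwise inequality of \cite[Thm 6.5]{Porretta}. The symmetry--monotonicity assumption is precisely what makes this route viable in the whole space, since it confines the approximating optimal trajectories $\gamma^R$ to bounded sets uniformly in $R$; relaxing the assumption would allow $\gamma^R$ to drift to infinity and thereby compromise the entire bootstrap.
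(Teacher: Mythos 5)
Your overall scheme—periodic approximation with $R\to\infty$, $R$-independent global estimates from Propositions \ref{energy-log}--\ref{bounduloc}, Wasserstein equicontinuity for the traces, and uniqueness via the truncated Lasry--Lions duality—matches the paper's strategy. But the step you propose for the $R$-uniform \emph{interior} regularity has a genuine gap, and it is precisely the step where the whole-space problem is hard.

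You claim that the local $L^\infty$ bound on $u^R$ together with the pointwise inequality $\tfrac14|u_x^R|^2 + \log m^R \le C_{\mathcal K}$ from \cite[Thm 6.5]{Porretta} yields both a uniform bound on $u^R_x$ and a \emph{two-sided} bound on $\log m^R$. That inequality only gives an \emph{upper} bound on $\log m^R$ (because $|u_x|^2\ge 0$), and it gives a bound on $u^R_x$ \emph{only once you already know} $\log m^R$ is bounded below. In the torus proof (Theorem \ref{ex-log-torus}) the lower bound on $\log m$ is supplied by Proposition \ref{stimalog-inside}, but the paper explicitly warns that this estimate is $R$-dependent (it scales like $R^2$), so it is unusable here. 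Without an independent $R$-uniform local positivity of $m^R$, your bootstrap never starts.

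The paper closes this gap in a way you do not mention. In Lemma \ref{lem.esticaselogRsym} the lower bound on $m^R$ along the optimal trajectories is obtained directly from conservation of mass \eqref{masspreservationTER}, the uniform second-moment bound, and—crucially—the monotonicity of $m^R(\cdot,t)$ guaranteed by Lemma \ref{lem.convexity}: this gives the explicit $R$-free estimate
\[
m^R(\gamma^R(x,t),t)\geq (8C_0)^{-1/2}\Bigl(\int_{|x|}^{R/2} m_0\Bigr)^{3/2},
\]
without any recourse to \cite{Porretta}. A second ingredient you omit, also essential, is the lower bound \eqref{boundbelowgamma} on $\gamma^R(x,t)$ for $x$ close to the edge of $\mathrm{supp}\,m_0$: it uses the $R$-free estimate on $u$ from Proposition \ref{bounduloc} together with the concavity of $\gamma^R(x,\cdot)$ to show that the Lagrangian image $\gamma^R([-(\theta-\eta),\theta-\eta]\times[\delta,T-\delta])$ \emph{covers} any prescribed Eulerian window $[-a,a]\times[\delta,T-\delta]$ once $\theta$ is close enough to the support boundary. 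This is what lets one transfer the $C^{2,\alpha}$ Lagrangian bounds on $w=\log(m^R\circ\gamma^R)$ and $\gamma^R$ back to Eulerian coordinates on a fixed compact set, uniformly in $R$. Your proposal has no mechanism to play this role, so even granting a local positivity of $m^R$ your bootstrap would only control $m^R$ on $\gamma^R$-images of fixed Lagrangian sets, which are not a priori known to contain a fixed Eulerian neighborhood.
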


Let us start with a priori estimates for positive periodic solutions: 

\begin{lem}\label{lem.esticaselogRsym} Suppose that \eqref{hyp.symm} holds and that $(u,m)\in C^2(\T\times [0,T])\times C^1(\T\times [0,T])$  is a classical solution  to \eqref{log-per}--\eqref{plan-log} or to \eqref{log-per}--\eqref{mfg-log} on $\T\times (0,T)$ with $m$ positive in $\T\times [0,T]$.  Let $\gamma :\T \times [0,T] \to \T$ be the associated flow of optimal trajectories.
\begin{enumerate}
\item (Global estimates) There exists $C_0>0$ depending only on $T$,   $\cE(m_0)$, $\cE(m_T)$, $M_2(m_0)$ (and $M_2(m_T)$ for the planning problem), such that: 
\begin{equation}\label{defedfC0}
\sup_{t\in [0,T]} \int_{\T} x^2m(x,t)dx\leq C_0,\qquad W_2(m(t),m(s)) \leq  C_0|t-s|^{1/2}\qquad \forall s, t\in [0,T].
\end{equation}
and
\begin{equation}\label{defedfC0bis}
-\frac{C_0}{t}(x^2+1)\leq  u(x,t) \leq \frac{C_0}{(T-t)}(x^2+1).
\end{equation}
Moreover, for any  $(x,t)\in (-R/2,R/2)\times [0,T]$, 
\be\label{boundabovemm}
m(x,t) \leq \left\{\begin{array}{ll}
\max\{\|m_0\|_\infty, \|m_T\|_\infty\} & \text{if $(u,m)$ satisfies  \eqref{log-per}--\eqref{plan-log}}\\
\|m_0\|_\infty & \text{if $(u,m)$ satisfies  \eqref{log-per}--\eqref{mfg-log}} \end{array}\right. 
\ee
and
\begin{equation}\label{lizajqesdfg}
( 8C_0)^{-1/2} \left( \int_{|x|}^{R/2} m_0(y)dy\right)^{3/2}\leq m(\gamma(x,t),t). 
\end{equation}

\item (Interior estimates)  Fix any $\delta\in (0,(1/2)\wedge (T/4))$ and $a\in (1,(R/2-1))$, with $R>4$. For any  $\eta\in (0,R/2)$ and $\theta\in (\eta, R/2)$  such that 
\begin{equation}\label{condcondtheta}
  \delta^{3/2} ( 8C_0)^{-1/2}\left| \log \left( \int_{\theta-\eta}^{R/2} m_0(y)dy\right)\right|^{1/2}  > 2a, 
\end{equation}
one has 
\be\label{boundbelowgammaBIS}
\min_{t\in [\delta, T-\delta]}\gamma(\theta-\eta,t)>a
\ee
and
$$
\|m, 1/m\|_{C^{2,\alpha}((-a,a)\times (\delta,T-\delta))} \leq C( \eta^{-1},   \delta^{-1}, \|m_0\|_{\infty},  \|m_T\|_\infty, K_\theta, C_0), 
$$
where  
\begin{equation}\label{defKtheta}
K_\theta:= \|m_0^{-1}\|_{L^\infty((-\theta,\theta))} + \|m_0\|_{C^{1,\alpha}((-\theta,\theta))}+ \left( \int_{\theta}^{R/2} m_0(y)dy\right)^{-1}.
\end{equation}
\end{enumerate}
\end{lem}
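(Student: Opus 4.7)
The global estimates in Part 1 are, in the main, direct consequences of earlier results: \rife{defedfC0} is immediate from \rife{momest} and \rife{wass-unif} of Proposition \ref{energy-log}, the pointwise bound \rife{defedfC0bis} is Proposition \ref{bounduloc}, and the $L^\infty$ bound \rife{boundabovemm} is Corollary \ref{cor: m bound}. The only genuinely new statement is the trajectory lower bound \rife{lizajqesdfg}, which I would establish by combining the monotonicity preservation of Lemma \ref{lem.convexity} with a Chebyshev-type argument. For $x \in [0, R/2]$, mass conservation (Lemma \ref{lem.gamma}) together with $\gamma(R/2, \cdot) = R/2$ (enforced by symmetry) gives
\[
\int_{\gamma(x,t)}^{R/2} m(y,t)\,dy \;=\; \int_x^{R/2} m_0(y)\,dy \;=:\; I .
\]
Splitting this integral at a free parameter $A>\gamma(x,t)$, the monotonicity of $m(\cdot,t)$ on $[0,R/2]$ bounds $\int_{\gamma(x,t)}^A m$ by $m(\gamma(x,t),t)(A-\gamma(x,t))$, while $\int_A^{R/2} m \leq C_0/A^2$ by Chebyshev applied to the second-moment bound \rife{defedfC0}. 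Optimizing in $A$ with the choice $A = \sqrt{2 C_0/I}$ yields $m(\gamma(x,t), t) \geq (8 C_0)^{-1/2} I^{3/2}$, which is \rife{lizajqesdfg}; the case $x \in [-R/2, 0]$ follows from the evenness of $m$.

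The interior estimates in Part 2 reduce to the trajectory confinement \rife{boundbelowgammaBIS}. Granted that estimate, the $C^{2,\alpha}$ regularity of $m$ and $1/m$ follows from a familiar bootstrap. Indeed, monotonicity of $m(\cdot,t)$ on $[0,R/2]$ together with $\gamma(\theta - \eta, t) > a$ gives $m(y,t) \geq m(\gamma(\theta - \eta, t), t)$ for $0 \leq y \leq a$, and inserting \rife{lizajqesdfg} at $x = \theta - \eta$ yields a positive lower bound on $m$ on $[-a, a] \times [\delta, T-\delta]$ depending only on $C_0$ and on $K_\theta$ (through $\int_{\theta - \eta}^{R/2} m_0 \geq \int_\theta^{R/2} m_0 \geq K_\theta^{-1}$). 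Combined with the upper bound \rife{boundabovemm}, this makes the quasilinear equation \rife{eq.elluBIS} uniformly elliptic with smooth coefficients on a neighborhood of $(-a,a) \times (\delta, T - \delta)$; Lieberman's interior $C^{1,s}$ estimates followed by Schauder bootstrap (exactly as in the proof of Theorem \ref{thm:existence}) yield $u \in C^{2,\alpha}$, and consequently $m = \exp(\tfrac12 u_x^2 - u_t) \in C^{1,\alpha}$. A further iteration of the Schauder argument on the linearization promotes this to the claimed $C^{2,\alpha}$ regularity of $m$ and $1/m$.

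The main obstacle is therefore \rife{boundbelowgammaBIS}, which I plan to attack through a Lagrangian action argument. Dynamic programming along the optimal trajectory $\gamma(\theta - \eta, \cdot)$, compared against the constant competitor $\alpha \equiv \theta - \eta$, gives for every $t \in [\delta, T-\delta]$
\[
\int_0^t \tfrac{1}{2} \gamma_t^2 \, ds \;\leq\; u(\theta - \eta, t) - u(\gamma(\theta - \eta, t), t) + \int_0^t \bigl[ \log m(\theta - \eta, s) - \log m(\gamma, s) \bigr] ds.
\]
Each term on the right is controlled by the global estimates of Part 1: the $u$-difference by \rife{defedfC0bis} (contributing at most $C_0 ((\theta - \eta)^2 + a^2 + 1)/\delta$ under the working hypothesis $\gamma(\theta - \eta, t) \leq a$), the positive log term by \rife{boundabovemm}, and the negative log term by \rife{lizajqesdfg} along $\gamma$, which contributes at most $\tfrac{3T}{2} |\log I|$ with $I = \int_{\theta - \eta}^{R/2} m_0$. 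Combining this action bound with Cauchy-Schwarz, $(\theta - \eta - \gamma(t))^2 \leq t \int_0^t \gamma_t^2 \, ds$, and noting that \rife{condcondtheta} is calibrated so that $|\log I|$ is too large for the resulting deviation to bring $\gamma$ down to $a$, yields the desired contradiction. The most delicate step is tracking the precise $\delta^{3/2}$ scaling in \rife{condcondtheta}: this requires a careful choice of competitor or a reparametrization concentrating the action on an interval of length of order $\delta$, so that the $|\log I|$ contribution and the Cauchy-Schwarz weight combine to produce exactly the coefficient appearing in the hypothesis.
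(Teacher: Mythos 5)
Your Part~1 is essentially the paper's own argument: for \eqref{lizajqesdfg} the paper also splits $\int_{\gamma(x,t)}^{R/2}m(\cdot,t)$ at a threshold $k$ of order $(C_0/I)^{1/2}$ (where $I=\int_{x}^{R/2}m_0$), uses monotonicity on the near part and the second-moment bound on the far part, and optimizes; your free-parameter $A$ is the same $k$, so no issue there. Your Part~2 bootstrap via the Eulerian equation \eqref{eq.elluBIS} (rather than the paper's Lagrangian equations for $\gamma$ and $w=\log m(\gamma)$) is also a legitimate alternative route, modulo the usual care about working on a slightly enlarged rectangle. The genuine gap is in your proposed proof of the trajectory confinement \eqref{boundbelowgammaBIS}: the Lagrangian action comparison cannot give it, for a structural reason that no tuning of the competitor will fix. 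Your inequality bounds $\int_0^t\frac12\gamma_t^2$ from \emph{above} by a quantity which, after inserting \eqref{defedfC0bis}, \eqref{boundabovemm} and \eqref{lizajqesdfg}, is of order $C_0\bigl((\theta-\eta)^2+a^2\bigr)/\delta + T|\log I|$. Cauchy--Schwarz then bounds the \emph{displacement} $|\theta-\eta-\gamma(t)|$ by $\sqrt{T\,\cdot\text{(action bound)}}$; this bound \emph{grows} with $|\log I|$, so a small $I$ makes the displacement control weaker, not stronger, and the presence of $(\theta-\eta)^2/\delta$ with $T/\delta>4$ already means the permitted displacement exceeds $\theta-\eta$. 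You obtain an upper bound for how far the particle might have wandered, whereas you need a lower bound for where it must be; the sign of the inference is wrong.

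The paper's mechanism is quite different and does not involve comparison trajectories at all. Fixing $x=\theta-\eta$ and $t\in[\delta,T-\delta]$, one sets $a:=\max_{|s-t|\le\delta^2}\gamma(x,s)+1$, so the fixed spatial point $a$ lies one unit \emph{ahead} of the trajectory throughout a short time window. Conservation of mass plus the monotonicity of $m(\cdot,s)$ (Lemma~\ref{lem.convexity}) give $m(a,s)\leq I$ on that window, so $\log m(a,s)\leq\log I$ is very negative there. Now the Hamilton--Jacobi equation is evaluated at the fixed point $a$: dropping the nonnegative $\tfrac12 u_x^2$, $\log m(a,s)\geq -u_t(a,s)$, and integrating in $s$ over the window of length $2\delta^2$ yields a telescoping term $u(a,t-\delta^2)-u(a,t+\delta^2)$, which \eqref{defedfC0bis} bounds below by $-C a^2/\delta$. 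Combining, $2\delta^2\log I\gtrsim -a^2/\delta$, which forces $a\gtrsim \delta^{3/2}|\log I|^{1/2}$; the concavity of $\gamma(x,\cdot)$ then transfers this bound from the windowed maximum to $\gamma(x,t)$ itself. The two ingredients you are missing are (i) the observation that the density is tiny at a fixed point just past the trajectory, not along the trajectory (where \eqref{lizajqesdfg} controls it from \emph{below}), and (ii) the use of the HJ equation \emph{pointwise at that fixed $a$} so that the time integral of $\log m(a,\cdot)$ telescopes against $u(a,\cdot)$. I would rebuild Step~3 around those two points.
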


\begin{proof} Estimates \eqref{defedfC0} and \eqref{defedfC0bis} are given in Proposition \ref{energy-log}. By Lemma \ref{lem.convexity}  the solution $(u,m)$ satisfies \eqref{PptExtram}, and $x\to \gamma(x,t)$ is increasing on $[0,R/2]$, with $\gamma(0,t)=0$ and $\gamma(R/2,t)=R/2$ for any $t\in [0,T]$. \\

\noindent{\it Step 1: Bounds on the density.} 
The upper bounds on $m$ in \eqref{boundabovemm} hold by Proposition \ref{prop: oscillation bound}. 
Let us now prove the lower bound \eqref{lizajqesdfg}. Let $x\in [0,R/2)$ and $k=(2C_0)^{1/2} (\int_x^{R/2} m_0(y)dy)^{-1/2}$. We first assume that $k<R/2$. Then, by \eqref{PptExtram} (for the second inequality) and \eqref{defedfC0} and the choice of $k$ (for the last one), we have 
\begin{align*}
\int_x^{R/2} m_0(y)dy&= \int_{\gamma(x,t)}^{R/2} m(y,t)dy \leq \int_{\gamma(x,t)\wedge k}^{k} m(y,t)dy+ \int_{k}^{R/2} m(y,t)dy \\
&\leq m(k\wedge\gamma(x,t),t) (k-k\wedge \gamma(x,t)) + k^{-2} \int_k^{R/2} x^2m(y,t)dy \\
& \leq  m(k\wedge\gamma(x,t),t) (k-k\wedge \gamma(x,t)) +\frac12  \int_x^{R/2} m_0(y)dy . 
\end{align*}
This implies that $\gamma(x,t)< k$ and 
\eqref{lizajqesdfg} in this case. Next we suppose that $k\geq R/2$. Then the same computation shows that 
$$
m(\gamma(x,t),t)\geq (2/R) \int_x^{R/2} m_0(y)dy\geq (2C_0)^{ -1/2} \left(\int_x^{R/2} m_0(y)dy\right)^{3/2},
$$
where the last inequality holds because $k\geq R/2$. Thus we have proved \eqref{lizajqesdfg} for $x\in [0,R/2)$. The result for negative $x$ holds by symmetry. \\

\noindent{\it Step 2: Elliptic estimates.} We now prove $C^{2,\alpha}$ estimates for $\gamma_x$ and  $w=\log(m(\gamma))$. Recalling from \rife{masspreservationBIS} that $\gamma_x(x,t)= m_0(x)/m(\gamma(x,t),t)$, we note that \eqref{boundabovemm} and \eqref{lizajqesdfg} imply that $\gamma_x$ is locally bounded above and below: 
\begin{equation}\label{boundgammax}
\frac{m_0(x)}{\max\{\|m_0\|_\infty, \|m_T\|_\infty\}} \leq  \gamma_x(x,t)\leq \frac{( 8C_0)^{1/2} m_0(x)}{\left( \int_{|x|}^{R/2} m_0(y)dy\right)^{3/2}}\qquad  \forall (x,t)\in (-R/2,R/2)\times [0,T].
\end{equation}
Let $w(x,t)=\log(m(\gamma(x,t),t))$. Then $w$ solves the elliptic equation in divergence form (see \rife{eq.v=fmgammalog}):
\begin{equation}\label{eqloggammam}
-(\gamma_x w_t)_t -\left(\frac{1}{\gamma_x}w_x\right)_x = 0 \quad \text{in}\; (-R/2,R/2)\times (0,T).
\end{equation}
Fix $\eta\in (0,R/2)$ and $\delta\in (0,T/4)$. Let $\theta\in (\eta ,R/2)$ and $K_\theta$ be defined by \eqref{defKtheta}. As, by \eqref{boundabovemm}, \eqref{lizajqesdfg} and \eqref{boundgammax}, 
$$
|w|+|\gamma_x|+|1/\gamma_x| \leq C(K_\theta, \|m_0\|_\infty, \|m_T\|_\infty, C_0)\qquad \text{on}\; [-\theta,\theta]\times [0,T], 
$$ 
we infer by elliptic regularity that
$$
\|w\|_{C^{0,\alpha}([-\theta+\eta/3,\theta-\eta/3]\times [\delta/3,T-\delta/3])}\leq C(\eta^{-1}, \delta^{-1},K_\theta, \|m_0\|_\infty, \|m_T\|_\infty, C_0). 
$$
Recalling that $\gamma_x= m_0/m(\gamma) = m_0 e^{-w}$, this implies that 
$$
\|\gamma_x, 1/\gamma_x\|_{C^{0,\alpha}([-\theta+\eta/3,\theta-\eta/3]\times [\delta/3,T-\delta/3])} \leq C(\eta^{-1}, \delta^{-1},K_\theta,\|m_0\|_\infty,\|m_T\|_\infty, C_0).
$$
On the other hand (see \rife{eq.ell.gammaBIS} with $f(s)=\log s$), $\gamma$ solves the elliptic equation 
$$
\gamma_{tt}+ \frac{\gamma_{xx}}{\gamma_x^2}= \frac{1}{\gamma_x}\frac{(m_0)_x}{m_0} \quad \text{on}\; (-R/2,R/2)\times (0,T), \qquad \gamma(x,0)=x \ \text{on} \; [-R/2,R/2].
$$
Using the Schauder estimates, we therefore have
\begin{equation}\label{estiC2gamma}
\|\gamma\|_{C^{2,\alpha}([-\theta+\eta/2,\theta-\eta/2]\times [\delta/2,T-\delta/2])}\leq C(\eta^{-1}, \delta^{-1},K_\theta, \|m_0\|_\infty, \|m_T\|_\infty,  C_0).
\end{equation}
Returning to \eqref{eqloggammam}, we obtain, again by the Schauder estimates, 
\begin{equation} \label{estiC2w}
\|w\|_{C^{2,\alpha}([-\theta+\eta,\theta-\eta]\times [\delta,T-\delta])}\leq C(\eta^{-1}, \delta^{-1},K_\theta, \|m_0\|_\infty,\|m_T\|_\infty,   C_0).
\end{equation}
\smallskip

\noindent{\it Step 3: Lower bound on $\gamma$.} We claim that, for any $\delta\in (0,(1/2)\wedge (T/4))$ and any $x\in [0, R/2)$, 
\begin{equation}\label{boundbelowgamma}
\min_{t\in [\delta, T-\delta]} \gamma(x,t)\geq (1-\delta)\left( (R/2)\wedge  \left(\delta^{3/2} (C_0)^{-1/2}\left| \log \left( \int_x^{R/2} m_0(y)dy\right)\right|^{1/2}\right) -1\right).
\end{equation}
{\it Proof of \eqref{boundbelowgamma}:} Fix $x\in [0,R/2)$, $t\in [\delta, T-\delta]$  and set 
$$
a=\max_{s\in [t-\delta^2, t+\delta^2]} \gamma(x,s) + 1. 
$$
Since we want a lower bound for $a$, we can assume that $a\leq R/2$. Then, recalling \rife{masspreservationTER} and the fact that $m(\cdot,t)$ is nonincreasing,   we have for any $s\in [t-\delta^2, t+\delta^2]$, 
$$
\int_x^{R/2} m_0(y)dy = \int_{\gamma(x,s)}^{R/2} m(y,s)dy \geq \int_{\gamma(x,s)}^{a} m(y,s)dy \geq (a-\gamma(x,s))m(a,s)\geq  m(a,s).
$$
  Using the previous inequality together with the HJ equation, and integrating in  $(t-\delta^2,t+\delta^2)$, we get 
\begin{align*}
2\delta^2 \log \left( \int_x^{R/2} m_0(y)dy\right) & \geq \int_{t-\delta^2}^{t+\delta^2} \log(m(a,s))ds   \geq \int_{t-\delta^2}^{t+\delta^2} -u_t(a,s)ds \\ & \geq -\left(\frac{C_0}{t-\de^2}+\frac{C_0}{T-t-\de^2}\right)(a^2+1) \geq - 8a^2\, C_0 \delta^{-1}     
\end{align*}
where we used \eqref{defedfC0bis} and the fact that $a\geq 1$ and $\delta/2\leq t-\delta^2\leq t+\delta^2\leq T-\delta/2$. Thus, up to increasing the value of $C_0$, we obtain 
$$
a \geq \delta^{3/2} C_0^{-1/2}\left| \log \left( \int_x^{R/2} m_0(y)dy\right)\right|^{1/2}.
$$

This proves that 
$$
\max_{s\in [t-\delta^2, t+\delta^2]} \gamma(x,s) \geq (R/2)\wedge  \left(\delta^{3/2} (C_0)^{-1/2}\left| \log \left( \int_x^{R/2} m_0(y)dy\right)\right|^{1/2}\right) -1.
$$
As $\gamma(x,\cdot)$ is nonnegative and concave, letting $s_0$ be a maximum point of $\gamma(x,\cdot)$ in $[t-\delta^2, t+\delta^2]$, 
$$
\gamma(x,t) \geq \left( \frac{t}{s_0}{\bf 1}_{s_0\in [t,t+\delta^2]}+ \frac{T-t}{T-s_0}{\bf 1}_{s_0\in [t-\delta^2, t]}\right) \gamma(s_0)  \geq \min\left\{ \frac{t}{t+\delta^2}, \frac{T-t}{T-t+\delta^2}\right\} \gamma(s_0) \geq (1- \delta)\gamma(s_0) . 
$$
Using our estimate on $\gamma(x,s_0)=\max_{[t-\delta^2, t+\delta^2]} \gamma$ gives \eqref{boundbelowgamma}. \\

\noindent{\it  Step 4: Interior estimate of $m$.}   Fix $\delta\in (0,(1/2)\wedge T/4)$ and $a\in (1,(R/2-1))$, with $R>4$. Now assume that $\eta\in (0,R/2)$ and  $\theta\in (\eta, R/2)$ are such that \eqref{condcondtheta} holds. Then 
\be\label{kuzaqesnrd}
 (1-\delta)\left( (R/2)\wedge  \left(\delta^{3/2} (C_0)^{-1/2}\left| \log \left( \int_{\theta-\eta}^{R/2} m_0(y)dy\right)\right|^{1/2}\right) -1\right) >a+1. 
\ee
We claim that
$$
\|m, m^{-1}\|_{C^{2,\alpha}((-a,a)\times (\delta,T-\delta))} \leq C(\eta^{-1}, \delta^{-1}, K_\theta, \| m_0\|_\infty, \|m_T\|_\infty,  C_0).
$$
Indeed, let $\gamma^{-1}(\cdot,t):(-R/2,R/2)\to (-R/2,R/2)$ denote the inverse in space of $\gamma(\cdot, t)$. 
We first claim that $\gamma^{-1}([-a,a]\times [\delta,T-\delta]) \subset (-\theta+\eta,\theta-\eta)\times [\delta, T-\delta]$. Indeed, otherwise, there exists $t\in [\delta, T-\delta]$ and $x\in [-a,a]$ such that $(x,t)\notin \gamma((-\theta+\eta,\theta-\eta)\times [\delta, T-\delta])$. This means that $x\notin \gamma((-\theta+\eta,\theta-\eta),t)$. As $\gamma(\cdot, t)$ is continuous, vanishes at $x=0$ and is increasing, this implies that $a\geq |x|\geq \gamma(\theta-\eta, t)$, which, by  \eqref{boundbelowgamma},  contradicts  \eqref{kuzaqesnrd}. In particular, \eqref{boundbelowgammaBIS} holds. Recalling \eqref{estiC2gamma}, $\gamma^{-1}$ is bounded in $C^{2,\alpha}$ in $[-a,a]\times [\delta,T-\delta]$ and then \eqref{estiC2w} implies the $C^{2,\alpha}$ bound on $\log(m)=w\circ \gamma^{-1}$. This implies that $m$ and $1/m$ are bounded in $C^{2,\alpha}$ in $[-a,a]\times [\delta,T-\delta]$. 
\end{proof}

\begin{proof}[Proof of Theorem \ref{thm.caselogRsym}] 

 (i) We prove the existence of a classical solution. We start with the planning problem \eqref{eq:MFGPR} and explain at the end of the proof the necessary changes for the \eqref{eq:MFGR} problem. 

Let  $[-b_0,b_0]$ (resp. $[-b_T,b_T]$) be the support of $m_0$ (resp. of $m_T$).  For $R\geq \bar R:=\max\{b_0,b_T\}$,  let $\tilde m^R_0$ and $\tilde m^R_T$ be the continuous periodic map of period $R$ such that $\tilde m^R_0=m_0$ and $\tilde m^R_T=m_T$ on $(-R/2,R/2)$. We let $m_0^{R,\epsilon} = \tilde m^R_0 \ast\eta_{\epsilon}$ and $m_T^{R,\epsilon} = \tilde m^R_T \ast\eta_{\epsilon}$ where $\eta^{\epsilon}$ is a standard mollifier, smooth, even and positive on $\R$. Then, for $R\geq \bar R$ and $\epsilon>0$, $m_0^{R,\epsilon}$ and $m_T^{R,\epsilon}$ are smooth, positive and satisfy \eqref{hyp.symm}. Let $(u^{R,\epsilon}, m^{R,\epsilon})$ be the classical solution to \eqref{eq:Per MFG}--\eqref{per-MFGP} given by Theorem \ref{thm:existence}. By Lemma \ref{lem.convexity}, $m^{R,\epsilon}$ satisfies \eqref{PptExtram}. 

We now prove an interior estimate for $m^{R,\epsilon}$. Fix $a>1$ and $\delta \in (0,(1/2)\wedge (T/4))$. Choose $R_0>4$ such that $(R_0/2-1)\geq a+1$. Fix also $r_0\in (0,b^0)$ large enough such that 
$$
\delta^{3/2} (C_0)^{-1/2}\left| \log \left( \int_{r_0}^{b_0} m_0(y)dy\right)\right|^{1/2}  > 2a+1.  
$$
This is possible as $\int_{r_0}^{b_0} m_0(y)dy\to 0$ as $r_0\to (b^0)^-$. We then set $\theta = (r_0+b_0)/2$ and $\eta= (b_0-r_0)/2$.  We finally choose $\epsilon_0\in (0, \eta/4)$ such that, for $\epsilon\in (0,\epsilon_0)$, 
$$
\delta^{3/2} ( 8C_0)^{-1/2}\left| \log \left( \int_{r_0}^{b_0}  m_0^{R,\epsilon}(y)dy\right)\right|^{1/2}  > 2a\qquad \text{and}\qquad 
\int_{\theta}^{R/2}  m_0^{R,\epsilon}(y)dy\geq \frac12 \int_{\theta}^{R/2} m_0(y)dy.
$$
Then, for any $R\geq R_0$, $\epsilon\in (0,\epsilon_0)$, condition \eqref{condcondtheta} holds for $m_0^\epsilon$. By Lemma \ref{lem.esticaselogRsym},
\begin{multline}
\|m^{R,\epsilon}, 1/m^{R,\epsilon}\|_{C^{2,\alpha}((-a,a)\times (\delta,T-\delta))}  \leq C( \eta^{-1},  \delta^{-1}, K^\epsilon_\theta,\|\tilde m^R_0\|_\infty,, \|\tilde m^R_T\|_\infty,  C_0^\epsilon) \\
=  C( \eta^{-1},  \delta^{-1}, K^\epsilon_\theta, \|m_0\|_\infty,\|m_T\|_\infty,  C_0^\epsilon) ,
\end{multline}
where $C_0^\epsilon$ depends only on $T$,   $\cE(\tilde m_0^R)$, $\cE(\tilde m_T^R)$, $M_2(\tilde m^R_0)$, $M_2(\tilde m^R_T)$,  and thus only on $T$,   $\cE(m_0)$, $\cE( m_T)$, $M_2( m_0)$, $M_2( m_T)$, and 
\begin{multline}
K^\epsilon_\theta = \|(\tilde m^{R,\epsilon}_0)^{-1}\|_{L^\infty((-\theta,\theta))}+  \|\tilde m^{R,\epsilon}_0\|_{C^{1,\alpha}((-\theta,\theta))}+\left( \int_{\theta}^{R/2} \tilde m^{R,\epsilon}_0(y)dy\right)^{-1}
 \\
 \leq  \|m_0^{-1}\|_{L^\infty((-\theta-\epsilon_0,\theta+\epsilon_0))}+ \|m_0\|_{C^{1,\alpha}((-\theta-\epsilon_0,\theta+\epsilon_0))}+2 \left( \int_{b_0-\eta/2}^{b_0} m_0(y)dy\right)^{-1}, 
\end{multline}
which is finite since $\theta+\epsilon_0 = b_0-\eta$. This shows that 
 $$
 \|m^{R,\epsilon}, 1/m^{R,\epsilon}\|_{C^{2,\alpha}((-a,a)\times (\delta,T-\delta))} \leq C(a, \delta^{-1}, T, m_0, m_T). 
 $$
 Since $\log(m^{R,\epsilon})$ is uniformly Lipschitz continuous in $(-a,a)\times (\delta,T-\delta)$ and since the map $u^{R,\epsilon}$ is a locally uniformly bounded solution of a HJ equation with  r.h.s. 
$\log(m^{R,\epsilon})$,  $u^{R,\epsilon}$ is uniformly Lipschitz continuous in  $(-a/2,a/2)\times (2\delta,T-2\delta)$. By  
\rife{eq.elluBIS} (with $f=\log s$), we know that $u^{R,\epsilon}$ satisfies the elliptic equation 
\be\label{ureps}
-u_{tt}+2u_xu_{xt} -(u_x^2+1)u_{xx}=0\,.
\ee
Hence, by elliptic regularity,  we obtain
$$
\|u^{R,\epsilon}\|_{C^{2,\alpha}((-a/2,a/2)\times (2\delta,T-2\delta))} \leq C(a, \delta^{-1},m_0,m_T, C_0). 
$$
We can now use the estimates above and the first part of Lemma \ref{lem.esticaselogRsym}  to find (a subsequence) of $(u^{R,\epsilon}, m^{R,\epsilon})$ which converges, as $\epsilon\to 0^+$ and then $R\to \infty$, to a pair $(u,m)$ which is a $C^{2,\alpha}$ solution of  the MFG system \eqref{mfgsys} in $\R\times (0,T)$  with $f(m)=\log(m)$ and such that:
$$
m\in C^0([0,T], \mathcal P_2(\R)), \; m(0)=m_0, \; m(T)= m_T,. 
$$
Moreover, by construction, $m(\cdot, t)$ is even and $x\to m(x,t)$ is nonincreasing on $[0,+\infty)$ for any $t\in [0,T]$. 
 
 Let us finally check the continuity of $m$ at $t=0$ (the case $t=T$ being symmetric). Let $t_n\to 0^+$. Then the maps $m(\cdot, t_n)$ are nonincreasing on $[0,\infty)$ and  converge weakly-* (as measures) to $m_0$ which has a continuous density:  this limit is therefore locally uniform. \\
 
 We now consider the \eqref{eq:MFGR} problem. We regularize $m_0^{R,\epsilon}$ as above and let $(u^{R,\epsilon},m^{R,\epsilon})$ be the classical solution to \eqref{log-per}--\eqref{mfg-log}. Let $\gamma^{R,\epsilon}$ be the associated flow of optimal trajectories.  Let us recall that, under our structure condition, $\gamma^{R,\epsilon}(x,\cdot)$  is concave and nondecreasing in time for any $x\in [0,R/2]$. 
 
 Fix $a>1$ and $\delta \in (0,(1/2)\wedge (T/4))$. We can choose $R_0$, $\theta$, $\eta$ and $\epsilon_0>0$ as in the first part of the proof such that, for $R\geq R_0$ and $\epsilon\in (0,\epsilon_0)$, 
 $$
 \int_{\theta}^{R/2}  m_0^{R,\epsilon}(y)dy\geq \frac12 \int_{\theta}^{R/2} m_0(y)dy >0
$$
and 
$$
 \|m^{R,\epsilon}, 1/m^{R,\epsilon}\|_{C^{2,\alpha}((-a,a)\times (\delta,T-\delta))} \leq C(a, \delta^{-1}, T, m_0, m_T). 
 $$
 With this choice we also have (estimate \eqref{boundbelowgamma} in Lemma \ref{lem.esticaselogRsym})
 $$
\min_{t\in [\delta, T-\delta]}\gamma(\theta-\eta,t)>a. 
$$
Recall also that, by \eqref{boundabovemm} and \eqref{lizajqesdfg}, for any $(x,t)\in (-R/2,R/2)\times [0,T]$, 
$$
( 8C_0)^{-1/2} \left( \int_{|x|}^{R/2} m^{R,\epsilon}_0(y)dy\right)^{3/2}\leq m^{R,\epsilon}(\gamma(x,t),t) \leq \|m_0^{R,\epsilon}\|_\infty\leq \|m_0\|_\infty. 
$$
Hence for any $(y,t)\in [0, a]\times [\delta, T]$, there exists $x\in [0,\theta-\eta]$ with $\gamma^{R,\epsilon}(x,t)=y$ and therefore 
$$
 \|m_0\|_\infty \geq m^{R,\epsilon}(y,t)= m^{R,\epsilon}(\gamma(x,t),t)\geq 
\frac12( 8C_0)^{-1/2} \left( \int_{\theta}^{R/2} m_0(y)dy\right)^{3/2},
$$
which proves that $m^{R, \epsilon}$ is bounded above and below in $[-a,a]\times [\delta,T]$. 
Next we show a Lipschitz bound for $u^{R,\epsilon}$ in $[0,a]\times [\delta,T]$. For any $(y,t)\in [0,a]\times [\delta, T]$, there exists $x\in [0,y]$ such that $\gamma^{R,\epsilon}(x,t)=y$. Recall that $\gamma^{R,\epsilon}_t(x,t)=-u_x(y,t)$. On the other hand, by concavity of $\gamma^{R,\epsilon}(x,\cdot)$, 
 $$
0\leq  \gamma^{R,\epsilon}(x,s)\leq y+ \gamma^{R,\epsilon}_t(x,t)(s-t) \qquad \forall s\in [0,T].
$$
Thus (choosing $s=0$ and using that $y\leq a$ and $t\geq \delta$)
$$
0\leq \gamma^{R,\epsilon}_t(x,t)=-u^{R,\epsilon}_x(y,t) \leq y/t \leq a/\delta.
$$
By symmetry, this proves that 
$$
\|u^{R,\epsilon}_x\|_{L^\infty([-a,a]\times [\delta, T])} \leq a/\delta.
$$
Let us finally check a local bound for $u^{R,\epsilon}$. We already have a bound below (Lemma \ref{lem.esticaselogRsym}). As $u^{R,\epsilon}(\cdot,t)$ is nonincreasing on $[0,R/2]$ and $\gamma(0,\cdot)=0$, we have, for $(x,t)\in [0,a]\times [\delta, T]$:
\begin{align*}
u^{R,\epsilon}(x,t) & \leq u^{R,\epsilon}(0,t)= \int_t^T 
\frac12 |\gamma^{R,\epsilon}_s(0,s)|^2 + 
\ln(m^{R,\epsilon}(\gamma^{R, \epsilon}(0,s))) \ ds
+ c_T\ln(m^{R,\epsilon}(\gamma^{R, \epsilon}(0,T),T)) \\
& \leq C \|m^{R,\epsilon}, 1/m^{R,\epsilon}\|_{L^\infty(\{0\}\times [\delta, T])} \leq C(a, \delta^{-1}).
\end{align*}

We have proved positive upper and lower bounds for $m^{R, \epsilon}$ and Lipschitz bounds for $u^{R, \epsilon}$ independent of $R,\epsilon$  on $[-a,a]\times [\delta, T]$.  By the elliptic equation \rife{ureps} satisfied by $u^{R, \epsilon}$, we can infer that 
$$
\|u^{R,\epsilon}\|_{C^{2,\alpha}([-a/2,a/2]\times [2\delta,T]}\leq C(a, \delta^{-1}). 
$$
We can then conclude as before. 

 (ii) The uniqueness of solutions is proved with the same kind of argument used in Theorem \ref{ex-log-torus}. First of all, we observe that, since $m(t) \in \cP_2(\R)$ and $u(t)/(1+ |x|^2)\in L^\infty(\R)$, then we have $u(t)m(t) \in L^1(\R)$ for any $t\in (0,T)$.  This implies that  a similar  equality as \rife{t0t1} holds, namely
\be\label{t0t12}
\intr u(t_0)m(t_0) - \intr u(t_1)m(t_1) = \int_{t_0}^{t_1}  \intr mu_x^2 +\int_{t_0}^{t_1}\intr m\log(m) \qquad\forall \, 0<t_0<t_1<T\,.
\ee
As in the proof of Theorem \ref{ex-log-torus}, we deduce from \rife{t0t12} (and the time-monotonicity of $u$) that $u(0^+)\in L^1(dm_0)$ and $u(T^-)\in L^1(dm(T))$. Then, using a truncation argument for $u$ and the continuity of $m$, inequality \rife{t0t12} is   extended up to $t_0=0$ and $t_1=T$, obtaining the equivalent of \rife{ide} but integrated for $x\in \R$. Notice that the same truncation argument for $u$ as in Theorem \ref{ex-log-torus} works here, because $m$ has finite moments (uniformly in time), so actually $\intr u_k(t)m(t)$ ends up being continuous in $[0,T]$ for fixed $k$.  In a similar way,  we obtain the equivalent of \rife{ideh} for $x\in \R$, and we conclude from the Lasry-Lions monotonicity argument as in the compact case.

\end{proof}
\subsection*{Acknowledgments}
The authors would like to thank P.E. Souganidis for his great help in facilitating this project, which was partly developed during the second author's visit at the University of Rome Tor Vergata. The first author was partially supported  by AFOSR grant FA9550-18-1-0494. The second author was partially supported by P. E. Souganidis? NSF grant DMS-1900599, ONR grant N000141712095 and AFOSR grant FA9550-18-1-0494, as well as funds provided by the University of Rome Tor Vergata (Fondi di Ateneo ``ConDyTransPDE''). The third author was also supported by Indam Gnampa Projects 2022, by the Excellence  Projects Math@TOV and MatMod@TOV of the Department of Mathematics of the University of Rome Tor Vergata; he also wishes to thank the University of Paris Dauphine on occasion of the invitation received in the spring 2023 at CEREMADE.

\appendix
\section{Construction of the self-similar solutions} \label{sec:appendix}

The goal of this appendix is to prove the statements made in Proposition \ref{self-building} regarding the construction of the self-similar solution, as well as to provide a precise analysis of the regularity of the value function $u$. We begin by showing that, inside the support of $m$, the system is solved in the classical sense.

\begin{lem} Let  $u,m$ be defined as in Proposition \ref{self-building}. Then $m$ is a weak solution of the continuity equation, $u$ is $C^1$ in the support of $m$, and $u$ satisfies, in the classical sense,
$$
-u_t+\frac12 |u_x|^2= m^\theta \quad \hbox{in $\,\{m>0\}$.}
$$ 
\end{lem}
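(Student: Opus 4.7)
The plan is to verify the three claims in turn, working directly from the explicit formulas for $(u,m)$ inside the support. On $\Omega := \{m>0\}$, both \rife{u2} and \rife{uneq2} reduce to
\[
u(x,t) = -\oal\,\frac{x^2}{2t} + c(t), \qquad c\in C^\infty(0,\infty), \quad c'(t) = -R t^{-\oal\theta},
\]
as already anticipated in \rife{usuppo}. Since this formula is manifestly smooth on $\Omega$, claim (2) is immediate. For (3), I would simply compute $u_x = -\oal x/t$, $u_t = \oal x^2/(2t^2) + c'(t)$, so that
\[
-u_t + \tfrac12 u_x^2 = -\tfrac12\oal(1-\oal)\,\frac{x^2}{t^2} + R t^{-\oal\theta}.
\]
Using the identity $\oal(2+\theta)=2$, and hence $\oal\theta + 2\oal = 2$, the same expression equals
\[
m^\theta(x,t) = t^{-\oal\theta}\Bigl(R - \tfrac12\oal(1-\oal)(x/t^{\oal})^2\Bigr) = R t^{-\oal\theta} - \tfrac12\oal(1-\oal)\,\frac{x^2}{t^2}
\]
on $\Omega$, which establishes (3).

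For (1), I would begin by checking the continuity equation classically on $\Omega$, substituting $m(x,t) = t^{-\oal}\phi(x/t^{\oal})$ and $u_x = -\oal x/t$:
\[
m_t = -\oal t^{-\oal-1}\phi(x/t^{\oal}) - \oal x t^{-2\oal-1}\phi'(x/t^{\oal}) = -\frac{\oal}{t}(m + xm_x) = (mu_x)_x.
\]
To upgrade this classical identity to a distributional one on $\R\times(0,\infty)$, I would exploit two features of the self-similar profile: $m$ is continuous on $\R\times(0,\infty)$ with $m\equiv 0$ outside $\Omega$; and the flux $mu_x$, extended by zero outside $\Omega$, is continuous and vanishes on $\partial\Omega$ because $m$ does. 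Given $\varphi\in C_c^\infty(\R\times(0,\infty))$, the integral $\iint(m\varphi_t + mu_x\varphi_x)\,dxdt$ is supported in $\overline\Omega$; I would then approximate $\Omega$ from within by the smooth subdomains $\Omega_\delta := \{m>\delta\}$, apply the divergence theorem on $\Omega_\delta$ where $u,m$ are smooth, and pass to the limit $\delta\downarrow 0$. The boundary integrals on $\partial\Omega_\delta$ carry a factor of $m=\delta$ or of $|mu_x|\le \delta\,\|u_x\|_{L^\infty(\mathrm{supp}\,\varphi)}$, hence vanish in the limit; the interior integral converges by dominated convergence to $-\iint_\Omega(m_t-(mu_x)_x)\varphi\,dxdt = 0$ by the classical identity already established.

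The main technical obstacle is the integrability of $m_t$ and $(mu_x)_x$ near $\partial\Omega$: when $\theta>1$, the derivative $m_x$ behaves like $(R - \tfrac12\oal(1-\oal)y^2)^{1/\theta-1}$ and is singular on the interface. However the singularity is integrable in $x$, uniformly on compact time intervals, so the dominated convergence step in the exhaustion argument remains legitimate. Everything else is a routine algebraic verification of the identities that build up \rife{powersys}.
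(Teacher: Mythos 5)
Your verification of the Hamilton--Jacobi equation in $\{m>0\}$ matches the paper's computation exactly. Where you diverge is in the treatment of the continuity equation, and your route is genuinely different (and valid). The paper differentiates the pairing $t\mapsto\int_\R m\varphi\,dx$ directly over the moving interval $(\gamma^-(t),\gamma^+(t))$ via the Leibniz rule --- the boundary terms drop because $\phi$ vanishes at the endpoints of its support --- and then integrates by parts in $x$ to reach the weak form. You instead first verify $m_t-(mu_x)_x=0$ classically inside $\{m>0\}$ (a computation the paper does not write out) and then pass to the weak form by a space--time exhaustion $\Omega_\delta=\{m>\delta\}$ and the divergence theorem, with the boundary flux $O(\delta)$. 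Your route is somewhat more robust --- it would transfer to higher dimensions or to supports that are not moving intervals --- while the paper's is shorter and more explicit for this 1D self-similar profile.

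Two remarks. First, a sign: the weak form of $m_t-(mu_x)_x=0$ is $\iint(m\varphi_t - mu_x\varphi_x)\,dxdt=0$, not $\iint(m\varphi_t + mu_x\varphi_x)\,dxdt=0$; the sign you wrote corresponds to the opposite drift. Second, and more substantively, the integrability of $m_t$ and $(mu_x)_x$ near $\partial\Omega$ that you flag as ``the main technical obstacle'' is a red herring for your exhaustion argument. Since $m_t-(mu_x)_x\equiv 0$ pointwise in $\Omega$, the interior integral $\iint_{\Omega_\delta}(m_t-(mu_x)_x)\varphi$ is identically zero for every $\delta$, and $\iint_{\Omega_\delta}(m\varphi_t-mu_x\varphi_x)$ converges to its value on $\Omega$ simply because $m$ and $mu_x$ are bounded on $\text{supp}\,\varphi$; only the boundary term needs an estimate, and it is $\leq C\delta$. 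The integrability of $\phi'$ (equivalently of $m_x\sim\text{dist}^{1/\theta-1}$) is instead what justifies the paper's differentiation under the integral sign over the moving domain, not your exhaustion.
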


\begin{proof}  We  set $  \gamma^\pm(t)= \pm (2R/(\overline{\alpha}-\overline{\alpha}^2))^{1/2} t^{\overline{\alpha}}$, and we note that 
 $$
 \{m(t,\cdot)>0\}=\{(x,t), \;  \gamma^-(t)<x<  \gamma^+(t)\}.
 $$ 
Here we have, according to \rife{usuppo} and if $m(x,t)>0$, :  
$$
-u_t+\frac12 |u_x|^2 = -\overline{\alpha}\frac{x^2}{2t^2}+ R t^{-2\theta/(2+\theta)} +\frac12 \overline{\alpha}^2\frac{x^2}{t^2}= -(\overline{\alpha}-\overline{\alpha}^2) \frac{x^2}{2t^2} + R t^{-2\theta/(2+\theta)}
 $$
 while 
 $$
 f( m(x,t))= t^{-\overline{\alpha} \theta}  \left( R-\frac12(\overline{\alpha}-\overline{\alpha}^2)(\frac x{t^{\overline{\alpha}}})^2\right)=  t^{-\overline{\alpha} \theta}R- \frac12(\overline{\alpha}-\overline{\alpha}^2)\frac{x^2}{t^{\overline{\alpha}\theta+2\overline{\alpha}}}\,.
 $$
By the definition of $\overline{\alpha}$, we have $\overline{\alpha}\theta= 2\theta/(2+\theta)$, and $\overline{\alpha}\theta+2\overline{\alpha}=2$. Thus we conclude that 
 $$
 -u_t+\frac12 u_x^2=   f( m(x,t))\qquad {\rm in}\; \{m>0\}.
 $$
 On the other hand, for any test function $\varphi\in C^\infty_c(\R \times (0,T))$, we have 
 \begin{align*}
 &\frac{d}{dt} \int_\R \varphi(x,t) m(x,t)dx=  \frac{d}{dt} \int_{  \gamma^-(t)}^{  \gamma^+(t)} \varphi(x,t)t^{-\overline{\alpha}} \phi(x/t^{\overline{\alpha}})dx \\ 
 &\qquad = \int_{  \gamma^-(t)}^{  \gamma^+(t)} ( \varphi_t(x,t)t^{-\overline{\alpha}} \phi(x/t^{\overline{\alpha}})+  \varphi(x,t) (-\overline{\alpha} t^{-\overline{\alpha}-1})
 (\phi(x/t^{\overline{\alpha}})+ xt^{-\overline{\alpha}}\phi'(x/t^{\overline{\alpha}})))dx
 \end{align*}
 where 
$$
 \int_{ \gamma^-(t)}^{  \gamma^+(t)}\varphi(x,t) x\phi'(x/t^{\overline{\alpha}})dx = - t^{\overline{\alpha}}\int_{\bar \gamma^-(t)}^{\bar \gamma^+(t)}(\varphi_x(x,t) x+\varphi(x,t))\phi(x/t^{\overline{\alpha}})dx.
 $$
 So 
  \begin{align*}
\frac{d}{dt} \int_\R \varphi(x,t) m(x,t)dx& =   \int_{  \gamma^-(t)}^{  \gamma^+(t)} (\varphi_t(x,t)t^{-\overline{\alpha}} \phi(x/t^{\overline{\alpha}})- \varphi_x(x,t) (-\overline{\alpha} t^{-\overline{\alpha}-1})x\phi(x/t^\alpha)))dx \\
&= \int_{\R} (\varphi_t(x,t)- \varphi_x(x,t) u_x(x,t)) m(x,t)dx .
 \end{align*}
This shows that $m$ solves the continuity equation.  
\end{proof}

\bigskip
We now extend the definition of $u$ outside the support of $m$, and analyze its behavior near the interface. We recall that the free  boundary is the set $\{\Delta=0\}$, where $
\Delta=|x|-\sqrt{\frac{2R}{\overline{\alpha}(1-\overline{\alpha})}}t^{\overline{\alpha}}$.

\begin{lem} \label{prop:0}
For each $(x,t)\text{\ensuremath{\in\{\Delta>0\}}}$, the equation
(\ref{eq:1}) has a unique positive solution $S\in(0,t)$, and S is
a smooth function of $(x,t)$. Furthermore, the function $S=S(x,t)$
extends continuously to $\{\Delta\geq0\}$, $S(x,t)=t$ on $\Delta=0$,
and one has the estimates
\be\label{ests1}
S(x,t)\geq c_{0}\left(\frac{t}{|x|+\Delta}\right)^{\frac{1}{1-\overline{\alpha}}},
\ee
\be\label{ests2}
|t-S(x,t)|\leq C_{0}t^{1-\frac{\overline{\alpha}}{2}}\Delta^{\frac{1}{2}}.
\ee
where $c_{0}=\left(\frac{2R\overline{\alpha}}{1-\overline{\alpha}}\right)^{\frac{1}{2(1-\overline{\alpha})}}$ and $C_{0}=\left(\frac{2}{R\overline{\alpha}(1-\overline{\alpha})}\right)^{\frac{1}{4}}$.
\end{lem}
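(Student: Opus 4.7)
The strategy is to rewrite the defining equation as the vanishing of a one-variable convex function whose geometry makes existence, uniqueness, smoothness, and the two bounds transparent. Let $A=\sqrt{2R/(\oal(1-\oal))}$ and $B=\sqrt{2R\oal/(1-\oal)}$, so that $B=\oal A$ and equation \eqref{eq:1} reads $F(S)=0$ with
\[ F(S):=AS-|x|S^{1-\oal}+B(t-S)=A(1-\oal)S+Bt-|x|S^{1-\oal},\qquad S>0. \]
Direct computation gives $F''(S)=|x|\oal(1-\oal)S^{-\oal-1}>0$, so $F$ is strictly convex; moreover $F(0^+)=Bt>0$ and, crucially, since $\Delta=|x|-At^{\oal}$,
\[ F(t)=At-|x|t^{1-\oal}=-t^{1-\oal}\Delta. \]
Thus on $\{\Delta>0\}$ we have $F(t)<0$, so by strict convexity $F$ has exactly one root $S\in(0,t)$ and one root above $t$. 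Smoothness of $S(x,t)$ follows from the implicit function theorem, since at the relevant root $F'(S)<0$ (the root lies to the left of the unique minimizer $S^\ast=(|x|/A)^{1/\oal}$, which exceeds $t$ exactly when $\Delta>0$). As $\Delta\to 0^+$, $F(t)\to 0^-$, and the root in $(0,t)$ must tend to $t$, giving the continuous extension with $S(x,t)=t$ on $\{\Delta=0\}$.

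For the lower bound \eqref{ests1}, I would read off the identity $|x|S^{1-\oal}=A(1-\oal)S+Bt$ directly from $F(S)=0$; since both summands are nonnegative, $|x|S^{1-\oal}\geq Bt$, whence $S\geq(Bt/|x|)^{1/(1-\oal)}$. Observing that $|x|\leq|x|+\Delta$ and that $B^{1/(1-\oal)}=c_0$ yields \eqref{ests1}.

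The main work is the upper bound \eqref{ests2}, since the implicit relation is degenerate at $\Delta=0$ (where $F'(t)=0$), producing the characteristic square-root singularity. The plan is a second-order Taylor expansion of $F$ about $S=t$. Writing $\tau=t-S>0$ and using $F(t)=-t^{1-\oal}\Delta$, $F'(t)=-(1-\oal)t^{-\oal}\Delta$, together with $F(S)=0$, gives
\[ t^{1-\oal}\Delta=(1-\oal)t^{-\oal}\Delta\,\tau+\int_0^{\tau}F''(t-s)(\tau-s)\,ds. \]
Both terms on the right are nonnegative. Since $F''(\sigma)=|x|\oal(1-\oal)\sigma^{-\oal-1}$ is decreasing, $F''(t-s)\geq F''(t)=|x|\oal(1-\oal)t^{-\oal-1}$ on $[0,\tau]$, so dropping the first term and integrating gives
\[ t^{1-\oal}\Delta\geq\tfrac{1}{2}|x|\oal(1-\oal)t^{-\oal-1}\tau^{2}. \]
Combined with $|x|\geq At^{\oal}$ (which holds on $\{\Delta\geq 0\}$), this yields $\tau^{2}\leq 2t^{2-\oal}\Delta/(A\oal(1-\oal))$, and a short calculation with the explicit $A$ identifies the constant as $C_0^{2}=\sqrt{2/(R\oal(1-\oal))}$, producing \eqref{ests2}.

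The hard step is the last one: handling the vanishing of $F'(t)$ at the free boundary and choosing the right monotonicity of $F''$ so that the inequality goes the right way. Everything else is routine convexity and implicit function theorem bookkeeping.
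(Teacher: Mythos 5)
Your proof is correct; the existence, uniqueness, and smoothness portion is essentially the same as the paper's (same $F$, same sign observations $F(0^+)>0$, $F(t)=-t^{1-\oal}\Delta<0$, and the implicit function theorem with $\partial_s F(x,t,S)<0$, which you recover via $S<S^\ast$). Where you genuinely diverge is in the two estimates. For \eqref{ests1} the paper constructs a competitor $s=\bigl(\oal C_R t/(|x|+\Delta)\bigr)^{1/(1-\oal)}$, checks $s<t$ and $F(x,t,s)\geq 0$, and invokes monotonicity of $F$; you read $|x|S^{1-\oal}=A(1-\oal)S+Bt\geq Bt$ directly off $F(S)=0$ and divide, which is shorter and in fact yields the slightly sharper bound with $|x|$ rather than $|x|+\Delta$ in the denominator. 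For \eqref{ests2} the paper manipulates the identity $\oal C_R(t-S)=S^{1-\oal}(|x|-C_R S^{\oal})$ into an expression involving $S^{1-\oal}\Delta$ and then applies the ad hoc elementary inequality $\tfrac{1-\oal}{2}(1-w)\leq 1-\tfrac{1}{\oal}w^{1-\oal}\tfrac{1-w^{\oal}}{1-w}$ with $w=S/t$; your route expands $F$ to second order about $t$ with integral remainder, drops the nonnegative first-order term, and bounds $F''(t-s)$ below by its value at $s=0$ using monotonicity of $F''$ together with $|x|\geq At^{\oal}$ on $\{\Delta\geq 0\}$. Both approaches are exploiting the same second-order degeneracy of $F$ at the interface, but yours makes the provenance of the constant $C_0$ more transparent and avoids the need to discover and verify a bespoke inequality. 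One small point of rigor: the assertion that $S$ extends continuously with $S=t$ on $\{\Delta=0\}$ should be attributed to \eqref{ests2} rather than the heuristic remark about roots migrating as $F(t)\to 0^-$; you derive \eqref{ests2} anyway, so this is cosmetic. Both constants come out correctly on your computation.
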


\begin{proof}
Let us  set $C_R:= \sqrt{\frac{2R}{\overline{\alpha}(1-\overline{\alpha})}}$; 
hence the interface $\Delta=0$ is the curve $|x|= C_R t^{\overline{\alpha}}$. 
We  define 
\begin{equation}
F(x,t,s)=  -|x|s^{1-\overline{\alpha}}+  C_R( \overline{\alpha} t + (1-\overline{\alpha}) s).
\label{eq:F}
\end{equation}
Then, we have

\begin{equation}
\frac{\partial F}{\partial s}(x,t,s)=-(1-\overline{\alpha})s^{-\overline{\alpha}}\left(|x|-C_R s^{\overline{\alpha}}\right).\label{eq:3}
\end{equation}
Notice that, since $\Delta(x,t)>0$, $\frac{\partial F}{\partial s}(x,t,s)<0$
for $s\in[0,t]$. Moreover, $F(x,t,0)=\overline{\alpha} C_R t>0$,
and $F(x,t,t)=-  t^{1-\overline{\alpha}}  \Delta<0$. Hence there exists a unique $S\in(0,t)$
with $F(x,t,S)=0$. Now, since $\frac{\partial F}{\partial s}(x,t,S)<0$,
the implicit function theorem guarantees that the function $S$ is
smooth in $\{\Delta>0\}$. We now show a lower bound on $S$, by taking
$s= \left(\frac{\overline{\alpha} C_R t}{|x|+\Delta}\right)^{\frac{1}{1-\overline{\alpha}}}$.  First we note
that, since $\overline{\alpha}<1$, we have 
\[
s< \left( \frac{C_R t}{|x|}\right)^{\frac{1}{1-\overline{\alpha}}}  \leq t \qquad \hbox{in $\{\Delta>0\}$.}
\]
Hence $s\in(0,t)$. Moreover, we have
$$
F(x,t,s)    = - \frac{ |x| }{|x|+\Delta}\overline{\alpha} C_R t+  C_R( \overline{\alpha} t + (1-\overline{\alpha}) s) \geq 0\,.
$$
Consequently, since $\frac{\partial F}{\partial s}<0$ on $(0,t)$,
and $F(x,t,S)=0$, we have $S\geq s$, that is,
\[
S(x,t)\geq  \left(  \frac{\overline{\alpha} C_R t}{|x|+\Delta}\right)^{\frac{1}{1-\overline{\alpha}}}.
\]
This is \rife{ests1}.
Finally, we are now concerned with the continuous extension to $\Delta=0$. First of all, we rewrite \rife{eq:1} as 
\begin{equation}
\overline{\alpha} C_R (t-S) =S^{1-\overline{\alpha}}(|x|-C_R S^{\overline{\alpha}})\label{eq:t-S}
\end{equation}
which yields
\begin{align}
\overline{\alpha} C_R (t-S)  & \left(1-\frac{1}{\overline{\alpha}}S^{1-\overline{\alpha}}\frac{t^{\overline{\alpha}}-S^{\overline{\alpha}}}{t-S}\right)=S^{1-\overline{\alpha}}\Delta.
\label{t-S}
\end{align}
We can then use the elementary inequality 
\[
 \frac{(1-\overline{\alpha})}{2}(1-w)\leq\left(1-\frac{1}{\overline{\alpha}}w^{1-\overline{\alpha}}\frac{1-w^{\overline{\alpha}}}{1-w}\right),
\]
valid for all real numbers $w\in(0,1)$. Using this inequality with $w=\frac{S}{t}$, we deduce from \rife{t-S}
%
\[
\frac{\overline{\alpha} C_R (1-\overline{\alpha}) }{2t} (t-S)^{2}\leq S^{1-\overline{\alpha}}\Delta\leq t^{1-\overline{\alpha}}\Delta,
\]
which reduces to
\[
|t-S|\leq C_{0}t^{1-\frac{\overline{\alpha}}{2}}\Delta^{\frac{1}{2}}
\]
 by setting $C_0=  \left(\frac{2}{R\overline{\alpha}(1-\overline{\alpha})}\right)^{\frac{1}{4}}$.
From this estimate, one sees that if $(x_{n},t_{n})\in\{\Delta>0\}$
is such that $(x_{n},t_{n})\rightarrow(x_{0},t_{0})\in\{\Delta=0\}$,
then $|S(x,t)-t|\rightarrow0$. 
\end{proof}

We can now establish the H\"older regularity of $Du$.

\begin{prop} \label{prop:self similar regu}
There exists $0<s<1$ such that the function $u$  (defined in \rife{u2} or \rife{uneq2}) is smooth away from $\Delta=0$, and is uniformly $C^{1,s}$
on compact subsets of $(0,\infty)\times\mathbb{R}.$ Moreover, it is a  viscosity solution of 
$$
-u_{t}+\frac{1}{2}u_{x}^{2}=m^\theta \qquad \hbox{in $\R\times(0,T)$.}
$$
\end{prop}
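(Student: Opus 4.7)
The plan is to treat the three regions $\{\Delta<0\}$, $\{\Delta>0\}$ and the interface $\{\Delta=0\}$ separately, obtaining classical regularity on the open sets and then matching across the free boundary.

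In the region $\{\Delta<0\}$ (i.e.\ inside the support of $m$), $u$ is given by the explicit polynomial-plus-power expression from \eqref{u2}-\eqref{uneq2}, which is smooth in $(x,t)$, and the HJ equation is verified classically by the lemma preceding Proposition \ref{prop:0}. In the region $\{\Delta>0\}$, Lemma \ref{prop:0} provides a smooth implicit function $S(x,t)$, so the defining formulas give $u\in C^{\infty}(\{\Delta>0\})$; along the straight optimal trajectories (in $\{m=0\}$), which are precisely the level sets of $S$, one checks directly (either by differentiating the implicit definition of $S$ or by invoking the fact that, outside $\mathrm{supp}(m)$, $u$ is extended by the method of characteristics with source $f(m)=0$) that $-u_t+\tfrac12 u_x^2=0=f(m)$. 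These two computations give smoothness and the classical PDE on the open set $\{\Delta\neq 0\}$.

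The central step is matching across $\{\Delta=0\}$. Using the asymptotics $S(x,t)\to t$ as $\Delta\to 0^+$ (Lemma \ref{prop:0}) and the identity $|x|=\sqrt{2R/(\overline{\alpha}(1-\overline{\alpha}))}\,t^{\overline{\alpha}}$ on the interface, a direct substitution in \eqref{uneq2} shows that the two expressions for $u$ coincide on $\{\Delta=0\}$, so $u$ is continuous globally. To handle the first derivatives I will differentiate the implicit relation $F(x,t,S)=0$ from \eqref{eq:F}. A short calculation yields, for $x>0$,
\[
S_x=\frac{S^{1-\overline{\alpha}}}{\partial_s F(x,t,S)}=-\frac{S}{(1-\overline{\alpha})(|x|-C_R S^{\overline{\alpha}})},\qquad C_R:=\sqrt{\tfrac{2R}{\overline{\alpha}(1-\overline{\alpha})}},
\]
and an analogous expression for $S_t$. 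On the other hand, differentiating \eqref{uneq2} and simplifying (the leading terms cancel), one obtains the neat identity
\[
u_x=2\overline{\alpha} R\,S^{2\overline{\alpha}-3}(t-S)S_x,\qquad u_t= -\tfrac{1}{2}u_x^2+R S^{2\overline{\alpha}-2}
\]
(the second relation is just the HJ equation with $m=0$). Combining with the quantitative estimate \eqref{ests2}, namely $|t-S|\leq C_0 t^{1-\overline{\alpha}/2}\Delta^{1/2}$, and with the identity \eqref{eq:t-S} which gives $|x|-C_R S^{\overline{\alpha}}=\overline{\alpha} C_R(t-S)/S^{1-\overline{\alpha}}$, the singular factor $S_x$ is multiplied by $(t-S)$ and the product $(t-S)\,S_x$ stays bounded. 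Taking limits $\Delta\to 0^+$ and matching with the explicit $u_x=-\overline{\alpha} x/t$ from inside the support shows that $u_x$ and $u_t$ extend continuously across the interface, so $u\in C^{1}(\R\times(0,\infty))$.

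For the H\"older exponent, I will expand $S(x,t)=t+\psi(x,t)\Delta^{1/2}+O(\Delta)$ from \eqref{t-S}, which in turn gives $(t-S)S_x$ of the form $\mathrm{const}+O(\Delta^{1/2})$. Substituting back yields an expansion
\[
u_x(x,t)=u_x^{\rm inside}\big|_{\Delta=0}+O(\Delta^{1/2}),\qquad u_t(x,t)=u_t^{\rm inside}\big|_{\Delta=0}+O(\Delta^{1/2}),
\]
uniformly on compact subsets of $(0,\infty)\times\R$. Since $\Delta$ itself is smooth (in fact Lipschitz in $(x,t)$) and vanishes linearly transversally to the interface, this is precisely a $C^{1,1/2}$ estimate on $u$, locally uniform in $t>0$; hence Proposition \ref{prop:self similar regu} holds with $s=1/2$.

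Finally, the viscosity solution property follows from the stability of viscosity solutions under locally uniform limits: $u$ is a classical solution of $-u_t+\tfrac12 u_x^2=m^\theta$ on the open dense set $\{\Delta\neq 0\}$, and $u\in C^{1}$ globally with $m^\theta$ continuous; any test function touching $u$ from above or below at a point of $\{\Delta=0\}$ can be perturbed into the adjacent open region, where the classical equation applies, and the desired inequality is obtained by passing to the limit. The main obstacle is the bookkeeping in the second step: one must exhibit the exact square-root cancellation between the blow-up of $S_x$ and the vanishing of $t-S$, and extract the same $\Delta^{1/2}$ rate for the remainder to justify the $C^{1,1/2}$ bound.
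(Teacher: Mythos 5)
Your decomposition into $\{\Delta<0\}$, $\{\Delta>0\}$, and the interface, the implicit differentiation of $S$, and the observation that $(t-S)S_x$ stays bounded are all the same ingredients the paper uses, and the viscosity-solution step is also handled analogously (a globally $C^1$ function solving the PDE classically off the interface solves it everywhere). Two points of comparison. First, the paper pushes the algebra one step further: substituting the identity \eqref{eq:t-S} (which says $|x|-C_R S^{\overline\alpha}=\overline\alpha C_R(t-S)/S^{1-\overline\alpha}$) into $u_x=2\overline\alpha R S^{2\overline\alpha-3}(t-S)S_x$ collapses everything to the closed form $u_x=-\frac{2R}{C_R(1-\overline\alpha)}S^{\overline\alpha-1}\operatorname{sgn}(x)$ of \eqref{selfux}, so $u_x$ is an explicit smooth function of $S$ alone, and the H\"older regularity of $u_x$ on compact subsets of $t>0$ is \emph{literally} that of $S$. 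This makes the analysis cleaner than tracking the product $(t-S)S_x$ asymptotically. (Also note a slip: on $\{\Delta>0\}$ the relation is $u_t=\tfrac12 u_x^2$, not $u_t=-\tfrac12 u_x^2+RS^{2\overline\alpha-2}$; your parenthetical shows you know the former, so this is cosmetic.)

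The substantive gap is in the final step. Establishing $|u_x(x,t)-u_x^{\rm interface}|=O(\Delta^{1/2})$ only controls differences $u_x(p)-u_x(q)$ when one of the two points sits on $\{\Delta=0\}$. For a genuine $C^{1,s}$ estimate you must compare two arbitrary nearby points, both possibly in $\{\Delta>0\}$; this requires an additional ingredient — either a second-derivative bound of the form $|D^2u|\lesssim\Delta^{-1/2}$ (to handle pairs at mutual distance $d\lesssim\Delta$ via the mean value theorem) or the kind of two-regime case analysis the paper carries out on $S$ itself, splitting on whether $\Delta\gtrless d^\sigma$ and using the implicit-function bound $|S-\bar S|\lesssim d/\Delta$ in one regime and the interface estimate \eqref{ests2} in the other. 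You flag this as "bookkeeping," but it is precisely the step that produces a bona fide H\"older modulus; the expansion $u_x=u_x^{\rm interface}+O(\Delta^{1/2})$ by itself does not yield one (a bounded function of the form $g+h$ with $g$ Lipschitz and $|h|\le C\Delta^{1/2}$ need not be H\"older without control on how $h$ varies). The paper's two-case argument does not even produce $s=1/2$: as written it gives $s=\min(1-\sigma,\sigma/2)=1/3$ at $\sigma=2/3$, which is all the statement requires. Your claim of $s=1/2$ is very likely the optimal exponent (consistent with the $\Delta^{-1/2}$ blow-up of $u_{xx}$ in Remark \ref{rem:uxx unbounded}), but establishing it would require the sharper lower bound $|x|-C_R S^{\overline\alpha}\gtrsim\Delta^{1/2}$ rather than $\gtrsim\Delta$ in the implicit-function estimate — a refinement neither you nor the paper carries out.
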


\begin{proof}
 Given $(x,t) \in \{\Delta >0\}$, $u(x,t)$ has been defined through the method of characteristics;  outside the support of $m$, the characteristics are straight lines that join $(x,t)$ to  a unique point $(\bar x, S)$ belonging to the curve $\{\Delta =0\}$. This means that
$$
u(x,t)= u(\bar x, S)+ \frac12 (t-S)  |\lambda|^2 \,,\qquad \begin{cases} x=\lambda(t- S)+ \bar x \,,& \quad \lambda= -u_x(\bar x, S)= \frac{\alpha \bar x}{S}\\
\bar x= C_R S^{\overline{\alpha}} & \end{cases}
$$
which leads to the value $S=S(x,t)$ defined by \rife{eq:t-S} and, correspondingly to the formulas \rife{u2} or \rife{uneq2}.  By construction, relying on the method of characteristics, it follows that $u$ satisfies  $-u_{t}+\frac{1}{2}u_{x}^{2}=0$ in $ \{\Delta >0\}$, and $u$ is actually a viscosity solution in the whole $\R\times(0,T)$. 

Let us now look at the regularity of $u$. 
Since $S$ is smooth away from $\Delta=0$, so is $u$. More precisely, recalling that $S$ is given by the equation
\be\label{Seq}
\overline{\alpha} C_R t = S^{1-\overline{\alpha}} |x|- (1-\overline{\alpha})C_R S
\ee
we have, by implicit differentiation,
\be\label{sx}
S_{x}=\frac{-\text{sgn}(x)S}{(1-\overline{\alpha})(|x|-C_R S^{\overline{\alpha}})}\,.
\ee
Suppose that $\theta\neq 2$, so $u$ is given by \rife{uneq2}, which simplifies into
$$
u= -\frac{2\overline{\alpha} R}{2\overline{\alpha}-1} S^{2\overline{\alpha}-1}- \frac{\overline{\alpha} R}{1-\overline{\alpha}}t \, S^{2\overline{\alpha}-2}\,.
$$
Therefore,
\[
u_{x}=-2\overline{\alpha} R \left( 1-\frac t S\right) S^{2\overline{\alpha}-2} S_x,
\]
and from \rife{sx} and (\ref{eq:t-S}) we see that this simplifies to:
\begin{equation}
u_{x}= - \frac{2 R}{C_R(1-\overline{\alpha})}  S^{\overline{\alpha}-1}   \text{sgn}(x)\label{selfux}
\end{equation}
By Proposition \ref{prop:0}, we deduce that $u\in C^1(\R\times(0,T))$; in fact, we can see that $u\in C^{1,s}$, if we 
prove a uniform $C^{s}$ bound for $S$ on compact subsets
near $\Delta=0$.  To this purpose,  let $K$ be a compact subset of $\Delta>0$, and let $(x,t),(\overline{x},\overline{t})$
be two points in $K$. We write $\overline{S}=S(\overline{x},\overline{t})$.
With no loss of generality, we may assume that $S\geq\overline{S}$.  
By formula \rife{Seq}, we have
$$
S^{1-\overline{\alpha}} |x|- {\bar S}^{1-\overline{\alpha}} |\bar x| - (1-\overline{\alpha})C_R (S-\bar S)= C_R(t-\bar t)
$$
which yields, by using $S^{1-\overline{\alpha}}\geq {\bar S}^{1-\overline{\alpha}} + (1-\overline{\alpha}) S^{-\overline{\alpha}} (S-\bar S)$
$$
\frac{(1-\overline{\alpha}) }{S^{\overline{\alpha}}} \left(|x|- C_R S^{\overline{\alpha}}\right) (S-\bar S) \leq C_R |t-\bar t| + {\bar S}^{1-\overline{\alpha}} |x-\bar x|\,.
$$
By definition of $\Delta$, and Proposition \ref{prop:0}, we deduce
$$
|S-\bar S| \leq K_{t,\bar t} \, \frac{\left(|x-\overline{x}| +|t-\overline{t}|\right)} {\Delta}\,.
$$
Suppose now that $\Delta \geq \left(|x-\overline{x}| +|t-\overline{t}|\right)^{\sigma}$, for some $\sigma\in (0,1)$. Then we get
$$
|S-\bar S| \leq K_{t,\bar t} \,  \left(|x-\overline{x}| +|t-\overline{t}|\right)^{1-\sigma}\,.
$$
On the other hand, if $\Delta < \left(|x-\overline{x}| +|t-\overline{t}|\right)^{\sigma}$, then we also have 
$$
\bar \Delta < \left(|x-\overline{x}| +|t-\overline{t}|\right)^{\sigma}+ |\Delta -\bar \Delta | \leq C_{t,\bar t} \left(|x-\overline{x}| +|t-\overline{t}|\right)^{\sigma}
$$
because $\Delta $ is a  Lipschitz function of $(x,t)$ far from $t=0$ (it is also globally $\overline{\alpha}$-H\"older, as well). Hence we estimate, using \rife{ests2}
\begin{equation*}
|S-\bar S|  \leq |S-t|+ |t-\bar t| + |\bar t - \bar S| \leq  C_0 \max(t,\bar t)^{1-\frac{\overline{\alpha}}{2}} \left( \Delta^{\frac12}+ {\bar\Delta}^{\frac12}\right) +  |t-\bar t| \leq {\tilde C}_{t, \bar t}  \left(|x-\overline{x}| +|t-\overline{t}|\right)^{\frac\sigma2}\,.
\end{equation*}
This concludes with the H\"older bound of $S(x,t)$, and therefore with the $C^{1,s}$ regularity of $u$. Finally, for the case $\theta=2$, we argue in the same way  by using formula \rife{u2}; notice that $u$ is explicit in this case, because (\ref{eq:1}) is a quadratic equation in
$\sqrt{S}$.

\end{proof}

\begin{rem} \label{rem:uxx unbounded}
We observe that the solution $u$ found above is not $W^{2,\infty}$. In fact, by differentiating once more \rife{selfux} one gets
\[
u_{xx}=-\sqrt{\frac{2R\overline{\alpha}}{1-\overline{\alpha}}}\frac{S^{\overline{\alpha}-1}}{(|x|-\sqrt{\frac{2R}{\overline{\alpha}(1-\overline{\alpha})}}S^{\overline{\alpha}})}<0\,.
\]
Now, as $\Delta\rightarrow0$, $S\rightarrow t$, and thus the denominator
$|x|-\sqrt{\frac{2R}{\overline{\alpha}(1-\overline{\alpha})}}S^{\overline{\alpha}}\rightarrow0$. Hence
$u_{xx}$ is unbounded.

The same holds for the case $\theta=2$, since we have
\[
u_{x}=-\frac{4R}{x-\sqrt{\Delta}},u_{xx}=-\frac{4R}{\sqrt{\Delta}(x-\sqrt{\Delta})}\,.
\]
So $u_{xx}$ is again unbounded as $\Delta\rightarrow0$ .
\end{rem}

\end{document}